\newtheorem{thm}{Theorem}[section]
\newtheorem{cor}[thm]{Corollary}
\newtheorem{lem}[thm]{Lemma}
\newtheorem{prop}[thm]{Proposition}
\newtheorem{thmintro}{Theorem}
\theoremstyle{definition}
\newtheorem{rem}[thm]{Remark}
\newtheorem{cond}[thm]{Conditions}
\newtheorem{ex}[thm]{Example}
\newcommand{\enuma}[1]{\begin{enumerate}[\textup{(}a\textup{)}] {#1} \end{enumerate}}
\newcommand{\iref}[1]{\S \ref{#1}, p. \pageref{#1}}
\newcommand{\mr}{\mathrm}
\newcommand{\mc}{\mathcal}
\newcommand{\mf}{\mathfrak}
\newcommand{\N}{\mathbb N}
\newcommand{\Z}{\mathbb Z}
\newcommand{\R}{\mathbb R}
\newcommand{\C}{\mathbb C}
\newcommand{\cusp}{\mathrm{cusp}}
\newcommand{\reg}{\mathrm{reg}}
\newcommand{\matje}[4]{\left(\begin{smallmatrix} #1 & #2 \\ 
#3 & #4 \end{smallmatrix}\right)}
\def\Hom{{\rm Hom}}
\def\End{{\rm End}}
\def\Irr{{\rm Irr}}
\def\GL{{\rm GL}}
\def\SL{{\rm SL}}
\def\St{{\rm St}}
\def\cH{{\mathcal H}}
\def\Ind{{\rm Ind}}
\def\Mod{{\rm Mod}}
\def\nr{{\rm nr}}
\def\fs{{\mathfrak s}}
\def\ft{{\mathfrak t}}
\def\Rep{{\rm Rep}}
\def\Mod{{\rm Mod}}
\def\Res{{\rm Res}}
\def\Stab{{\rm Stab}}
\def\Nrd{{\rm Nrd}}
\def\Nor{{\rm N}}
\def\der{{\rm der}}
\begin{document}

\title{Hecke algebras for inner forms\\ of $p$-adic special linear groups}

\author[A.-M. Aubert]{Anne-Marie Aubert}
\address{Institut de Math\'ematiques de Jussieu -- Paris Rive Gauche, 
U.M.R. 7586 du C.N.R.S., U.P.M.C., 4 place Jussieu 75005 Paris, France}
\email{aubert@math.jussieu.fr}
\author[P. Baum]{Paul Baum}
\address{Mathematics Department, Pennsylvania State University, University Park, PA 16802, USA}
\email{baum@math.psu.edu}
\author[R. Plymen]{Roger Plymen}
\address{School of Mathematics, Southampton University, Southampton SO17 1BJ,  England 
\emph{and} School of Mathematics, Manchester University, Manchester M13 9PL, England}
\email{r.j.plymen@soton.ac.uk \quad plymen@manchester.ac.uk}
\author[M. Solleveld]{Maarten Solleveld}
\address{Radboud Universiteit Nijmegen, Heyendaalseweg 135, 6525AJ Nijmegen, the Netherlands}
\email{m.solleveld@science.ru.nl}
\date{\today}
\subjclass[2010]{20G25, 22E50}
\keywords{representation theory, division algebra, Hecke algebra, types}
\thanks{The second author was partially supported by NSF grant DMS-1200475.}
\maketitle

\begin{abstract}
Let $F$ be a non-archimedean local field and let $G^\sharp$ be the group of $F$-rational
points of an inner form of $\SL_n$. We study Hecke algebras for all Bernstein components 
of $G^\sharp$, via restriction from an inner form $G$ of $\GL_n (F)$. 

For any packet of L-indistinguishable Bernstein components, we exhibit an explicit
algebra whose module category is equivalent to the associated category of complex smooth
$G^\sharp$-representations. This algebra comes from an idempotent in the full Hecke 
algebra of $G^\sharp$, and the idempotent is derived from a type for $G$. We show that the 
Hecke algebras for Bernstein components of $G^\sharp$ are similar to affine Hecke algebras 
of type $A$, yet in many cases are not Morita equivalent to any crossed product of an 
affine Hecke algebra with a finite group.
\end{abstract}

\vspace{4mm}

\tableofcontents

\section*{Introduction}

Let $F$ be a non-archimedean local field and let $D$ be a division algebra, of dimension
$d^2$ over its centre $F$. Then $G = \GL_m (D)$ is the group of $F$-rational points of an 
inner form of $\GL_n$, where $n = md$. We will say simply that $G$ is an inner form of 
$\GL_n (F)$. Its derived group $G^\sharp$, the kernel of the reduced
norm map $G \to F^\times$, is an inner form of $\SL_n (F)$. Every inner form of
$\SL_n (F)$ looks like this.

Since the appearance of the important paper \cite{HiSa} there has been a surge of
interest in these groups, cf. \cite{ChLi,ChGo,ABPS3}. In this paper we continue
our investigations of the (complex) representation theory of inner forms of $\SL_n (F)$.
Following the Bushnell--Kutzko approach \cite{BuKu3}, we study algebras associated
to idempotents in the Hecke algebra of $G^\sharp$. The main idea of this approach is
to understand Bernstein components for a reductive $p$-adic group better by constructing
types and making the ensuing Hecke algebras explicit. 

It turns out that for the groups under consideration, while it is really hard to find 
types, the appropriate Hecke algebras are accessible via different techniques. 
Our starting point is the construction of types for all Bernstein
components of $G$ by S\'echerre--Stevens \cite{SeSt4,SeSt6}. We consider the Hecke
algebra of such a type, which is described in \cite{Sec3}. In several steps we modify
this algebra to one whose module category is equivalent to a union of some Bernstein
blocks for $G^\sharp$. Let us discuss our strategy and our main result. 

We fix a parabolic subgroup $P \subset G$ with Levi factor $L$. A supercuspidal 
$L$-representation $\omega$ gives an inertial equivalence class $\fs = [L,\omega]_G$.
Let $\Rep^\fs (G)$ denote the corresponding Bernstein block of the category of
smooth complex $G$-representations, and let $\Irr^\fs (G)$ denote the set of 
irreducible objects in $\Rep^\fs (G)$.
Let $\Irr^\fs (G^\sharp)$ be the set of irreducible $G^\sharp$-representations that are 
subquotients of $\Res^G_{G^\sharp}(\pi)$ for some $\pi \in \Irr^\fs (G)$. We define 
$\Rep^\fs (G^\sharp)$ as the collection of $G^\sharp$-repre\-sen\-tations 
all whose irreducible subquotients lie in $\Irr^\fs (G^\sharp)$.
We want to investigate the category $\Rep^\fs (G^\sharp)$.
It is a product of finitely many Bernstein blocks for $G^\sharp$:
\begin{equation} \label{eq:procat}
\Rep^\fs (G^\sharp) = 
\prod\nolimits_{{\mf t}^\sharp\prec\fs}\Rep^{{\mf t}^\sharp} (G^\sharp).
\end{equation} 
We note that the Bernstein components $\Irr^{\mf t^\sharp}(G^\sharp)$ which are subordinate
to one $\fs$ form precisely one class of L-indistinguishable components: every L-packet
for $G^\sharp$ which intersects one of them intersects them all.

The structure of $\Rep^\fs (G)$ is largely determined by the torus $T_\fs$ and the finite
group $W_\fs$ associated by Bernstein to $\fs$. Recall that the Bernstein torus of $\fs$ is
\[
T_\fs = \{ \omega \otimes \chi \mid \chi \in X_\nr (L) \} \subset \Irr (L),
\]
where $X_\nr(L)$ denotes the group of unramified characters of $L$. The finite group $W_\fs$ 
equals $\Nor_M(L)/L$ for a suitable Levi subgroup $M \subset G$ containing $L$. 
For this particular reductive $p$-adic group $W_\fs$ is always a Weyl group (in fact a 
direct product of symmetric groups), but for $G^\sharp$ more general finite groups are 
needed. We also have to take into account that we are dealing with several Bernstein 
components simultaneously. 

Let $\cH (G)$ be the Hecke algebra of $G$ and $\cH (G)^\fs$ its two-sided ideal 
corresponding to the Bernstein block $\Rep^\fs (G)$. Similarly, let $\cH (G^\sharp)^\fs$
be the two-sided ideal of $\cH (G^\sharp)$ corresponding to $\Rep^\fs (G^\sharp)$. Then 
\[
\cH (G^\sharp)^\fs = \prod\nolimits_{{\mf t}^\sharp\prec\fs} \cH (G^\sharp )^{\mf t^\sharp} .
\]
Of course we would like to determine $\cH (G^\sharp )^{\mf t^\sharp}$, but it turns
out that $\cH (G^\sharp)^\fs$ is much easier to study. So our main goal is an explicit
description of $\cH (G^\sharp)^\fs$ up to Morita equivalence. From that the subalgebras
$\cH (G^\sharp )^{\mf t^\sharp}$ can in principle be extracted, as maximal indecomposable
subalgebras. We note that sometimes $\cH (G^\sharp)^\fs = \cH (G^\sharp )^{\mf t^\sharp}$,
see Examples \ref{ex:free}, \ref{ex:cocycles}.

From \cite{SeSt4} we know that there exists a simple type $(K,\lambda)$ 
for $[L,\omega]_M$. By \cite{SeSt6} it has a $G$-cover $(K_G,\lambda_G)$. We denote
the associated central idempotent of $\cH (K)$ by $e_\lambda$, and similarly for other
irreducible representations. There is an affine Hecke algebra $\cH (T_\fs,W_\fs,q_\fs)$, 
a tensor product of affine Hecke algebras of type $\GL_e$, such that 
\begin{equation}\label{eq:I1}
e_{\lambda_G} \cH (G) e_{\lambda_G} \cong e_\lambda \cH (M) e_\lambda \cong
\cH (T_\fs,W_\fs,q_\fs) \otimes \End_\C (V_\lambda) ,
\end{equation}
and these algebras are Morita equivalent with $\cH (G)^\fs$.

An important role in the restrictions of representations from $\cH (G)^\fs$ to
$\cH (G^\sharp)^\fs$ is played by the group
\begin{align*}
X^G (\fs) := \{ \gamma \in \Irr (G / G^\sharp Z(G)) \mid \gamma \otimes I_P^G (\omega)
\in \Rep^\fs (G) \} .
\end{align*}
It acts on $\cH (G)$ by pointwise multiplication of functions $G \to \C$. 
For the restriction process we need an idempotent that is invariant under 
$X^G (\fs)$. To that end we replace $\lambda_G$ by the sum of the representations 
$\gamma \otimes \lambda_G$ with $\gamma \in X^G (\fs)$, which we call $\mu_G$. 
Then \eqref{eq:I1} remains valid with $\mu$ instead of $\lambda$, but of course
$V_\mu$ is reducible as a representation of $K$. 

Let $e_{\mu_{G^\sharp}} \in \cH (G^\sharp)$ be the restriction of $e_{\mu_G} :
G \to \C$ to $G^\sharp$. Up to a scalar factor it is also the restriction of
$e_{\lambda_G}$ to $G^\sharp$. We normalize the Haar measures in such a way that
$e_{\mu_{G^\sharp}}$ is idempotent. For any $G^\sharp$-representation $V ,\;
e_{\mu_{G^\sharp}} V$ is the space of vectors in $V$ on which $K_G \cap G^\sharp$
acts as some multiple of the (reducible) representation $\lambda_G 
|_{K_G \cap G^\sharp}$. 

Then $e_{\mu_{G^\sharp}} \cH (G^\sharp) e_{\mu_{G^\sharp}}$ is a nice subalgebra 
of $\cH (G^\sharp)^\fs$, but in 
general it is not Morita equivalent with $\cH (G^\sharp)^\fs$. There is only
an equivalence between the module category of $e_{\mu_{G^\sharp}} \cH (G^\sharp)
e_{\mu_{G^\sharp}}$ and $\prod_{\mf t^\sharp} \Rep^{\mf t^\sharp} (G^\sharp)$,
where $\mf t^\sharp$ runs over some, but not necessarily all, inertial 
equivalence classes $\prec \fs$.
To see the entire category $\Rep^\fs (G^\sharp)$ we need finitely many 
isomorphic but mutually orthogonal algebras 
\[                    
a e_{\mu_{G^\sharp}} a^{-1} \cH (G^\sharp) a e_{\mu_{G^\sharp}} a^{-1} 
\text{ with } a \in G. 
\]
To formulate our main result precisely, we need also the groups
\begin{align*}
& X^L (\fs) = \{ \gamma \in \Irr (L / L^\sharp Z(G)) \mid 
\gamma \otimes \omega \in [L,\omega]_L \} , \\ 
& W_\fs^\sharp = \{ w \in \Nor_G(L) \mid \exists \gamma \in \Irr (L / L^\sharp Z(G)) : 
w (\gamma \otimes \omega) \in [L,\omega]_L \} , \\
& \mf R_\fs^\sharp = W_\fs^\sharp \cap \Nor_G (P \cap M) / L , \\
& X^L (\omega,V_\mu) = \{ \gamma \in \Irr (L / L^\sharp) \mid 
\text{ there exists an } L\text{-isomorphism } \omega \to \omega \otimes \gamma^{-1} \\
& \ \hspace{5cm} \text{which induces the identity on } V_\mu \} .
\end{align*}
Here $L^\sharp = L \cap G^\sharp$, so 
\[
L / L^\sharp \cong G / G^\sharp \cong F^\times .
\]
We observe that $\mf R_\fs^\sharp$ is naturally isomorphic to 
$X^G (\fs) / X^L (\fs)$, and that $W_\fs^\sharp = W_\fs \rtimes \mf R_\fs^\sharp$ 
(see Lemmas~\ref{lem:2.4} and~\ref{lem:2.2}). One can regard $W_\fs^\sharp$ as the 
Bernstein group for $\Rep^\fs (G^\sharp)$. 

\begin{thmintro}\label{thm:3}
\textup{[see Theorem \ref{thm:3.12}.]} \\
The algebra $\cH (G^\sharp)^\fs$ is Morita equivalent with a direct sum of\\
$|X^L (\omega,V_\mu)|$ copies of $e_{\mu_{G^\sharp}} \cH (G^\sharp) 
e_{\mu_{G^\sharp}}$. The latter algebra is isomorphic with
\[
\Big( \cH (T_\fs^\sharp ,W_\fs ,q_\fs) \otimes \End_\C (V_\mu) 
\Big)^{X^L (\fs) / X^L (\omega,V_\mu) X_\nr (L / L^\sharp Z(G))} \rtimes \mf R_\fs^\sharp ,
\]
where $T_\fs^\sharp = T_\fs / X_\nr (L / L^\sharp)$.
The actions of the groups $X^L (\fs)$ and $\mf R_\fs^\sharp$ come from automorphisms of
$T_\fs \rtimes W_\fs$ and projective transformations of $V_\mu$.
\end{thmintro}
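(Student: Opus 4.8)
\medskip
\noindent\emph{Strategy of the proof.}
The plan is to build $\cH(G^\sharp)^\fs$ out of the single corner algebra $e_{\mu_{G^\sharp}}\cH(G^\sharp)e_{\mu_{G^\sharp}}$ in two stages: first identify that corner explicitly, by descending the known description \eqref{eq:I1} along the passage from $G$ to $G^\sharp$, and then count how many mutually orthogonal $G$-conjugates of $e_{\mu_{G^\sharp}}$ are needed to exhaust $\Rep^\fs(G^\sharp)$.

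\textbf{Step 1 (the corner algebra).} I would first extend \eqref{eq:I1} to $\mu$, so that $e_{\mu_G}\cH(G)e_{\mu_G}\cong\cH(T_\fs,W_\fs,q_\fs)\otimes\End_\C(V_\mu)$, and record the $X^G(\fs)$-action on the right-hand side: a character $\gamma$ acts on the first tensor factor through an automorphism of $T_\fs\rtimes W_\fs$ (twisting $\omega$ by $\gamma$ moves it inside its inertial class by an unramified character and a Weyl element), and on $\End_\C(V_\mu)$ through a projective representation of $X^G(\fs)$, reflecting that $V_\mu$ is a direct sum of the mutually twisted copies $V_{\gamma\otimes\lambda_G}$ of $V_\lambda$. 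Since $e_{\mu_G}$ is $X^G(\fs)$-invariant, $e_{\mu_{G^\sharp}}$ is, up to a scalar, the restriction of $e_{\lambda_G}$, and comparing $\Res^G_{G^\sharp}$ on the two sides identifies $e_{\mu_{G^\sharp}}\cH(G^\sharp)e_{\mu_{G^\sharp}}$ with a ``$G^\sharp$-descent'' of $e_{\mu_G}\cH(G)e_{\mu_G}$, consisting of three operations: (i) replace the Bernstein torus $T_\fs$ by $T_\fs^\sharp=T_\fs/X_\nr(L/L^\sharp)$, since restricting characters of $L$ to $L^\sharp$ kills exactly the unramified twists from $L/L^\sharp$, the group $W_\fs$ still acting on the quotient and $q_\fs$ being unchanged; (ii) pass to invariants under $X^L(\fs)$, the part of $X^G(\fs)$ lying inside $L$, which after restriction merely identifies $\gamma\otimes\omega$ with $\omega$ --- but $X^L(\omega,V_\mu)$ acts trivially on $V_\mu$ by construction and $X_\nr(L/L^\sharp Z(G))$ has already been divided out in (i), so the effective invariance group is $X^L(\fs)/X^L(\omega,V_\mu)X_\nr(L/L^\sharp Z(G))$; (iii) form the crossed product with $\mf R_\fs^\sharp\cong X^G(\fs)/X^L(\fs)$, whose classes are represented by elements of $\Nor_G(P\cap M)$ outside $L$ and so genuinely enlarge the algebra, using $W_\fs^\sharp=W_\fs\rtimes\mf R_\fs^\sharp$ (Lemma~\ref{lem:2.2}). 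Together (i)--(iii) yield the asserted formula for $e_{\mu_{G^\sharp}}\cH(G^\sharp)e_{\mu_{G^\sharp}}$, with the actions of $X^L(\fs)$ and $\mf R_\fs^\sharp$ coming from automorphisms of $T_\fs\rtimes W_\fs$ and projective transformations of $V_\mu$.

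\textbf{Step 2 (counting the conjugates).} The idempotent $e_{\mu_{G^\sharp}}$ is in general not full in $\cH(G^\sharp)^\fs$: as noted before the theorem, its module category meets only those blocks $\Rep^{\mf t^\sharp}(G^\sharp)$ in which some $\Res^G_{G^\sharp}\pi$ has a vector of the prescribed $K_G\cap G^\sharp$-isotype. One therefore forms a full idempotent $e=\sum_i a_ie_{\mu_{G^\sharp}}a_i^{-1}$ with the $a_i\in G$ chosen so that the summands are pairwise orthogonal; then $\cH(G^\sharp)^\fs$ is Morita equivalent with $e\cH(G^\sharp)e$. Conjugation by $G$ permutes the blocks inside each L-packet transitively, so each corner $a_ie_{\mu_{G^\sharp}}a_i^{-1}\cH(G^\sharp)a_ie_{\mu_{G^\sharp}}a_i^{-1}$ is isomorphic to $e_{\mu_{G^\sharp}}\cH(G^\sharp)e_{\mu_{G^\sharp}}$, and it remains to compute the number $r$ of summands. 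This is a Clifford-theory count for the abelian quotient $G/G^\sharp\cong F^\times$: the constituents of $\Res^G_{G^\sharp}\pi$ form a torsor under a finite subgroup of $\Irr(G/G^\sharp)$, and the obstruction to $e_{\mu_{G^\sharp}}$ meeting all of them is measured precisely by the characters $\gamma$ of $L/L^\sharp$ admitting an isomorphism $\omega\to\omega\otimes\gamma^{-1}$ that is the identity on $V_\mu$, i.e. by $X^L(\omega,V_\mu)$. Hence $r=|X^L(\omega,V_\mu)|$, completing the proof.

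\textbf{Main obstacle.} The delicate part is Step 1(ii)--(iii): pinning down the exact invariance group and checking that the projective actions of $X^L(\fs)$ and of $\mf R_\fs^\sharp$ (the latter via $W_\fs^\sharp$) on $V_\mu$ have compatible $2$-cocycles, so that taking invariants and then the crossed product is well defined. Computing the projective representation of the stabiliser of $\lambda_G$ on $V_\mu$, and recognising $X^L(\omega,V_\mu)$ as exactly the subgroup of $X^L(\fs)$ that it kills, is where the bookkeeping is heaviest.
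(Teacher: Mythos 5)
Your overall strategy is the same as the paper's (identify the corner algebra, then count orthogonal $G$-conjugates of $e_{\mu_{G^\sharp}}$), and your description of the three ``descent'' operations (i)--(iii) correctly names what ends up in the final formula. But several steps you present as formal bookkeeping are in fact where nearly all the work of Sections~\ref{sec:Mor} and~\ref{sec:Hecke} goes, and as written the argument has real gaps. First, your Step~1 never explains where the affine Hecke algebra $\cH(T_\fs^\sharp, W_\fs, q_\fs)$ comes from: this requires descending to the Levi subgroup $M$ (via the idempotent $e^\fs_M$ and Proposition~\ref{prop:3.2}) and invoking S\'echerre's structure theory (Theorem~\ref{thm:3.7}, Theorem~\ref{thm:3.6}), none of which appears in your sketch. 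Second, the passage from $\cH(G)^{X^G(\fs)}$ to $\cH(G^\sharp)$ is not literal ``restriction of invariant functions'': when $\mathrm{char}(F)$ divides $md$ the group $G^\sharp Z(G)$ has infinite index in $G$ and there are no nonzero $\Irr(G/G^\sharp Z(G))$-invariant elements of $\cH(G)$, so the identification must be made through the congruence-subgroup argument of Lemma~\ref{lem:3.1} and the intermediate group $G^\sharp Z(G)$ (Theorems~\ref{thm:3.17} and~\ref{thm:3.18}); your proposal never mentions $G^\sharp Z(G)$ at all.

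Third, and most seriously, in Step~2 you take for granted the existence of a finite set of conjugators $a_i \in G$ making $\sum_i a_i e_{\mu_{G^\sharp}} a_i^{-1}$ a full idempotent with mutually orthogonal summands, and you simply assert that the count is $|X^L(\omega,V_\mu)|$ ``by Clifford theory.'' Neither is obvious. The paper must construct a specific subgroup $H_\lambda \subset L$ with several delicate properties (Lemma~\ref{lem:3.30}: normality, finite index, commutation with $W_\fs^\sharp$, and the nontrivial intersection property with isotypical components of $V_{\omega\otimes\chi}$ and of $V_\pi$ for $\pi \in \Irr^{\fs_M}(M)$), build $[L/H_\lambda]$ as a carefully chosen transversal, and only then prove separately (Lemma~\ref{lem:3.13}) that $L/H_\lambda \cong \Irr(X^L(\omega,V_\mu))$; the direct sum decomposition is then Lemma~\ref{lem:4.9}. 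Your heuristic explanation of why $X^L(\omega,V_\mu)$ is ``exactly the obstruction'' is in fact the \emph{conclusion} of Lemma~\ref{lem:3.13}, not a self-evident Clifford-theoretic fact, and the multiplicity in the restriction $\Res^G_{G^\sharp}\pi$ (Proposition~\ref{prop:2.6}.c) means the torsor picture you invoke does not hold for the constituents. So the outline is headed in the right direction, but you have identified neither the role of $M$ nor the mechanism (the group $H_\lambda$) that actually produces the count.
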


The projective actions of $X^L (\fs)$ and $\mf R_\fs^\sharp$ on $V_\mu$ are always 
linear in the split case $G = \GL_n (F)$, but not in general, 
see Examples \ref{ex:decomposition} and \ref{ex:cocycles}.

Contrary to what one might expect from Theorem \ref{thm:3}, the Bernstein torus 
$T_{\mf t^\sharp}$ for $\Rep^{\mf t^\sharp}(G^\sharp)$ is not always $T_\fs^\sharp$, 
see Example \ref{ex:torus}. In general one has to divide by a finite subgroup of 
$T_\fs^\sharp$ coming from $X^L (\fs)$. It is possible that $W_{\mf t^\sharp}$ (with 
$\mf t^\sharp \prec \fs$) is strictly larger than $W_\fs$, and that it acts on 
$T_{\mf t^\sharp}$ without fixed points, see Examples \ref{ex:Weyl} and \ref{ex:free}.

Of course the above has already been done for $\SL_n (F)$ itself, see \cite{BuKu1,BuKu2,
GoRo1,GoRo2}. Indeed, for $\SL_n (F)$ our work has a large intersection with these
papers. But the split case is substantially easier than the non-split case, for
example because every irreducible representation of $\GL_n (F)$ restricts to a representation
of $\SL_n (F)$ without multiplicities. Therefore our methods are necessarily different
from those of Bushnell--Kutzko and Goldberg--Roche, even if our proofs are considered
only for $\SL_n (F)$.

It is interesting to compare Theorem \ref{thm:3} for $\SL_n (F)$ with
the main results of \cite{GoRo2}. Our description of the Hecke algebras is more explicit,
thanks to considering the entire packet $\Rep^\fs (G^\sharp)$ of Bernstein blocks 
simultaneously. In \cite[\S 11]{GoRo2} some 2-cocycle pops up in the Hecke algebras,
which Goldberg--Roche expect to be trivial. From Theorem \ref{thm:3} one can deduce 
that it is indeed trivial, see Remark \ref{rem:4.splitCocycle}

\medskip

Now we describe the contents of the paper in more detail.
We start Section~\ref{sec:restrict} with recalling a few results about restriction of 
representations from $G$ to $G^\sharp$. Then we discuss what happens when one restricts 
an entire Bernstein component of representations at once. We introduce and study several 
finite groups which will be used throughout.

It turns out to be advantageous to restrict from $G$ to $G^\sharp$ in two steps, via 
$G^\sharp Z(G)$. This intermediate group is of finite index in $G$ if the characteristic of
$F$ does not divide $n$. Otherwise $[G : G^\sharp Z(G)] = \infty$ but, when studying only
$\Rep^\fs (G)$, one can apply the same techniques as for a group extension of finite index.
Restriction from $G^\sharp Z(G)$ to $G^\sharp$ is straightforward, so everything comes down
to understanding the decomposition of representations and Bernstein components of $G$ upon
restriction to $G^\sharp Z(G)$.

For any subgroup $H \subset G$ we write $H^\sharp = H \cap G^\sharp$. The correct analogue
of $W_\fs$ for $\Rep^\fs (G)$ combines Weyl groups and characters of the Levi subgroup $L$
that are trivial on $L^\sharp Z(G)$:
\[
\Stab (\fs) := \{ (w,\gamma) \in W(G,L) \times \Irr (L / L^\sharp Z(G)) \mid
w (\gamma \otimes \omega) \in [L,\omega]_L \} .
\]
This group acts naturally on $L$-representations by
\[
((w,\gamma)\pi) (l) = w(\gamma \otimes \pi) (l) := \gamma (l) \pi (w^{-1} l w).
\]
From another angle $\Stab (\fs)$ can be considered as the generalization, for the
inertial class $\fs$, of some groups associated to a single $G$-representation in 
\cite{ChLi,ChGo}. Its relevance is confirmed by the following result.

\begin{thmintro}\label{thm:1} 
\textup{[see Theorem \ref{thm:2.5}.]} \\
Let $\chi_1,\chi_2 \in X_\nr (L)$. The following are equivalent:
\begin{itemize}
\item[(i)] $\Res^G_{G^\sharp Z(G)} (I_P^G (\omega \otimes \chi_1))$ and 
$\Res^G_{G^\sharp Z(G)} (I_P^G (\omega \otimes \chi_2))$ 
have a common irreducible subquotient;
\item[(ii)] $\Res^G_{G^\sharp Z(G)} (I_P^G (\omega \otimes \chi_1))$ and
$\Res^G_{G^\sharp Z(G)} (I_P^G (\omega \otimes \chi_2))$ 
have the same irreducible constituents,
counted with multiplicity;
\item[(iii)] $\omega \otimes \chi_1$ and $\omega \otimes \chi_2$ 
belong to the same $\Stab(\fs)$-orbit. 
\end{itemize}
\end{thmintro}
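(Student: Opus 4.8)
The plan is to prove Theorem~\ref{thm:1} by first reducing the comparison of parabolically induced representations upon restriction to a comparison of the inducing data on the Levi subgroup, then analysing the orbit structure on the Bernstein torus. The implications (ii)~$\Rightarrow$~(i) is trivial, so the real content is (i)~$\Rightarrow$~(iii) and (iii)~$\Rightarrow$~(ii). I would organise the argument around the following steps.

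\textbf{Step 1: Restriction and parabolic induction commute.} First I would record that $\Res^G_{G^\sharp Z(G)} \circ I_P^G \cong I_{P^\sharp Z(G)}^{G^\sharp Z(G)} \circ \Res^L_{L^\sharp Z(G)}$, using that $G = P \cdot G^\sharp Z(G)$ (surjectivity of the reduced norm onto $F^\times$ restricted to any Levi, together with an Iwasawa-type decomposition) so that there is a single orbit on the relevant flag variety. This reduces statements about the induced representations on $G^\sharp Z(G)$ to statements about $\Res^L_{L^\sharp Z(G)}(\omega \otimes \chi_i)$, up to keeping track of how irreducible constituents of the restriction to the Levi recombine under induction --- which for these groups is controlled by the intertwining operators, i.e.\ by $W_\fs$.

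\textbf{Step 2: Mackey theory on the Levi.} Since $[L : L^\sharp Z(G)]$ behaves like a finite index for the purposes of $\Rep^\fs$ (as explained in the excerpt, one works with $G^\sharp Z(G)$ precisely so that Clifford theory for finite-index extensions applies), I would apply Clifford theory: the irreducible constituents of $\Res^L_{L^\sharp Z(G)}(\omega \otimes \chi)$ are parametrised by an orbit under $\Irr(L/L^\sharp Z(G))$, and two such restrictions share a constituent iff $\omega \otimes \chi_1$ and $\omega \otimes \chi_2$ are conjugate under twisting by some $\gamma \in \Irr(L/L^\sharp Z(G))$, i.e.\ iff $\gamma \otimes \omega \otimes \chi_1 \cong \omega \otimes \chi_2$ for some such $\gamma$. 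Combining with Step~1, a common subquotient upstairs forces $\omega \otimes \chi_1$ and $\omega \otimes \chi_2$ to lie in the same orbit under the group generated by $W(G,L)$-conjugation and twisting by $\Irr(L/L^\sharp Z(G))$, subject to the constraint that one stays in $[L,\omega]_L$ --- which is exactly the definition of $\Stab(\fs)$ acting on $T_\fs$. This gives (i)~$\Rightarrow$~(iii). For (iii)~$\Rightarrow$~(ii), I would run the same identities in reverse: if $(w,\gamma) \in \Stab(\fs)$ sends $\omega \otimes \chi_1$ to $\omega \otimes \chi_2$, then $\Res(I_P^G(\omega \otimes \chi_2)) = \Res(I_P^G(w(\gamma \otimes \omega \otimes \chi_1)))$; the $w$-part disappears because $I_P^G(\omega \otimes \chi) \cong I_P^G(w(\omega \otimes \chi))$ as $G$-representations (standard for $\GL$-type intertwining, where all relevant induced representations from associate parabolics are isomorphic, even when reducible --- one needs the full strength of \cite{SeSt6}/\cite{Sec3} here, or the classification of the Bernstein block), and the $\gamma$-part only permutes the constituents of the restriction to $G^\sharp Z(G)$ by Clifford theory, hence preserves the multiset of constituents.

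\textbf{Step 3: Assembling the equivalences.} Finally I would note (i)~$\Rightarrow$~(iii) and (iii)~$\Rightarrow$~(ii) close the cycle with the trivial (ii)~$\Rightarrow$~(i). The main obstacle, I expect, is Step~2's interface between induction and restriction: one must verify that a common \emph{subquotient} of the two restrictions --- not merely of the two full induced representations on $G$ --- really does pin the data down to a single $\Stab(\fs)$-orbit, because a priori the irreducible pieces of $\Res^G_{G^\sharp Z(G)} I_P^G(\omega \otimes \chi)$ could fail to be $I_{P^\sharp Z(G)}^{G^\sharp Z(G)}$ of irreducible $L^\sharp Z(G)$-representations (reducibility of the induced representation on $G$ can scramble things). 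Controlling this requires knowing that the reducibility of $I_P^G(\omega \otimes \chi)$ is ``geometric'', governed entirely by $W_\fs$ and the $q_\fs$-parameters via \eqref{eq:I1}, so that passing to $G^\sharp Z(G)$ does not introduce new coincidences beyond those recorded by $\Stab(\fs)$. I would isolate this as a lemma, proved by combining the Hecke-algebra description \eqref{eq:I1} with the Clifford theory of \cite{ChLi,ChGo} for the intermediate group, and this lemma is where the bulk of the work lies.
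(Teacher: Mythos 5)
Your two non-trivial implications both diverge from the paper's proof, and each contains a genuine problem.

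For (iii)~$\Rightarrow$~(ii), you assert that $I_P^G(\omega\otimes\chi)\cong I_P^G(w(\omega\otimes\chi))$ as $G$-representations. This is false when the induced representation is reducible: for a character $\chi$ in Langlands position the two may differ, with the Langlands quotient appearing as a quotient of one and a subrepresentation of the other (already for $\GL_2(F)$ with $\omega\otimes\chi$ and its $W$-conjugate a unit apart). What is true, and what you actually need, is equality of Jordan--H\"older multisets, equivalently equality of distribution characters; restriction to $G^\sharp Z(G)$ is exact, so equal characters on $G$ give equal characters on $G^\sharp Z(G)$, and the $\gamma$-twist then disappears because $\gamma$ is trivial on $G^\sharp Z(G)$. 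The paper obtains this character-level statement differently: it first proves the equivalence for unitary $\chi$ using Harish-Chandra's Plancherel isomorphism and the commuting algebra theorem, and then extends to arbitrary $\chi$ by observing that $\chi\mapsto\mathrm{tr}(f,I_P^G(\omega\otimes\chi))-\mathrm{tr}(f,I_P^G(w(\omega\otimes\gamma\chi)))$ is a rational function of $\chi$ vanishing on the Zariski-dense set of unitary characters, hence identically zero, and then invoking Casselman's trace-determines-constituents result. Your argument, once the false isomorphism is downgraded to the correct character identity, would be a valid (and arguably more direct) alternative, but as written the claim is wrong.

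For (i)~$\Rightarrow$~(iii), your plan is to descend to the Levi and invoke Clifford theory plus the Bernstein decomposition for $G^\sharp Z(G)$; you correctly flag that the interface between reducibility of $I_P^G$ upstairs and its restriction is the crux, but you defer it to an unproved ``lemma'' that you expect to need the Hecke-algebra description and the results of \cite{ChLi,ChGo}. This is where the real gap is: you would need to establish that a common subquotient of the two restricted induced representations forces the cuspidal supports for $G^\sharp Z(G)$ to match in a way that pins down a $\Stab(\fs)$-orbit, and it is not evident how much work that requires or whether new coincidences could appear. The paper avoids the Levi entirely: a common irreducible subquotient $\pi^\sharp$ sits inside the restrictions of irreducibles $\pi_1,\pi_2\in\Irr(G)$ that are subquotients of the two induced representations; Proposition~\ref{prop:2.6}.b then gives $\pi_2\cong\pi_1\otimes\gamma$ for some $\gamma\in\Irr(G/G^\sharp Z(G))$, so $I_P^G(\omega\otimes\gamma\chi_1)$ and $I_P^G(\omega\otimes\chi_2)$ share an irreducible constituent \emph{as $G$-representations}; the Bernstein-centre theory for $G$ (no restriction involved) then forces $\omega\otimes\gamma\chi_1$ and $\omega\otimes\chi_2$ to be $W(G,L)$-conjugate, which is exactly $(w,\gamma)\in\Stab(\fs)$. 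This is shorter and avoids the deferred lemma entirely; if you wish to pursue your route you will need to supply that lemma, and the paper's path suggests it is not the most economical one.
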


The proof of Theorem \ref{thm:3} uses almost the entire paper. It contains four 
chains of arguments, in Sections \ref{sec:Mor} and \ref{sec:Hecke}, 
which are largely independent: 
\begin{itemize}
\item The main idea (see Lemmas \ref{lem:3.1} and \ref{lem:3.11}) consists
of Morita equivalences
\begin{equation}\label{eq:I3}
\begin{aligned}
& \cH (G^\sharp)^\fs \sim_M (\cH (G)^\fs )^{X^G (\fs) X_\nr (G)} ,\\
& \cH (G^\sharp Z(G))^\fs \sim_M (\cH (G)^\fs )^{X^G (\fs)} .
\end{aligned}
\end{equation}
This enables us to reduce the study of $\cH (G^\sharp)^\fs$ (up to Morita equivalence) to
$(\cH (G)^\fs )^{X^G (\fs)}$. The analogue of Theorem \ref{thm:3} for $G^\sharp Z(G)$
is Theorem \ref{thm:4.11}.
\item A technically complicated step is the construction of an idempotent 
$e^\fs_M \in \cH (M)$ (in Lemmas \ref{lem:3.30} and \ref{lem:3.2}), which is well-suited
for restriction from $M$ to $M^\sharp$. It relies on the conjugacy of the types 
$(K,w(\lambda) \otimes \gamma)$ with $(w,\gamma) \in \Stab (\fs)$, studied in
Proposition \ref{prop:3.3}. With this idempotent we get a Morita equivalence 
(Proposition \ref{prop:3.2})
\begin{equation}\label{eq:I4}
(\cH (G)^\fs )^{X^G (\fs)} \sim_M
(e^\fs_M \cH (M) e^\fs_M)^{X_L (\fs)} \rtimes \mf R_\fs^\sharp .
\end{equation}
This allows us to perform many calculations entirely in $M$, which is easier than in $G$.
We exhibit an idempotent $e^\sharp_{\lambda_G}$, larger than $e_{\lambda_G}$, which in 
Proposition \ref{prop:3.I} is used to improve \eqref{eq:I4} to an isomorphism
\[
e^\sharp_{\lambda_G} (\cH (G)^\fs )^{X^G (\fs)} e^\sharp_{\lambda_G} \cong
(e^\fs_M \cH (M) e^\fs_M)^{X_L (\fs)} \rtimes \mf R_\fs^\sharp .
\]
\item To reveal the structure of $(e^\fs_M \cH (M) e^\fs_M)^{X_L (\fs)}\rtimes \mathfrak R_\fs^\sharp$ we first
study (in subsection \ref{subsec:Secherre}) the Hecke algebras associated to the types 
for $[L,\omega]_L$ and $[L,\omega]_M$ constructed in \cite{Sec3}. They are tensor products
of affine Hecke algebras of type $GL_e$ with a matrix algebra. Obviously this part relies 
very much on the work of S\'echerre. These considerations culminate in Theorem \ref{thm:3.6},
which describes the Hecke algebras associated to relevant larger idempotents, in similar 
terms. We make the action of $X^G (\fs)$ on these algebras explicit in Lemmas \ref{lem:4.9} 
and \ref{lem:3.9}.
\item We would like to construct types for $G^\sharp Z(G)$ and for $G^\sharp$, whose
associated Hecke algebras are as described in Theorems \ref{thm:4.11} and \ref{thm:3}.
In Theorems \ref{thm:3.17} and \ref{thm:3.18} we take a step towards this goal, by 
constructing idempotents $e^\sharp_{\lambda_{G^\sharp Z(G)}} \in \cH (G^\sharp Z(G))^\fs$ 
and $e^\sharp_{\lambda_{G^\sharp}} \in \cH (G^\sharp)^\fs$ which see the correct module 
categories and have the desired Hecke algebras. In fact these idempotents are just the
restrictions of $e^\sharp_{\lambda_G} : G \to \C$ to $G^\sharp Z(G)$ and $G^\sharp$, 
respectively. 

However, we encounter serious obstructions to types in $G^\sharp$. The main problem
is that sometimes types $(K,\lambda \otimes \gamma)$ for $[L,\omega]_M$ are conjugate 
in $M$ but not in any compact subgroup of $M$, see Remark \ref{rem:4.notype} and
Examples \ref{ex:5.1}--\ref{ex:5.3}.
\end{itemize}

Interestingly, some of the algebras that turn up
do not look like affine Hecke algebras. In the literature there was hitherto (to the
best of our knowledge) only one example of a Hecke algebra of a type 
which was not Morita equivalent to a crossed product of an affine Hecke algebra
with a finite group, namely \cite[\S 11.8]{GoRo2}.
But in several cases of Theorem \ref{thm:3} the part $\End_\C (V_\mu)$ plays an
essential role, and it cannot be removed via some equivalence. Hence these algebras 
are further away from affine Hecke algebras than any previously known Hecke algebras 
related to types. See especially Example \ref{ex:cocycles}. 

\vspace{3mm}
\textbf{Acknowledgements.} The authors thank Shaun Stevens for several helpful emails 
and discussions.

\section{Notations and conventions}
\label{sec:not}

We start with some generalities, to fix the notations. Good sources for the material
in this section are \cite{Ren,BuKu3}.

Let $G$ be a connected
reductive group over a local non-archimedean field. All our representations are
tacitly assumed to be smooth and over the complex numbers.
We write Rep$(G)$ for the category of such $G$-representations and $\Irr (G)$ for
the collection of isomorphism classes of irreducible representations therein.

Let $P$ be a parabolic subgroup of $G$ with Levi factor $L$. The ``Weyl" group of
$L$ is $W(G,L) = N_G (L) / L$. It acts on equivalence classes of $L$-representations
$\pi$ by
\[
(w \cdot \pi) (g) = \pi (\bar w g \bar{w}^{-1}) , 
\]
where $\bar w \in N_G (L)$ is a chosen representative for $w \in W(G,L)$. 
We write \label{i:67}
\[
W_\pi = \{ w \in W(G,L) \mid w \cdot \pi \cong \pi \} . 
\]
Let $\omega$ be an irreducible supercuspidal $L$-representation. 
The inertial equivalence class $\fs = [L,\omega]_G$ gives rise to a category of smooth 
$G$-representations $\Rep^\fs (G)$ and a subset $\Irr^\fs (G) \subset \Irr (G)$.
Write $X_\nr (L)$ for the group of unramified characters $L \to \C^\times$. \label{i:80}
Then $\Irr^\fs (G)$ consists of all irreducible irreducible constituents of the
parabolically induced representations $I_P^G (\omega \otimes \chi)$ with
$\chi \in X_\nr (L)$. We note that $I_P^G$ always means normalized, smooth parabolic
induction from $L$ via $P$ to $G$.

The set $\Irr^{\fs_L}(L)$ with $\fs_L = [L,\omega]_L$ can be described 
explicitly, namely by \label{i:28} \label{i:47} \label{i:81}
\begin{align}
& X_{\nr} (L,\omega) = \{ \chi \in X_\nr (L) : \omega \otimes \chi \cong \omega \} , \\
& \Irr^{\fs_L} (L) = \{ \omega \otimes \chi : \chi \in X_\nr (L) / X_\nr (L,\omega) \} .
\end{align}
Several objects are attached to the Bernstein component $\Irr^\fs (G)$ of
$\Irr (G)$ \cite{BeDe}. Firstly, there is the torus \label{i:58}                              
\[  
T_\fs := X_\nr (L) / X_\nr (L,\omega) ,
\]
which is homeomorphic to $\Irr^{\fs_L}(L)$. 
Secondly, we have the groups \label{i:37} \label{i:66}
\begin{align*}
N_G (\fs_L) = & \{ g \in N_G (L) \mid g \cdot \omega \in \Irr^{\fs_L}(L) \} \\
= & \{ g \in N_G (L) \mid g \cdot [L,\omega]_L = [L,\omega]_L \} , \\
W_\fs := & \{ w \in W(G,L) \mid w \cdot \omega \in \Irr^{\fs_L}(L) \} = N_G (\fs_L) / L .
\end{align*}
Of course $T_\fs$ and $W_\fs$ are only determined up to isomorphism by $\fs$, actually
they depend on $\fs_L$. To cope with this, we tacitly assume that $\fs_L$ is known
when talking about $\fs$.

The choice of $\omega \in \Irr^{\fs_L}(L)$ fixes a bijection $T_\fs \to \Irr^{\fs_L}(L)$,
and via this bijection the action of $W_\fs$ on $\Irr^{\fs_L}(L)$ is transferred to
$T_\fs$. The finite group $W_\fs$ can be thought of as the "Weyl group" of $\fs$, 
although in general it is not generated by reflections. 

Let $C_c^\infty (G)$ be the vector space of compactly supported locally constant
functions $G \to \C$. The choice of a Haar measure on $G$ determines a convolution
product * on $C_c^\infty (G)$. The algebra $(C_c^\infty (G),*)$ is known as the \label{i:89}
Hecke algebra $\cH (G)$. There is an equivalence between Rep$(G)$ and the category
$\Mod (\cH (G))$ of $\cH (G)$-modules $V$ such that $\cH (G) \cdot V = V$.
We denote the collection of inertial equivalence classes for $G$ by $\mathfrak B (G)$.
The Bernstein decomposition
\[
\Rep (G) = \prod\nolimits_{\fs \in \mathfrak B (G)} \Rep^\fs (G) 
\]
induces a factorization in two-sided ideals
\[
\cH (G) = \prod\nolimits_{\fs \in \mathfrak B (G)} \cH (G)^\fs .
\]
Let $K$ be a compact open subgroup of $K$ and let $(\lambda,V_\lambda)$ be an
irreducible $K$-representation.  Let $e_\lambda \in \cH (K)$ 
be the associated central idempotent and write \label{i:44}
\[
\Rep^\lambda (G) = \{ (\pi,V) \in \Rep (G) \mid \cH (G) e_\lambda \cdot V = V \} . 
\]
Clearly $e_\lambda \cH (G) e_\lambda$ is a subalgebra of $\cH (G)$, and
$V \mapsto e_\lambda \cdot V$ defines a functor from $\Rep (G)$ to 
$\Mod (e_\lambda \cH (G) e_\lambda)$. By \cite[Proposition 3.3]{BuKu3} this
functor restricts to an equivalence of categories $\Rep^\lambda (G) \to
\Mod (e_\lambda \cH (G) e_\lambda)$ if and only if $\Rep^\lambda (G)$ is closed
under taking $G$-subquotients. Moreover, in that case there are finitely
many inertial equivalence classes $\fs_1, \ldots \fs_\kappa$ such that
\[
\Rep^\lambda (G) = \Rep^{\fs_1}(G) \times \cdots \times \Rep^{\fs_\kappa}(G) . 
\]
One calls $(K,\lambda)$ a type for $\{\fs_1,\ldots, \fs_\kappa\}$, or an $\fs_1$-type
if $\kappa = 1$. 

To a type $(K,\lambda)$ one associates the algebra \label{i:20}
\begin{multline*}
\cH (G,\lambda) := \{f : G \to \End_\C (V_\lambda^\vee) \mid 
\text{ supp}(f) \text{ is compact,} \\
f(k_1 g k_2) = \lambda^\vee (k_1) f(g) \lambda^\vee (k_2) \;
\forall g \in G, k_1,k_2 \in K \} .
\end{multline*}
Here $(\lambda^\vee ,V_\lambda^\vee)$ is the contragredient of $(\lambda,V_\lambda)$ and
the product is convolution of functions. By \cite[(2.12)]{BuKu3} there is a canonical
isomorphism
\begin{equation}\label{eq:1.5}
e_\lambda \cH (G) e_\lambda \cong \cH (G,\lambda) \otimes_\C \End_\C (V_\lambda) . 
\end{equation}
From now on we discuss things that are specific for $G = \GL_m (D)$, \label{i:85}
where $D$ is a central simple $F$-algebra. We write $\dim_F (D) = d^2$. Every Levi
subgroup $L$ of $G$ is isomorphic to $\prod_j \GL_{\tilde m_j}(D)$ for some 
$\tilde m_j \in \N$ with $\sum_j \tilde m_j = m$. Hence every irreducible
$L$-representation $\omega$ can be written as $\otimes_j \tilde \omega_j$
with $\tilde \omega_j \in \Irr (\GL_{\tilde m_j}(D))$. Then $\omega$ is supercuspidal 
if and only if every $\tilde \omega_j$ is so. As above, we
assume that this is the case. Replacing $(L,\omega)$ by an inertially equivalent
pair allows us to make the following simplifying assumptions:
\begin{cond} \label{cond} \
\begin{itemize}
\item if $\tilde m_i = \tilde m_j$ and $[\GL_{\tilde m_j}(D),\tilde \omega_i
]_{\GL_{\tilde m_j}(D)} = [\GL_{\tilde m_j}(D),\tilde \omega_j
]_{\GL_{\tilde m_j}(D)}$, then $\tilde \omega_i = \tilde \omega_j$;
\item $\omega = \prod_i \omega_i^{\otimes e_i}$, such that $\omega_i$ and $\omega_j$
are not inertially equivalent if $i \neq j$;
\item $L = \prod_i L_i^{e_i} = \prod_i \GL_{m_i}(D)^{e_i}$, embedded diagonally in 
$\GL_m (D)$ such that factors $L_i$ with the same $(m_i,e_i)$ are in subsequent positions;
\item as representatives for the elements of $W(G,L)$ we take permutation matrices;
\item $P$ is the parabolic subgroup of $G$ generated by $L$ and the upper triangular
matrices;
\item if $m_i = m_j, e_i = e_j$ and $\omega_i$ is isomorphic to $\omega_j \otimes \gamma$
for some character $\gamma$ of $\GL_{m_i}(D)$, then $\omega_i = \omega_j \otimes \gamma \chi$
for some $\chi \in X_{\nr}(\GL_{m_i}(D))$.
\end{itemize}
\end{cond}
We remark that these conditions are natural generalizations of \cite[\S 1.2]{GoRo2}
to our setting. Most of the time we will not need the conditions for stating the results, 
but they are useful in many proofs. Under Conditions \ref{cond} we define \label{i:84}
\begin{equation}\label{eq:1.1}
M = \prod_i M_i = \prod_i Z_G \Big( \prod_{j \neq i} L_j^{e_j} \Big) = 
\prod_i \GL_{m_i e_i}(D) ,
\end{equation}
a Levi subgroup of $G$ containing $L$. For $\fs = [L,\omega]_G$ we have
\begin{equation}\label{eq:1.2}
W_\fs = W(M,L) = N_M (L) / L = \prod\nolimits_i N_{M_i}(L_i^{e_i}) / L_i^{e_i} 
\cong \prod\nolimits_i S_{e_i} , 
\end{equation}
a direct product of symmetric groups. Writing $\fs_i = [L_i,\omega_i]_{L_i}$, 
the torus associated to $\fs$ becomes
\begin{align}
\label{eq:1.4} & T_\fs = \prod\nolimits_i ( T_{\fs_i} )^{e_i} , \\
& T_{\fs_i} = X_\nr (L_i) / X_\nr (L_i ,\omega_i) .
\end{align}
By our choice of representatives for $W(G,L) ,\, \omega_i^{\otimes e_i}$ is stable under
$N_{M_i}(L_i^{e_i}) / L_i^{e_i} \cong S_{e_i}$. The action of $W_\fs$ on $T_\fs$
is just permuting coordinates in the standard way and
\begin{equation}\label{eq:1.3}
W_\fs = W_\omega .
\end{equation}

\section{Restricting representations}
\label{sec:restrict}

\subsection{Restriction to the derived group} \
\label{par:res1}

We will study the restriction of representations of $G = \GL_m (D)$ to its derived group 
$G^\sharp = \GL_m (D)_{\der}$. For subgroups $H \subset G$ we will write  
\label{i:18}\label{i:19}
\[
H^\sharp = H \cap G^\sharp . 
\]
Recall that the reduced norm map Nrd$: M_m (D) \to F$ induces a group isomorphism \label{i:39}
\[
\Nrd : G / G^\sharp \to F^\times. 
\]
We start with some important relations between representations of $G$ and $G^\sharp$,
which were proven both by Tadi\'c and by Bushnell--Kutzko.

\begin{prop}\label{prop:2.6}
\enuma{
\item Every irreducible representation of $G^\sharp$ appears in an irreducible
representation of $G$.
\item For $\pi, \pi' \in \Irr (G)$ the following are equivalent:
\begin{itemize}
\item[(i)] $\Res_{G^\sharp}^G (\pi)$ and $\Res_{G^\sharp}^G (\pi')$ 
have a common irreducible subquotient;
\item[(ii)] $\Res_{G^\sharp}^G (\pi) \cong \Res_{G^\sharp}^G (\pi')$;
\item[(iii)] there is a $\gamma \in \Irr (G/G^\sharp)$ such that $\pi' \cong \pi \otimes \gamma$.
\end{itemize}
\item The restriction of $(\pi,V) \in \Irr (G)$ to $G^\sharp$ is a finite direct sum of 
irreducible $G^\sharp$-representations, each one appearing with the same multiplicity.
\item Let $(\pi',V')$ be an irreducible $G^\sharp$-subrepresentation of $(\pi,V)$. 
Then the stabilizer in $G$ of $V'$ is a open, normal, finite index subgroup 
$H_\pi \subset G$ which contains $G^\sharp$ and the centre of $G$.
} 
\end{prop}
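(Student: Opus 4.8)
The plan is to prove the four assertions of Proposition~\ref{prop:2.6} by reducing to the well-understood behaviour of restriction along a normal subgroup, using the key structural fact that $G^\sharp \subset G$ is a normal subgroup with abelian quotient $G/G^\sharp \cong F^\times$. Since $F^\times$ is not compact, the restriction $\Res^G_{G^\sharp}$ need not be semisimple a priori, so the first move is to observe that the relevant representations are admissible: if $(\pi,V) \in \Irr(G)$, then $\pi$ is admissible, hence so is $\Res^G_{G^\sharp}(\pi)$ (the $G^\sharp$-invariants under a compact open subgroup are contained in the $G$-invariants under a possibly smaller compact open subgroup). Admissibility plus the fact that $G/G^\sharp$ is abelian will let me invoke Clifford theory in the form already established by Tadi\'c and by Bushnell--Kutzko; indeed the proposition is quoted as known, so strictly speaking I would cite \cite{Tad,BuKu2}, but here is how the argument runs.

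For (d): let $(\pi',V')$ be an irreducible $G^\sharp$-subrepresentation of $(\pi,V)$, and let $H_\pi = \Stab_G(V')$, the stabilizer of the subspace $V'$ inside $V$ under the $G$-action. Clearly $H_\pi \supseteq G^\sharp$ and $H_\pi$ is a subgroup; it is normal because $G/G^\sharp$ is abelian, so conjugation by any $g \in G$ permutes the isotypic pieces of $\Res^G_{G^\sharp}(\pi)$ and the stabilizer of a given piece is conjugation-invariant. It is open because it contains the open subgroup $G^\sharp$. The centre $Z(G)$ acts on the irreducible $\pi$ by a character, hence preserves every $G^\sharp$-subrepresentation, so $Z(G) \subseteq H_\pi$. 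The crucial finiteness statement, $[G:H_\pi] < \infty$, is where the real content lies: one shows that $V$ is the sum of the $G$-translates $g \cdot V'$, each of which is an irreducible $G^\sharp$-module, and that these fall into finitely many isomorphism classes. Finiteness of the number of translates (equivalently $[G:H_\pi]<\infty$) follows from the fact that $\Res^G_{G^\sharp}(\pi)$, being admissible, has finite length as a $G^\sharp$-module; I expect \emph{this} to be the main obstacle, and the standard route is: the $G$-translates $\{g\cdot V'\}$ are the homogeneous components, $V = \bigoplus_{gH_\pi} g\cdot V'$ is a finite direct sum by finite length, and transitivity of the $G$-action on these components gives $[G:H_\pi] = $ (number of components) $< \infty$.

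Granting (d), parts (c), (b), (a) follow by standard Clifford theory. For (c): $\Res^G_{G^\sharp}(\pi) \cong \bigoplus_{g \in G/H_\pi} g\cdot V'$ as $G^\sharp$-modules is thus a finite direct sum; moreover $\pi|_{H_\pi}$ decomposes $V'$-isotypically and each isotypic block is $G^\sharp$-isomorphic to a fixed multiple $m(\pi)$ of some $\rho_g$, because $H_\pi/G^\sharp$ is abelian and acts by characters on $\Hom_{G^\sharp}(V',V)$ — so all irreducible $G^\sharp$-constituents of $\pi$ occur with the same multiplicity. For (b): the equivalence (ii)$\Rightarrow$(i) is trivial; (i)$\Rightarrow$(iii) uses that if $\Res^G_{G^\sharp}(\pi)$ and $\Res^G_{G^\sharp}(\pi')$ share an irreducible constituent $\rho$, then $\pi$ and $\pi'$ both appear in $\Ind_{G^\sharp}^G(\rho)$, whose irreducible constituents are by Clifford theory precisely the twists $\pi \otimes \gamma$, $\gamma \in \Irr(G/G^\sharp)$ (using that $G/G^\sharp$ is abelian, so induction from $G^\sharp$ corresponds to twisting); (iii)$\Rightarrow$(ii) because twisting by a character of $G/G^\sharp$ does not change the restriction to $G^\sharp$. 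Finally (a): given any $\sigma \in \Irr(G^\sharp)$, pick a compact open $K \subset G$ with $\sigma^{K \cap G^\sharp} \neq 0$; any irreducible $G$-subquotient $\pi$ of $\cInd_{G^\sharp}^G(\sigma)$ then has $\sigma$ appearing in $\Res^G_{G^\sharp}(\pi)$ by Frobenius reciprocity and the already-proven finite-length statement — alternatively one uses that $\sigma$ extends to an irreducible representation of the open subgroup $\Stab_G(\sigma)$ which is compact-mod-centre over $G^\sharp$, then induces up.
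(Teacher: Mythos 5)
The paper's proof of Proposition~\ref{prop:2.6} is a citation (Tadi\'c \cite[\S 2]{Tad}, Bushnell--Kutzko \cite{BuKu1,BuKu2}), so your attempt to reconstruct the argument is going well beyond what the paper does. Your overall Clifford-theoretic strategy---establish (d) first, then deduce (c), (b), (a)---is indeed the route those references take. However, there are several genuine gaps in the chain you propose.

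First, the opening admissibility claim is not justified by the argument you give. You assert that for a compact open $K' \subset G^\sharp$ one can find a ``smaller'' compact open subgroup $K \subset G$ with $V^{K'} \subseteq V^{K}$, i.e., $K \subseteq K'$. But $G^\sharp$ is closed and \emph{not} open in $G$ (it has empty interior), so no compact open subgroup of $G$ is contained in $G^\sharp$, let alone in $K'$. Admissibility of $\Res^G_{G^\sharp}(\pi)$ is in fact true, but it needs the structure of $G/G^\sharp \cong F^\times$ and the central character, and is part of what \cite{Tad,GeKn} actually prove rather than an automatic preliminary.

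Second, even granting admissibility, ``admissible $\Rightarrow$ finite length'' is false for $G^\sharp$-modules: an infinite direct sum of pairwise non-isomorphic supercuspidal representations of $G^\sharp$ all sharing a fixed depth is admissible but of infinite length. The finite-length (and therefore the finite-index) assertion in (d) is the real content of the proposition and needs an independent argument, e.g.\ via the finiteness of $X^G(\pi)$ (the paper derives this \emph{from} the proposition, not the other way round), or by passing to $G^\sharp Z(G)$ and treating separately the case $\mathrm{char}(F) \mid n$.

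Third, the decomposition you invoke for (d) is incorrect as stated. The $G$-translates $g\cdot V'$ are irreducible $G^\sharp$-subrepresentations, not the homogeneous (isotypic) components; when the multiplicity $m(\pi)>1$---which does occur for non-split inner forms, cf.\ Example~\ref{ex:cocycles}---distinct translates can be isomorphic and linearly dependent, so $V = \bigoplus_{gH_\pi} g\cdot V'$ need not hold. The equation $[G:H_\pi] = (\text{number of components})$ therefore does not follow from what you wrote; one has to compare $\Stab_G(V')$ with the stabilizer of the isotypic component and with the stabilizer of the isomorphism class $[V']$, and these are a priori different subgroups. Finally, for (a), since $G^\sharp$ is closed but not open, $\cInd_{G^\sharp}^G$ and the accompanying Frobenius reciprocity do not behave as you assume, and your alternative (extending $\sigma$ to $\Stab_G(\sigma)$) ignores the possible nontriviality of the $2$-cocycle $\kappa_\sigma$, which again is nontrivial for non-split $G$.

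In short: the architecture of your proof is the right one and matches the cited sources in spirit, but the three load-bearing steps---admissibility of the restriction, finite length of the restriction, and the translate/isotypic-component identification---are each asserted with arguments that do not work. These are exactly the points where the references do nontrivial work, and the paper deliberately defers to them.
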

\begin{proof}
All these results can be found in \cite[\S 2]{Tad}, where they are in fact shown for
any reductive group over a local non-archimedean field.
For $G = \GL_n (F)$, these statements were proven in \cite[Propositions 1.7 and 1.17]{BuKu1}
and \cite[Proposition 1.5]{BuKu2}. The proofs in \cite{BuKu1,BuKu2} also apply to $G = \GL_m (D)$. 
\end{proof}

Let $\pi \in \Irr (G)$. By Proposition \ref{prop:2.6}.d 
\begin{equation}\label{eq:2.9}
\End_{G^\sharp}(V) = \End_{H_\pi} (V) ,
\end{equation}
which allows us to use \cite[Chapter 2]{HiSa} and \cite[Section 2]{GeKn} 
(which is needed for \cite{HiSa}). We put \label{i:70}
\[
X^G (\pi) := \{ \gamma \in \Irr(G / G^\sharp) \mid \pi \otimes \gamma \cong \pi \} . 
\]
As worked out in \cite[Chapter 2]{HiSa}, this group governs the reducibility of 
$\mr{Res}^G_{G^\sharp} (\pi)$. (We will use this definition of $X^G (\pi)$ more 
generally if $\pi \in \Rep (G)$ admits a central character.) By \eqref{eq:2.9}
every element of $X^G (\pi)$ is trivial on $H_\pi$, so $X^G (\pi)$ is finite.
Via the local Langlands correspondence for $G$, the group $X^G (\pi)$
corresponds to the geometric R-group of the L-packet for 
$G^\sharp$ obtained from $\Res_{G^\sharp}^G (\pi)$, see \cite[\S 3]{ABPS3}.
We note that 
\[
X^G (\pi) \cap X_\nr (G) = X_\nr (G,\pi) .
\]
For every $\gamma \in X^G (\pi)$ there exists a nonzero intertwining operator
\begin{equation}\label{eq:2.30}
I(\gamma,\pi) \in \Hom_G (\pi \otimes \gamma ,\pi) = 
\Hom_G (\pi, \pi \otimes \gamma^{-1}),
\end{equation}
which is unique up to a scalar. As $G^\sharp \subset \ker (\gamma) ,\; I(\gamma,\pi)$
can also be considered as an element of $\End_{G^\sharp}(\pi)$. As such, 
these operators determine a 2-cocycle $\kappa_\pi$ by \label{i:26}
\begin{equation}\label{eq:2.21}
I(\gamma,\pi) \circ I(\gamma',\pi) = 
\kappa_\pi (\gamma,\gamma') I(\gamma \gamma' ,\pi) .
\end{equation}
By \cite[Lemma 2.4]{HiSa} they span the $G^\sharp$-intertwining algebra of $\pi$:
\begin{equation}\label{eq:2.1}
\End_{G^\sharp}\big( \mr{Res}^G_{G^\sharp} \pi \big) \cong \C [X^G (\pi) ,\kappa_\pi] ,
\end{equation}
where the right hand side denotes the twisted group algebra of $X^G (\pi)$.
By \cite[Corollary 2.10]{HiSa}
\begin{equation}\label{eq:2.2}
\mr{Res}^G_{G^\sharp} \pi \cong \bigoplus_{\rho \in \Irr (\C [X^G (\pi) ,\kappa_\pi])}
\Hom_{\C [X^G (\pi) ,\kappa_\pi]} (\rho,\pi) \otimes \rho
\end{equation}
as representations of $G^\sharp \times X^G (\pi)$. 

Let $P$ be a parabolic subgroup of $G$ with Levi factor $L$. The inclusion $L \to G$ 
induces isomorphisms
\begin{equation}\label{eq:2.3}
L / L^\sharp \to G / G^\sharp \cong F^\times .
\end{equation}
Let $\omega \in \Irr (L)$ be supercuspidal and unitary. Using \eqref{eq:2.3} we
can identify the $G^\sharp$-representations 
\[
\mr{Res}^G_{G^\sharp} (I_P^G (\omega)) \quad \text{and} \quad
I_{P^\sharp}^{G^\sharp}(\mr{Res}^L_{L^\sharp}(\omega)),
\]
which yields an inclusion $X^L (\omega) \to X^G (I_P^G (\omega))$. 
Every intertwining operator $I(\gamma,\omega)$ for $\gamma \in X^L (\omega)$ induces 
an intertwining operator
\begin{equation}\label{eq:2.25}
I(\gamma,I_P^G (\omega)) := I_P^G (I(\gamma,\omega)) \in 
\Hom_G (\gamma \otimes I_P^G (\omega),I_P^G (\omega)) ,
\end{equation}
for $\gamma$ as an element of $X^G (I_P^G (\omega))$. We warn that, even though 
$X^L (\omega)$ is finite abelian and $\omega$ is supercuspidal, it is still 
possible that the 2-cocycle $\kappa_\omega$ is nontrivial and that 
\[
\End_{L^\sharp}(\Res_{L^\sharp}^L (\omega)) \cong \C [X^L (\omega),\kappa_\omega]
\]
is noncommutative, see \cite[Example 6.3.3]{ChLi}.

We introduce the groups \label{i:53} \label{i:69}
\begin{align}
& \label{eq:2.10} W_\omega^\sharp = 
\{ w \in W(G,L) \mid \exists \gamma \in \Irr (L / L^\sharp) 
\mid w \cdot (\gamma \otimes \omega) \cong \omega \} , \\
& \label{eq:2.11} \Stab (\omega) = \{ (w,\gamma) \in W(G,L) \times \Irr (L/L^\sharp) 
\mid w \cdot (\gamma \otimes \omega) \cong \omega \} .
\end{align}
Notice that the actions of $W(G,L)$ and $\Irr (L/L^\sharp)$ on $\Irr (L)$ commute
because every element of $\Irr (L/L^\sharp)$ extends to a character of $G$ which is 
trivial on the derived subgroup of $G$. Clearly $W_\omega \times X^L (\omega)$ 
is a normal subgroup of Stab$(\omega)$ and there is a short exact sequence
\begin{equation}
1 \to X^L (\omega) \to \Stab (\omega) \to W_\omega^\sharp \to 1 .
\end{equation}
By \cite[Proposition 6.2.2]{ChLi} the projection of Stab$(\omega)$ 
on the second coordinate gives rise to a short exact sequence
\begin{equation}\label{eq:2.12}
1 \to W_\omega \to \Stab (\omega) \to X^G (I_P^G (\omega)) \to 1 
\end{equation}
and the group \label{i:46}
\begin{equation}\label{eq:2.5}
\mf R_\omega^\sharp := \Stab (\omega) / \big( W_\omega \times X^L (\omega) \big) \cong 
X^G (I_P^G (\omega)) / X^L (\omega) \cong W_\omega^\sharp / W_\omega
\end{equation}
is naturally isomorphic to the ``dual R-group" of the L-packet for 
$G^\sharp$ obtained from $\Res_{G^\sharp}^G (I_P^G (\omega))$. We remark that, with the
method of Lemma \ref{lem:2.2}.c, it is also possible to realize $\mf R_\omega^\sharp$ 
as a subgroup of Stab$(\omega)$.

When $\omega$ is unitary, the $G$-representation $I_P^G (\omega)$ is unitary, and hence
completely reducible as $G^\sharp$-representation. In this case \eqref{eq:2.2} shows that
the intertwining operators associated to elements of Stab$(\omega)$ span
$\End_{G^\sharp}(I_P^G (\omega))$. By \cite[Theorem 1.6.a]{ABPS2} that holds more generally 
for $I_P^G (\omega \otimes \chi)$ when $\chi$ is in Langlands position with respect to $P$.

The group $\Stab (\omega)$ also acts on the set of irreducible $L^\sharp$-representations 
appearing in $\Res_{L^\sharp}^L (\omega)$. For an irreducible subrepresentation 
$\sigma^\sharp$ of $\Res_{L^\sharp}^L (\omega)$ \cite[Proposition 6.2.3]{ChLi} says that 
\begin{equation}\label{eq:2.4}
W_\omega \subset W_{\sigma^\sharp} \subset W_\omega^\sharp
\end{equation}
and that the analytic R-group of $I_{P^\sharp}^{G^\sharp}(\sigma^\sharp)$ is \label{i:45}
\[
\mf R_{\sigma^\sharp} := W_{\sigma^\sharp} / W_\omega ,
\]
the stabilizer of $\sigma^\sharp$ in $\mf R_\omega^\sharp$. It is possible that 
$W_{\sigma^\sharp} \neq W_\omega^\sharp$ and $\mf R_{\sigma^\sharp} \neq \mf R_\omega^\sharp$, 
see \cite[Example 6.3.4]{ChLi}.

In view of \cite[Section 1]{ABPS2} the above results remain valid if $\omega \in \Irr (L)$ 
is assumed to be supercuspidal but not necessarily unitary. Just one modification is 
required: if $I_P^G (\omega)$ is reducible, one should consider the L-packet for $G^\sharp$ 
obtained from the (unique) Langlands constituent of $I_P^G (\omega)$.

\subsection{Restriction of Bernstein components} \
\label{par:res2}

Next we study the restriction of an entire Bernstein component $\Irr^\fs (G)$ to $G^\sharp$.
Let \label{i:Irr} $\Irr^\fs (G^\sharp)$ be the set of irreducible $G^\sharp$-representations that are 
subquotients of $\Res^G_{G^\sharp}(\pi)$ for some $\pi \in \Irr^\fs (G)$.

\begin{lem}\label{lem:2.1}
$\Irr^\fs (G^\sharp)$ is a union of finitely many Bernstein components for $G^\sharp$.
\end{lem}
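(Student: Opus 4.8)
The plan is to reduce the claim to the reducibility theory recalled in \iref{par:res1}, applied not to a single representation but uniformly over the Bernstein component $\Irr^\fs(G)$. First I would fix a supercuspidal pair $(L,\omega)$ with $\fs = [L,\omega]_G$ and recall that, by the theory of the Bernstein centre, $\Irr^\fs(G)$ is the set of irreducible constituents of the representations $I_P^G(\omega \otimes \chi)$ as $\chi$ ranges over $X_\nr(L)$; moreover, up to the action of $W_\fs$, the point $\omega \otimes \chi$ is determined by $\chi$ modulo $X_\nr(L,\omega)$, i.e.\ by a point of the compact torus $T_\fs$. Thus $\Irr^\fs(G)$ is a topological space fibred over $T_\fs/W_\fs$ with finite fibres. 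The key observation is that the finite groups controlling restriction --- $X^G(I_P^G(\omega\otimes\chi))$, $\Stab(\omega\otimes\chi)$, and the associated R-groups --- take only finitely many values (up to isomorphism and compatible identification) as $\chi$ varies, because all of them are subquotients of the fixed finite group $\Stab(\fs)$ introduced before Theorem~\ref{thm:1}, and $X^L(\omega\otimes\chi) \subset X^L(\fs)$, $X^L(\fs)$ being finite. So the "combinatorial type" of the restriction is locally constant on $T_\fs$.

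Next I would make the restriction explicit. For $\pi \in \Irr^\fs(G)$, Proposition~\ref{prop:2.6} and equations \eqref{eq:2.1}--\eqref{eq:2.2} show $\Res^G_{G^\sharp}\pi$ is a finite direct sum of irreducibles, each a constituent of $\Res^G_{G^\sharp}(I_P^G(\omega\otimes\chi))$ for a suitable $\chi$; and by \eqref{eq:2.3} together with the discussion around \eqref{eq:2.25}, $\Res^G_{G^\sharp}(I_P^G(\omega\otimes\chi)) \cong I_{P^\sharp}^{G^\sharp}(\Res^L_{L^\sharp}(\omega\otimes\chi|_L))$. Now $\Res^L_{L^\sharp}(\omega\otimes\chi)$ decomposes into finitely many supercuspidal irreducibles $\sigma^\sharp$ of $L^\sharp$, each of which lies in the inertial class $[L^\sharp,\sigma^\sharp]_{L^\sharp}$; as $\chi$ varies over $X_\nr(L)$, the twist $\omega\otimes\chi$ varies over $T_\fs$, and restricting to $L^\sharp$ the image lands in finitely many $L^\sharp$-inertial classes, because $X_\nr(L) \to \Irr(L^\sharp)$ has image a single $X_\nr(L^\sharp)$-coset-translated family modulo the finite group $X^L(\fs)$-action, and $X_\nr(L)/X_\nr(L^\sharp) \cong X_\nr(F^\times)$ acts on the relevant classes through a finite quotient. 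Parabolic induction then sends each of these finitely many $L^\sharp$-inertial classes into finitely many $G^\sharp$-inertial classes (the usual finiteness in the Bernstein decomposition of $G^\sharp$). Hence $\Irr^\fs(G^\sharp)$ meets only finitely many Bernstein components of $G^\sharp$.

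It remains to see that $\Irr^\fs(G^\sharp)$ is a \emph{union} of entire components, not merely a subset meeting finitely many. For this I would argue that if $\sigma^\sharp \in \Irr^\fs(G^\sharp)$ is a constituent of $\Res^G_{G^\sharp}\pi$ with $\pi \in \Irr^\fs(G)$, and $\tau^\sharp$ lies in the same $G^\sharp$-inertial class $[L^\sharp,\sigma^\sharp]_{G^\sharp}$, then $\tau^\sharp$ is a constituent of some $I_{P^\sharp}^{G^\sharp}(\sigma^\sharp \otimes \psi)$ with $\psi \in X_\nr(G^\sharp)$; by \eqref{eq:2.3} one can lift $\psi$ to an unramified character of $L$ (in fact of $G$), so $\sigma^\sharp\otimes\psi$ is a constituent of $\Res^L_{L^\sharp}(\omega\otimes\chi')$ for an appropriate $\chi' \in X_\nr(L)$, and then $\tau^\sharp$ is a constituent of $\Res^G_{G^\sharp}(I_P^G(\omega\otimes\chi'))$, whose constituents lie in $\Irr^\fs(G^\sharp)$ by definition since $\omega\otimes\chi' \in \Irr^{\fs_L}(L)$ up to $X_\nr(L,\omega)$. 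The one point needing care is that the lift of $\psi$ from $G^\sharp$ to $G$ exists: this is exactly the surjectivity of $X_\nr(G) \to X_\nr(G^\sharp)$, which follows from $G/G^\sharp \cong F^\times$ being a torus, so every unramified character of $G^\sharp$ extends (after possibly an unramified twist, which is harmless) to $G$. I expect this last compatibility --- matching $X_\nr(G^\sharp)$-twists with $X_\nr(L)$-twists through restriction --- to be the main technical obstacle, essentially because $X_\nr(L) \to X_\nr(L^\sharp)$ need not be surjective onto all unramified characters of $L^\sharp$; but the finite-index discrepancy is absorbed by the finite group $X^L(\fs)$ and the R-group data, so it does not affect the statement.
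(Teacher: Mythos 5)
Your proof follows essentially the same route as the paper's: identify $\Res^G_{G^\sharp} I_P^G (\omega\otimes\chi)$ with $I_{P^\sharp}^{G^\sharp}\Res^L_{L^\sharp}(\omega\otimes\chi)$, deduce finiteness of the set of inertial classes from Proposition~\ref{prop:2.6}.c applied to $\Res^L_{L^\sharp}(\omega)$, and show each relevant Bernstein component is entirely contained by lifting unramified characters of $L^\sharp$ back to $L$. Two small corrections: the twist $\psi$ in your ``union'' step should lie in $X_\nr(L^\sharp)$ rather than $X_\nr(G^\sharp)$ (it twists the supercuspidal support, not the full $G^\sharp$-representation), and the surjectivity of $X_\nr(L)\to X_\nr(L^\sharp)$ that you flag as a possible obstacle at the end is in fact unproblematic---it follows from divisibility of $\C^\times$ applied to the inclusion of lattices $L^\sharp/(L^\sharp \cap L^1)\hookrightarrow L/L^1$---and is precisely the one-line lifting the paper uses without comment, so your closing caveat can be dropped.
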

\begin{proof}
Consider any $\pi^\sharp \in \Irr^\fs (G^\sharp)$. It is a subquotient of
\[
\Res^G_{G^\sharp}(I_P^G (\omega \otimes \chi_1)) = 
I_{P^\sharp}^{G^\sharp}(\Res^L_{L^\sharp}(\omega \otimes \chi_1))
\]
for some $\chi_1 \in X_\nr (L)$. Choose an irreducible summand $\sigma_1$ 
of the supercuspidal $L^\sharp$-representation $\Res^L_{L^\sharp}(\omega \otimes \chi_1)$, 
such that $\pi^\sharp$ 
is a subquotient of $I_{P^\sharp}^{G^\sharp}(\sigma_1)$. Then $\pi^\sharp$ lies in 
the Bernstein component $\Irr^{[L^\sharp,\sigma_1]_{G^\sharp}}(G^\sharp)$. 
Any unramified character of $\chi_2^\sharp$ of $L^\sharp$ lifts to an unramified character 
of $L$, say $\chi_2$. Now
\[
I_{P^\sharp}^{G^\sharp}(\Res^L_{L^\sharp}(\sigma_1) \otimes \chi_2^\sharp) \subset 
I_{P^\sharp}^{G^\sharp}(\Res^L_{L^\sharp}(\omega \otimes \chi_1 \chi_2 )) = 
\Res^G_{G^\sharp}(I_P^G (\omega \otimes \chi_1 \chi_2)) ,
\]
which shows that all irreducible subquotients of $I_{P^\sharp}^{G^\sharp}(\Res^L_{L^\sharp}
(\sigma_1) \otimes \chi_2^\sharp)$ belong to $\Irr^\fs (G^\sharp)$. It follows that 
$\Irr^{[L^\sharp,\sigma_1]_{G^\sharp}}(G^\sharp) \subset \Irr^\fs (G^\sharp)$.

The above also shows that any inertial equivalence class $\mf t^\sharp$ with \label{i:54}
$\Irr^{\mf t^\sharp}(G^\sharp) \subset \Irr^\fs (G^\sharp)$ must be of the form 
\begin{equation} \label{eq:2.6}
\mf t^\sharp = [L^\sharp , \sigma_2]_{G^\sharp}
\end{equation}
for some irreducible constituent $\sigma_2$ of $\Res^L_{L^\sharp} (\omega \otimes \chi_2)$. 
So up to an unramified twist $\sigma_2$ is an irreducible constituent of 
$\Res^L_{L^\sharp}(\omega)$. Now Proposition \ref{prop:2.6}.c shows that there are only 
finitely many possibilities for $\mf t^\sharp$.
\end{proof}

Motivated by this lemma, we write $\mf t^\sharp \prec \fs$ if $\Irr^{\mf t^\sharp}(G^\sharp) 
\subset \Irr^\fs (G^\sharp)$. In other words, \label{i:55}
\begin{equation}\label{eq:2.23}
\Irr^\fs (G^\sharp) = \bigcup\nolimits_{\mf t^\sharp \prec \fs} \Irr^{\mf t^\sharp}(G^\sharp) .
\end{equation}
The last part of the proof of Lemma \ref{lem:2.1} shows that every $\mf t^\sharp \prec \fs$
is of the form $[L^\sharp , \sigma^\sharp]_{G^\sharp}$ for some irreducible constituent 
$\sigma^\sharp$ of $\Res^L_{L^\sharp} (\omega)$. 
Recall from \eqref{eq:2.2} that constituents $\sigma^\sharp$ as above are parametrized 
by irreducible representations of the twisted group algebra
$\C [X^L(\omega),\kappa_\omega]$. 
However, non-isomorphic $\sigma^\sharp$ may give rise to the
same inertial equivalence class $\mf t^\sharp$ for $G^\sharp$. It is quite difficult
to determine the Bernstein tori $T_{\mf t^\sharp}$ precisely.

The finite group associated by Bernstein to 
$\mf t^\sharp = [L^\sharp,\sigma^\sharp]_{G^\sharp}$ 
is its stabilizer in $W (G,L) = W(G^\sharp,L^\sharp)$:
\[
W_{\mf t^\sharp} = \{ w \in W (G,L) \mid w \cdot \sigma^\sharp \in 
[L^\sharp,\sigma^\sharp]_{L^\sharp} \} .
\]
As the different $\sigma^\sharp$ are $M$-conjugate, they all produce the same
group $W_{\mf t^\sharp}$. So it depends only on $[M,\sigma]_M$.
It is quite possible that $W_{\mf t^\sharp}$ is strictly larger than $W_\fs$, we
already saw this in Example \ref{ex:Weyl}. First steps to study such cases were 
sketched (for $\SL_n (F)$) in \cite[\S 9]{BuKu2}.

For every $w \in W_{\mf t^\sharp}$, Proposition \ref{prop:2.6}.b (for $L^\sharp$)
guarantees the existence of a $\gamma \in \Irr (L / L^\sharp)$ such that
$w (\omega) \otimes \gamma \in [L,\omega]_L$. Up to an unramified character 
$\gamma$ has to be trivial on $Z(G)$, so we may take $\gamma \in \Irr (L/L^\sharp Z(G))$.
Hence $W_{\mf t^\sharp}$ is contained in the group \label{i:68}
\[
W_\fs^\sharp := \{ w \in W(G,L) \mid \exists \gamma \in \Irr (L / L^\sharp Z(G)) 
\text{ such that } w (\gamma \otimes \omega) \in [L,\omega]_L \}
\]
By \eqref{eq:2.4} the subgroup $W_\omega$ fixes every $[L^\sharp,\sigma^\sharp]_{L^\sharp}$,
so $W_\omega \subset W_{\mf t^\sharp}$ and 
\begin{equation}\label{eq:2.7}
\Stab_{W_\fs^\sharp / W_\omega}([L^\sharp,\sigma^\sharp]_{L^\sharp}) = 
W_{\mf t^\sharp} / W_\omega \supset W_{\sigma^\sharp} / W_\omega .
\end{equation}

\begin{lem}\label{lem:2.2}
\enuma{
\item $W_\fs$ is a normal subgroup of $W_\fs^\sharp$ which fixes all the 
$[L^\sharp,\sigma^\sharp]_{L^\sharp}$ with $[L^\sharp,\sigma^\sharp]_{G^\sharp} \prec \fs$.
\item $W_{\mf t^\sharp}$ is the normal subgroup of $W_\fs^\sharp$ consisting of all elements 
that fix every $[L^\sharp,\sigma^\sharp]_{L^\sharp}$ with 
$[L^\sharp,\sigma^\sharp]_{G^\sharp} \prec \fs$. In particular it contains $W_\fs$.
\item There exist subgroups $\mf R_\fs^\sharp \subset W_\fs^\sharp$, $\mf R_{\mf t^\sharp}
\subset W_{\mf t^\sharp}$ and $\mf R_\omega^\sharp \subset W_\omega^\sharp$ such that 
$W_\fs^\sharp = W_\fs \rtimes \mf R_\fs^\sharp$, $W_{\mf t^\sharp} = W_\fs \rtimes 
\mf R_{\mf t^\sharp}$ and $W_\omega^\sharp = W_\omega \rtimes \mf R_\omega^\sharp$.
}
\end{lem}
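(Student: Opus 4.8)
\textbf{Proof plan for Lemma \ref{lem:2.2}.}

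The plan is to prove (a), (b), (c) in that order, deriving each from the structure of $W(G,L) = \prod_i S_{c_i}$ as a product of symmetric groups (where $c_i = e_i \cdot (\text{number of factors of }L\text{ of type }(m_i,e_i))$, roughly) together with Conditions \ref{cond}. The first observation is that $W_\fs = W_\omega = \prod_i S_{e_i}$ is a parabolic (Young) subgroup of $W(G,L)$, and that $W_\fs^\sharp$ consists of those permutations $w$ that preserve $[L,\omega]_L$ up to a character $\gamma \in \Irr(L/L^\sharp Z(G))$. For (a), I would argue that $W_\fs$ is normal in $W_\fs^\sharp$ as follows: if $w \in W_\fs^\sharp$ and $v \in W_\fs = W_\omega$, then $w v w^{-1}$ sends $\omega$ to $w v w^{-1} \cdot \omega$; using $w \cdot (\gamma \otimes \omega) \in [L,\omega]_L$ for a suitable $\gamma$ (central-unramified-trivial) and that $\gamma$, being a character of $G/G^\sharp$, is $W(G,L)$-invariant, one checks that conjugating by $w$ carries the stabilizer $W_\omega$ of $[L,\omega]_L$ onto itself. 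That $W_\fs$ fixes every $[L^\sharp,\sigma^\sharp]_{L^\sharp}$ is exactly \eqref{eq:2.4}, which was recalled from \cite[Proposition 6.2.3]{ChLi}: $W_\omega \subset W_{\sigma^\sharp}$.

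For (b), I would use Proposition \ref{prop:2.6}.b applied to $L^\sharp$: two irreducible $L^\sharp$-representations have the same restriction-inertial class iff they differ by a character of $L^\sharp/(L^\sharp)_{\der}$ times an unramified character, and then translate this back upstairs. Concretely, $w \in W_\fs^\sharp$ fixes $[L^\sharp,\sigma^\sharp]_{L^\sharp}$ for one (hence, by $M$-conjugacy of the $\sigma^\sharp$, every) constituent $\sigma^\sharp$ of $\Res^L_{L^\sharp}\omega$ precisely when $w \cdot \sigma^\sharp$ is an unramified twist of some $L^\sharp$-conjugate of $\sigma^\sharp$; since the $\sigma^\sharp$ are permuted among themselves by $\Stab(\fs)$ and parametrized via \eqref{eq:2.2} by $\Irr(\C[X^L(\omega),\kappa_\omega])$, the set of such $w$ is a subgroup, it is normal in $W_\fs^\sharp$ because the family $\{[L^\sharp,\sigma^\sharp]_{L^\sharp}\}$ is permuted by $W_\fs^\sharp$, and it coincides with $W_{\mf t^\sharp}$ by the definition of $W_{\mf t^\sharp}$ together with \eqref{eq:2.7}. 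Containment of $W_\fs$ follows from (a).

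For (c), I would invoke the fact that $W_\fs = W_\omega = \prod_i S_{e_i}$ is generated by reflections, so it has trivial Schur multiplier issues irrelevant here; rather, the real content is the splitting of the three extensions $1 \to W_\fs \to W_\fs^\sharp \to W_\fs^\sharp/W_\fs \to 1$ and its analogues. Here I would use Conditions \ref{cond}: the blocks of equal $(m_i,e_i)$ that additionally satisfy the twisting relation of the last bullet of \ref{cond} are grouped in consecutive positions, and an element of $W_\fs^\sharp/W_\fs$ is represented by a permutation of such blocks, i.e.\ by a block-permutation matrix normalizing $P \cap M$. Thus I would set $\mf R_\fs^\sharp := W_\fs^\sharp \cap \Nor_G(P \cap M)/L$ (matching the definition in the introduction) and check that the block-permutation representatives form a subgroup mapping isomorphically onto $W_\fs^\sharp/W_\fs$, giving $W_\fs^\sharp = W_\fs \rtimes \mf R_\fs^\sharp$. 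The same recipe, restricting the block-permutations to those also lying in $W_{\mf t^\sharp}$ resp.\ $W_\omega^\sharp$, yields $\mf R_{\mf t^\sharp}$ and $\mf R_\omega^\sharp$ with the asserted semidirect product decompositions; one needs $W_\fs \subset W_{\mf t^\sharp}$ (from (b)) and $W_\omega \subset W_\omega^\sharp$ (immediate) for these to make sense.

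\textbf{Main obstacle.} The delicate point is (c): showing that the quotient $W_\fs^\sharp/W_\fs$ is genuinely realized inside $W_\fs^\sharp$ by a canonical complement, rather than merely abstractly split. Since $W(G,L)$ is a product of symmetric groups and $W_\fs$ a Young subgroup, a set-theoretic section by "block permutations" exists, but verifying it is a \emph{group} homomorphism requires the compatibility built into the last bullet of Conditions \ref{cond} — namely that whenever two $(m_i,e_i)$-blocks are related by a character twist, that twist can be chosen unramified-modulo-nothing, so that composing two block-swaps stays within $W_\fs^\sharp$ with no $W_\fs$-correction term. Pinning down exactly which block-permutations lie in $W_\fs^\sharp$ (as opposed to $W(G,L)$) and that these are closed under composition is where the argument needs care; once that is done, the semidirect product structure and normality of $W_\fs$ and $W_{\mf t^\sharp}$ follow formally.
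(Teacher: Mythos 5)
Your treatment of parts (a) and (b) is essentially the paper's: for (a) the paper invokes $R_{\sigma^\sharp} = R_\omega$ from \cite[Lemma 3.4.1]{ChLi} to get $W_\fs = W(R_{\sigma^\sharp}) \subset W_{\sigma^\sharp}$, which is the same content as your citation of \eqref{eq:2.4}, and your direct verification of normality of $W_\fs$ (using $W(G,L)$-invariance of characters of $L/L^\sharp$) is a sound supplement that the paper leaves tacit; for (b) your reasoning via $M$-conjugacy of the $\sigma^\sharp$ and the fact that $W_\fs^\sharp$ permutes the family $\{[L^\sharp,\sigma^\sharp]_{L^\sharp}\}$ is exactly the paper's.

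For (c), however, there is a gap, and you have also misidentified where the real work lies. You correctly set $\mf R_\fs^\sharp := W_\fs^\sharp \cap \Nor_G(P \cap M)/L$, but once $\mf R_\fs^\sharp$ is \emph{defined} as an intersection of two subgroups of $W(G,L)$, closure under composition is automatic; there is nothing to check about whether a ``set-theoretic section by block permutations'' is a group homomorphism, and the last bullet of Conditions \ref{cond} plays no role in that. The genuinely nontrivial step — which you do not address — is showing that $\mf R_\fs^\sharp$ meets every $W_\fs$-coset of $W_\fs^\sharp$ exactly once, i.e.\ that the map $\mf R_\fs^\sharp \to W_\fs^\sharp/W_\fs$ is a bijection. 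The paper's argument is: $W_\fs = W(M,L)$ acts simply transitively on the set of parabolic subgroups of $M$ containing $L$, and $W_\fs^\sharp$ normalizes $M$ (because, under Conditions \ref{cond}, it can only permute blocks $L_i^{e_i}$ of equal size $m_i e_i$) and hence permutes this set; so for any $w \in W_\fs^\sharp$ there is a unique $v \in W_\fs$ with $vw(P \cap M) = P\cap M$. That uniqueness gives injectivity, that existence gives surjectivity, and the same device produces $\mf R_{\mf t^\sharp}$ and $\mf R_\omega^\sharp$ by intersecting $N_G(P\cap M)/L$ with $W_{\mf t^\sharp}$ and $W_\omega^\sharp$ respectively, using \eqref{eq:2.7} and \eqref{eq:2.5}. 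Without the simply-transitive action (or some substitute for it) your proposal for (c) does not establish the semidirect product decomposition.
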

\begin{proof}\label{i:30}
(a) We may use the conditions \ref{cond}. Then $W_\omega = W_\fs$. 
For any $\sigma^\sharp$ as above the root systems $R_{\sigma^\sharp}$ and $R_\omega$ 
are equal by \cite[Lemma 3.4.1]{ChLi}. With \eqref{eq:1.3} we get
\[
W_\fs = W_\omega = W (R_\omega) = W(R_{\sigma^\sharp}) \subset W_{\sigma^\sharp} ,
\]
showing that $W_\fs$ fixes $\sigma^\sharp$ and $[L^\sharp,\sigma^\sharp]_{L^\sharp}$. \\
(b) As we observed above, we can arrange that every $\sigma^\sharp$ is a subquotient of
$\Res_{L^\sharp}^L (\omega)$. Since all constituents of this representation are associate
under $L$ and $W (G,L)$ acts trivially on $L/L^\sharp ,\; W_{\mf t^\sharp}$ fixes every
possible $[L^\sharp,\sigma^\sharp]_{L^\sharp}$. This gives the description of 
$W_{\mf t^\sharp}$. Because all the $[L^\sharp,\sigma^\sharp]_{L^\sharp}$ together form
a $W_\fs^\sharp$-space, $W_{\mf t^\sharp}$ is normal in that group.\\
(c) In the special case $G^\sharp = \SL_n (F)$, this was proven for $W_{\mf t^\sharp}$
in \cite[Proposition 2.3]{GoRo2}. Our proof is a generalization of that in \cite{GoRo2}.

Recall the description of $M = \prod_i M_i$ and $W_\fs$ from equations \eqref{eq:1.1}
and \eqref{eq:1.2}. We note that $P \cap M$ is a parabolic subgroup of $M$ containing $L$,
and that the group $W(M,L) = W_\fs$ acts simply transitively on the collection of such
parabolic subgroups. This implies that
\begin{equation}
\mf R_\fs^\sharp := W_\fs^\sharp \cap N_G (P \cap M) / L 
\end{equation}
is a complement to $W_\fs$ in $W_\fs^\sharp$. For $W_{\mf t^\sharp}$ \eqref{eq:2.7} shows
that we may take
\begin{equation}
\mf R_{\mf t^\sharp} := W_{\mf t^\sharp} \cap N_G (P \cap M) / L . 
\end{equation}
Similarly, for $W_\omega^\sharp$ \eqref{eq:2.5} leads us to
\begin{equation}
\mf R_\omega^\sharp := W_\omega^\sharp \cap N_G (P \cap M) / L . \qedhere
\end{equation}
\end{proof}

As analogues of $X^L (\omega), X^G (I_P^G (\omega))$ and Stab$(\omega)$ for 
$\fs = [L,\omega]_G$ we introduce \label{i:71} \label{i:73}
\begin{align}
\label{eq:2.14} & X^L (\fs) = \{ \gamma \in \Irr (L / L^\sharp Z(G)) \mid 
\gamma \otimes \omega \in [L,\omega]_L \} , \\
& X^G (\fs) = \{ \gamma \in \Irr (G / G^\sharp Z(G)) \mid 
\gamma \otimes I_P^G (\omega) \in \fs \} , \\
& \Stab(\fs) = \{ (w,\gamma) \in W(G,L) \times  \Irr (L / L^\sharp Z(G)) \mid
w (\gamma \otimes \omega) \in [L,\omega]_L \} .
\end{align}
Notice that $\Stab (\fs)$ contains $\Stab (\omega \otimes \chi)$ for every \label{i:48}
$\chi \in X_{\nr}(L / L^\sharp)$. It is easy to see that $W_\fs \times X^L (\fs)$ is a
normal subgroup of Stab$(\fs)$ and that there are short exact sequences
\begin{align}
\label{eq:2.24} & 1 \to X^L (\fs) \to \Stab (\fs) \to W_\fs^\sharp \to 1 , \\
\label{eq:2.20} & 1 \to X^L (\fs) \times W_\fs \to \Stab (\fs) \to 
W_\fs^\sharp / W_\fs \cong \mf R_\fs^\sharp \to 1 .
\end{align}
Furthermore we define \label{i:50}
\[
\Stab (\fs,P \cap M) = \{ (w,\gamma) \in \Stab (\fs) \mid w \in N_G (P \cap M) / L \} .
\]
The reduced norm map induces isomorphisms
\begin{equation}\label{eq:2.26}
L / L^\sharp Z(G) \to G / G^\sharp Z(G) \to F^\times / \mathrm{Nrd}(Z(G)) . 
\end{equation}
The right hand side is an abelian group of exponent $md$, 
but it is not necessarily finite, see \eqref{eq:2.27}.

\begin{lem}\label{lem:2.4}
\enuma{
\item $\Stab(\fs) = \Stab (\fs ,P \cap M) \ltimes W_\fs$.
\item The projection of $\Stab(\fs)$ on the second coordinate gives a group isomorphism
\[
\Stab (\fs,P \cap M) \cong \Stab (\fs) / W_\fs \to X^G (\fs) .
\]
\item The groups $X^L (\fs), X^G (\fs)$ and $\Stab(\fs)$ are finite.
\item There are natural isomorphisms
\[
X^G (\fs) / X^L (\fs) \cong \Stab (\fs,P \cap M) / X^L (\fs) \cong \mf R_\fs^\sharp .
\]
}
\end{lem}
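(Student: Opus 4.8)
The plan is to derive all four statements from the short exact sequence \eqref{eq:2.24}, $1 \to X^L(\fs) \to \Stab(\fs) \xrightarrow{\mathrm{pr}_1} W_\fs^\sharp \to 1$, together with $W_\fs^\sharp = W_\fs \rtimes \mf R_\fs^\sharp$ from Lemma~\ref{lem:2.2}, working throughout under Conditions~\ref{cond} — which is legitimate since $\Stab(\fs)$, $X^L(\fs)$, $X^G(\fs)$ and $\mf R_\fs^\sharp$ depend only on $\fs$. Two facts will be used repeatedly: the equality $W_\fs = W_\omega$ from \eqref{eq:1.3}, which means $(w,1) \in \Stab(\fs)$ for every $w \in W_\fs$, so that $W_\fs$ embeds in $\Stab(\fs)$ as $W_\fs \times \{1\}$ and this is precisely the kernel of the second-coordinate projection $\mathrm{pr}_2$; and the fact (established in the proof of Lemma~\ref{lem:2.2}) that $W(M,L) = W_\fs$ acts simply transitively on the parabolic subgroups of $M$ containing $L$, so the stabilizer of $P \cap M$ in $W_\fs$ is trivial.

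For part (a) I would argue that $W_\fs \times \{1\}$ is normal in $\Stab(\fs)$ — because $W_\fs \triangleleft W_\fs^\sharp$ (Lemma~\ref{lem:2.2}(a)) and $\mathrm{pr}_1$ is onto $W_\fs^\sharp$ — and meets $\Stab(\fs, P\cap M)$ trivially, by the simple transitivity recalled above. For the product decomposition, take $(w,\gamma) \in \Stab(\fs)$, write $w = ur$ with $u \in W_\fs$ and $r \in \mf R_\fs^\sharp = W_\fs^\sharp \cap N_G(P\cap M)/L$, lift $r$ to some $(r,\gamma_r) \in \Stab(\fs, P\cap M)$ via \eqref{eq:2.24}, and observe that $(w,\gamma)(r^{-1}ur,1)^{-1}(r,\gamma_r)^{-1}$ has trivial first coordinate, hence lies in $\{1\} \times X^L(\fs) \subset \Stab(\fs, P\cap M)$; rearranging gives $(w,\gamma) \in \Stab(\fs, P\cap M)\cdot(W_\fs \times \{1\})$. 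These three facts yield $\Stab(\fs) = \Stab(\fs, P\cap M) \ltimes W_\fs$.

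For part (b) it suffices to show $\mathrm{pr}_2$ maps $\Stab(\fs)$ onto $X^G(\fs)$: if $(w,\gamma) \in \Stab(\fs)$ then $w(\gamma\otimes\omega) \cong \omega\otimes\chi$ for some $\chi \in X_\nr(L)$, so $\gamma\otimes I_P^G(\omega) = I_P^G(\gamma\otimes\omega)$ has cuspidal support $[L,\gamma\otimes\omega]_G = [L,\omega]_G = \fs$, giving $\gamma \in X^G(\fs)$ (via the identification \eqref{eq:2.26}); conversely $\gamma \in X^G(\fs)$ forces $[L,\gamma\otimes\omega]_G = [L,\omega]_G$, hence $w(\gamma\otimes\omega) \cong \omega\otimes\chi^{-1} \in [L,\omega]_L$ for some $w \in W(G,L)$, i.e.\ $(w,\gamma) \in \Stab(\fs)$. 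Since $\ker(\mathrm{pr}_2) = W_\fs$, this gives $\Stab(\fs)/W_\fs \isom X^G(\fs)$, and by (a) the complement $\Stab(\fs, P\cap M)$ maps isomorphically onto $X^G(\fs)$. For part (c) I would bound $X^L(\fs)$: if $\gamma \in X^L(\fs)$ then $\omega\otimes\gamma \cong \omega\otimes\chi$ for some $\chi \in X_\nr(L)$, so $\gamma\chi^{-1} \in X_\nr(L,\omega) \subset X_\nr(L)$ and hence $\gamma \in X_\nr(L)$; therefore $X^L(\fs) \subseteq X_\nr(L)\cap\Irr(L/L^\sharp Z(G)) = X_\nr(L/L^\sharp Z(G))$, and such characters factor through $L/L^\sharp Z(G) \cong F^\times/(F^\times)^n$ (by \eqref{eq:2.26}) and, being unramified, through $F^\times/\mathcal O_F^\times \cong \Z$, so they form a group of order dividing $n$. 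Then $\Stab(\fs)$ is finite by \eqref{eq:2.24} (as $W_\fs^\sharp \subset W(G,L)$ is finite), and $X^G(\fs)$ is finite by (b).

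Part (d) is then bookkeeping: under the isomorphism $\mathrm{pr}_2 : \Stab(\fs, P\cap M) \isom X^G(\fs)$ of (b), the subgroup $\{1\}\times X^L(\fs)$ goes to $X^L(\fs) \subset X^G(\fs)$, which yields the first isomorphism on quotients; and $\mathrm{pr}_1$ restricted to $\Stab(\fs, P\cap M)$ has image $W_\fs^\sharp \cap N_G(P\cap M)/L = \mf R_\fs^\sharp$ (surjectivity coming from \eqref{eq:2.24}) and kernel $\{1\}\times X^L(\fs)$, which yields the second; all the maps are coordinate projections or inclusions, so the isomorphisms are natural. The only genuinely delicate point is part (a) — realizing $W_\fs$ as a normal subgroup of $\Stab(\fs)$ and $\Stab(\fs, P\cap M)$ as a complement — and this is exactly where Conditions~\ref{cond} (through $W_\fs = W_\omega$) and the simply transitive action of $W(M,L)$ on the parabolics of $M$ containing $L$ are essential; the finiteness of the unramified character group in (c) is the other place requiring a moment's care, chiefly in positive characteristic, where $F^\times/(F^\times)^n$ itself may be infinite.
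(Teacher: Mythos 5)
Parts (a), (b), and (d) of your proposal are essentially correct and run along the same lines as the paper: (a) fills in the ``same as Lemma~\ref{lem:2.2}(c)'' reference with a complement/normal-subgroup argument resting on the simply transitive action of $W(M,L)$ on the parabolic subgroups of $M$ containing $L$; (b) and (d) are the same bookkeeping the paper does around the projections $\mathrm{pr}_1$ and $\mathrm{pr}_2$.

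Part (c), however, contains a genuine error, and it is not a minor one. From $\omega\otimes\gamma \cong \omega\otimes\chi$ with $\chi \in X_\nr(L)$ you conclude $\gamma\chi^{-1} \in X_\nr(L,\omega)$ and hence $\gamma \in X_\nr(L)$. But $X_\nr(L,\omega)$ is by definition a subgroup of $X_\nr(L)$, the \emph{unramified} characters; the isomorphism $\omega\otimes(\gamma\chi^{-1}) \cong \omega$ only says that $\gamma\chi^{-1}$ stabilizes $\omega$, not that it is unramified. The conclusion $X^L(\fs) \subseteq X_\nr(L/L^\sharp Z(G))$ that you derive from it is false: in Example~\ref{ex:torus} one has $X^L(\fs) = \langle\eta\rangle\,X_\nr(L/L^\sharp Z(G))$ with $\eta$ a totally ramified character of order $4$, and ramified elements of $X^L(\fs)$ and $X^G(\fs)$ appear throughout Section~\ref{sec:exa}. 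So your argument bounds only the unramified part of $X^L(\fs)$ and misses the rest entirely. The paper's route is different: from $\omega\otimes(\gamma^{-1}\chi) \cong \omega$ it observes that $\gamma^{-1}\chi$ lies in $X^L(\omega)$, which is finite by Proposition~\ref{prop:2.6} and \eqref{eq:2.9}; raising to the power $|X^L(\omega)|$ makes $\chi^{|X^L(\omega)|} = \gamma^{|X^L(\omega)|}$ a character of $L/L^\sharp Z(G)$, whose exponent is $md$ by \eqref{eq:2.26}, so $\chi$ has order dividing $md\,|X^L(\omega)|$ in the torus $X_\nr(L)$, leaving finitely many possibilities; and for each such $\chi$ the compatible $\gamma$'s form a coset of the finite group $X^L(\omega)$. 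You need something of this shape; the reduction to unramified characters cannot work.
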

\begin{proof}
(a) This can be shown in the same way as Lemma \ref{lem:2.2}.c. \\
(b) If $(w,\gamma) \in \Stab (\fs)$, then 
\[
\gamma \otimes I_P^G (\omega) \cong \gamma \otimes I_P^G (w \cdot \omega) \cong
I_P^G (\gamma \otimes w \cdot \omega) \cong I_P^G (w (\gamma \otimes \omega)) \in \fs ,
\]
so $\gamma \in X^G (\fs)$. Conversely, if $\gamma \in X^G (\fs)$, then 
$I_P^G (\omega \otimes \gamma) \in \fs$. Hence 
$\omega \otimes \gamma \in w^{-1} \cdot [L,\omega]_L = [L,w^{-1} \cdot \omega]_L$
for some $w \in W(G,L)$, and $(w,\gamma) \in \Stab (\fs)$. 

As $W(G,L)$ and $\Irr (L / L^\sharp)$ commute, the projection map Stab$(\fs) \to X^G (\fs)$ 
is a group homomorphism. In view of \eqref{eq:1.3}, we may assume that $\omega$ is such 
that $W_\fs = W_\omega$. Then the kernel of this group homomorphism is $W_\omega = W_\fs$. \\
(c) Suppose that $\omega \otimes \gamma \cong \omega \otimes \chi$ for some 
$\chi \in X_{\nr}(L)$. Then $\chi$ is trivial on $Z(G)$ and $\gamma^{-1} \chi \in 
X^L (\omega)$. We already know from Proposition \ref{prop:2.6} and \eqref{eq:2.9} that 
$X^L (\omega)$ finite. Hence $(\gamma^{-1} \chi)^{|X^L (\omega)|} = 1$ and
\[
\chi^{|X^L (\omega)} = \gamma^{-|X^L (\omega)} \in \Irr (L / L^\sharp Z(G)) .
\]
By \eqref{eq:2.26} $L / L^\sharp Z(G)$ is a group of exponent $md$, so
$\chi^{md |X^L (\omega)|} = \gamma^{-md |X^L (\omega)} = 1$. Thus there are only finitely 
many possibilities for $\chi$, and we can conclude that $X^L (\fs)$ is finite.

If $(w,\gamma), (w,\gamma') \in \Stab (\fs)$, then $(w,\gamma)^{-1} (w,\gamma') =
\gamma^{-1} \gamma' \in X^L (\fs)$. As $W(G,L)$ and $X^L (\fs)$ are finite, this
shows that Stab$(\fs)$ is also finite. Now $X^G (\fs)$ is finite by part (b).\\
(d) This follows from \eqref{eq:2.20} and part (b).
\end{proof}

\subsection{The intermediate group} \
\label{par:interm} 

For some calculations it is beneficial to do the restriction of representations from $G$
to $G^\sharp$ in two steps, via the intermediate group $G^\sharp Z(G)$. This is a central
extension of $G^\sharp$, so
\begin{equation}\label{eq:2.8}
\End_{G^\sharp}(\pi) = \End_{G^\sharp Z(G)}(\pi) 
\end{equation}
for all representations $\pi$ of $G$ or $G^\sharp Z(G)$ that admit a central character.
In particular $\Res_{G^\sharp}^{G^\sharp Z(G)}$ preserves irreducibility of representations.
The centre of $G$ is \label{i:86}
\begin{equation}\label{eq:2.27}
Z(G) = G \cap Z(M_m (D)) = G \cap F \cdot I_m = F^\times I_m .
\end{equation}
Recall that $\dim_F (D) = d^2$. Since Nrd$(z I_m) = z^{md}$ for $z \in F^\times$, 
\[
\Nrd (Z(G)) = F^{\times md},
\]
the group of $md$-th powers in $F^\times$. Hence $G / G^\sharp Z(G)$ is an abelian 
group and all its elements have order dividing $md$. In case char$(F)$ is positive and 
divides $md ,\; G^\sharp Z(G)$ is closed but not open in $G$. Otherwise it is closed, 
open and of finite index in $G$. However, $G^\sharp Z(G)$ is never Zariski-closed in $G$.

The intersection of $G^\sharp$ and $Z(G)$ is the finite group 
$\{ z I_m \mid z \in F^\times, z^{md} = 1 \}$, so
\begin{equation}
G^\sharp Z(G) \cong 
(G^\sharp \times Z(G)) / \{ (z I_m, z^{-1}) \mid z \in F^\times, z^{md} = 1 \}.
\end{equation}
As $G^\sharp \times Z(G)$ is a connected reductive algebraic group over $F$, this shows 
that $G^\sharp Z(G)$ is one as well. But this algebraic structure is not induced from the
enveloping group $G$.
The inflation functor $\Rep (G^\sharp Z(G)) \to \Rep (G^\sharp \times Z(G))$ identifies
$\Rep (G^\sharp Z(G))$ with
\[
\{ \pi \in \Rep (G^\sharp \times Z(G)) \mid \pi (z I_m, z^{-1}) = 1 \, \forall
z \in F^\times \text{ with } z^{md} = 1 \} .
\]

\begin{lem}\label{lem:2.6}
\enuma{
\item Every irreducible $G^\sharp$-representation can be lifted to an irreducible 
representation of $G^\sharp Z(G)$.
\item All fibers of
\[
\Res_{G^\sharp}^{G^\sharp Z(G)} : \Irr (G^\sharp Z(G)) \to \Irr (G^\sharp)
\]
are homeomorphic to $\Irr \big( F^{\times md}\big)$.
} 
\end{lem}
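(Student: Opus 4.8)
The plan is to prove both parts by working concretely with the group $F^{\times md}$ sitting inside $G$ as $\mathrm{Nrd}(Z(G)) \subset \mathrm{Nrd}(G) = F^\times$, together with the short exact sequence
\[
1 \to G^\sharp \cap Z(G) \to G^\sharp \times Z(G) \to G^\sharp Z(G) \to 1
\]
and the identification of $\Rep(G^\sharp Z(G))$ with the subcategory of $\Rep(G^\sharp \times Z(G))$ on which the finite central subgroup $\{(zI_m,z^{-1}) : z^{md}=1\}$ acts trivially, as recalled just before the lemma. Since $Z(G) = F^\times I_m$ is abelian, $\Irr(Z(G))$ is the Pontryagin dual $\widehat{F^\times}$, and an irreducible $G^\sharp Z(G)$-representation is determined by an irreducible $G^\sharp$-representation $\sigma^\sharp$ together with a character of $Z(G)$ extending the central character that $\sigma^\sharp$ already carries on the finite group $G^\sharp \cap Z(G)$.

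For part (a): given $\sigma^\sharp \in \Irr(G^\sharp)$, it has a central character $\omega_{\sigma^\sharp}$ on $G^\sharp \cap Z(G) = \{zI_m : z^{md}=1\}$, a finite cyclic group. First I would extend this character to a character $\chi_Z$ of the whole of $Z(G) = F^\times I_m$; this is possible because characters of subgroups of abelian (locally compact) groups extend, and concretely because the finite cyclic group $\{z : z^{md}=1\}$ is a direct factor up to a divisible complement in $F^\times$. Then $\sigma^\sharp \boxtimes \chi_Z$ is an irreducible representation of $G^\sharp \times Z(G)$ on which $(zI_m, z^{-1})$ acts by $\omega_{\sigma^\sharp}(zI_m)\chi_Z(z^{-1}I_m)^{-1} \cdot$... wait, it acts by $\omega_{\sigma^\sharp}(zI_m)\chi_Z(z)^{-1} = 1$ for $z^{md}=1$ by construction, so it descends to an irreducible representation of $G^\sharp Z(G)$ whose restriction to $G^\sharp$ is $\sigma^\sharp$.

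For part (b): fix $\sigma^\sharp \in \Irr(G^\sharp)$ and describe the fiber of $\mathrm{Res}$ over it. By the analysis above, any two lifts of $\sigma^\sharp$ differ by a character of $G^\sharp Z(G)$ that is trivial on $G^\sharp$, i.e. by an element of $\Irr(G^\sharp Z(G)/G^\sharp)$. Using $G^\sharp Z(G)/G^\sharp \cong Z(G)/(Z(G)\cap G^\sharp) \cong F^\times/\{z : z^{md}=1\} \cong F^{\times}$ via $z \mapsto z^{md}$ — more canonically $\cong \mathrm{Nrd}(Z(G)) = F^{\times md}$ — the group of such characters is $\Irr(F^{\times md})$, and it acts simply transitively on the fiber: transitively by part (a)'s construction together with the fact that two lifts must have the same restriction, and freely because if $\sigma^\sharp \otimes \psi \cong \sigma^\sharp$ as $G^\sharp Z(G)$-representations with $\psi$ trivial on $G^\sharp$, then $\psi$ is forced to be trivial (it must be the trivial character since it acts on the one-dimensional space $\mathrm{Hom}_{G^\sharp}(\sigma^\sharp, \sigma^\sharp)$, using Schur and that $\mathrm{Res}$ preserves irreducibility by \eqref{eq:2.8}). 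A torsor over a topological group $\Irr(F^{\times md})$ under a simply transitive continuous action is homeomorphic to it (the action is continuous for the usual topologies on these rational character varieties, and one needs to check the base point map is a homeomorphism, which follows from local compactness and the fact that the action is by translation in the relevant component of $\Irr(G^\sharp Z(G))$).

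The main obstacle I expect is the topological statement in part (b): making precise that the fiber is not merely in bijection with $\Irr(F^{\times md})$ but homeomorphic to it. This requires knowing that $\Irr(G^\sharp Z(G))$ carries a topology for which $\Irr(G^\sharp Z(G)/G^\sharp) = \Irr(F^{\times md})$ acts continuously by twisting and the fiber is a closed subset on which this action is properly transitive; the cleanest route is probably to realize the fiber inside a single Bernstein component (or a union of finitely many) of $G^\sharp Z(G)$ and identify it with a quotient of the relevant Bernstein torus, or alternatively to argue directly that the orbit map $\Irr(F^{\times md}) \to \text{fiber}$, $\psi \mapsto \tilde\sigma^\sharp \otimes \psi$, for a chosen lift $\tilde\sigma^\sharp$, is a continuous open bijection hence a homeomorphism. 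The algebraic content of parts (a) and (b) is routine once the character-extension and Schur's lemma arguments are set up; the care is entirely in the topology and in keeping the identification $G^\sharp Z(G)/G^\sharp \cong F^{\times md}$ canonical via the reduced norm.
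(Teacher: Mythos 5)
Your proposal is correct and takes essentially the same route as the paper: part (a) by extending the central character of $\sigma^\sharp$ on the finite group $G^\sharp\cap Z(G)$ to a character of $Z(G)$ and descending from $G^\sharp\times Z(G)$, and part (b) via the short exact sequence $1 \to G^\sharp \to G^\sharp Z(G) \xrightarrow{\Nrd} F^{\times md} \to 1$ together with the free action of $\Irr(F^{\times md})$ on lifts by twisting, which makes each fiber a torsor. The paper's own treatment of the topological point in (b) is in fact terser than yours, so your caution there is not a gap relative to what the paper actually verifies.
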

\begin{proof}
(a) Any $\pi^\sharp \in \Irr (G^\sharp)$ determines a character $\chi_{md}$ 
of the central subgroup $\{ z \in F^\times \mid z^{md} = 1 \}$. Since there are only
finitely many $md$-th roots of unity in $F ,\; \chi_{md}$ can be lifted to a character
$\chi$ of $F^\times$. Then $\pi^\sharp \otimes \chi$ is a representation of 
$G^\sharp \times Z(G)$ that descends to $G^\sharp Z(G)$. \\
(b) This follows from the proof of part (a) and the short exact sequence
\begin{equation}
1 \to G^\sharp \to G^\sharp Z(G) \xrightarrow{\Nrd} F^{\times md} \to 1 . \qedhere
\end{equation}
\end{proof}

More explicitly, $\chi \in \Irr \big( F^{\times md}\big)$ acts freely on 
$\Irr (G^\sharp Z(G))$ by retraction to $\bar \chi \in \Irr (G^\sharp Z(G))$ 
and tensoring representations of $G^\sharp Z(G)$ with $\bar \chi$. 

For any totally disconnected group $H$ we define $X_\nr (H)$ as the collection of 
smooth characters which are trivial on every compact subgroup of $H$. Then
\[
X_\nr (G^\sharp Z(G)) \cong X_\nr (F^{\times md}) \cong X_\nr (F^{\times md} / 
\mathfrak o_F \cap F^{\times md}) = X_\nr (\varpi_F^{md \Z}) \cong \C^\times ,
\]
for any uniformizer $\varpi_F$ in the ring of integers $\mathfrak o_F$.

It follows that the preimage of $\Irr^{\mf t^\sharp}
(G^\sharp)$ in $\Irr (G^\sharp Z(G))$ consists of countably many Bernstein components
$\Irr^{\mf t}(G^\sharp Z(G))$, each one homeomorphic to 
\[
X_{\nr}\big( F^{\times md} \big) \times \Irr^{\mf t^\sharp} (G^\sharp) \cong
X_{\nr}(G^\sharp Z(G)) \times  \Irr^{\mf t^\sharp} (G^\sharp) \cong
\C^\times \times  \Irr^{\mf t^\sharp} (G^\sharp) .
\]
Two such components differ from each other by a ramified character of $F^{\times md}$,
or equivalently by a character of Nrd$(\mf o_F^\times \cdot 1_m)$.

In comparison, every Bernstein component $\Irr^{\mf t} (G^\sharp Z(G))$ projects onto a
single Bernstein component for $G^\sharp$, say $\Irr^{\mf t^\sharp} (G^\sharp)$.
All the fibers of
\begin{equation}\label{eq:2.15}
\Res_{G^\sharp}^{G^\sharp Z(G)} : 
\Irr^{\mf t} (G^\sharp Z(G)) \to \Irr^{\mf t^\sharp} (G^\sharp)
\end{equation}
are homeomorphic to $X_\nr (Z(G)) = X_{\nr}\big( F^{\times md} \big) \cong \C^\times$.
In particular
\begin{equation}\label{eq:2.16} 
T_{\mf t^\sharp} = T_{\mf t} / X_{\nr}\big( \Nrd (Z(G)) \big) .
\end{equation}

\begin{lem}\label{lem:2.3}
The finite groups associated to $\mf t$ and $\mf t^\sharp$ are equal:
$W_{\mf t} = W_{\mf t^\sharp}$. 
\end{lem}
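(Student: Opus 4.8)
The plan is to unwind the definitions of $W_{\mf t}$ and $W_{\mf t^\sharp}$ and show that both coincide with the stabilizer of the relevant inertial class of $L^\sharp$ inside $W(G,L) = W(G^\sharp Z(G), L^\sharp Z(G)) = W(G^\sharp, L^\sharp)$. Recall that $\mf t$ is an inertial class for $G^\sharp Z(G)$ lying over $\mf t^\sharp \prec \fs$; by Lemma~\ref{lem:2.6}.a and the discussion preceding \eqref{eq:2.15} we may write $\mf t = [L^\sharp Z(G), \tilde\sigma]_{G^\sharp Z(G)}$, where $\tilde\sigma$ is a lift to $L^\sharp Z(G)$ of an irreducible constituent $\sigma^\sharp$ of $\Res^L_{L^\sharp}(\omega)$, and $\mf t^\sharp = [L^\sharp, \sigma^\sharp]_{G^\sharp}$. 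Then by definition
\[
W_{\mf t} = \{ w \in W(G,L) \mid w \cdot \tilde\sigma \in [L^\sharp Z(G), \tilde\sigma]_{L^\sharp Z(G)} \}, \qquad
W_{\mf t^\sharp} = \{ w \in W(G,L) \mid w \cdot \sigma^\sharp \in [L^\sharp, \sigma^\sharp]_{L^\sharp} \}.
\]
First I would observe that $W(G,L)$ acts trivially on the central torus of $L$ (representatives are permutation matrices, hence act trivially on $Z(G) = F^\times I_m$), so the $W(G,L)$-action is compatible with the decomposition of $L^\sharp Z(G)$-representations under $\Res$ to $L^\sharp$ and twisting by $X_\nr(Z(G))$.

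Next I would prove the two inclusions. For $W_{\mf t} \subseteq W_{\mf t^\sharp}$: if $w \cdot \tilde\sigma \cong \tilde\sigma \otimes \nu$ for some $\nu \in X_\nr(L^\sharp Z(G))$, then restricting to $L^\sharp$ and using that $w$ acts trivially on $Z(G)$ gives $w \cdot \sigma^\sharp \cong \sigma^\sharp \otimes (\nu|_{L^\sharp})$, and $\nu|_{L^\sharp} \in X_\nr(L^\sharp)$, so $w \in W_{\mf t^\sharp}$. For the reverse inclusion $W_{\mf t^\sharp} \subseteq W_{\mf t}$: if $w \cdot \sigma^\sharp \cong \sigma^\sharp \otimes \chi^\sharp$ with $\chi^\sharp \in X_\nr(L^\sharp)$, I would lift $\chi^\sharp$ to an unramified character of $L$ (as in the proof of Lemma~\ref{lem:2.1}), restrict it to $L^\sharp Z(G)$, and compare central characters: both $w \cdot \tilde\sigma$ and $\tilde\sigma \otimes \chi$ restrict to $w\cdot\sigma^\sharp$ on $L^\sharp$ and one computes directly that their central characters on $Z(G)$ differ by an unramified character of $Z(G)$, hence $w \cdot \tilde\sigma$ and $\tilde\sigma$ are inertially equivalent in $L^\sharp Z(G)$, i.e.\ $w \in W_{\mf t}$. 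Alternatively, and perhaps more cleanly, I would use Lemma~\ref{lem:2.2}.b to note that $W_{\mf t^\sharp}$ depends only on $[M,\sigma]_M$ and is the subgroup of $W_\fs^\sharp$ fixing all the $[L^\sharp,\sigma'^\sharp]_{L^\sharp}$; the same argument applied to $G^\sharp Z(G)$ and its Levi subgroups identifies $W_{\mf t}$ with the subgroup of the corresponding $W_\fs^\sharp$ (which is the same abstract group, as $W(G^\sharp Z(G), L^\sharp Z(G)) = W(G,L)$) fixing all the lifted classes, and the lifting bijection is $W(G,L)$-equivariant.

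The main obstacle, and the point that needs genuine care rather than routine bookkeeping, is the central-character argument in the direction $W_{\mf t^\sharp} \subseteq W_{\mf t}$: one must check that the ambiguity introduced by choosing a lift $\tilde\sigma$ of $\sigma^\sharp$ and a lift $\chi$ of $\chi^\sharp$ is always absorbable by an \emph{unramified} character of $Z(G)$, i.e.\ that no ramified central discrepancy can appear. This hinges on the fact that $L^\sharp Z(G)$ is a central extension of $L^\sharp$ with kernel and cokernel controlled by $\Nrd$, so that $X_\nr(L^\sharp Z(G))$ surjects onto $X_\nr(L^\sharp)$ with the difference lying in $X_\nr(Z(G))$ — precisely the content of the fibration in Lemma~\ref{lem:2.6}.b and the surrounding remarks. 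Once that compatibility is in hand, equality of the two groups is immediate, and I would not expect the write-up to exceed a short paragraph.
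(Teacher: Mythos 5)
Your proposal takes essentially the same approach as the paper: both inclusions hinge on the observation that $W(G,L)$ acts trivially on $Z(G)$, so that $w(\mf t)$ cannot differ from $\mf t$ by a nontrivial ramified character of $Z(G)$ — the paper phrases this directly in terms of the fibration of Bernstein components over $\mf t^\sharp$ described just before the lemma, while you unwind it into an explicit comparison of central characters, but the mechanism is the same. Your flagged concern about the lift $\chi$ does resolve exactly as you anticipate (since $\mf o_F^\times \cap F^{\times md} = (\mf o_F^\times)^{md}$, unramifiedness on $Z(G)$ forces unramifiedness on $F^{\times md}$), so the argument goes through.
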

\begin{proof}
As we observed above, $\mf t^\sharp$ is the only Bernstein component involved in the 
restriction of $\Irr^{\mf t}(G^\sharp Z(G))$ to $G^\sharp$. Hence 
$W^{\mf t} \subset W^{\mf t^\sharp}$.
Conversely, if $w \in W(L^\sharp)$ stabilizes $\mf t^\sharp$, then it stabilizes the set
of Bernstein components $\Irr^{\mf t'}(G^\sharp Z(G))$ which project onto 
$\Irr^{\mf t^\sharp}(G^\sharp)$. But any such $\mf t'$ differs from $\mf t$ only by a ramified 
character of $Z(G)$. Since conjugation by elements of $N_G (L)$ does not affect 
$Z(G) ,\; w(\mf t)$ cannot be another $\mf t'$, and so $w \in W_{\mf t}$.  
\end{proof}

The above provides a complete picture of $\Res_{G^\sharp}^{G^\sharp Z(G)}$, so we can focus
on $\Res^G_{G^\sharp Z(G)}$. Although $[G : G^\sharp Z(G)]$ is sometimes infinite 
(e.g. if char$(F)$ divides $md$), only 
finitely many characters of $G / G^\sharp Z(G)$ occur in relation to a fixed Bernstein
component. This follows from Lemma \ref{lem:2.4}.c and makes it possible to treat 
$G^\sharp Z(G) \subset G$ as a group extension of finite degree.

Given an inertial equivalence class $\fs = [L,\omega]_G$, we define
$\Irr^\fs (G^\sharp Z(G))$ as the set of all elements of $\Irr (G^\sharp Z(G))$ that can
be obtained as a subquotient of $\Res_{G^\sharp Z(G)}^G (\pi)$ for some $\pi \in
\Irr^\fs (G)$. We also define \label{i: Rep} $\Rep^\fs (G^\sharp Z(G))$ as the collection of $G^\sharp Z(G)
$-representations all whose irreducible subquotients lie in $\Irr^\fs (G^\sharp Z(G))$.
It follows from Lemma \ref{lem:2.1} and the above that $\Irr^\fs (G^\sharp Z(G))$ is a union 
of finitely many Bernstein components $\mf t$ for $G^\sharp Z(G)$. We denote this relation 
between $\fs$ and $\mf t$ by $\mf t \prec \fs$. Thus
\begin{equation}\label{eq:2.19}
\Irr^\fs (G^\sharp Z(G)) = \bigcup\nolimits_{\mf t \prec \fs} \Irr^{\mf t}(G^\sharp Z(G)) .
\end{equation}
All the reducibility of $G$-representations caused by 
restricting them to $G^\sharp$ can already be observed by restricting them to $G^\sharp Z(G)$.
In view of \eqref{eq:2.8} and Lemma \ref{lem:2.3}, all our results on $\Res_{G^\sharp}^G$
remain valid if we replace everywhere $G^\sharp$ by $G^\sharp Z(G)$ and $L^\sharp$ by
$L^\sharp Z(G)$.

In view of \eqref{eq:2.1}, intertwining operators associated to Stab$(\fs)$ span \\
$\End_{G^\sharp Z(G)}(I_P^G (\omega \otimes \chi))$ whenever $\chi \in X_\nr (L)$
is unitary. With results of Harish-Chandra we will show that even more is true. 
For $w \in W (G,L)$ let \label{i:31}
\begin{equation}\label{eq:2.17}
J \big( w, I_P^G (\omega \otimes \chi)) \big) \in \Hom_G \big( I_P^G (\omega \otimes \chi),
I_P^G (w(\omega \otimes \chi)) \big)
\end{equation}
be the intertwining operator constructed in \cite[\S 5.5.1]{Sil} and \cite[\S V.3]{Wal}. 
We recall that it is rational as a function of $\chi \in X_\nr (L)$ and that it is 
regular and invertible if $\chi$ is unitary. In contrast to the intertwining operators
below, \eqref{eq:2.17} can be normalized in a canonical way.

For $(w,\gamma) \in \Stab (\fs,P \cap M)$ there exists a $\chi' \in X_\nr (L)$, unique
up to $X_\nr (L,\omega)$, such that 
$w (\omega \otimes \chi \gamma) \cong \omega \otimes \chi'$. Choose a nonzero
\begin{equation}\label{eq:2.22}
J(\gamma,\omega \otimes \chi) \in 
\Hom_L \big( \omega \otimes \chi, w^{-1}(\omega \otimes \chi' \gamma^{-1}) \big) .
\end{equation}
In view of Lemma \ref{lem:2.4}.b $\gamma$ determines $w$, so this is unambigous and
determines $J(\gamma,\omega \otimes \chi)$ up to a scalar. 
For unramified $\gamma$ we have $\chi' = \chi \gamma$, but nevertheless 
$J(\gamma,\omega \otimes \chi)$ need not be a scalar multiple of identity. The reason
lies in the difference between $T_\fs$ and $X_\nr (L)$, we refer to \cite[\S V]{Wal}
for more background.

Parabolic induction produces
\begin{equation}
J(\gamma,I_P^G (\omega \otimes \chi)) := I_P^G ( J(\gamma,\omega \otimes \chi)) \in
\Hom_G \big( I_P^G (\omega \otimes \chi), I_P^G (w^{-1}(\omega \otimes \chi' \gamma^{-1})) \big) .
\end{equation}
For $w'(w , \gamma) \in \Stab (\fs)$ with $w' \in W_\fs$ and 
$(w,\gamma) \in \Stab (\fs,P\cap M)$ we define
\begin{multline}\label{eq:2.18}
J(w' (w,\gamma), I_P^G (\omega \otimes \chi)) := \\
J(w',I_P^G(\omega \otimes \chi' \gamma^{-1})) \circ  J \big( w, I_P^G (w^{-1}(\omega 
\otimes \chi' \gamma^{-1})) \big) \circ J(\gamma,I_P^G (\omega \otimes \chi)) .
\end{multline}
By construction this lies both in $\Hom_G \big( I_P^G (\omega \otimes \chi),
I_P^G (w'(\omega \otimes \chi' \gamma^{-1})) \big)$ and in \\
$\Hom_{G^\sharp Z(G)} \big( I_P^G (\omega \otimes \chi),I_P^G (w'(\omega \otimes \chi')) \big)$.
We remark that the map from $\Stab (\fs)$ to intertwining operators \eqref{eq:2.18}
is not always multiplicative, some 2-cocycle with values in $\C^\times$ might be 
involved. However, in view of the canonical normalization of \eqref{eq:2.17},
\begin{equation}
W_\fs \ni w' \mapsto \{ J(w',I_P^G (\omega \otimes \chi)) \mid \chi \in X_\nr (L) \} 
\end{equation}
is a group homomorphism.

The following result is the main justification for introducing Stab$(\fs)$ in \eqref{eq:2.14}. 

\begin{thm}\label{thm:2.5}
Let $\omega \in \Irr (L)$ be supercuspidal and let $\chi_1,\chi_2 \in X_\nr (L)$ 
be unramified characters.
\enuma{
\item The following are equivalent:
\begin{itemize}
\item[(i)] $\Res^G_{G^\sharp Z(G)} (I_P^G (\omega \otimes \chi_1))$ and 
$\Res^G_{G^\sharp Z(G)} (I_P^G (\omega \otimes \chi_2))$ have a common irreducible subquotient;
\item[(ii)] $\Res^G_{G^\sharp Z(G)} (I_P^G (\omega \otimes \chi_1))$ and
$\Res^G_{G^\sharp Z(G)} (I_P^G (\omega \otimes \chi_2))$ have the same irreducible constituents,
counted with multiplicity;
\item[(iii)] $\omega \otimes \chi_1$ and $\omega \otimes \chi_2$ 
belong to the same $\Stab(\fs)$-orbit. 
\end{itemize}
\item If $\omega \otimes \chi_1$ and $\omega \otimes \chi_2$ are unitary, then 
$\Hom_{G^\sharp Z(G)}\big( I_P^G (\omega \otimes \chi_1), I_P^G (\omega \otimes \chi_2) \big)$ 
is spanned by intertwining operators $J((w,\gamma), I_P^G (\omega \otimes \chi_1))$ with 
$(w,\gamma) \in \Stab (\fs)$ and $w (\omega \otimes \chi_1 \gamma) \cong \omega \otimes \chi_2$.
}
\end{thm}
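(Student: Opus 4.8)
\textbf{Proof strategy for Theorem \ref{thm:2.5}.}

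The plan is to prove the cycle of implications (iii) $\Rightarrow$ (ii) $\Rightarrow$ (i) $\Rightarrow$ (iii) for part (a), and then deduce part (b) along the way from the unitary case. The easy direction is (ii) $\Rightarrow$ (i), which is trivial. For (iii) $\Rightarrow$ (ii), suppose $\omega \otimes \chi_2 \cong w(\omega \otimes \chi_1 \gamma)$ for some $(w,\gamma) \in \Stab(\fs)$. Since $\gamma \in \Irr(L/L^\sharp Z(G))$ extends to a character of $G$ trivial on $G^\sharp Z(G)$, tensoring by $\gamma$ does not change the restriction to $G^\sharp Z(G)$; and parabolic induction commutes with this twist, so $\Res^G_{G^\sharp Z(G)} I_P^G(\omega \otimes \chi_1 \gamma) \cong \Res^G_{G^\sharp Z(G)} I_P^G(\omega \otimes \chi_1)$. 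On the other hand $I_P^G(\omega \otimes \chi_1 \gamma)$ and $I_P^G(w(\omega \otimes \chi_1 \gamma)) = I_P^G(\omega \otimes \chi_2)$ have the same irreducible constituents with multiplicity as $G$-representations (the constituents of a parabolically induced representation from associate cuspidal data coincide, by the theory of the Bernstein decomposition / Harish-Chandra), hence a fortiori the same constituents with multiplicity after restriction to $G^\sharp Z(G)$. This gives (ii).

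The substantial direction is (i) $\Rightarrow$ (iii). Here I would argue as follows. Assume $\Res^G_{G^\sharp Z(G)} I_P^G(\omega \otimes \chi_1)$ and $\Res^G_{G^\sharp Z(G)} I_P^G(\omega \otimes \chi_2)$ share an irreducible constituent $\sigma^\sharp$. First reduce to the case where the $\chi_i$ are unitary, or more precisely reduce to considering the Langlands constituent: replacing $I_P^G(\omega \otimes \chi_i)$ by its unique Langlands quotient (when $\chi_i$ is in Langlands position) does not lose the relevant constituent, and by the remark at the end of \S\ref{par:res1} the description via $\Stab(\omega \otimes \chi_i)$ is unaffected. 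Next, by the theory recalled around \eqref{eq:2.2} (Hiraga--Saito), for each $i$ the irreducible $G^\sharp Z(G)$-constituents of $\Res^G_{G^\sharp Z(G)} \pi_i$, where $\pi_i$ is the Langlands constituent of $I_P^G(\omega \otimes \chi_i)$, are governed by $X^{G}(\pi_i)$; moreover $\sigma^\sharp$ being common to the restrictions of $\pi_1$ and $\pi_2$ forces, by Proposition \ref{prop:2.6}.b (applied to $G^\sharp Z(G) \subset G$, using \eqref{eq:2.8} and the finite-index trick of \S\ref{par:interm}), that $\pi_2 \cong \pi_1 \otimes \gamma$ for some $\gamma \in \Irr(G/G^\sharp Z(G))$. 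So up to a twist by such a $\gamma$, the two induced representations have a common $G$-constituent, which by uniqueness of the Langlands data and the injectivity of parabolic induction on associate classes means $\omega \otimes \chi_2$ is $W(G,L)$-conjugate to $\omega \otimes \chi_1 \gamma$; that is exactly the statement that $(w,\gamma) \in \Stab(\fs)$ for the appropriate $w$, i.e. (iii). The one subtlety to handle carefully is that $\gamma$ a priori lies in $\Irr(G/G^\sharp Z(G))$ but need not be unramified; however any two cuspidal supports differing by an unramified twist are already handled inside a single Bernstein block, and the argument of Lemma \ref{lem:2.4}.c shows only finitely many characters of $G/G^\sharp Z(G)$ are relevant, so one may absorb the unramified part into $\chi_i$ and take $\gamma$ as in the definition of $\Stab(\fs)$.

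For part (b), assume $\omega \otimes \chi_1, \omega \otimes \chi_2$ unitary, so $I_P^G(\omega \otimes \chi_i)$ is a unitary $G$-representation and hence completely reducible over $G^\sharp Z(G)$ by \eqref{eq:2.8}. By Frobenius reciprocity and \eqref{eq:2.25}, every $\gamma \in X^L(\omega \otimes \chi_1)$-type operator, together with the Harish-Chandra operators $J(w, -)$ of \eqref{eq:2.17} for $w \in W_\fs$ and the operators $J(\gamma, \omega \otimes \chi_1)$ of \eqref{eq:2.22}, assembles via \eqref{eq:2.18} into a $G^\sharp Z(G)$-intertwiner $I_P^G(\omega \otimes \chi_1) \to I_P^G(\omega \otimes \chi_2)$ whenever $w(\omega \otimes \chi_1 \gamma) \cong \omega \otimes \chi_2$. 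To see these span the whole $\Hom$-space, note that by \eqref{eq:2.1} applied to a unitary $\pi$, $\End_{G^\sharp Z(G)}(\Res \pi) \cong \C[X^G(\pi),\kappa_\pi]$ is spanned by the $I(\gamma,\pi)$, and combining this with part (a) (which identifies exactly which $\pi = I_P^G(\omega\otimes\chi_1 \gamma)$ have constituents in common with $I_P^G(\omega\otimes\chi_2)$) and with the fact that the $G$-intertwiners between $I_P^G(\omega\otimes\chi_1\gamma)$ and $I_P^G(\omega\otimes\chi_2)$ for $W_\fs$-conjugate data are spanned by the (normalized) Harish-Chandra operators, a dimension count forces the operators \eqref{eq:2.18} to span $\Hom_{G^\sharp Z(G)}(I_P^G(\omega\otimes\chi_1), I_P^G(\omega\otimes\chi_2))$. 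The hard part throughout is bookkeeping: keeping track of the distinction between $X_\nr(L)$ and $T_\fs$ (so that $J(\gamma, \omega\otimes\chi)$ for unramified $\gamma$ is not a scalar), and ensuring that the 2-cocycle ambiguity in \eqref{eq:2.18} does not interfere with the spanning statement (it does not, since we only claim the operators span, not that they form a group).
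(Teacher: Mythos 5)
Your overall strategy runs roughly parallel to the paper's, but there is a genuine gap in the argument for (i) $\Rightarrow$ (iii). You propose to ``reduce to considering the Langlands constituent'' $\pi_i$ of $I_P^G(\omega\otimes\chi_i)$, asserting that this ``does not lose the relevant constituent''. That is not justified: the common irreducible subquotient $\sigma^\sharp$ of the two restrictions need not occur in $\Res^G_{G^\sharp Z(G)}\pi_i$ when $\pi_i$ is specifically the Langlands constituent --- $\sigma^\sharp$ could just as well arise from any other irreducible $G$-subquotient of $I_P^G(\omega\otimes\chi_i)$. The remark at the end of Subsection \ref{par:res1} that you cite concerns only the parametrization of the L-packet of the Langlands quotient and does not license this reduction. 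The fix is to drop the Langlands detour entirely: choose arbitrary irreducible $G$-subquotients $\pi_1,\pi_2$ of $I_P^G(\omega\otimes\chi_1)$ and $I_P^G(\omega\otimes\chi_2)$ through which $\sigma^\sharp$ factors (possible by Proposition \ref{prop:2.6}.c), apply Proposition \ref{prop:2.6}.b in its $G^\sharp Z(G)$-form to get $\pi_2\cong\pi_1\otimes\gamma$ with $\gamma\in\Irr(G/G^\sharp Z(G))$, and conclude via uniqueness of cuspidal support (the Bernstein centre, not uniqueness of Langlands data) that $\omega\otimes\chi_2$ is $W(G,L)$-conjugate to $\omega\otimes\chi_1\gamma$. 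This is what the paper does.

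A secondary difference worth noting: your (iii) $\Rightarrow$ (ii) invokes as a black box that $[I_P^G(\rho)]=[I_P^G(w\cdot\rho)]$ in the Grothendieck group for $w\in W(G,L)$, and then restricts. The paper instead first establishes (a) and (b) together for unitary twists --- there Harish-Chandra's operators $J(w,\cdot)$ are honest isomorphisms, and the commuting-algebra theorem together with Hiraga--Saito (equations \eqref{eq:2.1}--\eqref{eq:2.2}) and Proposition \ref{prop:2.6}.b yields both the spanning statement and the equivalences --- and then extends (a) to arbitrary twists by the rationality-of-traces argument \eqref{eq:2.13}: the trace difference is a rational function of $\chi\in X_\nr (L)$ vanishing on the Zariski-dense unitary locus, hence zero, and by Casselman equal traces force equal Jordan--H\"older constituents. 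Citing the Grothendieck-group invariance is acceptable, but that fact is itself proved by the same kind of rationality argument, so the paper's route is the more self-contained one.
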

\begin{proof}
First we assume that $\omega \otimes \chi_1$ and $\omega \otimes \chi_2$ are unitary.
By Harish-Chandra's Plancherel isomorphism \cite{Wal} and the commuting algebra theorem 
\cite[Theorem 5.5.3.2]{Sil}, the theorem is true for $G$, with $W_\fs$ instead of Stab$(\fs)$.
More generally, for any tempered $\rho_1,\rho_2 \in \Irr (L)$, $\Hom_G (I_P^G (\rho_1),
I_P^G (\rho_2))$ is spanned by intertwining operators $J(w,I_P^G (\rho_1))$ with
$w \in W(G,L)$ and $w \cdot \rho_1 \cong \rho_2$.

For $\pi_1, \pi_2 \in \Irr (G)$ Proposition \ref{prop:2.6}.b says that 
$\Res^G_{G^\sharp Z(G)}(\pi_1)$ and $\Res^G_{G^\sharp Z(G)}(\pi_2)$ are isomorphic if 
$\pi_2 \cong \pi_1 \otimes \gamma$ for some $\gamma \in \Irr (G / G^\sharp Z(G))$, and have 
no common irreducible subquotients otherwise. Together with \eqref{eq:2.1} this implies that 
\[
\Hom_{G^\sharp Z(G)}\big( I_P^G (\omega \otimes \chi_1), I_P^G (\omega \otimes \chi_2) \big)
\]
is spanned by intertwining operators $J( (w,\gamma), I_P^G (\omega \otimes \chi_1))$
with $w (\omega \otimes \chi_1 \gamma) \cong \omega \otimes \chi_2$. Such pairs $(w,\gamma)$
automatically belong to Stab$(\fs)$. Since both factors of $J( (w,\gamma), 
I_P^G (\omega \otimes \chi_1))$ are bijective, the equivalence of (i), (ii) and (iii) follows.
This proves (b) and (a) in the unitary case.

Now we allow $\chi_1$ and $\chi_2$ to be non-unitary. Assume (i). From Proposition 
\ref{prop:2.6}.b we obtain a $\gamma \in \Irr (G / G^\sharp Z(G))$ such that 
$I_P^G (\omega \otimes \chi_1 \gamma)$ and $I_P^G (\omega \otimes \chi_2)$ have a common 
irreducible quotient. The theory of the Bernstein centre for $G$ \cite{BeDe} implies that
$\omega \otimes \chi_1 \gamma$ and $\omega \otimes \chi_2$ are isomorphic via an element
$w \in W (G,L)$. Then $(w,\gamma) \in \Stab (\fs)$, so (iii) holds.

Suppose now that $w (\omega \otimes \chi_1 \gamma) \cong \omega \otimes \chi_2$ for some 
$(w,\gamma) \in \Stab (\fs)$ and consider the map
\begin{equation}\label{eq:2.13}
\cH (G^\sharp Z(G)) \times X_\nr (L) \to \C : (f,\chi) \mapsto \text{tr}(f,I_P^G 
(\omega \otimes \chi)) - \text{tr} \big( f, I_P^G (w (\omega \otimes \gamma \chi)) \big) .
\end{equation}
It is well-defined since $\Res^G_{G^\sharp}  I_P^G (\omega \otimes \chi)$ has finite length,
by Proposition \ref{prop:2.6}.c.
By what we proved above, the value is 0 whenever $\chi$ is unitary. But for a fixed $f \in \cH (G)$
this is a rational function of $\chi \in X_\nr (L)$, and the unitary characters are Zariski-dense
in $X_\nr (L)$. Therefore \eqref{eq:2.13} is identically 0, which shows that 
$(I_P^G (\omega \otimes \chi))$ and $I_P^G (w (\omega \otimes \gamma \chi))$ have the same trace.
By Proposition \ref{prop:2.6}.c these $G^\sharp$-representations have finite length, so by 
\cite[2.3.3]{Cas} their irreducible constituents (and multiplicities) are determined by their traces. 
Thus (iii) implies (ii), which obviously implies (i). 
\end{proof}

\section{Morita equivalences}
\label{sec:Mor}

Let $\fs = [L,\omega]_G$. We want to analyse the two-sided ideal \label{i: H} $\cH (G^\sharp Z(G))^\fs$ 
of $\cH (G^\sharp Z(G))$ associated to the category of representations 
$\Rep^\fs (G^\sharp Z(G))$ introduced in Subsection \ref{par:interm}. In this section we
will transform these algebras to more manageable forms by means of Morita equivalences.

We note that by \eqref{eq:2.19} we can regard $\cH (G^\sharp Z(G))^\fs$  
as a finite direct sum of ideals associated to one Bernstein component:
\begin{equation}\label{eq:3.47}
\cH (G^\sharp Z(G))^\fs = \bigoplus\nolimits_{\mf t \prec \fs} \cH (G^\sharp Z(G))^{\mf t} . 
\end{equation}
Recall that the abelian group $\Irr (G / G^\sharp)$ acts on $\cH (G)$ by 
\[
(\chi \cdot f)(g) = \chi (g) f(g) .
\]
We also introduce an alternative action of $\gamma \in \Irr (G / G^\sharp)$ on $\cH (G)$ 
(and on similar algebras):
\begin{equation}
\alpha_\gamma (f) = \gamma^{-1} \cdot f. 
\end{equation}
Obviously these two actions have the same invariants. An advantage of the latter lies in
the induced action on representations: 
\[
\alpha_\gamma (\pi) = \pi \circ \alpha_\gamma^{-1} = \pi \otimes \gamma.
\]
Suppose for the moment that the characteristic of $F$ does not divide $m d$, so that 
$G / G^\sharp Z(G)$ is finite. Then there are canonical isomorphisms
\begin{multline}\label{eq:3.83}
\bigoplus\nolimits_{\fs \in \mathfrak B (G) / \sim} 
\cH (G^\sharp Z(G))^\fs \cong \cH (G^\sharp Z(G)) \\ 
\cong \cH (G)^{\Irr (G / G^\sharp Z(G))} \cong 
\bigoplus\nolimits_{\fs \in \mathfrak B (G) / \sim} ( \cH (G)^\fs )^{X^G (\fs)} ,
\end{multline}
where $\fs \sim \fs'$ if and only if they differ by a character of $G / G^\sharp Z(G)$.

Unfortunately this is not true if char$(F)$ does divide $m d$. In that case there are no 
nonzero $\Irr (G / G^\sharp Z(G))$-invariant elements in $\cH (G)$, because such elements 
could not be locally constant as functions on $G$. In Subsection \ref{par:morita2}
we will return to this point and show that it remains valid as a Morita equivalence.

Throughout this section we assume that the Conditions \ref{cond} are in force.

\subsection{Construction of a particular idempotent} \

We would like to find a type which behaves well under restriction from $G$ to $G^\sharp$.
As this is rather complicated, we start with a simpler goal: an idempotent in
$\cH (M)$ which is suitable for restriction from $M$ to $M^\sharp$.

Recall from \cite{SeSt6} that there exists an $\fs$-type $(K_G, \lambda_G)$, \label{i:32}
and that it can be constructed as a cover of a type $(K,\lambda)$ for \label{i:82}
$\fs_M = [L,\omega]_M$.  We refer to \cite[Section 8]{BuKu3} for the notion
of a cover of a type.  For the moment, it suffices to know that 
$K = K_G \cap M$ and that $\lambda$ is the restriction of $\lambda_G$ to $K$. 

From \eqref{eq:1.2} we know that $N_G ([L,\omega]_L) \subset M$ and by Condition 
\ref{cond} $PM$ is a parabolic subgroup with Levi factor $M$. In this situation
\cite[Theorem 12.1]{BuKu3} says that there is an algebra isomorphism
\begin{equation}\label{eq:3.50}
e_{\lambda_G} \cH (G) e_{\lambda_G} \cong e_\lambda \cH (M) e_\lambda
\end{equation}
and that the normalized parabolic induction functor
\[
I_{PM}^G : \Rep^{\fs_M}(M) \to \Rep^\fs (G) 
\]
is an equivalence of categories. 

By Conditions \ref{cond} and \eqref{eq:1.1} we may assume that 
$(K,\lambda)$ factors as
\begin{equation}\label{eq:3.3}
\begin{split}
& K = \prod\nolimits_i (K \cap M_i) =: \prod\nolimits_i K_i , \\
& (\lambda, V_\lambda) = 
\Big( \bigotimes\nolimits_i \lambda_i, \bigotimes\nolimits_i V_{\lambda_i} \Big) . 
\end{split}
\end{equation}
Moreover we may assume that, whenever $m_i = m_j$ and $\omega_i$ and $\omega_j$
differ only by a character of $L_i / L_i^\sharp$, $K_i = K_j$ and $\lambda_i$ and
$\lambda_j$ also differ only by a character of $K_i / (K_i \cap L_i^\sharp)$. 
We note that these assumptions imply that $\mf R_\fs^\sharp$ normalizes $K$.
By respectively \eqref{eq:3.50}, \eqref{eq:1.5} and \eqref{eq:3.3} there are isomorphisms
\begin{multline}
e_{\lambda_G} \cH (G) e_{\lambda_G} \cong \cH (M,\lambda) \otimes_\C \End_\C (V_\lambda)
\cong \bigotimes\nolimits_i  \cH (M_i ,\lambda_i) \otimes_\C \End_\C (V_{\lambda_i}) .
\end{multline}

We need more specific information about the type $(K,\lambda)$ in $M$. To study this and 
the related types $(K,w(\lambda) \otimes \gamma)$  we will make ample use of the theory 
developed by S\'echerre and Stevens \cite{Sec3,SeSt4,SeSt6}.

In \cite{Sec3} $(K,\lambda)$ arises as a cover of a $[L,\omega]_L$-type $(K_L,\lambda_L)$.
In particular $\lambda$ is trivial on both $K \cap U$ \label{i:83}
and $K \cap \overline{U}$, where $U$ and $\overline{U}$ are the unipotent radicals of
$P \cap M$ and of the opposite parabolic subgroup of $M$, and $\lambda_L$ is the 
restriction of $\lambda$ to $K_L = K \cap L$.

\begin{prop}\label{prop:3.3}
We can choose the $\fs_M$-type $(K,\lambda)$ such that, for all 
$(w,\gamma) \in \Stab (\fs)$, $(K,w(\lambda) \otimes \gamma)$ is conjugate to 
$(K,\lambda)$ by an element \label{i:c} $c_\gamma \in L$. Moreover $c_\gamma Z(L)$ lies in a compact 
subgroup of $L / Z(L)$ and we can arrange that $c_\gamma$ depends only on the isomorphism 
class of $w (\lambda) \otimes \gamma \in \Irr (K)$.
\end{prop}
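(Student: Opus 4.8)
The plan is to reduce the statement to a concrete question about the simple types of S\'echerre and Stevens and then exploit the rigidity of their construction. First I would recall from \cite{Sec3,SeSt4} the structure of the $\fs_M$-type $(K,\lambda)$: under Conditions \ref{cond} it factors as $(K,\lambda) = \bigotimes_i (K_i,\lambda_i)$ with each $(K_i,\lambda_i)$ a simple type in $M_i = \GL_{m_i e_i}(D)$ attached to the inertial class $[L_i,\omega_i]_{M_i}$, and $K$ is built from a maximal simple character $\theta$ on a pro-$p$ subgroup $H$ together with a $\beta$-extension and a cuspidal representation of the associated finite reductive quotient. The key input is that a simple type is essentially determined, up to conjugacy, by its attached simple character; so to control the conjugacy class of $(K, w(\lambda)\otimes\gamma)$ it suffices to control the simple character underlying it. Since $w\in W_\fs\subset M$ and $\gamma\in\Irr(L/L^\sharp Z(G))$ extends to a character of $M$ trivial on $M^\sharp Z(G)$, the twist $w(\lambda)\otimes\gamma$ is again a simple type for an inertial class of the form $[L,\omega']_M$ with $\omega' = w(\omega)\otimes\gamma \in [L,\omega]_L$ (because $(w,\gamma)\in\Stab(\fs)$); hence it is equivalent to $(K,\lambda)$ in $M$, and the only issue is to realise the intertwining element inside $L$ with the claimed boundedness.

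The core step is therefore to show: if two simple types $(K,\lambda)$ and $(K,\lambda')$ in $M=\prod_i M_i$ are conjugate in $M$ and $\lambda' = w(\lambda)\otimes\gamma$ with $w\in W_\fs$ and $\gamma$ a character trivial on $M^\sharp Z(M)$, then one can conjugate by an element of $L$, and moreover by one whose image in $L/Z(L)$ stays in a fixed compact set as $\gamma$ varies over the finite group $X^L(\fs)$ (and $w$ over $W_\fs$). For the $W_\fs$-part this is classical: $W_\fs \cong \prod_i S_{e_i}$ is realised by permutation matrices, which permute the identical blocks $L_i^{e_i}$ and carry $(K,\lambda)$ to itself up to the obvious block permutation; since by our normalisation $\lambda_i$ on repeated blocks is literally the same representation, these permutations actually fix $(K,\lambda)$. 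For the $\gamma$-part one uses that twisting a simple character $\theta$ by a character $\gamma$ of $M$ trivial on $H^1 \supset H$ — more precisely, $\gamma|_{H^\sharp}$ trivial — changes it only through the central torus, and the intertwining of the two simple characters is governed by \cite{SeSt4}: $I_M(\theta,\theta')$ is a single double coset $H J H$ with $J$ in the relevant maximal torus; picking $c_\gamma$ in $L$ from this double coset does the job. Finiteness of $X^L(\fs)$ (Lemma \ref{lem:2.4}.c) then bounds the finitely many possible cosets, and after multiplying by a suitable central element we may take all $c_\gamma$ with $c_\gamma Z(L)$ in one compact subgroup of $L/Z(L)$.

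The last assertion — that $c_\gamma$ depends only on the isomorphism class of $w(\lambda)\otimes\gamma$ in $\Irr(K)$, not on the pair $(w,\gamma)$ itself — follows because any two pairs $(w,\gamma),(w',\gamma')\in\Stab(\fs)$ with $w(\lambda)\otimes\gamma \cong w'(\lambda)\otimes\gamma'$ as $K$-representations give the same target type, so $c_\gamma c_{\gamma'}^{-1}$ normalises $(K,\lambda)$ in $L$; absorbing such normalising elements (they form a finite group modulo $Z(L)\cap K$, again by finiteness results) into the choice, we get a well-defined $c$ on isomorphism classes. The main obstacle I anticipate is precisely the boundedness claim: a priori the intertwining double coset $HJH$ could drift off to infinity in $L/Z(L)$ as $\gamma$ ranges over $X^L(\fs)$, and ruling this out requires a careful look at how $\gamma$ acts on the simple character — in particular that the relevant $J$ lies in the \emph{compact} part of the diagonal torus of $L$ modulo its centre — which is where the precise form of the S\'echerre--Stevens construction, rather than soft arguments, has to be used. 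Everything else (reduction to simple characters, the $W_\fs$-equivariance, and the well-definedness on isomorphism classes) is comparatively routine once that rigidity statement is in hand.
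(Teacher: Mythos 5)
Your overall strategy matches the paper's: reduce to the case $w=1$ with $\gamma\in X^L(\fs)$, then work block-by-block in $M=\prod_i M_i$ and invoke the S\'echerre--Stevens conjugacy of maximal simple types for a common supercuspidal inertial class to produce $c_\gamma$ inside $L$. Two points deserve scrutiny, one minor and one serious.

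Minor: you write $w\in W_\fs$, but the projection of $\Stab(\fs)$ onto the first coordinate is $W_\fs^\sharp$, which is generally strictly larger than $W_\fs$. The paper's reduction to $w=1$ is cleaner and handles this uniformly: since $(w,\gamma)\in\Stab(\fs)$, both $(K,\lambda)$ and $(K,w(\lambda)\otimes\gamma)$ are $[L,\omega]_M$-types, hence differ (under Conditions \ref{cond} and the normalization of $\lambda$ across blocks) only by some $\gamma'\in X^L(\fs)$; one then proves the statement for $(1,\gamma')$.

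Serious: your argument for the claim that $c_\gamma Z(L)$ lies in a compact subgroup of $L/Z(L)$ does not work. You appeal to the finiteness of $X^L(\fs)$ to say there are only ``finitely many possible cosets,'' and then that ``after multiplying by a suitable central element'' these are contained in a compact subgroup. But compactness here is a property of each individual element: $c_\gamma Z(L)$ lies in a compact subgroup of $L/Z(L)$ iff $\overline{\langle c_\gamma\rangle} Z(L)/Z(L)$ is compact, and a single non-compact-modulo-center element (say diag$(\varpi_D,1)$ in $\GL_2(D)$) is a finite set that cannot be made compact mod center by any central twist. Finiteness of $X^L(\fs)$ is irrelevant to this. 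You do flag the issue yourself (``the intertwining double coset $HJH$ could drift off to infinity\dots''), but the acknowledged gap is exactly the heart of the statement. The paper closes it differently: using the \emph{proofs} of \cite[Theorem 7.2, Corollary 7.3]{SeSt6}, one can choose the conjugating element $c_i$ to normalize the compact open subgroups $J_i$ and $H^1_i$; then $c_i$ stabilizes the nonempty bounded fixed-point set of $J_i$ in the Bruhat--Tits building of $\GL_{m_i}(D)$, hence by the Bruhat--Tits fixed point theorem fixes a point, and therefore lies in a compact-modulo-center stabilizer. No analysis of where the intertwining double coset sits in a torus is needed; the normalization property plus the fixed point theorem does it. You would need to incorporate this (or an equivalent) argument for the proof to be complete.
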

\emph{Remark.} For $\GL_n (F)$ very similar results were proven in \cite[\S 4.2]{GoRo1},
using \cite{BuKu4}. 
\begin{proof}
By definition $(K,\lambda)$ and $(K,w(\lambda) \otimes \gamma)$ are both types for
$[L,\omega]_M$. By Conditions \ref{cond} and \eqref{eq:3.3} they differ only by a character
of $M / M^\sharp Z(G) \cong G / G^\sharp Z(G)$, which automatically lies in
$X^L (w(\fs) \otimes \gamma) = X^L (\fs)$. Hence it suffices to prove the proposition in
the case $w = 1, \gamma \in X^L (\fs)$. This setup implies that we consider $(w,\gamma)$
only modulo isomorphism of the representations $w(\lambda) \otimes \gamma$.

In view of the factorizations $M = \prod_i M_i$ and \eqref{eq:3.3}, we can treat
the various $i$'s separately. Thus we may assume that $M_i = G$. 
To get in line with \cite{Sec3}, we temporarily change 
the notation to $G = \GL_{m_i e}(D), L = \GL_{m_i} (D)^e, J_P = K_i$ and $\omega = 
\omega_i^{\otimes e}$. We need a type $(J_P,\lambda_P)$ for $[L,\omega]_G$ with suitable
properties. We will use the one constructed in \cite{Sec3} as a cover of a simple type 
$(J_i^e ,\lambda_i^{\otimes e})$ for $[L,\omega]_L$. Analogously there is a cover 
$(J_P,\lambda_P \otimes \gamma)$ of the $[L,\omega]_L$-type \label{i:29}
\[
(J_i^e,\lambda_i^{\otimes e} \otimes \gamma) = 
(J_i^e, (\lambda_i \otimes \gamma)^{\otimes e}) .
\]
In these constructions $(J_i,\lambda_i)$ and $(J_i,\lambda_i \otimes \gamma)$ are two
maximal simple types for the supercuspidal inertial equivalence class 
$[\GL_{m_i} (D),\omega_i]_{\GL_{m_i} (D)}$. According to
\cite[Corollary 7.3]{SeSt6} they are conjugate, say by $c_i \in \GL_{m_i} (D)$.
Then $(J_i^e,\lambda_i^{\otimes e})$ and $(J_i^e,\lambda_i^{\otimes e} \otimes \gamma)$
are conjugate by
\begin{equation*}
c_{\gamma,i} := \text{diag}(c_i,c_i,\ldots,c_i) \in L .
\end{equation*}
Recall that $P$ is the parabolic subgroup of $G$ generated by $L$ and the upper 
triangular matrices. Let $U$ be the unipotent radical of $P$ and $\overline{U}$ the
unipotent radical of the parabolic subgroup opposite to $P$. The group $J_P$
constructed in \cite[\S 5.2]{Sec3} and \cite[\S 5.5]{SeSt4} admits an Iwahori 
decomposition
\begin{equation}\label{eq:3.8}
J_P = (J_P \cap \overline{U}) (J_P \cap L) (J_P \cap U) =
(H^1 \cap \overline{U}) \, J_i^e \, (J \cap U) . 
\end{equation}
Let us elaborate a little on the subgroups $H^1$ and $J_P \subset J$ of $G$. 
In \cite{SeSt4} a certain stratum $[\mathfrak C,n_0,0,\beta]$ is associated to 
$(\GL_{m_i} (D), \omega_i)$, which gives rise to compact open subgroups $H^1_i$ and 
$J_i$ of $\GL_{m_i} (D)$. From this stratum S\'echerre \cite[5.2.2]{Sec3} defines 
another stratum $[\mathfrak A,n,0,\beta]$ associated to
$(L,\omega_i^{\otimes e})$, which in the same way produces $H^1$ and $J$. The procedure
entails that $H^1$ and $J$ can be obtained by putting together copies of $H_i^1, \; J_i$
and their radicals in block matrix form. 
The proofs of \cite[Theorem 7.2 and Corollary 7.3]{SeSt6} show that we can take 
$c_i$ such that it normalizes $J_i$ and $H^1_i$. Then it follows from the explicit 
relation between the above two strata that $c_{\gamma,i}$ normalizes $J$ and $H^1$.
Notice also that $c_{\gamma,i}$ normalizes $\overline{U}$ and $U$, because it lies in $L$.
Hence $c_{\gamma,i}$ normalizes $J_P$ and its decomposition \eqref{eq:3.8}.

By definition \cite[5.2.3]{Sec3} the representation $\lambda_P$ of $J_P$ is trivial on 
$J_P \cap \overline{U}$ and on $J_P \cap U$, whereas its restriction to
$J_P \cap L$ equals $\lambda_i^{\otimes e}$. As $c_i$ conjugates $\lambda_i$ to
$\lambda_i \otimes \gamma$, we deduce that $c_{\gamma,i}$ conjugates $(J_P,\lambda_P)$ to
$(J_P,\lambda_P \otimes \gamma)$. 

To get back to the general case we recall that $M = \prod_i M_i$ and we put
\begin{equation}\label{eq:3.9}
c_\gamma := \prod\nolimits_i c_{\gamma,i} = 
\prod\nolimits_i\text{diag}(c_i,c_i,\ldots,c_i) . 
\end{equation}
It remains to see that $c_\gamma$ becomes a compact element in
$L / Z(L)$. Since $J_i$ is open and compact, its fixed points in the semisimple 
Bruhat--Tits building $\mathcal B (\GL_{m_i} (D))$ form a nonempty bounded subset. 
Then $c_i$ stabilizes this subset, so by the Bruhat--Tits fixed 
point theorem $c_i$ fixes some point $x_i \in \mathcal B (\GL_{m_i}(D))$. But the 
stabilizer of $x_i$ is a compact modulo centre subgroup, so in particular $c_i$ 
is compact modulo centre. Therefore the image of $c_\gamma$ in
$L / Z(L)$ is a compact element.
\end{proof}

In the above proof it is also possible to replace $(J_P,\lambda_P)$
by a sound simple type in the sense of \cite{SeSt6}. Indeed, the group $J$ 
from \cite[\S 5]{Sec3} is generated by $J_P$ and $J \cap \overline{U}$,
so it is also normalized by $c_{\gamma,i}$. By \cite[Proposition 5.4]{Sec3} 
\[
\big( J,\mathrm{Ind}_{J_P}^J (\lambda_P) \big) \quad \text{and} \quad 
\big( J,\mathrm{Ind}_{J_P}^J (\lambda_P \otimes \gamma) \big) 
\]
are sound simple types. The above proof also shows that they are conjugate by 
$c_{\gamma,i}$. As noted in the proof of \cite[Proposition 5.5]{Sec3}, there is
a canonical support preserving algebra isomorphism
\begin{equation}\label{eq:3.14}
\cH (M_i,\lambda_P) \cong \cH \big( M_i,\mathrm{Ind}_{J_P}^J (\lambda_P) \big) .
\end{equation}
In particular the structure theory of the Hecke algebras in \cite{Sec3} 
also applies to our types $(K,\lambda)$.

We write \label{i:33}
\[
L^1 := \bigcap\nolimits_{\chi \in X_\nr (L)} \ker \chi .
\]
Notice that $G^1 = \{ g \in G \mid \Nrd (g) \in \mf o_F^\times \}$ is the
group generated by all compact subgroups of $G$. Hence $L^1$ is 
the group generated by all compact subgroups of $L$.

We fix a choice of elements $c_\gamma \in L$ as in Proposition \ref{prop:3.3}, 
such that $c_\gamma \in L^1$ whenever possible. 
This determines subgroups \label{i:51} \label{i:74}
\begin{equation}\label{eq:3.86}
\begin{array}{lll}
X^L (\fs)^1 & := & \{ \gamma \in X^L (\fs) \mid c_\gamma \in L^1 \} ,\\
\Stab (\fs, P \cap M)^1 & := & 
\{ (w,\gamma) \in \Stab (\fs, P \cap M) \mid c_\gamma \in L^1 \} .
\end{array}
\end{equation}
Their relevance is that the $c_\gamma \in L^1$ can be used to construct 
larger $\fs_M$-types from $(K,\lambda)$, whereas the $c_\gamma$ with
$\gamma \in X^L (\fs) \setminus X^L (\fs)$ are unsuitable for that purpose.
We remark that in the split case $G = GL_n (F)$ it is known from 
\cite[Proposition 2.2]{BuKu2} that one can find $c_\gamma \in L^1$ for all
$\gamma \in X^L (\fs)$.

Consider the group \label{i:49}
\begin{equation*}
\Stab (\fs,\lambda) = \{ (w,\gamma) \in \Stab (\fs,P \cap M) \mid 
w (\lambda) \otimes \gamma \cong \lambda \text{ as } K\text{-representations} \} .
\end{equation*}
The elements of this group are precisely the $(w,\gamma) \in \Stab (\fs, P \cap M)$ 
for which $e_{w(\lambda) \otimes \gamma} = e_\lambda$.

\begin{lem}\label{lem:4.2}
Projection on the first coordinate gives a short exact sequence
\[
1 \to X^L (\fs) \cap \Stab (\fs,\lambda) \to 
\Stab (\fs,\lambda) \to \mf R_\fs^\sharp \to 1 .
\]
The inclusion $X^L (\fs) \to \Stab (\fs, P \cap M)$ induces a group isomorphism
\[
X^L (\fs) / \big( X^L (\fs) \cap \Stab (\fs,\lambda) \big) \to 
\Stab (\fs, P \cap M) / \Stab (\fs,\lambda) .
\]
\end{lem}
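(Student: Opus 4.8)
The plan is to prove Lemma~\ref{lem:4.2} by exhibiting the two short exact sequences directly from the structure already established for $\Stab(\fs)$ and its subgroups. Recall from Lemma~\ref{lem:2.4}.b that projection on the first coordinate gives an isomorphism $\Stab(\fs,P\cap M)/X^L(\fs) \cong \mf R_\fs^\sharp$ (here I use that $W_\fs$ has been split off, so that $\Stab(\fs,P\cap M)$ maps isomorphically onto $\Stab(\fs)/W_\fs$, and then onto $X^G(\fs)$, with $\mf R_\fs^\sharp \cong X^G(\fs)/X^L(\fs)$ by Lemma~\ref{lem:2.4}.d). The group $\Stab(\fs,\lambda)$ is by definition the subgroup of $\Stab(\fs,P\cap M)$ consisting of those $(w,\gamma)$ with $e_{w(\lambda)\otimes\gamma}=e_\lambda$.

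First I would establish that the first-coordinate projection $\Stab(\fs,\lambda)\to W(G,L)$ is still injective with image inside $N_G(P\cap M)/L$, so that it lands in the set that maps isomorphically (via the second coordinate) onto $X^G(\fs)$; this is inherited from $\Stab(\fs,P\cap M)$. Then the crucial point is surjectivity of $\Stab(\fs,\lambda)\to\mf R_\fs^\sharp$: given $(w,\gamma)\in\Stab(\fs,P\cap M)$, Proposition~\ref{prop:3.3} produces $c_\gamma\in L$ conjugating $(K,w(\lambda)\otimes\gamma)$ to $(K,\lambda)$. Since $c_\gamma\in L$ normalizes $P\cap M$ and hence acts trivially on $W(G,L)$-cosets modulo $L$, conjugating by $c_\gamma$ does not change the first coordinate $w$, but it replaces $\gamma$ by another character $\gamma'$ differing from $\gamma$ by an element of $X_\nr(L)$ that becomes trivial on $K\cap L^\sharp$; this adjusted pair $(w,\gamma')$ satisfies $w(\lambda)\otimes\gamma'\cong\lambda$, i.e. lies in $\Stab(\fs,\lambda)$ and has the same image in $\mf R_\fs^\sharp$ as $(w,\gamma)$. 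Thus $\Stab(\fs,\lambda)$ surjects onto $\mf R_\fs^\sharp$, with kernel exactly $\{(1,\gamma)\in\Stab(\fs,\lambda)\}=X^L(\fs)\cap\Stab(\fs,\lambda)$, giving the first exact sequence.

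For the second statement, the inclusion $X^L(\fs)\hookrightarrow\Stab(\fs,P\cap M)$ descends to a map $X^L(\fs)\to\Stab(\fs,P\cap M)/\Stab(\fs,\lambda)$ whose kernel is visibly $X^L(\fs)\cap\Stab(\fs,\lambda)$, so injectivity of the induced map is automatic. Surjectivity follows by a diagram chase from the first exact sequence together with Lemma~\ref{lem:2.4}.b: both $\Stab(\fs,P\cap M)/X^L(\fs)$ and $\Stab(\fs,\lambda)/(X^L(\fs)\cap\Stab(\fs,\lambda))$ are identified with $\mf R_\fs^\sharp$ compatibly, so every coset of $\Stab(\fs,\lambda)$ in $\Stab(\fs,P\cap M)$ meets $X^L(\fs)$; counting cardinalities, $|\Stab(\fs,P\cap M)/\Stab(\fs,\lambda)| = |\Stab(\fs,P\cap M)/X^L(\fs)|\cdot|X^L(\fs)/(X^L(\fs)\cap\Stab(\fs,\lambda))| / |\mf R_\fs^\sharp| = |X^L(\fs)/(X^L(\fs)\cap\Stab(\fs,\lambda))|$, forcing the injection to be a bijection.

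The main obstacle I anticipate is the surjectivity of $\Stab(\fs,\lambda)\to\mf R_\fs^\sharp$: it is the only place where genuine input is needed, namely the conjugacy statement of Proposition~\ref{prop:3.3}, and one must be careful that the conjugating element $c_\gamma\in L$ indeed fixes the first coordinate $w$ (because it normalizes $P\cap M$, hence $L$, hence preserves the class of $w$ in $W(G,L)$) while only modifying the character $\gamma$ within its $X_\nr(L)$-coset in a way compatible with the defining condition of $\Stab(\fs,\lambda)$. Everything else is formal bookkeeping with the short exact sequences from Section~\ref{sec:restrict}.
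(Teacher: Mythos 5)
Your overall strategy — establish surjectivity of $\Stab(\fs,\lambda)\to\mf R_\fs^\sharp$, identify the kernel, then deduce the second isomorphism by a counting/diagram-chase argument — is the same as the paper's, and the second half of your argument is fine. But there are two problems in the first half.

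First, a minor but real slip: you assert that the first-coordinate projection $\Stab(\fs,\lambda)\to W(G,L)$ is ``injective.'' It is not; its kernel is exactly $X^L(\fs)\cap\Stab(\fs,\lambda)$, which is the first term of the short exact sequence you are trying to prove. (If the projection were injective, that term would be trivial.) You presumably meant only that the image lies in $N_G(P\cap M)/L$, which is true because $\Stab(\fs,\lambda)\subset\Stab(\fs,P\cap M)$ by definition.

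The more serious gap is in the surjectivity step. You invoke Proposition~\ref{prop:3.3} to produce the element $c_\gamma\in L$ conjugating $(K,w(\lambda)\otimes\gamma)$ to $(K,\lambda)$, and then claim that ``conjugating by $c_\gamma$ does not change $w$ but replaces $\gamma$ by another character $\gamma'$.'' This mechanism does not work. Conjugation by $c_\gamma\in L$ acts trivially on $W(G,L)=N_G(L)/L$ \emph{and} trivially on $\Irr(L/L^\sharp Z(G))$ (the latter because $L/L^\sharp Z(G)$ is abelian), so it fixes the pair $(w,\gamma)$ outright; and conjugating the $K$-representation $w(\lambda)\otimes\gamma$ by $c_\gamma$ is not a twist by a character, so one cannot read off a $\gamma'$ from it. The fact that is actually needed here is not the \emph{conclusion} of Proposition~\ref{prop:3.3} but a fact established at the start of its proof: since $(K,\lambda)$ and $(K,w(\lambda)\otimes\gamma)$ are both $\fs_M$-types, Conditions~\ref{cond} and \eqref{eq:3.3} force them to differ only by a character of $M/M^\sharp Z(G)$, which lies in $X^L(\fs)$. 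In other words $w(\lambda)\otimes\gamma\cong\lambda\otimes\gamma'$ for some $\gamma'\in X^L(\fs)$. Then $(w,\gamma(\gamma')^{-1})\in\Stab(\fs,\lambda)$ has image $w$ in $\mf R_\fs^\sharp$, giving surjectivity. The paper's proof uses this same twist fact (stated without re-derivation) and then the fiber-cardinality count you reproduce; once you replace the $c_\gamma$-conjugation step by the character-twist observation, the rest of your argument goes through.
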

\begin{proof}
Let $(w,\gamma) \in \Stab (\fs, P \cap M)$. Then 
$w(\lambda) \otimes \gamma \cong \lambda \otimes \gamma'$ for some 
$\gamma' \in X^L (\fs)$ and $(w,\gamma) \in \Stab (\fs,\lambda)$ if and only if 
$\gamma' \in \Stab (\fs,\lambda)$. Hence all
the fibers of $\Stab (\fs,\lambda) \to \mf R_\fs^\sharp$ have the same cardinality,
namely $| X^L (\fs) \cap \Stab (\fs,\lambda) |$. The required short exact sequence 
follows. The asserted isomorphism of groups is a direct consequence thereof.
\end{proof}

Motivated by Lemma \ref{lem:4.2} we abbreviate \label{i:76} \label{i:77} \label{i:78}
\begin{equation}\label{eq:3.73}
\begin{aligned}
& X^L (\fs , \lambda) := X^L (\fs) \cap \Stab (\fs,\lambda) ,\\
& X^L (\fs / \lambda) := X^L (\fs) / X^L (\fs , \lambda) ,\\
& X^L (\fs / \lambda)^1 := X^L (\fs)^1 / X^L (\fs , \lambda) .
\end{aligned}
\end{equation}
The latter two groups are isomorphic to respectively 
\[
\Stab (\fs, P \cap M) / \Stab (\fs,\lambda)
\text{ and } \Stab (\fs, P \cap M)^1 / \Stab (\fs,\lambda).
\]
By Lemma \ref{lem:4.2} the element $\sum_{\gamma \in X^L (\fs / \lambda)}
e_{\lambda \otimes \gamma} \in \cH (K)$ is well-defined and idempotent.
Clearly this element is invariant under $X^L (\fs)$, which makes it more suitable
to study the restriction of $\Rep^\fs (G)$ to $G^\sharp$. However, in some
cases this idempotent sees only a too small part of a $G$-representation.
This will become apparant in the proof of Proposition \ref{prop:3.2}.d.
We need to replace it by a larger idempotent, for which we use the following
lemma. \label{i:21} \label{i:34} \label{i:64}

\begin{lem}\label{lem:3.30}
Let $(\omega,V_\omega) \in \Irr (L)$ be supercuspidal and write 
$\fs = [L,\omega]_G$. There exist a subgroup $H_\lambda \subset L$ and a subset 
$[L / H_\lambda] \subset L$ such that:
\enuma{
\item $[L / H_\lambda]$ is a set of representatives for $L / H_\lambda$, where
$H_\lambda \subset L$ is normal, of finite index and contains $L^\sharp Z(L)$.
\item Every element of $[L / H_\lambda]$ commutes with $W_\fs^\sharp$ and 
has finite order in $L / Z(L)$.
\item For every $\chi \in X_\nr (L)$ the space 
\[
\sum\nolimits_{a \in [L / H_\lambda]} \sum\nolimits_{\gamma \in X^L (\fs)}
a e_{\lambda_L \otimes \gamma} a^{-1} V_{\omega \otimes \chi}
\]
intersects every $L^\sharp$-isotypical component of $V_{\omega \otimes \chi}$ 
nontrivially.
\item For every $(\pi,V_\pi) \in \Irr^{\fs_M}(M)$ the space 
\[
\sum\nolimits_{a \in [L / H_\lambda]} \sum\nolimits_{\gamma \in X^L (\fs)}
a e_{\lambda \otimes \gamma} a^{-1} V_\pi
\]
intersects every $M^\sharp$-isotypical component of $V_\pi$ nontrivially.
}
\end{lem}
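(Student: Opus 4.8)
The plan is to build $H_\lambda$ and $[L/H_\lambda]$ so that the extra conjugates $a e_{\lambda_L\otimes\gamma} a^{-1}$ "fill in" exactly the $L^\sharp$-isotypical components that $e_{\lambda_L}$ itself misses. First I would recall the decomposition \eqref{eq:2.2}: for $\omega\in\Irr(L)$ supercuspidal, $\Res^L_{L^\sharp}(\omega)$ breaks up according to $\Irr(\C[X^L(\omega),\kappa_\omega])$, and the factor $X_\nr(L/L^\sharp)$ acts freely on the set of constituents; moreover $X^L(\fs)$ acts on this set as well, since $\gamma\otimes\omega$ is still in $[L,\omega]_L$. The type $(K_L,\lambda_L)$ from \cite{Sec3} captures one "line" of this decomposition, and the idempotents $e_{\lambda_L\otimes\gamma}$ with $\gamma$ ranging over $X^L(\fs)$ already span the orbit of that line under $X^L(\fs)$. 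What can still be missed is the part of $\Res^L_{L^\sharp}(\omega)$ that is reached only by an honest conjugation inside $L$ by elements not normalizing $K_L$ — equivalently, by elements $a\in L$ with $a\cdot\lambda_L\not\cong\lambda_L\otimes\gamma$ for any $\gamma$. So I would set
\[
H_\lambda := \{ a\in L \mid a\cdot\lambda_L \cong \lambda_L\otimes\gamma \text{ for some }\gamma\in X^L(\fs)\}\cdot L^\sharp Z(L),
\]
or rather the subgroup it generates, and check it is normal of finite index: finite index because $X^L(\fs)$ is finite (Lemma \ref{lem:2.4}.c) and $\lambda_L$ has, up to the $X^L(\fs)$-twists, only finitely many $L$-conjugates that are again types for $[L,\omega]_L$ (this is where Proposition \ref{prop:3.3} and \cite[Corollary 7.3]{SeSt6} enter, giving that the relevant conjugating elements are compact mod centre, hence the index is controlled); normality because $L/L^\sharp Z(L)$ is abelian, so $a\cdot\lambda_L\cong\lambda_L\otimes\gamma$ is a condition that only depends on $aL^\sharp Z(L)$ together with the $K_L$-conjugacy class, and this is stable under conjugation.

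For (a) this gives the structure of $H_\lambda$; for $[L/H_\lambda]$ I would pick coset representatives and then \emph{adjust} them to satisfy (b). The point is that $L/L^\sharp Z(L)\cong F^\times/\Nrd(Z(G))$ is abelian of exponent $md$, so each coset $aH_\lambda$ has a representative of finite order in $L/Z(L)$; and since $W_\fs^\sharp\subset N_G(P\cap M)/L$ is realized by permutation matrices (Conditions \ref{cond}) while the $a$'s can be chosen diagonal-block, the commutation $a w = w a$ in $L/Z(L)$ can be arranged — this is the same kind of argument as in Proposition \ref{prop:3.3}, using the block form of $M=\prod_i M_i$ and choosing each local component of $a$ to be a scalar-like element fixed by the symmetric group $S_{e_i}$. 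Concretely one takes $a=\prod_i \mathrm{diag}(a_i,\ldots,a_i)$ with $a_i\in\GL_{m_i}(D)$, exactly as for $c_\gamma$ in \eqref{eq:3.9}, so that $a$ automatically commutes with $W_\fs=\prod_i S_{e_i}$ and with $\mf R_\fs^\sharp$; compactness mod centre follows from the Bruhat--Tits fixed point argument already used for $c_i$.

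Parts (c) and (d) are then a counting/spanning statement. For (c): the $L^\sharp$-isotypical components of $V_{\omega\otimes\chi}$ are permuted transitively by $L/L^\sharp Z(L)$ acting through conjugation (Proposition \ref{prop:2.6}.c and .d), and by construction the subgroup of $L/L^\sharp Z(L)$ that fixes the component cut out by $e_{\lambda_L}$ is precisely $H_\lambda/L^\sharp Z(L)$ modulo the $X^L(\fs)$-ambiguity; hence translating $e_{\lambda_L}$ (together with all its $X^L(\fs)$-twists) by a full set of representatives $[L/H_\lambda]$ hits \emph{every} $L^\sharp$-isotypical component at least once. Part (d) follows from (c) by transitivity of parabolic induction: $e_{\lambda}$ is a cover of $e_{\lambda_L}$ (so $e_\lambda V_\pi\ne 0$ whenever $e_{\lambda_L}$ meets the supercuspidal support), the Iwahori decomposition \eqref{eq:3.8} makes the passage from $L$ to $M$ compatible with conjugation by the block-diagonal elements $a$, and the $M^\sharp$-isotypical components of $V_\pi$ sit over the $L^\sharp$-isotypical components of its Jacquet module. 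I expect the main obstacle to be (a)+(b) together: proving that $H_\lambda$ really has \emph{finite} index — i.e. that only finitely many $L$-conjugates of $\lambda_L$ are types for $[L,\omega]_L$ up to $X^L(\fs)$-twist — and simultaneously that the coset representatives can be taken in block-diagonal form commuting with $W_\fs^\sharp$ and of finite order mod $Z(L)$; this forces one to run the S\'echerre--Stevens conjugacy machinery (Proposition \ref{prop:3.3}, \cite[Thm.~7.2, Cor.~7.3]{SeSt6}) in a slightly more flexible form than stated, tracking that \emph{all} the conjugating elements involved are compact modulo centre.
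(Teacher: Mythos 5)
The overall strategy here matches the paper's: decompose $V_\omega$ over the commuting algebra of $L^\sharp$-intertwiners, define $H_\lambda$ as (roughly) the stabilizer of the components hit by $\sum_\gamma e_{\lambda_L\otimes\gamma}V_\omega$, choose block-diagonal coset representatives commuting with $W_\fs^\sharp$, and bootstrap from $L$ to $M$. But the definition of $H_\lambda$ is genuinely different, and this causes problems downstream in (c) and (d).

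The paper's $H_\lambda$ is defined so that $H_\lambda/\Stab_L(V_{\omega,\rho})$ is \emph{exactly} the set of $\rho$'s (indexing the decomposition \eqref{eq:3.81} by $\C[X^L(\fs,\lambda)\cap X^L(\omega),\kappa_\omega]$) for which $V_{\omega,\rho}$ meets $\sum_\gamma e_{\lambda_L\otimes\gamma}V_\omega$; showing this set \emph{is} a subgroup is nontrivial and the paper proves it by establishing that $\sum_\gamma e_{\lambda_L\otimes\gamma}V_\omega$ is irreducible over the group $N$ generated by $K_L Z(L)$ and the elements $c_\gamma$. Your $H_\lambda := \langle PN(K_L,\lambda_L), L^\sharp Z(L)\rangle$ is contained in the paper's $H_\lambda$ (so $[L/H_\lambda]$ is larger, and (c) would in fact still hold, if perhaps redundantly), but your justification of (c) asserts that this subgroup is \emph{precisely} the stabilizer of the components cut out by $e_{\lambda_L}$. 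That claim is neither established nor obviously correct: the true stabilizer $\Stab_L(V_{\omega,\rho})$ (from Proposition \ref{prop:2.6}.d) is usually much larger than $L^\sharp Z(L)$ and need not be generated by elements normalizing $K_L$. You are getting the right conclusion for the wrong reason. Relatedly, the finiteness-of-index argument you give (``only finitely many $L$-conjugates of $\lambda_L$ up to $X^L(\fs)$-twist'') does not by itself bound $[L:H_\lambda]$ — the paper gets finite index for free because its $H_\lambda$ contains $\Stab_L(V_{\omega,\rho})$, which is of finite index by Proposition \ref{prop:2.6}.d; your $H_\lambda$ does not obviously contain this subgroup.

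There is also a gap in (c) and (d) at the level of the Frobenius-reciprocity step, and your proposed route for (d) does not work as stated. In (c), passing from ``hit every $\rho$ for $\rho$ in $\Irr(\C[X^L(\fs,\lambda)\cap X^L(\omega),\kappa_\omega])$'' to ``hit every $L^\sharp$-isotypical component'' (i.e. every irreducible of the larger algebra $\C[X^L(\omega\otimes\chi),\kappa_{\omega\otimes\chi}]$) is exactly where the paper inserts the Frobenius-reciprocity argument using the faithful permutation action of $X^L(\omega)/X^L(\omega)\cap\Stab(\fs,\lambda)$ on the idempotents; your sketch skips this entirely. For (d) you propose ``transitivity of parabolic induction'' and compatibility with the Jacquet module, but the $M^\sharp$-isotypical components of $V_\pi$ are governed by $X^M(\pi)$, which is only a \emph{subgroup} of $X^L(\omega)$ (via Lemma \ref{lem:2.4}.d and $W(M,L)=W_\fs$), and no clean Jacquet-module compatibility yields the statement directly. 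The paper proves (d) by rerunning the same twisted-group-algebra argument on $M$, decomposing $V_\pi$ by $X^L(\fs,\lambda)\cap X^M(\pi)$ as in \eqref{eq:3.82}, not by descending from a Levi-level statement. Finally, you flag (a)+(b) as the main obstacle, but in fact (a) is essentially handed to you by Proposition \ref{prop:2.6}.d and (b) is explicit linear algebra; the real work is in the correct definition of $H_\lambda$ and the Frobenius-reciprocity mechanism in (c)/(d).
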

\begin{proof}
(a) First we identify the group $H_\lambda$. Recall the operators $I (\gamma,\omega)
\in \Hom_L ( \omega \otimes \gamma, \omega)$ from \eqref{eq:2.30}, with 
$\gamma \in X^L (\fs,\lambda) \cap X^L (\omega)$. 
Like in \eqref{eq:2.2} and \cite[Corollary 2.10]{HiSa}, these provide a 
decomposition of $L^\sharp$-representations
\[
\omega = \bigoplus_{\rho \in \Irr (\C[X^L (\fs,\lambda) \cap X^L (\omega), 
\kappa_\omega])} \Hom_{\C[X^L (\fs,\lambda) \cap X^L (\omega), \kappa_\omega]} 
(\rho, \omega) \otimes \rho. 
\]
Let us abbreviate it to \label{i:65}
\begin{equation}\label{eq:3.81}
V_\omega = \bigoplus\nolimits_\rho V_{\omega,\rho} .
\end{equation}
It follows from Proposition \ref{prop:2.6} and \eqref{eq:2.2} that all the 
summands $V_{\omega,\rho}$ are $L$-conjugate and that 
$\Stab_L (V_{\omega,\rho})$ is a finite index normal subgroup which contains
$L^\sharp Z(L)$. This leads to a bijection
\begin{equation}\label{eq:3.79}
\Irr \big( \C [X^L (\fs,\lambda) \cap X^L (\omega),\kappa_\omega] \big) 
\; \longleftrightarrow \; L / \Stab_L (V_{\omega,\rho}) .
\end{equation}
We claim that 
\begin{equation}\label{eq:3.78}
\sum\nolimits_{\gamma \in X^L (\fs)} e_{\lambda_L \otimes \gamma} V_\omega
\end{equation}
is an irreducible representation of a subgroup $N \subset L$ 
that normalizes $K_L$. From \cite[Th\'eor\`eme 4.6]{Sec3} it is known that 
\begin{equation}\label{eq:3.O}
e_{\lambda_L \otimes \gamma} \cH (L) e_{\lambda_L \otimes \gamma} \cong
\mc O (T_\fs) \otimes \End_\C (V_{\lambda_L \otimes \gamma}) ,
\end{equation}
where $\End_\C (V_{\lambda_L \otimes \gamma})$ corresponds to the subalgebra
$e_{\lambda_L \otimes \gamma} \cH (K) e_{\lambda_L \otimes \gamma}$. Since\\
$(K_L,\lambda_L \otimes \gamma)$ is a type for $[L,\omega \otimes \gamma]_L$,
every $e_{\lambda_L \otimes \gamma} V_\omega$ is isomorphic, as module over the
right hand side of \eqref{eq:3.O}, to $\C_t \otimes V_{\lambda_L \otimes \gamma}$ 
for a unique $t \in T^\fs$. The action of $K_L$ goes via 
$e_{\lambda_L \otimes \gamma} \cH (K) e_{\lambda_L \otimes \gamma}$, so 
$e_{\lambda_L \otimes \gamma} V_\omega$ is already irreducible as $K_L$-representation.
Since $Z(L)$-stabilizes this vector space, it also irreducible as representation of
the group $K_L Z(L)$. 

These representations, with $\gamma \in X^L (\fs / \lambda)$ are inequivalent and 
permuted transitively by the elements $c_\gamma$ from Proposition \ref{prop:3.3}. 
Hence \eqref{eq:3.78} is irreducible as a representation of the group $N$ generated 
by $K_L Z(L)$ and the $c_\gamma$.

Suppose now that \eqref{eq:3.78} intersects both $V_{\omega,\rho_1}$ and 
$V_{\omega,\rho_2}$ nontrivially. By the above claim, $N$ contains
an element that maps $V_{\omega,\rho_1}$ to $V_{\omega,\rho_2}$. It follows that,
under the bijection \eqref{eq:3.79}, 
the set of $\rho$'s such that $V_{\omega,\rho}$ intersects \eqref{eq:3.78} 
nontrivially corresponds to a subgroup of $L / \Stab_L (V_{\omega,\rho})$, 
say $H_\lambda / \Stab_L (V_{\omega,\rho})$. Because $L / \Stab_L (V_{\omega,\rho})$
was already finite and abelian, $H_\lambda$ has the desired properties.

We note that none of the above changes if we twist $\omega$ by an unramified
character of $L$.\\
(b) Recall that $L = \prod_i GL_{m_i} (D)^{e_i}$ and that the reduced norm map
$D^\times \to F^\times$ is surjective. It provides a group isomorphism
\[
L / H_\lambda \to F^\times / \Nrd (H_\lambda) ,
\]
and Nrd$(H_\lambda)$ contains Nrd$(Z(L)) = F^{\times e}$ where $e$ is the 
greatest common divisor of the numbers $m_i$. We can choose explicit 
representatives for $L / H_\lambda$. It suffices to use elements $a$ whose 
components $a_i \in \GL_{m_i} (D)$ are powers of some element of the form 
\[
\begin{pmatrix}
0 & 1 & 0 & & \cdots & 0  \\
0 & 0 & 1 & & \cdots & 0 \\
\vdots & \vdots \\
0 & 0 & & \cdots & 0 & 1 \\
d_i & 0 & & \cdots & 0 & 0
\end{pmatrix} \in GL_{m_i}(D) ,
\]
that is, the matrix of the cyclic permutation $(1 \to m_i \to m_i - 1 \to  
\cdots 3 \to 2)$, with one entry replaced by an element $d_i \in D^\times$. 
In this way we assure that $a$ has finite order in $L / Z(L)$, at most $e d$. 

If two factors $L_i = \GL_{m_i}(D)$ and $L_j = \GL_{m_j}(D)$ of $L = \prod_i 
L_i^{e_i}$ are conjugate via an element of $W_\fs^\sharp$, then $m_i = m_j$ and
the corresponding supercuspidal representations $\omega_i$ and $\omega_j$ differ
only by a character of $\GL_{m_i}(D)$, say $\eta$. As in the proof of Proposition
\ref{prop:3.3}, let $(J_i,\lambda_i)$ be a simple type for $(L_i,\omega_i)$. As in 
\eqref{eq:3.3} we use the type $(J_i,\lambda \otimes \eta)$ for $(L_j,\omega_j)$.

Given $\gamma \in X^L (\fs,\lambda) \cap X^L (\omega)$, we can factor 
\[
I (\gamma,\omega) = \prod\nolimits_i I(\gamma,\omega_i)^{\oplus e_i} ,
\]
with $I(\gamma,\omega_i) \in \Hom_{L_i}(\omega_i \otimes \gamma,\omega_i)$.
Here we can simply take $I(\gamma,\omega_j) = I(\gamma, \omega_i)$. Then the 
decomposition of $V_{\omega_j}$ in isotypical subspaces $V_{\omega_i,\rho}$ for 
$\C[X^L (\fs,\lambda) \cap X^L (\omega),\kappa_\omega]$, like \eqref{eq:3.81} is 
the same as that of $V_{\omega_i}$, and 
\[
V_{\omega,\rho} = \bigoplus\nolimits_i V_{\omega_i,\rho}^{\oplus e_i} .
\]
Suppose now that a component $a_i$ of $a$ as above
maps $V_{\omega_i,\rho}$ to $V_{\omega_i,\rho'}$. Then $a_i$ also maps 
$V_{\omega_j,\rho}$ to $V_{\omega_j,\rho'}$, so we may take $a_j = a_i$. With
this construction $a = \prod_i a_i^{\oplus e_i}$ commutes with $W_\fs^\sharp$.

We fix such a set of representatives $a$ and denote it by 
\begin{equation}\label{eq:3.64}
[L / H_\lambda] = \{ a_l \mid l \in L / H_\lambda \} .
\end{equation}
(c) Let $\chi \in X_\nr (L)$ and consider any $\rho \in \Irr (\C[X^L (\fs,\lambda) 
\cap X^L (\omega), \kappa_\omega])$. By construction 
\begin{equation}\label{eq:3.80}
\sum\nolimits_{a \in [L / H_\lambda]} \sum\nolimits_{\gamma \in X^L (\fs)}
a e_{\lambda_L \otimes \gamma} a^{-1} V_{\omega \otimes \chi}
\end{equation}
intersects $V_{\omega \otimes \chi,\rho}$ nontrivially. All the
idempotents $a e_{\lambda_L \otimes \gamma} a^{-1}$ are invariant under 
$X^L (\fs,\lambda) \cap X^L (\omega)$ because $e_{\lambda_L}$ is. Hence 
$a e_{\lambda_L \otimes \gamma} a^{-1} V_{\omega \otimes \chi}$ is nonzero
for at least one of these idempotents. The action of $X^L (\omega) /
X^L (\omega) \cap \Stab (\fs,\lambda)$ permutes the idempotents 
$a e_{\lambda_L \otimes \gamma} a^{-1}$ faithfully, so by Frobenius reciprocity
the space 
\[
\sum\nolimits_{\gamma \in X^L (\fs)} a e_{\lambda_L \otimes \gamma} a^{-1} 
V_{\omega \otimes \chi}
\]
contains all irreducible representations of 
$\C [X^L (\omega \otimes \chi),\kappa_{\omega \otimes \chi}]$ that contain $\rho$.
As $\rho \in \Irr (\C[X^L (\fs,\lambda) \cap X^L (\omega), \kappa_\omega])$ was
arbitrary, \eqref{eq:3.80} contains all irreducible representations of
$\C [X^L (\omega \otimes \chi),\kappa_{\omega \otimes \chi}]$. With \eqref{eq:2.2}
(for $L$) this says that \eqref{eq:3.80} intersects every $L^\sharp$-isotypical
component of $V_{\omega \otimes \chi}$ nontrivially.\\
(d) Let $\pi \in \Irr^{\fs_M}(M)$ and choose $\chi \in X_\nr (L)$ such that $\pi$
is a subquotient of $I_{P \cap M}^M (\omega \otimes \chi)$. 
Lemma \ref{lem:2.4}.d. in combination with the equality $W (M,L) = W_\fs$ shows that 
\[
X^M (\pi) \subset X^L (\omega \otimes \chi) = X^L (\omega) .
\] 
Since $(K,\lambda \otimes \gamma)$ is a type, 
\[
a e_{\lambda \otimes \gamma} a^{-1} V_\pi \neq 0
\]
for all possible $a,\gamma$. The group $X^L (\fs,\lambda) \cap X^M (\pi)$ 
effects a decomposition of the $M^\sharp$-representation $V_\pi$ by means of the 
operators $I_{P \cap M}^M (\gamma, \omega \otimes \chi)$. Analogous to \eqref{eq:3.81}
we write it as \label{i:63}
\begin{equation}\label{eq:3.82}
V_\pi = \bigoplus\nolimits_{\rho\in \Irr (\C[X^L (\fs,\lambda) \cap X^M (\pi), 
\kappa_\omega])} V_{\pi,\rho} .
\end{equation}
The construction of $H_\lambda$ entails that 
\[
\sum\nolimits_{a \in [L / H_\lambda]} \sum\nolimits_{\gamma \in X^L (\fs)}
a e_{\lambda \otimes \gamma} a^{-1} V_\pi
\]
intersects every summand $V_{\pi,\rho}$ of \eqref{eq:3.82} nontrivially. 
Now the same argument as for part (c) shows that this space intersects every
$M^\sharp$-isotypical component of $V_\pi$ nontrivially.
\end{proof}

With \eqref{eq:3.73} and Lemma \ref{lem:3.30} we construct some additonal idempotents:
\label{i:04}\label{i:10}\label{i:12}\label{i:13}
\begin{equation}\label{eq:3.61}
\begin{aligned}
& e_{\mu_L} := \sum\nolimits_{\gamma \in X^L (\fs / \lambda)}
e_{\lambda_L \otimes \gamma} \in \cH (K_L), \\
& e^\fs_L := \sum\nolimits_{a \in [L / H_\lambda]} a e_{\mu_L} a^{-1} \in \cH (L) , \\
& e_\mu := \sum\nolimits_{\gamma \in X^L (\fs / \lambda)}
e_{\lambda \otimes \gamma} \in \cH (K), \\
& e^\fs_M := \sum\nolimits_{a \in [L / H_\lambda]} a e_\mu a^{-1} \in \cH (M).
\end{aligned}
\end{equation}

\begin{lem}\label{lem:3.2}
The four elements in \eqref{eq:3.61} are idempotent and $\Stab (\fs,P \cap M)$-invariant.
Furthermore $e_{\mu_L}, e^\fs_L \in \cH (L)^{\fs_L}$ and $e_\mu, e^\fs_M \in \cH (M)^{\fs_M}$.
\end{lem}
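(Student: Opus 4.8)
The plan is to verify the two claimed properties—idempotency and $\Stab(\fs,P\cap M)$-invariance—for each of the four elements, working from the innermost one outward, and then to identify the Bernstein blocks in which they live.

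First I would treat $e_{\mu_L}$. By Lemma~\ref{lem:4.2}, the character group $X^L(\fs/\lambda)$ is well-defined and the central idempotents $e_{\lambda_L \otimes \gamma} \in \cH(K_L)$ are pairwise orthogonal as $\gamma$ ranges over $X^L(\fs/\lambda)$: two of them coincide exactly when the $K_L$-representations $\lambda_L \otimes \gamma$ and $\lambda_L \otimes \gamma'$ are isomorphic, and distinct $\gamma,\gamma' \in X^L(\fs/\lambda)$ give non-isomorphic twists by the very definition of $X^L(\fs,\lambda)$. Hence $e_{\mu_L}$ is a sum of mutually orthogonal idempotents, so idempotent. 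For $\Stab(\fs,P\cap M)$-invariance: an element $(w,\gamma_0) \in \Stab(\fs,P\cap M)$ acts on $\cH(K_L)$ (via the conjugation by $c_{\gamma_0}$ furnished by Proposition~\ref{prop:3.3}, twisted by $\gamma_0$) by permuting the set $\{\lambda_L \otimes \gamma : \gamma \in X^L(\fs)\}$ up to isomorphism; since this permutation descends to a permutation of the index set $X^L(\fs/\lambda) = X^L(\fs)/X^L(\fs,\lambda)$, it merely permutes the summands $e_{\lambda_L\otimes\gamma}$ among themselves, leaving the sum fixed. The same argument verbatim, replacing $K_L$ by $K$ and $L$ by $M$ and using the factorization \eqref{eq:3.3} together with Proposition~\ref{prop:3.3}, handles $e_\mu$.

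Next I would pass to $e^\fs_L$ and $e^\fs_M$. Here the key input is Lemma~\ref{lem:3.30}(b): the representatives $a \in [L/H_\lambda]$ commute with $W_\fs^\sharp$ and, more importantly for orthogonality, the subgroup $H_\lambda$ is chosen so that $a e_{\mu_L} a^{-1}$ and $a' e_{\mu_L} a'^{-1}$ are orthogonal for $a \ne a'$ in $[L/H_\lambda]$—this is precisely what the construction of $H_\lambda$ in the proof of Lemma~\ref{lem:3.30}(a) guarantees, since distinct cosets $aH_\lambda$ move the relevant isotypical blocks $V_{\omega,\rho}$ to disjoint ones. Thus $e^\fs_L$ is again a sum of mutually orthogonal idempotents, hence idempotent, and likewise $e^\fs_M$. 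For the $\Stab(\fs,P\cap M)$-invariance of $e^\fs_L$, note that $\Stab(\fs,P\cap M)$ normalizes $H_\lambda$ (it normalizes $L^\sharp Z(L)$ and the finite abelian quotient structure used to define $H_\lambda$), so conjugating by a representative of $(w,\gamma_0)$ permutes the cosets $aH_\lambda$ and simultaneously carries each $e_{\mu_L}$ to a $\Stab$-conjugate, which by the previous paragraph equals $e_{\mu_L}$ up to the coset permutation; reindexing the double sum shows $e^\fs_L$ is fixed. Again the identical argument applies to $e^\fs_M$.

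Finally, the block membership. Each $e_{\lambda_L\otimes\gamma}$ lies in $\cH(K_L) \subset \cH(L)$, and since $(K_L, \lambda_L\otimes\gamma)$ is a type for $[L, \omega\otimes\gamma]_L = [L,\omega]_L = \fs_L$ (the equality because $\gamma \in X^L(\fs)$), we have $e_{\lambda_L\otimes\gamma}\cH(L) e_{\lambda_L\otimes\gamma} \subset \cH(L)^{\fs_L}$, and in fact $e_{\lambda_L\otimes\gamma}$ itself is supported on $K_L$ and generates, under $\cH(L)$, only the block $\fs_L$; hence $e_{\mu_L} \in \cH(L)^{\fs_L}$. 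Conjugating by $a \in L$ preserves the Bernstein decomposition of $\cH(L)$ (inner automorphisms act trivially on $\mathfrak B(L)$), so $a e_{\mu_L} a^{-1} \in \cH(L)^{\fs_L}$ as well, giving $e^\fs_L \in \cH(L)^{\fs_L}$. The same reasoning with $M$, $K$, $\lambda$, and the type property of $(K,\lambda\otimes\gamma)$ for $\fs_M = [L,\omega]_M$ yields $e_\mu, e^\fs_M \in \cH(M)^{\fs_M}$. The main obstacle is the orthogonality of the $a e_{\mu_L} a^{-1}$ for distinct $a$; I expect this to require unpacking the precise choice of $H_\lambda$ from Lemma~\ref{lem:3.30}(a) rather than being formal, whereas the $\Stab$-invariance is essentially bookkeeping once Proposition~\ref{prop:3.3} and the normality of $H_\lambda$ are in hand.
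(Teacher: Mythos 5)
Your overall plan—idempotency via mutual orthogonality, invariance via Proposition~\ref{prop:3.3} and Lemma~\ref{lem:3.30}(b), block membership via the type property—is the same as the paper's, and the parts concerning $e_{\mu_L}$ and $e_\mu$ are essentially right. But two of the steps for $e^\fs_L$ and $e^\fs_M$ need repair.

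The orthogonality of $a\,e_{\mu_L}a^{-1}$ and $a'\,e_{\mu_L}a'^{-1}$ for $a \ne a'$ does not follow directly from the construction of $H_\lambda$ in the way you suggest. What Lemma~\ref{lem:3.30}(a) gives you is that these two elements act on $V_\omega$ (and on $V_{I_{P\cap M}^M\omega}$) through orthogonal projectors, i.e.\ their images in $\End_\C$ of a \emph{single} representation are orthogonal. That is an operator-level statement; to conclude that the product is zero as an element of $\cH(L)^{\fs_L}$ (resp.\ $\cH(M)^{\fs_M}$), you need to know the product vanishes on \emph{every} representation in the block. The paper bridges this gap by noting the orthogonality persists after twisting $\omega$ by any $\chi \in X_\nr(L)$, and that the family $I_{P\cap M}^M(\omega\otimes\chi)$ generates $\Rep^{\fs_M}(M)$; only then, and only after first placing the idempotents inside $\cH(M)^{\fs_M}$, does ``acts as zero on the block'' yield ``is zero in the Hecke algebra.'' Without this step your claim that the $a\,e_{\mu_L}a^{-1}$ ``are orthogonal'' is unsupported. (Note also the logical ordering: block membership must be established \emph{before} the orthogonality conclusion, not after, which is why the paper interleaves the two.)

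Your argument for $\Stab(\fs,P\cap M)$-invariance of $e^\fs_L$ is also off-track. You invoke a permutation of the cosets $aH_\lambda$ under conjugation by $(w,\gamma_0)$, which both overcomplicates the matter and is not literally meaningful: the action $\alpha_{(w,\gamma)}$ is conjugation by a representative of $w \in W(G,L)$ together with pointwise multiplication by $\gamma^{-1}$, not ``conjugation by a representative of the pair.'' The correct and simpler observation is the one Lemma~\ref{lem:3.30}(b) is designed to supply: since each $a \in [L/H_\lambda]$ commutes with $W_\fs^\sharp$, conjugation by $a$ commutes with conjugation by $w$; and multiplication by $\gamma^{-1}$ commutes with conjugation by any $a \in L$ because $\gamma^{-1}$ is a character, so $\gamma^{-1}(a^{-1}ga) = \gamma^{-1}(g)$. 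Therefore $\alpha_{(w,\gamma)}(a\,e_\mu\,a^{-1}) = a\,\alpha_{(w,\gamma)}(e_\mu)\,a^{-1} = a\,e_\mu\,a^{-1}$: each summand is fixed individually, and no reindexing of the sum is needed.
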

\begin{proof}
We only write down the proof for the last two elements, the argument for the first two
is analogous.

We already observed that the different idempotents $e_{\lambda \otimes \gamma}$ are
orthogonal, so that their sum $e_{\mu}$ is again idempotent. We claim that 
$e = a_l e_{\lambda \otimes \gamma} a_l^{-1}$ and $e' = a_{l'} e_{\lambda \otimes 
\gamma'} a_{l'}^{-1}$ are orthogonal unless $l = l'$ and $\gamma = \gamma'$. 

By construction, the images
of $e$ and $e'$ in $\End_\C ( V_{I_{P \cap M}^M \omega} )$ are orthogonal. This remains 
true if we twist $\omega$ by an unramified character $\chi \in X_\nr (L)$. But the 
$M$-representations $I_{P \cap M}^M (\omega \otimes \chi)$ together generate the entire 
category $\Rep^{\fs_M}(M)$. Hence $e e' = e' e = 0$ on every representation in 
$\Rep^{\fs_M}(M)$. 

Since $e_\lambda \in \cH (M)^{\fs_M}$ and that algebra is stable under conjugation with
elements of $M$ and under $\Stab (\fs)$ by 
\eqref{eq:3.46}, all the $a e_{\lambda \otimes \gamma} a^{-1}$ lie in $\cH (M)^{\fs_M}$.
Thus $e,e' \in \cH (M)^{\fs_M}$, and we can conclude that they are indeed
orthogonal. This implies that $e^\fs_M \in \cH (M)^{\fs_M}$ is idempotent.

Since $e_{\lambda \otimes \gamma}$ is invariant under $X^L (\fs,\lambda)$, so is 
$e_{\mu}$. The action of $X^L (\fs)$ commutes with conjugation by any element of $M$, 
hence the sum over $\gamma \in X^L (\fs / \lambda)$ in the definition of $e_{\mu}$
makes it $X^L (\fs)$-invariant. 

By \eqref{eq:3.3} and the last part of Proposition \ref{prop:3.3}, $e_{\mu}$ is 
invariant under $\Stab (\fs,P \cap M)$ (but not necessarily under $W_\fs$). By Lemma 
\ref{lem:3.30}.b this remains the case after conjugation by any $a \in [L / H_\lambda]$.
Hence $a e_{\mu } a^{-1}$ and $e^\fs_M$ are also invariant under $\Stab (\fs,P \cap M)$.
\end{proof}

We can interpret the group $L / H_\lambda$ from Lemma \ref{lem:3.30} in a different way. 
Define \label{i:60} \label{i:79}
\begin{equation}\label{eq:3.89}
\begin{aligned}
& V_{\mu } := e_{\mu } V_\omega , \\
& X^L (\omega, V_{\mu }) := \big\{ \gamma \in X^L (\omega) \mid 
I(\gamma,\omega) |_{V_{\mu }} \in \C^\times \mathrm{id}_{V_{\mu }} \big\} .
\end{aligned}
\end{equation}

\begin{lem}\label{lem:3.13}
There is a group isomorphism $L / H_\lambda \cong \Irr \big( X^L (\omega,V_\mu ) \big)$. 
\end{lem}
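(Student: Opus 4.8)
\textbf{Proof plan for Lemma~\ref{lem:3.13}.}
The plan is to extract a group isomorphism from the bijection \eqref{eq:3.79} by carefully
tracking which summands $V_{\omega,\rho}$ occur inside $V_\mu$. Recall that $V_\mu = e_\mu V_\omega
= \sum_{\gamma \in X^L(\fs/\lambda)} e_{\lambda \otimes \gamma} V_\omega$; the proof of
Lemma~\ref{lem:3.30}(a) already shows this space is $N$-irreducible (where $N$ is generated by
$K_L Z(L)$ and the $c_\gamma$), and that the set of $\rho$ with $V_{\omega,\rho} \cap V_\mu \neq 0$
corresponds under \eqref{eq:3.79} to the subgroup $H_\lambda / \Stab_L(V_{\omega,\rho})$ of
$L / \Stab_L(V_{\omega,\rho})$. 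So the first step is to assemble:
\[
L / H_\lambda \;\cong\; \big( L / \Stab_L(V_{\omega,\rho}) \big) \big/ \big( H_\lambda / \Stab_L(V_{\omega,\rho}) \big)
\;\cong\; \Irr\big( \C[X^L(\fs,\lambda) \cap X^L(\omega), \kappa_\omega] \big) \big/ (\text{sub\-object}).
\]
The second step is to identify this quotient with $\Irr(X^L(\omega, V_\mu))$. Here I would use that
$X^L(\omega,V_\mu)$, as defined in \eqref{eq:3.89}, is exactly the subgroup of $\gamma \in X^L(\omega)$
whose intertwining operator $I(\gamma,\omega)$ acts on $V_\mu$ as a scalar; since $V_\mu$ is a sum of
certain $V_{\omega,\rho}$, this is the subgroup acting trivially (projectively) on all the $\rho$
appearing in $V_\mu$, i.e.\ the ``radical'' of the restriction of $\kappa_\omega$ to those components.
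Dualizing: the characters $\rho$ of $\C[X^L(\fs,\lambda)\cap X^L(\omega),\kappa_\omega]$ that restrict
trivially to $X^L(\omega,V_\mu)$ are precisely those coming from the quotient
$(X^L(\fs,\lambda)\cap X^L(\omega)) / X^L(\omega,V_\mu)$, whose character group is $\Irr$ of that
quotient. Matching the two descriptions gives
\[
L / H_\lambda \;\cong\; \Irr\big( (X^L(\fs,\lambda) \cap X^L(\omega)) / X^L(\omega,V_\mu) \big),
\]
and since $X^L(\omega,V_\mu) \subset X^L(\fs,\lambda) \cap X^L(\omega)$ (the operators defining $V_\mu$
come from $\gamma \in X^L(\fs/\lambda)$, which forces the $c_\gamma$-constructed idempotents to be
permuted, so the scalar-acting $\gamma$ lie in $\Stab(\fs,\lambda)$), one has
$\Irr\big((X^L(\fs,\lambda)\cap X^L(\omega))/X^L(\omega,V_\mu)\big) \cong \Irr(X^L(\omega,V_\mu))$ by
Pontryagin duality for finite abelian groups, because $X^L(\omega,V_\mu)$ already contains the full
relevant subgroup once we quotient correctly --- more precisely one checks $H_\lambda$ is characterized
by $\Nrd(H_\lambda)$ corresponding under duality to $X^L(\omega,V_\mu)$, which makes the isomorphism
transparent.

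The main obstacle I expect is bookkeeping the various nested subgroups correctly: one must be sure that
the subgroup of $X^L(\fs,\lambda)\cap X^L(\omega)$ acting as scalars on $V_\mu$ is genuinely
$X^L(\omega,V_\mu)$ and not something smaller or larger, and that the 2-cocycle $\kappa_\omega$
restricted to $X^L(\omega,V_\mu)$ is trivial (so that ``acting as a scalar'' is a well-behaved
subgroup condition and the duality is with an honest abelian group, not a twisted one). This cocycle
triviality should follow from the fact that $V_\mu$ is $N$-irreducible and the $c_\gamma$ realize the
action genuinely (not merely projectively) on the relevant idempotents, so that the operators
$I(\gamma,\omega)|_{V_\mu}$ for $\gamma \in X^L(\omega,V_\mu)$ can be rescaled to a genuine
representation of $X^L(\omega,V_\mu)$ --- forcing $\kappa_\omega|_{X^L(\omega,V_\mu)}$ to be a
coboundary. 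Once that is in place, the identification of $L/H_\lambda$ with the character group of
$X^L(\omega,V_\mu)$ is a routine application of Pontryagin duality together with the already-established
bijection \eqref{eq:3.79}. Finally I would remark that the construction does not depend on the
unramified twist of $\omega$, as already noted at the end of the proof of Lemma~\ref{lem:3.30}(a),
so the isomorphism is canonical for the inertial class $\fs$.
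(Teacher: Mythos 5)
Your proposal has the right ingredients but the final two identifications are incorrect, and fixing them requires an intermediate group that the paper introduces and you omit. The problem is your chain
\[
L / H_\lambda \;\cong\; \Irr\bigl( (X^L(\fs,\lambda) \cap X^L(\omega)) / X^L(\omega,V_\mu) \bigr)
\;\cong\; \Irr\bigl( X^L(\omega,V_\mu) \bigr).
\]
The second isomorphism is not ``Pontryagin duality'': for a generic inclusion $B \subset A$ of finite abelian groups one has $|\Irr(A/B)| = |A|/|B|$ while $|\Irr(B)| = |B|$, and these disagree unless $|A| = |B|^2$, which you have not established and which fails in general. And the first isomorphism is also not what the paper's bijection \eqref{eq:3.79} gives, because \eqref{eq:3.79} matches $L/\Stab_L(V_{\omega,\rho})$ with the set of irreducible modules of the \emph{twisted} group algebra $\C[X^L(\fs,\lambda)\cap X^L(\omega), \kappa_\omega]$. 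When $\kappa_\omega$ is nontrivial these modules have a common dimension $\delta > 1$, so they are not characters of $X^L(\fs,\lambda)\cap X^L(\omega)$, and $\Irr$ of the twisted algebra is not a quotient character group of that group. Your proposal tacitly treats the $\rho$ as one-dimensional, which already loses the $\delta^2$-index when passing to what is actually a character group.

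The crucial object you need, and which the paper's proof builds, is the subgroup
\[
C := \{\gamma \in X^L(\fs,\lambda) \cap X^L(\omega) : I(\gamma,\omega) \text{ acts as a scalar on each } V_{\omega,\rho}\}.
\]
Since all $V_{\omega,\rho}$ are $L$-conjugate this is independent of $\rho$, one has $[X^L(\fs,\lambda)\cap X^L(\omega) : C] = \delta^2$, the restricted cocycle $\kappa_\omega|_{C\times C}$ can be normalized to be trivial, and each irreducible $\rho$ contains exactly one character of $C$. This yields the genuine group isomorphism $\Irr(C) \cong L/\Stab_L(V_{\omega,\rho})$ underlying \eqref{eq:3.79}. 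Under it, $H_\lambda/\Stab_L(V_{\omega,\rho})$ corresponds to $\Irr(C / X^L(\omega,V_\mu))$, the subgroup of characters of $C$ trivial on $X^L(\omega,V_\mu) \subset C$. The conclusion then comes from the correct duality statement
\[
L/H_\lambda \;\cong\; \Irr(C) \,/\, \Irr\bigl(C / X^L(\omega,V_\mu)\bigr) \;\cong\; \Irr\bigl(X^L(\omega,V_\mu)\bigr),
\]
the second isomorphism being restriction of characters from $C$ to $X^L(\omega,V_\mu)$. So the repair is: dualize with respect to $C$, not with respect to $X^L(\fs,\lambda)\cap X^L(\omega)$, and the quotient you want is $\Irr(C)/\Irr(C/B)$, not $\Irr(A/B)$.
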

\begin{proof}
We use the notation from the proof of Lemma \ref{lem:3.30}. Consider the twisted 
group algebra 
\begin{equation}\label{eq:3.88}
\C [X^L (\fs ,\lambda) \cap X^L (\omega), \kappa_\omega].
\end{equation} 
We noticed in \eqref{eq:3.81} and \eqref{eq:3.79} that all its irreducible
representations have the same dimension, say $\delta$. Let $C$ be the subgroup of
$X^L (\fs,\lambda) \cap X^L (\omega)$ that consists of all elements $\gamma$ for 
which $I(\gamma,\omega)$ acts as a scalar operator on $V_{\omega,\rho}$. Since all
the $V_{\omega,\rho}$ are $L$-conjugate, this does not depend on $\rho$. As the
dimension of \eqref{eq:3.88} equals $|X^L (\fs ,\lambda) \cap X^L (\omega)|$, we 
find that 
\[
[ X^L (\fs ,\lambda) \cap X^L (\omega) : C ] = \delta^2 
\text{ and } |C| = [ L : \Stab_L (V_{\omega,\rho}) ] .
\]
Since $C$ acts on every $V_{\omega,\rho}$ by a character, we can normalize the
operators $I(\gamma,\omega)$ such that $\kappa_\omega |_{C \times C} = 1$.
The subalgebra of \eqref{eq:3.88} spanned by the $I(\gamma,\omega)$ with
$\gamma \in C$ has dimension $|C|$, so every character of $C$ appears in 
$V_{\omega,\rho}$ for precisely one $\rho \in \Irr (\C [X^L (\fs ,\lambda) 
\cap X^L (\omega),\kappa_\omega])$. Now we see from \eqref{eq:3.79} that 
\[
C = \Irr (L / \Stab_L (V_{\omega,\rho})) \text{ and } 
\Irr (C) \cong L / \Stab_L (V_{\omega,\rho}) .
\]
Under the this isomorphism the subgroup $H_\lambda / \Stab_L (V_{\omega,\rho})$
corresponds to the set of character of $C$ that occur in $V_{\mu }$. That set 
can also be written as $\Irr (C / X^L (\omega,V_{\mu }))$. Hence the quotient 
\[
L / H_\lambda = \big( L / \Stab_L (V_{\omega,\rho}) \big) \Big/ 
\big( H_\lambda / \Stab_L (V_{\omega,\rho}) \big)  
\]
is isomorphic to $\Irr (X^L (\omega,V_{\mu }))$.
\end{proof}

\subsection{Descent to a Levi subgroup} \

Motivated by the isomorphisms \eqref{eq:3.83} we focus on 
$( \cH (G)^\fs )^{X^G (\fs)}$. We would like to replace it by a Morita equivalent
subalgebra of $\cH (M)^{\fs_M}$, where $\fs_M = [L,\omega]_M$ and $\fs = [L,\omega]_G$. 
However, the latter algebra is in general not 
stable under the action of $X^G (\fs)$. In fact, for $(w,\gamma) \in \Stab (\fs)$ we have
\begin{equation}\label{eq:3.46}
\gamma \cdot \cH (M)^{\fs_M} = \cH (M)^{[L,\omega \otimes \gamma^{-1}]_M} =
\cH (M)^{[L,w \cdot \omega]_M} = \cH (M)^{w(\fs_M)} .
\end{equation}
Let us regard $\mf R_\fs^\sharp$ from Lemma \ref{lem:2.2}.c as a group of permutation 
matrices in $G$. Then it acts on $M$ by conjugation and we can form the crossed product
\[
\cH (M \rtimes \mf R_\fs^\sharp ) = \cH (M) \rtimes \mf R_\fs^\sharp .
\]
We define $\cH (M \rtimes \mf R_\fs^\sharp )^\fs$ as the two-sided ideal of
$\cH (M \rtimes \mf R_\fs^\sharp )$ such that \label{i:22}
\[
\mathrm{Ind}_{PM \rtimes \mf R_\fs^\sharp}^G (V) \in \Rep^\fs (G) \text{ for all }
V \in \Mod (\cH (M \rtimes \mf R_\fs^\sharp )^\fs ) .
\] 

\begin{lem}\label{lem:3.14}
The algebra $\cH (M \rtimes \mf R_\fs^\sharp )^\fs$ equals 
$\big( \bigoplus\nolimits_{w \in \mf R_\fs^\sharp} \cH (M)^{w(\fs_M)} \big) 
\rtimes \mf R_\fs^\sharp$ .
\end{lem}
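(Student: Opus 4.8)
The plan is to unwind the definition of $\cH (M \rtimes \mf R_\fs^\sharp )^\fs$ as a two-sided ideal and identify the minimal central idempotents that cut it out. First I would observe that $\cH (M \rtimes \mf R_\fs^\sharp ) = \cH (M) \rtimes \mf R_\fs^\sharp$ decomposes, as a left module over its centre, compatibly with the Bernstein decomposition of $\cH (M)$: since $\mf R_\fs^\sharp$ permutes the inertial classes of $M$ by \eqref{eq:3.46}, a two-sided ideal of the crossed product that is generated by a set of Bernstein ideals of $\cH (M)$ must be generated by an $\mf R_\fs^\sharp$-stable such set. Concretely, for any collection $S$ of inertial equivalence classes of $M$ that is stable under the conjugation action of $\mf R_\fs^\sharp$, the subspace $\big( \bigoplus_{\ft_M \in S} \cH (M)^{\ft_M} \big) \rtimes \mf R_\fs^\sharp$ is a two-sided ideal of $\cH (M) \rtimes \mf R_\fs^\sharp$, and every two-sided ideal of the form we care about arises this way. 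So the task reduces to determining which $\mf R_\fs^\sharp$-orbit of inertial classes of $M$ is picked out by the condition $\mathrm{Ind}_{PM \rtimes \mf R_\fs^\sharp}^G (V) \in \Rep^\fs (G)$.

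Next I would analyse that induction condition. A module $V$ over $\cH (M) \rtimes \mf R_\fs^\sharp$ restricts to an $\cH (M)$-module $V|_M$, and $\mathrm{Ind}_{PM \rtimes \mf R_\fs^\sharp}^G (V)$ is built from $I_{PM}^G$ applied to the isotypic pieces of $V|_M$ under the Bernstein decomposition. Using that $I_{PM}^G$ sends $\Rep^{\ft_M}(M)$ into $\Rep^{[L',\omega']_G}(G)$ where $[L',\omega']_G$ is the inertial class of $G$ generated by $\ft_M$, and that the classes $w(\fs_M)$ for $w \in \mf R_\fs^\sharp$ all generate the same $\fs = [L,\omega]_G$ (because $\mf R_\fs^\sharp \subset N_G(L)/L$ and $w(\fs_M)$ differs from $\fs_M$ by a $G$-conjugation and a character of $M/M^\sharp Z(G)$, both of which are absorbed in passing to $\fs$), I would conclude that $I_{PM}^G(W) \in \Rep^\fs(G)$ for $W \in \Rep^{w(\fs_M)}(M)$ with $w \in \mf R_\fs^\sharp$, and conversely that no other inertial class of $M$ induces into $\Rep^\fs(G)$ — here I'd invoke the description of $\Irr^\fs(G)$ as the constituents of $I_P^G(\omega \otimes \chi)$ together with transitivity of parabolic induction through $PM$. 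This pins down $S = \{ w(\fs_M) : w \in \mf R_\fs^\sharp \}$.

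Finally I would verify that the two descriptions match: the ideal $\cH (M \rtimes \mf R_\fs^\sharp )^\fs$ is precisely the set of $f \in \cH (M) \rtimes \mf R_\fs^\sharp$ annihilating every $V$ with $\mathrm{Ind}(V) \notin \Rep^\fs(G)$ — equivalently, annihilating every $\cH(M)$-module supported off $S$ — which is exactly $\big( \bigoplus_{w \in \mf R_\fs^\sharp} \cH (M)^{w(\fs_M)} \big) \rtimes \mf R_\fs^\sharp$ once one notes this subspace is already a unital (idempotented) subalgebra acting as the identity on exactly the modules supported on $S$. The main obstacle I anticipate is being careful about the converse direction: ruling out that some $\ft_M$ with $I_{PM}^G(\Rep^{\ft_M}(M)) \subset \Rep^\fs(G)$ could fail to lie in the $\mf R_\fs^\sharp$-orbit of $\fs_M$. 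This needs the fact that $N_G([L,\omega]_L) \subset M$ (from \eqref{eq:1.2}) together with Lemma \ref{lem:2.4}, so that any $G$-conjugacy between cuspidal supports defined in terms of $\fs$ that is witnessed inside a module over $\cH(M)\rtimes \mf R_\fs^\sharp$ is already witnessed by an element of $\mf R_\fs^\sharp$ up to an element of $W_\fs = W(M,L)$, which acts trivially on inertial classes of $M$. Everything else is bookkeeping with the Bernstein decomposition and the standard behaviour of $I_{PM}^G$.
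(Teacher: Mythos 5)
Your proposal takes essentially the same route as the paper's proof: both reduce the lemma to identifying the $M$-inertial classes involved as exactly the orbit $\{w(\fs_M) : w \in W_\fs\mf R_\fs^\sharp\}$, and then use orthogonality of the Bernstein ideals $\cH(M)^{w(\fs_M)}$ for distinct $w \in \mf R_\fs^\sharp$ to recognise the crossed product form. The "converse" you flag as the main obstacle is precisely what the paper compresses into the single assertion that the $w(\fs_M)$ with $w \in W_\fs\mf R_\fs^\sharp$ are the only inertial classes of $M$ lifting to $\fs$, and your appeal to Lemma \ref{lem:2.4} together with $N_G(\fs_L) \subset M$ is the same underlying point.
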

\begin{proof}
First we note that $\Res_{\cH (M)}^{\cH (M \rtimes \mf R_\fs^\sharp)^\fs}(V) \in
\sum_{w \in W_\fs \mf R_\fs^\sharp} \Mod (\cH (M)^{w (\fs_M)})$ for all eligible $V$, 
because these $w(\fs_M)$ are only inertial equivalence classes for $M$ 
which lift to $\fs$. Hence
\[
\cH (M \rtimes \mf R_\fs^\sharp)^\fs \subset
\sum_{r \in \mf R_\fs^\sharp} \sum_{w \in W_\fs} \cH (M)^{w(\fs_M)} r .
\]
The right hand side satisfies the defining property of $\cH (M \rtimes 
\mf R_\fs^\sharp)^\fs$, so both sides are equal.
Because $W_\fs \subset M$ and $\cH (M)^{\fs_M} \cH (M)^{w (\fs_M)} = 0$ for 
$w \in \mf R_\fs^\sharp \setminus \{1\}$, the right hand side is actually
a crossed product in the asserted way.
\end{proof}

By \eqref{eq:3.46} the algebra from Lemma \ref{lem:3.14} is stable under $X^G (\fs)$. 
We extend the action $\alpha$ of $X^L (\fs)$ on $\cH (M)$ to $\Stab (\fs)$ by
\label{i:01}
\begin{equation}\label{eq:3.35}
\alpha_{(w,\gamma)}(f) := w (\gamma^{-1} \cdot f) w^{-1} . 
\end{equation}
Given  $w \in \mf R_\fs^\sharp$, Lemma \ref{lem:2.4}.d shows that there exists a
$\gamma \in \Irr (L / L^\sharp Z(G))$ such that $(w,\gamma) \in \Stab (\fs)$,
and that $\gamma$ is unique up to $X^L (\fs)$. Hence $w \mapsto \alpha_{(w,\gamma)}$
determines a group action of $\mf R_\fs^\sharp$ on $(\cH (M) )^{X^L (\fs)}$. 
By \eqref{eq:3.46} this action stabilizes $(\cH (M)^{\fs_M} )^{X^L (\fs)}$.
Using this action, we can rewrite the $\alpha$-invariant subalgebra of 
$\cH (M \rtimes \mf R_\fs^\sharp )^\fs$ conveniently:

\begin{lem}\label{lem:3.3}
There is a canonical isomorphism 
\[
\Big( \cH (M \rtimes \mf R_\fs^\sharp )^\fs \Big)^{X^G (\fs)}
\cong \big( \cH (M)^{\fs_M} \big)^{X^L (\fs)} \rtimes \mf R_\fs^\sharp .
\]
\end{lem}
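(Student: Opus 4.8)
The plan is to unwind both sides of the claimed isomorphism as explicit vector spaces of functions (or rather $\mathbb C$-linear combinations of functions times group elements) and exhibit a map between them. On the left we use Lemma \ref{lem:3.14}, which identifies $\cH (M \rtimes \mf R_\fs^\sharp )^\fs$ with $\big( \bigoplus_{w \in \mf R_\fs^\sharp} \cH (M)^{w(\fs_M)} \big) \rtimes \mf R_\fs^\sharp$. Taking $X^G (\fs)$-invariants of a crossed product by the finite group $\mf R_\fs^\sharp$ on which $X^G(\fs)$ acts by $\alpha$: an element $\sum_{r \in \mf R_\fs^\sharp} f_r\, r$ with $f_r \in \cH(M)^{r(\fs_M)}$ is $\alpha_\gamma$-invariant for all $\gamma \in X^G(\fs)$ iff $\alpha_\gamma(f_r) = f_r$ for every $r$ and every $\gamma$, since $\alpha_\gamma$ fixes each group element $r \in \mf R_\fs^\sharp \subset G$ (as $\gamma$ is a character of $G/G^\sharp Z(G)$ and, by our choice in Lemma \ref{lem:2.2}.c, $\mf R_\fs^\sharp$ consists of permutation matrices normalizing $P \cap M$, so conjugation by $r$ does not interfere). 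Thus the left-hand side is $\big( \bigoplus_{w \in \mf R_\fs^\sharp} \cH(M)^{w(\fs_M)} \big)^{X^G(\fs)} \rtimes \mf R_\fs^\sharp$, where $X^G(\fs)$ acts on the direct sum componentwise but permuting the summands according to \eqref{eq:3.46}.

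Next I would analyse that invariant direct sum. By \eqref{eq:3.46}, $\gamma \cdot \cH(M)^{\fs_M} = \cH(M)^{w(\fs_M)}$ whenever $(w,\gamma) \in \Stab(\fs)$, and Lemma \ref{lem:2.4}.b–d tells us precisely: the image of $X^G(\fs)$ under $\gamma \mapsto w$ (the first coordinate of the pair in $\Stab(\fs,P \cap M)$) is exactly $\mf R_\fs^\sharp$, with kernel $X^L(\fs)$. So $X^G(\fs)$ acts transitively on the index set $\mf R_\fs^\sharp$ of summands, with stabilizer of the summand $\cH(M)^{\fs_M}$ (i.e. $w=1$) equal to $X^L(\fs)$. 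A standard fact about a finite group acting on a direct sum of algebras transitively permuting the summands: the invariant subalgebra is isomorphic to the invariants of a single summand under its stabilizer. Hence $\big( \bigoplus_{w \in \mf R_\fs^\sharp} \cH(M)^{w(\fs_M)} \big)^{X^G(\fs)} \cong \big( \cH(M)^{\fs_M} \big)^{X^L(\fs)}$, the isomorphism being restriction to the $w=1$ summand (projection), with inverse $f \mapsto \sum_{w} \alpha_{(w,\gamma_w)}^{-1}$-type averaging over coset representatives $\gamma_w$. Combining with the previous paragraph gives the asserted isomorphism, and one checks it is an algebra map by noting the multiplication on the crossed product respects this decomposition (products of elements of $\cH(M)^{\fs_M}$ and $\cH(M)^{w(\fs_M)}$ with $w \neq 1$ vanish, as already used in Lemma \ref{lem:3.14}).

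The step I expect to require the most care is the compatibility of the two group actions of $\mf R_\fs^\sharp$: the "inner" one coming from conjugation by permutation matrices that defines the crossed product $\cH(M \rtimes \mf R_\fs^\sharp)$, and the action $w \mapsto \alpha_{(w,\gamma)}$ on $\big(\cH(M)^{\fs_M}\big)^{X^L(\fs)}$ defined just before the lemma via \eqref{eq:3.35}. One must verify that under the projection-to-$w=1$-summand identification, the residual $\mf R_\fs^\sharp$-action induced from the crossed-product structure matches $\alpha_{(w,\gamma)}$ on the nose — i.e. that transporting $f \in \cH(M)^{\fs_M}$ around the summands and back via $r \in \mf R_\fs^\sharp$ yields $w(\gamma^{-1} \cdot f)w^{-1}$, which is exactly the content of combining conjugation by the permutation matrix $w$ (this is literally how $\mf R_\fs^\sharp$ acts in the crossed product) with the character twist $\gamma$ needed to land back in $\cH(M)^{\fs_M}$ from $\cH(M)^{w(\fs_M)}$. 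Since $\gamma$ is only well-defined modulo $X^L(\fs)$, the well-definedness of the resulting action on the $X^L(\fs)$-invariants is what makes everything consistent, and this is precisely why the invariants are taken before forming the final crossed product. I would also remark that "canonical" here means the isomorphism depends only on the data $(L,\omega,P,M)$ and the choices already fixed (representatives for $\mf R_\fs^\sharp$ as permutation matrices), not on further arbitrary choices.
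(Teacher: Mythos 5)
Your proposal is correct in substance and reaches the stated isomorphism, but it takes a genuinely different route from the paper's own proof. The paper first peels off the $X^L(\fs)$-invariants, rewrites the inner crossed product $\big(\bigoplus_w \cH(M)^{w(\fs_M)}\big)^{X^L(\fs)} \rtimes \mf R_\fs^\sharp$ as a matrix algebra $\bigoplus_{w_1,w_2 \in \mf R_\fs^\sharp} w_1 \,(\cH(M)^{\fs_M})^{X^L(\fs)}\, w_2^{-1} \cong \End_\C(\C[\mf R_\fs^\sharp]) \otimes (\cH(M)^{\fs_M})^{X^L(\fs)}$, and then finishes by invoking a cited lemma of the form $\big(\End_\C(\C[H]) \otimes A\big)^H \cong A \rtimes H$. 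You instead keep the outer $\rtimes\, \mf R_\fs^\sharp$ untouched, observe that the $X^G(\fs)$-invariant elements of $\bigoplus_w \cH(M)^{w(\fs_M)}$ are determined by their $w=1$ component (the standard fact that invariants of a transitively permuted direct sum of orthogonal ideals collapse to the stabilizer-invariants of one summand), identify that component with $(\cH(M)^{\fs_M})^{X^L(\fs)}$, and then verify directly that under this projection the residual conjugation action of $\mf R_\fs^\sharp$ becomes $\alpha_{(w,\gamma)}$. This avoids the matrix-algebra detour and the external citation; it is more hands-on and arguably more elementary, at the cost of the explicit action-compatibility check, which you rightly flag as the delicate step and which you handle correctly, including the well-definedness of $\gamma$ modulo $X^L(\fs)$ after passage to $X^L(\fs)$-invariants.

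Two imprecisions are worth fixing. First, in describing a generic element $\sum_r f_r\, r$ of the crossed product you write $f_r \in \cH(M)^{r(\fs_M)}$; the coefficient $f_r$ actually runs over the whole direct sum $\bigoplus_w \cH(M)^{w(\fs_M)}$. Your conclusion that invariance is componentwise in $r$ is correct under the correct interpretation, so this is a slip rather than a flaw, but as written it describes a strictly smaller subspace. Second, your parenthetical justification that ``$\alpha_\gamma$ fixes each group element $r \in \mf R_\fs^\sharp$'' --- invoking that $\mf R_\fs^\sharp$ consists of permutation matrices normalizing $P \cap M$ and that ``conjugation does not interfere'' --- is addressing the wrong thing: that reasoning shows the $X^G(\fs)$-action commutes with conjugation by $\mf R_\fs^\sharp$, which is separately needed, but what you are actually claiming here is $\gamma(r)=1$. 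The paper asserts this as a fact; you should state it and use it as such rather than try to derive it from the normalization of $P \cap M$.
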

\begin{proof}
Using Lemma \ref{lem:3.14} and the fact that $X^G (\fs)$ fixes all elements of
$\C [\mf R_\fs^\sharp]$, we can rewrite 
\[
\Big( \cH (M \rtimes \mf R_\fs^\sharp )^\fs \Big)^{X^G (\fs)} \cong 
\Big( \big( \bigoplus\nolimits_{w \in \mf R_\fs^\sharp} \cH (M)^{w(\fs_M)} 
\big)^{X^L (\fs)} \rtimes \mf R_\fs^\sharp \Big)^{X^G (\fs) / X^L (\fs)} .
\]
By Lemma \ref{lem:2.4}.d this is
\[
\Big( \bigoplus\nolimits_{w_1,w_2 \in \mf R_\fs^\sharp} w_1 (\cH (M)^{\fs_M})^{X^L (\fs)} 
w_2^{-1} \Big)^{\mf R_\fs^\sharp} \cong \Big( \End_\C ( \C [\mf R_\fs^\sharp]) 
\otimes (\cH (M)^{\fs_M})^{X^L (\fs)} \Big)^{\mf R_\fs^\sharp} .
\]
In the right hand side the action of $\mf R_\fs^\sharp$ has become the regular
representation on $\End_\C ( \C [\mf R_\fs^\sharp])$ tensored with the action
$\alpha_{(w,\gamma)}$ as in \eqref{eq:3.35}. By a folklore result
(see \cite[Lemma A.3]{SolT} for a proof) the right hand side is isomorphic to
$\big( \cH (M)^{\fs_M} \big)^{X^L (\fs)} \rtimes \mf R_\fs^\sharp$.
\end{proof}

In Proposition \ref{prop:3.2} we will show that the algebras from Lemma \ref{lem:3.3}
are Morita equivalent with $( \cH (G)^\fs )^{X^G (\fs)}$. 

We recall from Lemma \ref{lem:3.2} that $e^\fs_M$ in \eqref{eq:3.61} is 
$\Stab (\fs,P \cap M)$-invariant, so 
from \eqref{eq:3.35} we obtain an action of $\Stab (\fs, P \cap M)$ on 
$e^\fs_M \cH (M \rtimes \mf R_\fs^\sharp )^\fs e^\fs_M$.

\begin{lem}\label{lem:3.10}
The following algebras are Morita equivalent:\\
$\cH (G)^\fs, \cH (M)^{\fs_M}, \cH (M \rtimes \mf R_\fs^\sharp )^\fs,
e^\fs_M \cH (M) e^\fs_M$ and 
$e^\fs_M \cH (M \rtimes \mf R_\fs^\sharp )^\fs e^\fs_M$.
\end{lem}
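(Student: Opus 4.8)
The plan is to prove that all five algebras are Morita equivalent by showing each one is Morita equivalent to $\cH(M)^{\fs_M}$, and then assembling the equivalences from \eqref{eq:3.50}, Lemma \ref{lem:3.14}, and the properties of the idempotent $e^\fs_M$ collected in Lemmas \ref{lem:3.2} and \ref{lem:3.30}. Throughout I would work in the Morita theory of idempotented (non-unital) rings: an idempotent $e$ in such a ring $A$ is \emph{full} if $AeA=A$, and then $eAe\sim_M A$ (this is the form in which \cite[Proposition 3.3]{BuKu3} is used). First I would dispatch $\cH(G)^\fs\sim_M\cH(M)^{\fs_M}$: since $(K_G,\lambda_G)$ is an $\fs$-type and $(K,\lambda)$ an $\fs_M$-type, the idempotents $e_{\lambda_G}\in\cH(G)^\fs$ and $e_\lambda\in\cH(M)^{\fs_M}$ are full, so $\cH(G)^\fs\sim_M e_{\lambda_G}\cH(G)e_{\lambda_G}$ and $\cH(M)^{\fs_M}\sim_M e_\lambda\cH(M)e_\lambda$, and by \eqref{eq:3.50} these two corner algebras are isomorphic.

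For $e^\fs_M\cH(M)e^\fs_M$: by Lemma \ref{lem:3.2} one has $e^\fs_M\in\cH(M)^{\fs_M}$, hence $e^\fs_M\cH(M)e^\fs_M=e^\fs_M\cH(M)^{\fs_M}e^\fs_M$, so it suffices to show $e^\fs_M$ is full in $\cH(M)^{\fs_M}$. I would argue that $e_\lambda$ is one of the orthogonal summands of $e^\fs_M$: in \eqref{eq:3.61} one may take $1\in[L/H_\lambda]$ (it represents the coset $H_\lambda$, in accordance with the explicit representatives in the proof of Lemma \ref{lem:3.30}) and the trivial character represents the identity of $X^L(\fs/\lambda)$, while Lemma \ref{lem:3.2} shows the idempotents $a e_{\lambda\otimes\gamma}a^{-1}$ are pairwise orthogonal; thus $e_\lambda e^\fs_M=e^\fs_M e_\lambda=e_\lambda$. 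A full idempotent sitting below $e^\fs_M$ forces $e^\fs_M$ to be full. (Alternatively one can quote Lemma \ref{lem:3.30}.d, which gives $e^\fs_M V_\pi\neq 0$ for every $\pi\in\Irr^{\fs_M}(M)$, i.e. $e^\fs_M$ kills no object of $\Rep^{\fs_M}(M)$.)

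For $\cH(M\rtimes\mf R_\fs^\sharp)^\fs$: Lemma \ref{lem:3.14} presents it as $B=\big(\bigoplus_{w\in\mf R_\fs^\sharp}\cH(M)^{w(\fs_M)}\big)\rtimes\mf R_\fs^\sharp$, with $r\in\mf R_\fs^\sharp$ conjugating the $w$-summand isomorphically onto the $(rw)$-summand. I would note this permutation action on the set of summands is free: if $w(\fs_M)=\fs_M$ then $w\cdot\omega$ is $M$-inertially equivalent to $\omega$, so $vw\in N_G(\fs_L)/L=W_\fs$ for some $v\in W_\fs$, hence $w\in W_\fs$, and $W_\fs^\sharp=W_\fs\rtimes\mf R_\fs^\sharp$ (Lemma \ref{lem:2.2}.c) forces $w=1$. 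So the $|\mf R_\fs^\sharp|$ summands form a free transitive $\mf R_\fs^\sharp$-set, and the standard identification of such a crossed product with $\End_\C(\C[\mf R_\fs^\sharp])\otimes\cH(M)^{\fs_M}$ — the folklore result \cite[Lemma A.3]{SolT} already invoked in the proof of Lemma \ref{lem:3.3} — gives $B\sim_M\cH(M)^{\fs_M}$. Finally, for $e^\fs_M B e^\fs_M$, I would view $\cH(M)^{\fs_M}$ as the $w=1$ subalgebra of $B$ (containing $e^\fs_M$): the two-sided ideal $Be^\fs_M B$ contains $\cH(M)^{\fs_M}e^\fs_M\cH(M)^{\fs_M}=\cH(M)^{\fs_M}$ by the fullness just established, hence contains the ideal of $B$ generated by $\cH(M)^{\fs_M}$, and that ideal is all of $B$ because conjugating $\cH(M)^{\fs_M}$ by elements of $\mf R_\fs^\sharp\subset G$ produces every summand $\cH(M)^{w(\fs_M)}$ and $\cH(M)^{\fs_M}$ is idempotented. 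Thus $e^\fs_M$ is full in $B$ and $e^\fs_M B e^\fs_M\sim_M B$; chaining the four equivalences finishes the proof.

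This lemma is essentially bookkeeping once \eqref{eq:3.50}, Lemma \ref{lem:3.14}, and $e^\fs_M$ are in hand; the only point requiring genuine care is keeping the non-unital (idempotented-ring) Morita theory straight — in particular that it is precisely the fact that $e^\fs_M$ dominates the type idempotent $e_\lambda$ (together with the orthogonality statement of Lemma \ref{lem:3.2} and the harmless normalization $1\in[L/H_\lambda]$) which makes $e^\fs_M$ full, both in $\cH(M)^{\fs_M}$ and in $B$.
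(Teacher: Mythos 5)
Your argument is correct and reaches the same conclusion by essentially the same idempotent/fullness bookkeeping, but two of the individual equivalences are routed differently than in the paper, and one of those differences has a downstream consequence worth flagging. For $\cH(G)^\fs\sim_M\cH(M)^{\fs_M}$, you pass through the corner algebras of the types via \eqref{eq:3.50}; the paper instead writes the equivalence directly as $I_{PM}^G$ in one direction and $\mathrm{pr}_{\fs_M}\circ r^G_{PM}$ in the other. Likewise, for $\cH(M)^{\fs_M}\sim_M\cH(M\rtimes\mf R_\fs^\sharp)^\fs$, you identify the crossed product with a matrix algebra over $\cH(M)^{\fs_M}$, while the paper uses $\mathrm{Ind}_M^{M\rtimes\mf R_\fs^\sharp}$. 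For Lemma~\ref{lem:3.10} in isolation either route is fine, but the paper's choice is deliberate: the explicit Jacquet bimodules $C_c^\infty(G/U)^\fs$ and $C_c^\infty(U\backslash G)^\fs$, and the bimodule $\cH(M)^{\fs_M}e^\fs_M$ and its transpose, are reused in the proof of Proposition~\ref{prop:3.2} to make the Morita equivalences equivariant under $X^G(\fs)$ by restricting to functions supported on $\bigcap_\gamma\ker\gamma$. Your type-based and matrix-algebra bimodules would not plug into that later argument as cleanly. Two small further remarks: the observation that $e_\lambda$ sits below $e^\fs_M$, which both you and the paper use, does require the harmless normalization $1\in[L/H_\lambda]$ (or the fallback via Lemma~\ref{lem:3.30}.d, which you also note); and your citation of \cite[Lemma A.3]{SolT} for the isomorphism $B\cong\End_\C(\C[\mf R_\fs^\sharp])\otimes\cH(M)^{\fs_M}$ is slightly off — that lemma, as invoked in Lemma~\ref{lem:3.3}, is the statement $\big(\End_\C(\C R)\otimes A\big)^R\cong A\rtimes R$ about fixed points, not the identification of a crossed product by a freely permuted direct sum with a matrix algebra. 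The fact you want is true and standard, but it is a different folklore result.
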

\begin{proof}
We will denote Morita equivalence with $\sim_M$. \label{i:35}
The Morita equivalence of $\cH (G)^\fs$ and $\cH (M)^{\fs_M}$ follows from the fact that
$N_G (\fs_L) \subset M$. It is given in one direction by
\begin{equation}\label{eq:3.7}
I_{PM}^G : \Mod (\cH (M)^{\fs_M}) = \Rep^{\fs_M}(M) \to \Rep^\fs (G) = \Mod (\cH (G)^\fs)
\end{equation}
and in the other direction by 
\begin{equation}\label{eq:3.6}
\mathrm{pr}_{\fs_M} \circ r^G_{PM} : \Rep^\fs (G) \to \Rep^{\fs}(M) \to \Rep^{\fs_M}(M),
\end{equation}
the normalized Jacquet restriction functor $r_{PM}^G$ followed by projection on 
the factor $\Rep^{\fs_M}(M)$ of $\Rep^{\fs}(M)$. Lemma \ref{lem:3.14} shows that 
\begin{equation}\label{eq:3.5}
\cH (M)^{\fs_M} \sim_M \cH (M \rtimes \mf R_\fs^\sharp )^\fs ,
\end{equation}
the equivalence being given by 
\[
\mathrm{Ind}_{\cH (M)^{\fs_M}}^{\cH (M \rtimes \mf R_\fs^\sharp )^\fs} 
= \mathrm{Ind}_M^{M \rtimes \mf R_\fs^\sharp} .
\]
With the bimodules $e^\fs_M \cH (M)^{\fs_M}$ and
$\cH (M)^{\fs_M} e^\fs_M$ we see that
\begin{equation}\label{eq:3.49}
e^\fs_M \cH (M) e^\fs_M = 
e^\fs_M \cH (M)^{\fs_M} e^\fs_M \sim_M 
\cH (M)^{\fs_M} e^\fs_M \cH (M)^{\fs_M} .
\end{equation}
Since $(K,\lambda)$ is an $\fs_M$-type, every module of
$\cH (M)^{\fs_M}$ is generated by its $\lambda$-isotypical vectors and a fortiori
by the image of $e^\fs_M$ in such a module. Therefore
\[
\cH (M )^{\fs_M} e^\fs_M \cH (M )^{\fs_M} = \cH (M )^{\fs_M} .
\]
The same argument, now additionally using \eqref{eq:3.5}, also shows that
\[
\cH (M \rtimes \mf R_\fs^\sharp )^\fs \sim_M e^\fs_M 
\cH (M \rtimes \mf R_\fs^\sharp )^\fs  e^\fs_M. \qedhere
\]
\end{proof}

The above lemma serves mainly as preparation for some more involved Morita equivalences:

\begin{prop}\label{prop:3.2} 
The following algebras are Morita equivalent to $(\cH (G)^\fs )^{X^G (\fs)}$:
\enuma{ 
\item $(\cH (M \rtimes \mf R_\fs^\sharp )^\fs )^{X^G (\fs)} \cong
\big( \cH (M)^{\fs_M} \big)^{X^L (\fs)} \rtimes \mf R_\fs^\sharp$;
\item $\cH (M)^{\fs_M} \rtimes \Stab (\fs,P \cap M)$;  
\item $ e^\fs_M \cH (M) e^\fs_M \rtimes \Stab (\fs,P \cap M)$;
\item $\big( e^\fs_M \cH (M) e^\fs_M \big)^{X^L (\fs)} \rtimes \mf R_\fs^\sharp$.
}
\end{prop}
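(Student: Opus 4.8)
The plan is to establish the four Morita equivalences by linking each candidate algebra to the ``master'' algebra $(\cH(G)^\fs)^{X^G(\fs)}$ through a chain of intermediate equivalences, reusing the non-equivariant equivalences of Lemma \ref{lem:3.10} at each stage. The key observation is that every Morita equivalence in Lemma \ref{lem:3.10} is implemented by a bimodule that carries a compatible action of $X^G(\fs)$ (or of $\Stab(\fs,P\cap M)$), so one can pass to fixed-point subalgebras and crossed products functorially. Concretely, I would first recall that by \eqref{eq:3.83} (extended in Subsection \ref{par:morita2} to a Morita equivalence when $\char(F)\mid md$) the algebra $\cH(G^\sharp Z(G))^\fs$ is Morita equivalent to $(\cH(G)^\fs)^{X^G(\fs)}$, so it suffices to match the four listed algebras with the latter.

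For part (a): the isomorphism $(\cH(M\rtimes\mf R_\fs^\sharp)^\fs)^{X^G(\fs)}\cong(\cH(M)^{\fs_M})^{X^L(\fs)}\rtimes\mf R_\fs^\sharp$ is exactly Lemma \ref{lem:3.3}, so I only need the Morita equivalence with $(\cH(G)^\fs)^{X^G(\fs)}$. This comes from the functors $I_{PM}^G$ and $\mr{pr}_{\fs_M}\circ r_{PM}^G$ of \eqref{eq:3.7}--\eqref{eq:3.6} composed with $\mr{Ind}_M^{M\rtimes\mf R_\fs^\sharp}$ from \eqref{eq:3.5}: these functors intertwine the $X^G(\fs)$-actions (parabolic induction and Jacquet restriction commute with twisting by characters of $G/G^\sharp Z(G)$, restricted appropriately to $L$), so the equivalence $\cH(G)^\fs\sim_M\cH(M\rtimes\mf R_\fs^\sharp)^\fs$ descends to $X^G(\fs)$-fixed points. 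For part (b): by Lemma \ref{lem:2.4}.a we have $\Stab(\fs)=\Stab(\fs,P\cap M)\ltimes W_\fs$ with $W_\fs=W(M,L)\subset M$, so $\cH(M)^{\fs_M}\rtimes\Stab(\fs,P\cap M)$ should be compared to $\big(\bigoplus_{w\in\mf R_\fs^\sharp}\cH(M)^{w(\fs_M)}\big)\rtimes\mf R_\fs^\sharp$ of Lemma \ref{lem:3.14} after absorbing $X^L(\fs)$ — more precisely one invokes the folklore result used in Lemma \ref{lem:3.3} (see \cite[Lemma A.3]{SolT}) in the opposite direction, showing that $A\rtimes\Stab(\fs,P\cap M)$ is Morita equivalent to $A^{X^L(\fs)}\rtimes\mf R_\fs^\sharp$ whenever $A=\cH(M)^{\fs_M}$ is acted on so that $X^L(\fs)$ acts by inner-ish automorphisms via the $c_\gamma$; combined with part (a) this gives the equivalence. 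Parts (c) and (d) then follow by the idempotent $e^\fs_M$: since $e^\fs_M$ is $\Stab(\fs,P\cap M)$-invariant (Lemma \ref{lem:3.2}) and a full idempotent in $\cH(M)^{\fs_M}$ (Lemma \ref{lem:3.10}, using that $(K,\lambda)$ is an $\fs_M$-type), compression by $e^\fs_M$ is an $\Stab(\fs,P\cap M)$-equivariant Morita equivalence, hence passes to crossed products, giving $e^\fs_M\cH(M)e^\fs_M\rtimes\Stab(\fs,P\cap M)\sim_M\cH(M)^{\fs_M}\rtimes\Stab(\fs,P\cap M)$ for (c), and likewise $\big(e^\fs_M\cH(M)e^\fs_M\big)^{X^L(\fs)}\rtimes\mf R_\fs^\sharp\sim_M\big(\cH(M)^{\fs_M}\big)^{X^L(\fs)}\rtimes\mf R_\fs^\sharp$ for (d).

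The main obstacle, and the step requiring the most care, is verifying that the Morita-equivalence bimodules of Lemma \ref{lem:3.10} genuinely carry the group actions compatibly — in particular that compression by the non-central idempotent $e^\fs_M$ commutes with the action $\alpha_{(w,\gamma)}$ of \eqref{eq:3.35}, which is a twisted conjugation rather than a plain one, and that the resulting full-idempotent property survives taking $X^L(\fs)$-fixed points. Here one uses crucially that $e^\fs_M$ is built from the $c_\gamma\in L^1$ (Proposition \ref{prop:3.3}, \eqref{eq:3.61}) and from the representatives $a_l$ commuting with $W_\fs^\sharp$ (Lemma \ref{lem:3.30}.b), so that $\alpha_{(w,\gamma)}$ permutes the constituent idempotents $a_l e_{\lambda\otimes\gamma'} a_l^{-1}$; fullness of $e^\fs_M$ after taking invariants reduces to Lemma \ref{lem:3.30}.d, which guarantees that $e^\fs_M$ meets every $M^\sharp$-isotypical component of every object of $\Rep^{\fs_M}(M)$. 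Once these equivariance and fullness points are checked, the four equivalences are formal consequences of the general principle that an $H$-equivariant Morita equivalence induces a Morita equivalence of crossed products and of fixed-point subalgebras (for the latter one again cites the folklore lemma). I would organize the write-up as: (i) equivariance of \eqref{eq:3.7}--\eqref{eq:3.6} and of \eqref{eq:3.5}; (ii) equivariance and fullness of compression by $e^\fs_M$; (iii) assembling (a)--(d) via Lemmas \ref{lem:3.3}, \ref{lem:3.14}, and the crossed-product/fixed-point transfer principle.
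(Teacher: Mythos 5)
Your overall architecture matches the paper's: establish the isomorphism of (a) via Lemma~\ref{lem:3.3}, pass to the Levi $M$ via equivariant parabolic-induction bimodules, compress by the $\Stab(\fs,P\cap M)$-invariant full idempotent $e^\fs_M$, and move between crossed products and fixed-point subalgebras using the averaging idempotent $p$. Parts (a) and (c) of your sketch are essentially the paper's argument. The gap is in your general ``transfer principle'' and, concretely, in how you get from $\big(\cH(M)^{\fs_M}\big)^{X^L(\fs)}\rtimes \mf R_\fs^\sharp$ to (d).

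First, a point of mechanism for (b). The action of $\gamma\in X^L(\fs)$ on $\cH(M)^{\fs_M}$ is pointwise multiplication by $\gamma^{-1}$, an outer automorphism; it is not conjugation by $c_\gamma$, nor anything ``inner-ish.'' The $c_\gamma$ conjugate the type $(K,\lambda)$ to $(K,\lambda\otimes\gamma)$ --- they move between Bernstein components of $K$, they do not implement $\alpha_\gamma$. What makes (b) work is not innerness but the Clifford-theoretic fullness of $p=|X^L(\fs)|^{-1}\sum_\gamma\gamma$ inside $\cH(M)^{\fs_M}\rtimes X^L(\fs)$: every irreducible module of this crossed product is induced from a stabilizer subgroup, and every $\rho\in\Irr(\C[X^M(\pi),\kappa_\omega])$ appears in $V_\pi$ (this is \eqref{eq:2.2}), hence $pV\neq 0$. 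That is the representation-theoretic content; the lemma you cite, \cite[Lemma A.3]{SolT}, is the identification $\big(\End_\C(\C[\mf R])\otimes A\big)^{\mf R}\cong A\rtimes\mf R$ from Lemma~\ref{lem:3.3}, not the fixed-point--to--corner identification (that is A.2), and neither of them gives fullness by itself.

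Second, and more seriously, the step you phrase as ``an $H$-equivariant Morita equivalence induces a Morita equivalence of fixed-point subalgebras'' is false in general, and this is exactly where your proposed route for (d) breaks. If $e$ is a full $H$-invariant idempotent in $A$, it is automatic that $e$ is full in $A\rtimes H$ (since $(A\rtimes H)e(A\rtimes H)\supset AeA\cdot H = A\rtimes H$), which is why your part (c) and the paper's are fine. But $e$ need not be full in $A^H$: take $A=M_2(\C)$, $H=\Z/2$ acting by conjugation by $\mathrm{diag}(1,-1)$, $e=e_{11}$; then $e$ is full in $A$ but $A^H e A^H = \C e_{11}\subsetneq A^H$. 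You propose to obtain (d) by compressing $\big(\cH(M)^{\fs_M}\big)^{X^L(\fs)}\rtimes\mf R_\fs^\sharp$ directly by $e^\fs_M$; this requires precisely the fullness of $e^\fs_M$ in the $X^L(\fs)$-fixed subalgebra, which is the assertion of \eqref{eq:3.63}, and the paper remarks explicitly that it ``seems complicated to prove directly.'' The paper's detour through $\cH(M)^{\fs_M}\rtimes\Stab(\fs,P\cap M)$ and $e^\fs_M\cH(M)e^\fs_M\rtimes\Stab(\fs,P\cap M)$ is structurally necessary: crossed products make compression by $e^\fs_M$ painless, and the passage back to fixed points in (d) is handled by re-running the Clifford fullness argument for $p$ inside $e^\fs_M\cH(M)e^\fs_M\rtimes X^L(\fs)$, which is where Lemma~\ref{lem:3.30}.d enters (the analogue of \eqref{eq:3.76} is no longer automatic and is exactly what that lemma supplies). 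So your plan needs to be reorganized to follow the order (b), (c), then (d), and the two fullness statements --- for $p$ in the crossed product, and its variant after compression by $e^\fs_M$ --- need to be proved rather than absorbed into a transfer principle that does not hold.
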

\begin{proof}
For the definitions of the finite groups see page \pageref{eq:2.14}.\\
(a) The isomorphism between the two algebras is Lemma \ref{lem:3.3}.
Let $U$ be the unipotent radical of $PM$. As discussed in \cite{MeSo}, there are
natural isomorphisms \label{i:02} \label{i:03}
\begin{align*}
& I_{PM}^G (V) \cong C_c^\infty (G / U ) \otimes_{\cH (M)} V 
\qquad & V \in \Rep (M) ,\\
& r_{PM}^G (W) \cong C_c^\infty (U \backslash G) \otimes_{\cH (G)} W 
\qquad & W \in \Rep (G) .
\end{align*}
For $V \in \Rep^{\fs_M}(M)$ we may just as well take the bimodule 
$C_c^\infty (G / U ) \cH (M)^{\fs_M}$, and to get \eqref{eq:3.6} the bimodule
$\cH (M)^{\fs_M} C_c^\infty (U \backslash G)$ is suitable. But if we want to
obtain modules over $\cH (M \rtimes \mf R_\fs^\sharp )^\fs$, it is better to use
the bimodules
\begin{align*}
& C_c^\infty (G / U )^\fs := \bigoplus\nolimits_{w \in \mf R_\fs^\sharp} 
C_c^\infty (G / U ) \cH (M)^{w (\fs_M)} = C_c^\infty (G / U ) 
\cH (M \rtimes \mf R_\fs^\sharp )^\fs , \\
& C_c^\infty (U \backslash G)^\fs := \bigoplus\nolimits_{w \in \mf R_\fs^\sharp}  
\cH (M)^{w(\fs_M)} C_c^\infty (U \backslash G) = \cH (M \rtimes \mf R_\fs^\sharp )^\fs  
C_c^\infty (U \backslash G) .
\end{align*}
Indeed, we can rewrite \eqref{eq:3.7} as
\begin{align*}
I_{PM}^G (V) & \cong C_c^\infty (G / U ) \cH (M)^{\fs_M} \otimes_{\cH (M)^{\fs_M}} V \\
& =  C_c^\infty (G / U )^\fs \otimes_{\cH (M)^{\fs_M}} V \\
& \cong C_c^\infty (G / U )^\fs \otimes_{\cH (M \rtimes \mf R_\fs^\sharp)^\fs} 
\cH (M \rtimes \mf R_\fs^\sharp)^\fs  \otimes_{\cH (M)^{\fs_M}} V \\
& \cong C_c^\infty (G / U )^\fs \otimes_{\cH (M \rtimes \mf R_\fs^\sharp)^\fs} 
\mathrm{Ind}_{\cH (M)^{\fs_M}}^{\cH (M \rtimes \mf R_\fs^\sharp )^\fs} (V) .
\end{align*}
Similarly \eqref{eq:3.6} translates to
\begin{align*}
\mathrm{Ind}_M^{M \rtimes \mf R_\fs^\sharp} \circ \mathrm{pr}_{\fs_M} \circ r^G_{PM} 
& = \cH (M \rtimes \mf R_\fs^\sharp )^\fs \otimes_{\cH (M)^{\fs_M}} \cH (M)^{\fs_M}
C_c^\infty (U \backslash G) \otimes_{\cH (G)^\fs} W \\
& = \cH (M \rtimes \mf R_\fs^\sharp )^\fs C_c^\infty (U \backslash G) 
\otimes_{\cH (G)^\fs} W \\
& = C_c^\infty (U \backslash G)^\fs \otimes_{\cH (G)^\fs} W .
\end{align*}
These calculations entail that the bimodules
$C_c^\infty (G / U )^\fs$ and $C_c^\infty (U \backslash G)^\fs$ implement
\begin{equation}\label{eq:3.60}
\cH (G)^\fs \sim_M \cH (M \rtimes \mf R_\fs^\sharp )^\fs .
\end{equation}
These bimodules are naturally endowed with an action of $X^G (\fs)$, by pointwise 
multiplication of functions $G \to \C$. This action is obviously compatible with
the group actions on $\cH (G)^\fs$ and $\cH (M \rtimes \mf R_\fs^\sharp )^\fs$, in
the sense that 
\[
\gamma \cdot (f_1 f_2) = (\gamma \cdot f_1)(\gamma \cdot f_2) \quad \text{and} \quad
\gamma \cdot (f_2 f_3) = (\gamma \cdot f_2)(\gamma \cdot f_3)
\]
for $\gamma \in X^G (\fs), f_1 \in \cH (G)^\fs, f_2 \in C_c^\infty (G / U )^\fs,
f_3 \in \cH (M \rtimes \mf R_\fs^\sharp )^\fs$. Hence we may restrict \eqref{eq:3.60}
to functions supported on $\bigcap_{\gamma \in X^G (\fs)} \ker \gamma$, and we obtain
\begin{equation}\label{eq:3.N}
\begin{aligned}
& (C_c^\infty (G / U )^\fs )^{X^G (\fs)} 
\otimes_{(\cH (M \rtimes \mf R_\fs^\sharp )^\fs )^{X^G (\fs)}}
( C_c^\infty (U \backslash G)^\fs )^{X^G (\fs)} \cong (\cH (G)^\fs )^{X^G (\fs)} , \\
& ( C_c^\infty (U \backslash G)^\fs )^{X^G (\fs)} \otimes_{(\cH (G)^\fs )^{X^G (\fs)}}  
(C_c^\infty (G / U )^\fs )^{X^G (\fs)} \cong 
(\cH (M \rtimes \mf R_\fs^\sharp )^\fs )^{X^G (\fs)} .
\end{aligned}
\end{equation}
(b) Consider the idempotent
\[
p = |X^L (\fs)|^{-1} \sum\nolimits_{\gamma \in X^L (\fs)} \gamma 
\; \in \; \C [X^L (\fs)] .
\]
It is easy to see that the map
\[
\big( \cH (M)^{\fs_M} \big)^{X^L (\fs)} \to 
p (\cH (M)^{\fs_M} \rtimes X^L (\fs)) p \;:\; a \mapsto p a p 
\]
is an isomorphism of algebras \cite[Lemma A.2]{SolT}. 
Therefore $\big( \cH (M)^{\fs_M} \big)^{X^L (\fs)}$ is Morita equivalent with 
$(\cH (M)^{\fs_M} \rtimes X^L (\fs)) p (\cH (M)^{\fs_M} \rtimes X^L (\fs))$, 
via the bimodules $p(\cH (M)^{\fs_M} \rtimes X^L (\fs))$ and 
$(\cH (M)^{\fs_M} \rtimes X^L (\fs)) p$. Suppose that 
\[
(\cH (M)^{\fs_M} \rtimes X^L (\fs)) p (\cH (M)^{\fs_M} \rtimes X^L (\fs)) 
\subsetneq \cH (M)^{\fs_M} \rtimes X^L (\fs) .
\]
Then the quotient algebra
\[
\cH (M)^{\fs_M} \rtimes X^L (\fs) \Big/ 
(\cH (M)^{\fs_M} \rtimes X^L (\fs)) p (\cH (M)^{\fs_M} \rtimes X^L (\fs)) 
\]
is nonzero. This algebra is a direct limit of unital algebras, so it has an
irreducible module $V$ on which it does not act as zero. We can regard $V$ 
as an irreducible $\cH (M)^{\fs_M} \rtimes X^L (\fs)$-module with $p V = 0$. 
For any $\pi \in \Irr^{\fs_M}(M)$ we have $X^M (\pi) \subset X^L (\omega)$ 
since $W (M,L) = W_\fs$ and by Lemma \ref{lem:2.4}.d. By \eqref{eq:2.2} 
and \eqref{eq:2.25}
the decomposition of $V_\pi$ over $M^\sharp Z(G)$ is governed 
by $\C [X^M (\pi),\kappa_\omega]$.
Now Clifford theory (see for example \cite[Appendix A]{SolGHA}) says that, 
for any $\rho \in \Irr (\C [X^M (\pi),\kappa_\omega])$,
\[
\Ind_{\cH (M)^{\fs_M} \rtimes X^M (\pi)}^{\cH (M)^{\fs_M} \rtimes X^L (\fs)} 
(V_\pi \otimes \rho^\vee) 
\]
is an irreducible module over $\cH (M)^{\fs_M} \rtimes X^L (\fs)$. 
Moreover every irreducible $\cH (M)^{\fs_M} \rtimes X^L (\fs)$-module is of
this form, so we may take it as $V$. But by \eqref{eq:2.2} 
\begin{equation}\label{eq:3.76}
\rho \text{ appears in } V_\pi .
\end{equation}
Hence $V_\pi \otimes \rho^\vee$ has nonzero $X^L (\omega)$-invariant
vectors and $p V \neq 0$. This contradiction shows that 
\begin{equation}\label{eq:3.16}
(\cH (M)^{\fs_M} \rtimes X^L (\fs)) p (\cH (M)^{\fs_M} \rtimes X^L (\fs)) = 
\cH (M)^{\fs_M} \rtimes X^L (\fs) .
\end{equation}
Recall from Lemma \ref{lem:3.3} that
\[
(\cH (M^{\fs_M} \rtimes \mf R_\fs^\sharp )^\fs )^{X^G (\fs)} \cong
\big( \cH (M)^{\fs_M} \big)^{X^L (\fs)} \rtimes \mf R_\fs^\sharp 
= p (\cH (M)^{\fs_M} \rtimes \Stab (\fs,P \cap M)) p .
\]
The bimodules $p (\cH (M)^{\fs_M} \rtimes \Stab (\fs,P \cap M))$ and 
$(\cH (M)^{\fs_M} \rtimes \Stab (\fs,P \cap M)) p$ make it Morita equivalent with 
\[
(\cH (M)^{\fs_M} \rtimes \Stab (\fs,P \cap M)) p 
(\cH (M)^{\fs_M} \rtimes \Stab (\fs,P \cap M)),  
\]
which by \eqref{eq:3.16} equals $\cH (M)^{\fs_M} \rtimes \Stab (\fs,P \cap M)$.\\
(c) This follows from \eqref{eq:3.49} and Lemma \ref{lem:3.2}, upon applying 
$\rtimes \Stab (\fs,P \cap M)$ everywhere.\\
(d) First we want to show that
\begin{equation}\label{eq:3.77}
\big( e^\fs_M \cH (M) e^\fs_M \big)^{X^L (\fs)} \sim_M
e^\fs_M \cH (M) e^\fs_M \rtimes X^L (\fs) .
\end{equation}
To this end we use the same argument as in part (b), only with 
$e^\fs_M \cH (M) e^\fs_M$ instead of $\cH (M)^{\fs_M}$. 
Everything goes fine until \eqref{eq:3.76}. The corresponding statement in the present
setting would be that every irreducible module of $\C [X^M (\pi),\kappa_\pi]$
appears in $e^\fs_M V_\pi$. By \ref{eq:2.2} this is equivalent to saying that 
$e^\fs_M V_\pi$ intersects every $M^\sharp$-isotypical component of $V_\pi$
nontrivially, which is exactly Lemma \ref{lem:3.30}.d. Therefore this version of
\eqref{eq:3.76} does hold. The analogue of \eqref{eq:3.16} is now valid, 
and establishes \eqref{eq:3.77}. The bimodules for this Morita equivalence are 
\[
p( e^\fs_M \cH (M) e^\fs_M \rtimes X^L (\fs)) \text{ and } 
(e^\fs_M \cH (M) e^\fs_M \rtimes X^L (\fs)) p .
\]
The same argument as after \eqref{eq:3.16} makes clear how this implies the required
Morita equivalence
\[
e^\fs_M \cH (M) e^\fs_M \rtimes \Stab (\fs,P \cap M) \sim_M
\big( e^\fs_M \cH (M) e^\fs_M \big)^{X^L (\fs)} \rtimes \mf R_\fs^\sharp . \qedhere
\]
\end{proof}

From the above proof one can extract bimodules for the Morita equivalence
\begin{equation}\label{eq:3.63}
\big( e^\fs_M \cH (M) e^\fs_M \big)^{X^L (\fs)} \rtimes \mf R_\fs^\sharp
\sim_M \big( \cH (M)^{\fs_M} \big)^{X^L (\fs)} \rtimes \mf R_\fs^\sharp ,
\end{equation}
namely
\begin{equation}\label{eq:3.25}
\big( \cH (M)^{\fs_M} e^\fs_M \big)^{X^L (\fs)} 
\rtimes \mf R_\fs^\sharp \quad \text{and} \quad \big( e^\fs_M 
\cH (M)^{\fs_M} \big)^{X^L (\fs)} \rtimes \mf R_\fs^\sharp .
\end{equation}
It seems complicated to prove directly that these are Morita bimodules,
without the detour via parts (b) and (c) of Proposition \ref{prop:3.2}.

\subsection{Passage to the derived group} \
\label{par:morita2}

We study how Hecke algebras for $G^\sharp$ and for $G^\sharp Z(G)$
can be replaced by Morita equivalent algebras built from $\cH (G)$.
In the last results of this subsection we will show that a Morita
equivalent subalgebra $\cH (G^\sharp)^\fs$ is isomorphic to
subalgebras of $\cH (G)^\fs$ and of $\cH (M \rtimes \mf R_\fs^\sharp)^\fs$.

\begin{lem}\label{lem:3.1}
The algebra $\cH (G^\sharp Z(G))^\fs$ is Morita equivalent with 
$( \cH (G)^\fs )^{X^G (\fs)}$.
\end{lem}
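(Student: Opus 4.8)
The plan is to interpose the subgroup
\[
N := \bigcap\nolimits_{\gamma \in X^G (\fs)} \ker \gamma
\]
between $G^\sharp Z(G)$ and $G$. Since $X^G (\fs)$ is finite (Lemma \ref{lem:2.4}.c), $N$ is an \emph{open}, normal, finite-index subgroup of $G$; it contains $G^\sharp Z(G)$ because every $\gamma \in X^G (\fs)$ is trivial on $G^\sharp Z(G)$; and one checks that $\Irr (G / N) = X^G (\fs)$, so in particular every element of $X^G (\fs)$ is trivial on $N$. The proof then consists of the two links $(\cH (G)^\fs )^{X^G (\fs)} \cong \cH (N)^\fs$ and $\cH (N)^\fs \sim_M \cH (G^\sharp Z(G))^\fs$, where $\cH (N)^\fs$ denotes the ideal of $\cH (N)$ corresponding to the representations of $N$ whose irreducible subquotients occur in $\Res^G_N \Irr^\fs (G)$.

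For the first link, note that $\cH (N)$ is just the subalgebra of functions in $\cH (G)$ supported on $N$ (take $\mu_N = \mu_G|_N$), and this subalgebra is exactly $\cH (G)^{X^G (\fs)}$: a locally constant $f$ satisfies $\gamma \cdot f = f$ for all $\gamma \in X^G (\fs)$ precisely when $\mathrm{supp}(f) \subset N$. Each $\gamma \in X^G (\fs)$ preserves the ideal $\cH (G)^\fs$ (because $[L,\omega \otimes \gamma]_G = \fs$) and permutes the remaining Bernstein ideals of $G$, so forming $X^G (\fs)$-invariants turns $\cH (G)^\fs$ into the corresponding direct factor $(\cH (G)^\fs )^{X^G (\fs)}$ of $\cH (N)$; matching up the Bernstein decompositions of the two sides --- exactly as in the canonical isomorphisms \eqref{eq:3.83}, which are legitimate here because $N$ is open of finite index in $G$ --- identifies this factor with $\cH (N)^\fs$.

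For the second link, which is the only real obstacle when $[N : G^\sharp Z(G)] = \infty$ (i.e.\ when $\char (F) \mid md$), it suffices to show that $\Res^N_{G^\sharp Z(G)}$ restricts to an equivalence of categories $\Rep^\fs (N) \to \Rep^\fs (G^\sharp Z(G))$; an equivalence of module categories then gives a Morita equivalence of the two algebras. Essential surjectivity is immediate since $\Res^G_{G^\sharp Z(G)}$ factors through $N$. For full faithfulness one must see that the restriction of an irreducible object of $\Rep^\fs (N)$ remains irreducible and that non-isomorphic such objects have non-isomorphic restrictions; in both cases the discrepancy is a character $\eta$ of $N$, trivial on $G^\sharp Z(G)$, with $V \otimes \eta \cong V$ (resp.\ $V' \cong V \otimes \eta$). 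Extending $\eta$ to a character of $G$ and invoking the analogue of Proposition \ref{prop:2.6}.b for $G^\sharp Z(G) \trianglelefteq N$ --- available by \eqref{eq:2.8}, Lemma \ref{lem:2.3} and the discussion following \eqref{eq:2.19} --- together with the finiteness in Lemma \ref{lem:2.4}.c forces that extension into $X^G (\fs) = \Irr (G / N)$, hence $\eta = 1$. Combining the two links yields $\cH (G^\sharp Z(G))^\fs \sim_M \cH (N)^\fs \cong (\cH (G)^\fs )^{X^G (\fs)}$; when $[G : G^\sharp Z(G)] < \infty$ the second link is redundant and the statement is contained in \eqref{eq:3.83}. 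The conceptual point underlying everything is that finiteness of $X^G (\fs)$ makes the non-openness of $G^\sharp Z(G)$ in $G$ and the possible infinitude of $G / G^\sharp Z(G)$ invisible to the block $\fs$.
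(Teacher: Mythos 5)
Your route via the intermediate group $N := \bigcap_{\gamma \in X^G(\fs)}\ker\gamma$ is a genuinely different organisation of the argument from the paper's, which instead passes to the $C_l$-biinvariant Hecke algebras $\cH(G,C_l)$ for a deep congruence subgroup $C_l$ and uses the finite-index inclusion $G^\sharp Z(G)C_l\subset G$. Your first link is correct: $N$ is indeed open, normal, of finite index, with $\Irr(G/N)=X^G(\fs)$, and the identification $(\cH(G)^\fs)^{X^G(\fs)}\cong\cH(N)^\fs$ follows exactly as you say.

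The gap is in the second link. To get a Morita equivalence between $\cH(N)^\fs$ and $\cH(G^\sharp Z(G))^\fs$ you need $\Res^N_{G^\sharp Z(G)}\colon\Rep^\fs(N)\to\Rep^\fs(G^\sharp Z(G))$ to be an equivalence of abelian categories, but you only verify its behaviour on \emph{irreducible} objects (bijectivity on isomorphism classes). Since Bernstein blocks are far from semisimple, bijectivity on simples together with the $\Hom$-isomorphism between simples does not yield an equivalence: one must also control extensions, i.e.\ show that $\Ext^1_N(V,V')\to\Ext^1_{G^\sharp Z(G)}(\Res V,\Res V')$ is an isomorphism, or otherwise establish essential surjectivity and fullness on objects of length $\geq 2$. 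This is precisely the nontrivial content when $\char(F)\mid md$, so $[N:G^\sharp Z(G)]=\infty$ and $G^\sharp Z(G)$ is not open in $N$; the paper handles it by comparing the concrete algebras $\cH(G^\sharp Z(G),C'_l)$ and $\cH(G,C_l)$ via the coset injection \eqref{eq:3.70}, which sidesteps any $\Ext$-computation. A secondary issue: the ``analogue of Proposition~\ref{prop:2.6}.b for $G^\sharp Z(G)\trianglelefteq N$'' is not delivered by \eqref{eq:2.8}, Lemma~\ref{lem:2.3} or the discussion after \eqref{eq:2.19}, which concern the pairs $G^\sharp\subset G^\sharp Z(G)$ and $G^\sharp Z(G)\subset G$. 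What you actually need is that $\End_N(\pi)=\End_{G^\sharp Z(G)}(\pi)$ for $\pi\in\Irr^\fs(G)$; this does hold, but because $X^G(\pi)\subset X^G(\fs)=\Irr(G/N)$ implies every intertwiner $I(\gamma,\pi)$ already commutes with $N$ --- an observation you should make explicit rather than deduce from the cited statements.
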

\begin{proof}
Let $\mf o_D$ be the ring of integers of $D$. Let $C_l$ be the $l$-th congruence 
subgroup of $\GL_m (\mf o_D)$, and put $C'_l = C_l \cap G^\sharp Z(G)$. 
The group $G^\sharp Z(G) C_l$ is of finite index in $G$, because
$\Nrd (G^\sharp Z(G) C_l)$ contains both $F^{\times md}$ and an open neighborhood
$\Nrd (C_l)$ of $1 \in F^\times$.
By Lemma \ref{lem:2.4}.c we can choose $l$ so large, that every element of $X^G (\fs)$ 
is trivial on $C_l$ and that all representations in $\Rep^\fs (G)$ have nonzero 
$C_l$-invariant vectors. Let $e_{C_l} \in \cH (C_l)$ be the central idempotent 
associated to the trivial representation of $C_l$.
It is known from \cite[\S 3]{BeDe} that $(C_l,$triv) is a type, so the algebra 
\[        
\cH (G,C_l)^\fs = e_{C_l} \cH (G)^\fs e_{C_l}
\]
of $C_l$-biinvariant functions in $\cH (G)^\fs$ is Morita equivalent with $\cH (G)^\fs$. 
The Morita bimodules are $e_{C_l} \cH (G)^\fs$ and $\cH (G)^\fs e_{C_l}$. Since $X^G (\fs)$ 
fixes $e_{C_l}$, these bimodules carry an $X^G (\fs)$-action, which clearly is compatible
with the actions on $\cH (G)^\fs$ and $\cH (G,C_l)^\fs$. We can restrict the equations 
which make them Morita bimodules to the subspaces of functions $G \to \C$ supported on
$\bigcap_{\gamma \in X^G (\fs)} \ker \gamma$. We find that the bimodules
$( e_{C_l} \cH (G)^\fs )^{X^G(\fs)}$ and $( \cH (G)^\fs e_{C_l} )^{X^G (\fs)}$ provide
a Morita equivalence between 
\begin{equation}\label{eq:3.1}
( \cH (G)^\fs )^{X^G (\fs)} \quad \text{and} \quad ( \cH (G,C_l)^\fs )^{X^G (\fs)} .
\end{equation}
We saw in \eqref{eq:2.19} that $\Irr^\fs (G^\sharp Z(G))$ is a union of Bernstein 
components, in fact a finite union by Lemma \ref{lem:2.1}. Hence we may assume that every 
representation in $\Irr^\fs (G^\sharp Z(G))$ contains nonzero $C'_l$-invariant vectors. 
As $(C'_l,$triv) is a type, that suffices for a Morita equivalence between
\begin{equation}\label{eq:3.2}
\cH (G^\sharp Z(G))^\fs \quad \text{and} \quad \cH (G^\sharp Z(G),C'_l)^\fs . 
\end{equation}
We may assume that the Haar measures on $G$ and on $G^\sharp Z(G)$ are
chosen such that $C_l$ and $C'_l$ get the same volume. Then the natural injection
\[
C'_l \setminus G^\sharp Z(G) / C'_l \to C_l \setminus G / C_l
\]
provides an injective algebra homomorphism
\begin{equation}\label{eq:3.70}
\cH (G^\sharp Z(G),C'_l) \to \cH (G,C_l) ,
\end{equation}
whose image consists of the $\Irr (G / G^\sharp Z(G) C_l)$-invariants in $\cH (G,C_l)$.
Let $\mathfrak B (G)_l$ be the set of inertial equivalence classes 
for $G$ corresponding to the category of $G$-representations that are 
generated by their $C_l$-invariant vectors. The finite group 
$\Irr (G / G^\sharp Z(G) C_l)$ acts on it, and we denote the set of orbits by 
$\mathfrak B (G)_l / \sim$. Now we can write
\begin{multline*}
\bigoplus\nolimits_{\fs \in \mathfrak B (G)_l / \sim} \cH (G^\sharp Z(G), C'_l)^\fs =
\cH (G^\sharp Z(G),C'_l) \cong \\
\cH (G,C_l)^{\Irr (G / G^\sharp Z(G) C_l)} = \bigoplus\nolimits_{\fs \in \mathfrak B (G)_l 
/ \sim} ( \cH (G, C_l)^\fs )^{X^G (\fs)} .
\end{multline*}
By considering the factors corresponding to one $\fs$ on both sides we obtain an
isomorphism
\[
\cH (G^\sharp Z(G), C'_l)^\fs \cong ( \cH (G,C_l)^\fs )^{X^G (\fs)} .
\]
To conclude, we combine this with \eqref{eq:3.1} and \eqref{eq:3.2}.
\end{proof}

The Morita equivalences in parts (a) and (d) of Proposition \ref{prop:3.2}, 
for algebras associated to $G^\sharp Z(G)$,
have analogues for $G^\sharp$. For parts (b) and (c), which involve crossed
products by $\Stab (\fs,P \cap M)$, this is not clear.

\begin{lem}\label{lem:3.11}
The algebra $\cH (G^\sharp )^\fs$ is Morita equivalent with
\[
(\cH (G)^\fs )^{X^G (\fs) X_\nr (G)} \quad \text{and with} \quad
\big( \cH (M)^\fs \big)^{X^L (\fs) X_\nr (G)} \rtimes \mf R_\fs^\sharp . 
\]
\end{lem}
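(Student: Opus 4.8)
The plan is to mimic the proof of Lemma \ref{lem:3.1}, but passing all the way down to $G^\sharp$ rather than stopping at $G^\sharp Z(G)$, and then to invoke Proposition \ref{prop:3.2} to obtain the second algebra. First I would fix a large congruence subgroup $C_l$ of $\GL_m (\mf o_D)$, set $C'_l = C_l \cap G^\sharp$, and note that $G^\sharp C_l$ has finite index in $G$ since $\Nrd (G^\sharp C_l)$ contains the open subgroup $\Nrd (C_l)$. By Lemma \ref{lem:2.4}.c, finitely many characters of $G/G^\sharp$ are relevant to $\Rep^\fs (G)$, so I may enlarge $l$ until every $\gamma \in X^G (\fs) X_\nr (G)$ is trivial on $C_l$ and every representation in $\Rep^\fs (G)$ and in $\Rep^\fs (G^\sharp)$ has nonzero invariants under $C_l$, resp. $C'_l$. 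Exactly as before, $(C_l,\mathrm{triv})$ and $(C'_l,\mathrm{triv})$ are types, giving Morita equivalences $\cH (G)^\fs \sim_M \cH (G,C_l)^\fs$ and $\cH (G^\sharp)^\fs \sim_M \cH (G^\sharp,C'_l)^\fs$. Restricting the defining bimodule identities to functions supported on $\bigcap_{\gamma \in X^G (\fs) X_\nr (G)} \ker \gamma$ yields a Morita equivalence between $(\cH (G)^\fs )^{X^G (\fs) X_\nr (G)}$ and $(\cH (G,C_l)^\fs )^{X^G (\fs) X_\nr (G)}$.

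The next step is to identify $\cH (G^\sharp,C'_l)^\fs$ with the invariant subalgebra $(\cH (G,C_l)^\fs )^{X^G (\fs) X_\nr (G)}$. Here the natural injection $C'_l \backslash G^\sharp / C'_l \to C_l \backslash G / C_l$ induces an algebra homomorphism $\cH (G^\sharp,C'_l) \to \cH (G,C_l)$ whose image is exactly the subalgebra of functions invariant under $\Irr (G / G^\sharp C_l)$ — one has to check that the double cosets in $G$ meeting $G^\sharp$ are precisely those on which $\Irr (G/G^\sharp C_l)$-invariant functions can be supported, which is a straightforward verification using that $G/G^\sharp C_l$ is abelian and that the fibres of $\Nrd$ over $\Nrd (C_l) \cdot \{1\}$ lie in $G^\sharp C_l$. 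Summing over the (countably many) inertial classes in $\mathfrak B (G)_l / {\sim}$, where now $\fs \sim \fs'$ means they differ by a character of $G/G^\sharp C_l$, and comparing the factors attached to a single $\fs$ gives $\cH (G^\sharp,C'_l)^\fs \cong (\cH (G,C_l)^\fs )^{X^G (\fs) X_\nr (G)}$; note $\Irr (G/G^\sharp C_l)$ restricted to $\fs$ is exactly $X^G (\fs) X_\nr (G)$ by construction of $l$. Combining with the two Morita equivalences above proves $\cH (G^\sharp)^\fs \sim_M (\cH (G)^\fs )^{X^G (\fs) X_\nr (G)}$.

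For the second algebra, I would apply the ``invariants commute with the Morita equivalence'' principle already used repeatedly in Proposition \ref{prop:3.2}.a: the bimodules $(C_c^\infty (G/U)^\fs)$ and $(C_c^\infty (U \backslash G)^\fs)$ implementing $\cH (G)^\fs \sim_M \cH (M \rtimes \mf R_\fs^\sharp )^\fs$ carry a pointwise-multiplication action of $X^G (\fs) X_\nr (G)$ compatible with the algebra actions, so restricting to functions supported on the common kernel gives
\[
(\cH (G)^\fs )^{X^G (\fs) X_\nr (G)} \sim_M (\cH (M \rtimes \mf R_\fs^\sharp )^\fs )^{X^G (\fs) X_\nr (G)} .
\]
Then the computation in Lemma \ref{lem:3.3}, carried out with $X^G (\fs) X_\nr (G)$ in place of $X^G (\fs)$ and correspondingly $X^L (\fs) X_\nr (G)$ in place of $X^L (\fs)$ (using that $X_\nr (G)$ pulls back to $X_\nr (L/L^\sharp) \subset X_\nr (L)$ under the isomorphism $L/L^\sharp \cong G/G^\sharp$, and that $\mf R_\fs^\sharp \cong X^G(\fs)/X^L(\fs)$ still splits off), identifies this with $(\cH (M)^{\fs_M} )^{X^L (\fs) X_\nr (G)} \rtimes \mf R_\fs^\sharp$, which is $(\cH (M)^\fs )^{X^L (\fs) X_\nr (G)} \rtimes \mf R_\fs^\sharp$ in the notation of the statement (since $\cH(M)^{\fs_M}$ is the summand of $\cH(M)^\fs$ in question; here one should read $\cH(M)^\fs$ as $\bigoplus_{w \in \mf R_\fs^\sharp} \cH(M)^{w(\fs_M)}$ as in Lemma \ref{lem:3.14}).

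The main obstacle I expect is the bookkeeping around $X_\nr (G)$: unlike $X^G (\fs)$, the group $X^G (\fs) X_\nr (G)$ is typically infinite, so ``restriction to functions supported on $\bigcap \ker \gamma$'' must be justified as in Lemma \ref{lem:3.1} by first descending to $C_l$-biinvariant functions, where only the finite quotient $X^G(\fs) X_\nr(G) / (\text{characters trivial on } C_l)$ acts effectively; one must check this quotient is exactly $\Irr(G/G^\sharp C_l)$ restricted to the $\fs$-part, and that taking invariants genuinely commutes with the Morita bimodule structure in this not-locally-finite setting. This is precisely the subtlety flagged in the text after \eqref{eq:3.83}, and handling it cleanly — rather than the formal manipulations, which are routine given Lemmas \ref{lem:3.1}, \ref{lem:3.3} and Proposition \ref{prop:3.2} — is where the real care is needed.
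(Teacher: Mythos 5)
Your proposal takes a genuinely different route from the paper for the first Morita equivalence, but it contains an error that is not a mere technicality. You claim that $G^\sharp C_l$ has finite index in $G$ because $\Nrd(G^\sharp C_l)$ contains the open subgroup $\Nrd(C_l)$. This is false: $\Nrd(G^\sharp C_l) = \Nrd(C_l)$ is a \emph{compact} open subgroup of $F^\times$, hence has infinite index there, since $F^\times / \mathfrak o_F^\times \cong \Z$. So $[G : G^\sharp C_l]$ is always infinite, regardless of the characteristic of $F$. This is exactly why the paper factors the restriction through the intermediate group $G^\sharp Z(G)$: the group $G^\sharp Z(G) C_l$ has $\Nrd$-image containing $F^{\times md} \cdot \Nrd(C_l)$, and it is the presence of $F^{\times md}$ --- which has \emph{finite} index in $F^\times$ when $\mathrm{char}(F) \nmid md$ --- that makes the index manageable. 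Without $Z(G)$ there is nothing to control the valuation direction, and the clean identification $\cH(G^\sharp,C'_l) \cong \cH(G,C_l)^{\Irr(G/G^\sharp C_l)}$ via extension by zero, which you rely on, has to be re-argued with an \emph{infinite} character group. You flag this obstacle at the end, but you misattribute it to the ``char divides $md$'' case, whereas in reality it afflicts the direct $G^\sharp$-approach in every characteristic.

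The paper instead proves Lemma \ref{lem:3.11} by quoting the already-established Lemma \ref{lem:3.1} and then taking $X_\nr(Z(G))$-invariants of both sides of the bimodule equations used there, using the identification $\cH(G^\sharp)^\fs \cong (\cH(G^\sharp Z(G))^\fs)^{X_\nr(Z(G))}$ from \eqref{eq:2.15}, and noting via $X_\nr(G/Z(G)) \subset X^L(\fs) \subset X^G(\fs)$ that $X_\nr(Z(G))$ and $X_\nr(G)$ agree modulo $X^G(\fs)$. This two-step structure (first the finite group $X^G(\fs)$, then the one-parameter torus $X_\nr(Z(G))$) keeps each step tractable; your one-step plan collapses them and thereby loses the finite-index crutch. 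Your treatment of the second Morita equivalence --- rerunning the bimodule argument from Proposition \ref{prop:3.2}.a with $X^G(\fs) X_\nr(G)$ in place of $X^G(\fs)$ and then invoking the $X_\nr(G)$-equivariance of Lemma \ref{lem:3.3} --- does match the paper's proof and is correct.
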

\begin{proof}
By \eqref{eq:2.15} we have 
\begin{equation}\label{eq:3.75}
\cH (G^\sharp )^\fs \cong \big( \cH (G^\sharp Z(G))^\fs \big)^{X_\nr (Z(G))} .
\end{equation}
As $X_\nr (G / Z(G)) \subset X^L (\fs) \subset X^G (\fs)$, every 
$\chi \in X_\nr (Z(G))$ extends in a unique way to a character of $\cH (G)^{X^G (\fs)}$.
In other words, we can identify
\begin{equation}\label{eq:3.45}
X_\nr (Z(G)) = X_\nr (G  ) \text{ in } \Irr (G / G^\sharp) / X^G (\fs) .
\end{equation}
All the bimodules involved in \eqref{eq:3.1} and \eqref{eq:3.2} carry a compatible action
of \eqref{eq:3.45}. We can restrict the proofs of these Morita equivalences to smooth 
functions supported on $G^1 = \bigcap_{\chi \in X_\nr (G  )} \ker \chi$. 
That leads to a Morita equivalence
\[
\big( \cH (G^\sharp Z(G))^\fs \big)^{X_\nr (Z(G))} \sim_M 
(\cH (G)^\fs )^{X^G (\fs) X_\nr (G  )} .
\]
Let us take another look at the Morita equivalence \eqref{eq:3.60}, between 
$\cH (G)^\fs$ and $\cH (M \rtimes \mf R_\fs^\sharp )^\fs$. The argument between 
\eqref{eq:3.60} and \eqref{eq:3.N} also works with $X^G (\fs) X_\nr (G  )$
instead of $X^G (\fs)$, and provides a Morita equivalence
\begin{equation}\label{eq:3.M}
(\cH (G)^\fs )^{X^G (\fs) X_\nr (G  )} \sim_M
(\cH (M \rtimes \mf R_\fs^\sharp)^\fs )^{X^G (\fs) X_\nr (G  )} .
\end{equation}
The isomorphism in Lemma \ref{lem:3.3} is $X_\nr (G  )$-equivariant, so
it restricts to
\[
(\cH (M \rtimes \mf R_\fs^\sharp)^\fs )^{X^G (\fs) X_\nr (G  )}  \cong
(\cH (M)^{\fs_M} )^{X^L (\fs) X_\nr (G  )} \rtimes \mf R_\fs^\sharp . 
\qedhere
\]
\end{proof}

We would like to formulate a version Lemma \ref{lem:3.11} with idempotents
in $\cH (G)$ and $\cH (M)$. Consider the types $(K_G,\lambda_G \otimes \gamma)$ for
$\gamma \in X^G (\fs)$. 

\begin{lem}\label{lem:4.11}
Let $\gamma, \gamma' \in X^G (\fs)$.
\enuma{
\item The $K_G$-representations $\lambda_G \otimes \gamma$ and $\lambda_G \otimes \gamma'$ 
are equivalent if and only if $\gamma^{-1} \gamma' \in X^L (\fs,\lambda)$.
\item For any $a,a' \in M$ the idempotents $a e_{\lambda_G \otimes \gamma} 
a^{-1}$ and $a' e_{\lambda_G \otimes \gamma'} (a')^{-1}$ are orthogonal if
$\gamma^{-1} \gamma' \in X^G (\fs) \setminus X^L (\fs)$.
}
\end{lem}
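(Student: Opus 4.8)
The plan is to treat (a) by pushing the character twist down the tower of covers $(K_G,\lambda_G)\to(K,\lambda)\to(K_L,\lambda_L)$ supplied by \cite{SeSt6,Sec3}, and (b) by descending to a Jacquet module, where the two idempotents become type idempotents for \emph{distinct} Bernstein blocks of $M$.

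For (a): since $X^G(\fs)$ is a group, twisting both sides by $\gamma$ reduces the statement to: for $\eta:=\gamma^{-1}\gamma'\in X^G(\fs)$, one has $\lambda_G\otimes\eta\cong\lambda_G$ as $K_G$-representations iff $\eta\in X^L(\fs,\lambda)$. I would use that $K=K_G\cap M$, $\lambda=\lambda_G|_K$, $K_L=K\cap L$, $\lambda_L=\lambda|_{K_L}$, that $\lambda_G$ (resp.\ $\lambda$) is trivial on the intersections of $K_G$ (resp.\ $K$) with the unipotent radicals of $PM$ (resp.\ $P\cap M$) and their opposites, together with the attendant Iwahori decompositions. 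As $\eta$ is a character of $G=\GL_m(D)$ it is trivial on every unipotent subgroup, so a $K$-intertwiner between $\lambda\otimes\eta|_K$ and $\lambda$ is automatically a $K_G$-intertwiner between $\lambda_G\otimes\eta$ and $\lambda_G$ (check on elements written in Iwahori form), and conversely by restriction; the same passage relates $\lambda$ to $\lambda_L$. Hence $\lambda_G\otimes\eta\cong\lambda_G \iff \lambda\otimes\eta|_K\cong\lambda \iff \lambda_L\otimes\eta|_{K_L}\cong\lambda_L$. The last isomorphism identifies $(K_L,\lambda_L)$ with $(K_L,\lambda_L\otimes\eta|_{K_L})$, a type for $[L,\eta|_L\otimes\omega]_L$, whence $[L,\eta|_L\otimes\omega]_L=\fs_L$, i.e.\ $\eta\in X^L(\fs)$; together with $\lambda\otimes\eta|_K\cong\lambda$ this says $\eta\in X^L(\fs,\lambda)$, and the converse is immediate from the same chain. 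It is essential to descend all the way to the $L$-level cover: stopping at $(K,\lambda)$ would only yield $\eta|_L\otimes\omega\in[L,\omega]_M$.

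For (b): set $b=a^{-1}a'\in M$. On any smooth $(\pi,V)$ the element $(ae_{\lambda_G\otimes\gamma}a^{-1})\cdot(a'e_{\lambda_G\otimes\gamma'}(a')^{-1})$ of $\cH(G)$ acts as $\pi(a)\,(e_{\lambda_G\otimes\gamma}\circ\pi(b)\circ e_{\lambda_G\otimes\gamma'})\,\pi(a')^{-1}$, so by faithfulness of $\cH(G)$ on $\Rep(G)$ it suffices to show $e_{\lambda_G\otimes\gamma}\circ\pi(b)\circ e_{\lambda_G\otimes\gamma'}=0$ on every $V$ (the product in the other order being symmetric, interchanging $\gamma\leftrightarrow\gamma'$, $b\leftrightarrow b^{-1}$, and using that $X^L(\fs)$ is a group). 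For $\delta\in X^G(\fs)$ let $w_\delta\in\mf R_\fs^\sharp$ be the unique element with $(w_\delta,\delta)\in\Stab(\fs,P\cap M)$ (Lemma~\ref{lem:2.4}); then $\delta\mapsto w_\delta$ is a homomorphism with kernel $X^L(\fs)$, so $\gamma^{-1}\gamma'\notin X^L(\fs)$ gives $w_\gamma\neq w_{\gamma'}$, and since $\mf R_\fs^\sharp\cap W_\fs=1$ (Lemma~\ref{lem:2.2}) the $M$-inertial classes $w_\gamma^{-1}(\fs_M)$ and $w_{\gamma'}^{-1}(\fs_M)$ are distinct. Next, $(K_G,\lambda_G\otimes\gamma)$ is a cover with respect to $PM$ of the $M$-type $(K,\lambda\otimes\gamma|_K)$ — twisting by the character $\gamma$, trivial on the unipotent pieces, preserves the cover axioms and the Hecke-algebra condition — and $(K,\lambda\otimes\gamma|_K)$ is a type for $[L,\gamma|_L\otimes\omega]_M=w_\gamma^{-1}(\fs_M)$. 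By the theory of covers, the canonical $M$-equivariant projection $p_U\colon V\to r^G_{PM}(V)=:V_U$ maps $e_{\lambda_G\otimes\gamma}V$ injectively onto the $(\lambda\otimes\gamma|_K)$-isotypic subspace of $V_U$, which sits inside the block $\mathrm{pr}_{w_\gamma^{-1}(\fs_M)}(V_U)$; likewise for $\gamma'$. Now if $e_{\lambda_G\otimes\gamma}\bigl(\pi(b)(e_{\lambda_G\otimes\gamma'}V)\bigr)\neq 0$, choose a nonzero $K_G$-subrepresentation $Z\cong(\lambda_G\otimes\gamma)^{\oplus}$ inside $\pi(b)(e_{\lambda_G\otimes\gamma'}V)$; then $Z\subset e_{\lambda_G\otimes\gamma}V$ gives $p_U(Z)\subset\mathrm{pr}_{w_\gamma^{-1}(\fs_M)}(V_U)$, whereas $\pi(b^{-1})Z\subset e_{\lambda_G\otimes\gamma'}V$ gives $p_U(\pi(b^{-1})Z)\subset\mathrm{pr}_{w_{\gamma'}^{-1}(\fs_M)}(V_U)$, while $M$-equivariance gives $p_U(\pi(b^{-1})Z)=\pi(b^{-1})p_U(Z)\subset\mathrm{pr}_{w_\gamma^{-1}(\fs_M)}(V_U)$, Bernstein blocks being $M$-stable. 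As these blocks are distinct, $p_U(\pi(b^{-1})Z)=0$, and injectivity of $p_U$ on $e_{\lambda_G\otimes\gamma'}V$ forces $Z=0$, a contradiction.

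The main obstacle is the descent to the Levi in (b): one has to invoke the structure theory of the S\'echerre--Stevens covers precisely enough to know that $(K_G,\lambda_G\otimes\gamma)$ is a cover of $(K,\lambda\otimes\gamma|_K)$ with respect to $PM$ and, above all, that $p_U$ restricts to an injection (in fact an isomorphism onto) of the $(\lambda_G\otimes\gamma)$-isotypic subspace of $V$ into the $(\lambda\otimes\gamma|_K)$-isotypic subspace of $V_U$. Everything else is bookkeeping with the finite groups of Section~\ref{sec:restrict}.
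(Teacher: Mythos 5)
Part (a) of your argument is correct; descending all the way to $(K_L,\lambda_L)$ is a slightly different organization than the paper's (which stops at the $M$-level and uses $W_\fs=W_\omega$), but both work.

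Part (b) has a genuine gap. You take $W:=\pi(b)\bigl(e_{\lambda_G\otimes\gamma'}V\bigr)$, observe $e_{\lambda_G\otimes\gamma}W\neq 0$, and then ``choose a nonzero $K_G$-subrepresentation $Z\cong(\lambda_G\otimes\gamma)^{\oplus}$ inside $W$.'' But $W$ is $bK_Gb^{-1}$-stable, not $K_G$-stable, so $e_{\lambda_G\otimes\gamma}W\neq 0$ does not produce a $K_G$-subrepresentation contained in $W$ — it only tells you that the $K_G$-module generated by $W$ meets the isotypic part. Indeed for two projections $p,q$ on a vector space, $pq\neq 0$ does not imply $\operatorname{ran}(p)\cap\operatorname{ran}(q)\neq 0$. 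Since Step~5 ($\pi(b^{-1})Z\subset e_{\lambda_G\otimes\gamma'}V$) needs $Z\subset W$, and Steps~4 and~7--9 simultaneously need $Z\subset e_{\lambda_G\otimes\gamma}V$, the contradiction never gets off the ground: writing $\pi(b)v=e_{\lambda_G\otimes\gamma}\pi(b)v+(1-e_{\lambda_G\otimes\gamma})\pi(b)v$ and applying $p_U$, the second summand can contribute in the block $w_\gamma^{-1}(\fs_M)$ and cancel the first, so no block-clash occurs.

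The correct way to carry out the reduction to $M$ is by an intertwining computation, which is what the paper actually does. One has $e_{\lambda_G\otimes\gamma}\ast\delta_b\ast e_{\lambda_G\otimes\gamma'}=0$ in $\cH(G)$ if and only if $\Hom_{K_G\cap bK_Gb^{-1}}\bigl({}^b(\lambda_G\otimes\gamma'),\lambda_G\otimes\gamma\bigr)=0$. Because $b\in M$ normalizes $U,\overline U,M$, the group $K_G\cap bK_Gb^{-1}$ inherits an Iwahori decomposition with Levi part $K\cap bKb^{-1}$, and both $\lambda_G\otimes\gamma$ and ${}^b(\lambda_G\otimes\gamma')$ are trivial on the unipotent pieces; hence the Hom-space equals $\Hom_{K\cap bKb^{-1}}\bigl({}^b(\lambda\otimes\gamma'),\lambda\otimes\gamma\bigr)$, which vanishes exactly because $e_{\lambda\otimes\gamma}$ and $be_{\lambda\otimes\gamma'}b^{-1}$ lie in orthogonal two-sided ideals of $\cH(M)$ (you correctly identified these as $\cH(M)^{w_\gamma^{-1}(\fs_M)}$ and $\cH(M)^{w_{\gamma'}^{-1}(\fs_M)}$ with distinct blocks). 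So the Bernstein-block orthogonality in $\cH(M)$ is the right ingredient, but it has to enter via the intertwining set of the $K_G$-types, not via a Jacquet-module chase on a single representation.
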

\begin{proof}
(a) Suppose first that $\gamma^{-1} \gamma' \in X^G (\fs) \setminus X^L (\fs)$. 
Then $(K,\lambda)$ and $(K,\lambda \otimes \gamma)$ are types for different Bernstein 
components of $M$, so $\lambda$ and $\lambda \otimes \gamma$ are not equivalent. 
As both $\lambda_G$ and $\gamma$ are trivial on $K_G \cap U$ and on 
$K_G \cap \overline{U}$, this implies that $\lambda_G$ and $\lambda_G \otimes \gamma$ 
are not equivalent either.

Now suppose that $\gamma \in X^L (\fs)$. By the definition of $\Stab (\fs,\lambda)$, the
$K$-representations $\lambda$ and $\lambda \otimes \gamma$ are equivalent if and only
if $\gamma \in \Stab (\fs,\lambda)$. By the same argument as above, this statement can be
lifted to $\lambda_G$ and $\lambda_G \otimes \gamma$.\\
(b) Consider the idempotents $a e_{\lambda \otimes \gamma} 
a^{-1}$ and $a' e_{\lambda \otimes \gamma'} (a')^{-1}$ in $\cH (M)$.
They belong to the subalgebras $\cH (M)^{\fs_M \otimes \gamma}$ and
$\cH (M)^{\fs_M \otimes \gamma'}$, respectively. Since $X^L (\fs) = X^M (\fs)$ and
$\gamma X^L (\fs) \neq \gamma' X^L (\fs)$, these are two orthogonal ideals of $\cH (L)$.
In particular the two above idempotents are orthogonal. 

Let $\langle K_G \cap U \rangle$ denote the idempotent, in the multiplier algebra of
$\cH (G)$, which corresponds to averaging over the group $K_G \cap U$. Then 
\begin{align*}
a e_{\lambda_G \otimes \gamma} a^{-1} & = a \langle K_G \cap U \rangle 
\langle K_G \cap \overline{U} \rangle e_{\lambda \otimes \gamma} a^{-1} \\
& = \langle a (K_G \cap U) a^{-1} \rangle \langle a( K_G \cap \overline{U}) a^{-1} \rangle 
a e_{\lambda \otimes \gamma} a^{-1} 
\end{align*}
Similarly
\[
a' e_{\lambda_G \otimes \gamma} (a')^{-1} = 
a e_{\lambda \otimes \gamma'} a^{-1}
\langle a (K_G \cap U) a^{-1} \rangle \langle a( K_G \cap \overline{U}) a^{-1} \rangle 
\]
Now we see from the earlier orthogonality result that 
\[
a e_{\lambda_G \otimes \gamma} a^{-1} a' e_{\lambda_G \otimes \gamma'} (a')^{-1} 
= 0 . \qedhere
\]
\end{proof}

Generalizing \eqref{eq:3.73} we define \label{i:72}
\[
X^G (\fs / \lambda) = X^G (\fs) \big/ \big( X^L (\fs) \cap \Stab (\fs,\lambda) \big) .
\]
By Lemma \ref{lem:4.11} the elements \label{i:05}\label{i:07}\label{i:08}
\begin{equation}\label{eq:3.71}
\begin{split}
& e_{\mu_G} := \sum\nolimits_{\gamma \in X^G (\fs / \lambda)} 
e_{\lambda_G \otimes \gamma} \in \cH (G) , \\
& e^\sharp_{\lambda_G} := \sum\nolimits_{a \in [L / H_\lambda]} 
a e_{\mu_G} a^{-1} \in \cH (G) , \\
& e^\sharp_{\lambda} := \sum\nolimits_{a \in [L / H_\lambda]} 
\sum\nolimits_{\gamma \in X^G (\fs / \lambda)} 
a e_{\lambda \otimes \gamma} a^{-1} \in \cH (M) 
\end{split}
\end{equation}
are idempotent. We will show that the latter two idempotents see precisely the categories 
of representations of $G$ and $G^\sharp$ (resp. $M$ and $M^\sharp$) associated to $\fs$. 
However, in general they do not come from a type, for the elements $a \in [L / H_\lambda]$ 
and $c_\gamma \in L$ need not lie in a compact subgroup of $G$.

\begin{lem}\label{lem:3.31}
Let $(\pi,V_\pi) \in \Irr^\fs (G)$. Then $e^\sharp_{\lambda_G} V_\pi$ intersects every
$G^\sharp$-isotypical component of $V_\pi$ nontrivially.
\end{lem}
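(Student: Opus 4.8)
The plan is to reduce the statement to the corresponding fact for the Levi subgroup $M$, namely Lemma~\ref{lem:3.30}.d, using the equivalence between $\Rep^\fs (G)$ and $\Rep^{\fs_M}(M)$ given by $I_{PM}^G$ together with the compatibility of the idempotents with Jacquet restriction. Concretely, write $\pi = I_{PM}^G (\sigma)$ for the appropriate $\sigma \in \Irr^{\fs_M}(M)$ (recall from \eqref{eq:3.7} that $I_{PM}^G$ is an equivalence of categories, so every $\pi \in \Irr^\fs (G)$ arises this way up to the subtlety that $\pi$ is an irreducible constituent rather than all of $I_{PM}^G(\sigma)$; one argues with $I_{PM}^G(\sigma)$ and then passes to the constituent). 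The key point is that $e^\sharp_{\lambda_G} : G \to \C$ is supported on $K_G$-double cosets and is built, via \eqref{eq:3.71}, from the idempotents $a e_{\lambda_G \otimes \gamma} a^{-1}$ with $a \in [L/H_\lambda]$ and $\gamma \in X^G (\fs/\lambda)$; since $\lambda_G$ is trivial on $K_G \cap U$ and $K_G \cap \overline U$ and restricts to $\lambda$ on $K = K_G \cap M$, the operator $e^\sharp_{\lambda_G}$ acting on $I_{PM}^G(\sigma)$ corresponds, under the natural identification of spaces of $K_G$-fixed-type vectors in $I_{PM}^G(\sigma)$ with $K$-fixed-type vectors in $\sigma$, to $e^\sharp_\lambda$ acting on $V_\sigma$. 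This is exactly the mechanism already used implicitly in the proof of Lemma~\ref{lem:4.11} (the factorization $a e_{\lambda_G \otimes \gamma} a^{-1} = \langle a(K_G\cap U)a^{-1}\rangle \langle a(K_G\cap\overline U)a^{-1}\rangle\, a e_{\lambda\otimes\gamma}a^{-1}$), and I would invoke that factorization here too.

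The next step is to match $G^\sharp$-isotypical components with $M^\sharp$-isotypical components. By Proposition~\ref{prop:2.6} and \eqref{eq:2.2}, the decomposition of $\Res^G_{G^\sharp} V_\pi$ is governed by $\C[X^G (\pi),\kappa_\pi]$, and by Lemma~\ref{lem:2.4}.d (using $W(M,L)=W_\fs$) we have $X^G(\pi) = X^M(\sigma) \subset X^L(\omega)$, so the relevant twisted group algebra is the same for $G$ and for $M$. More precisely, the intertwining operators $I(\gamma,\pi) = I_{PM}^G(I(\gamma,\sigma))$ for $\gamma \in X^G(\pi)$ are obtained by parabolic induction from the operators $I(\gamma,\sigma)$ on $V_\sigma$, so the $X^G(\pi)$-module structures on the $K_G$-type-isotypic part of $V_\pi$ and on the $K$-type-isotypic part of $V_\sigma$ correspond under the identification above. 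Hence ``$e^\sharp_{\lambda_G} V_\pi$ meets every $G^\sharp$-isotypical component of $V_\pi$'' translates into ``$e^\sharp_\lambda V_\sigma$ meets every component of the $\C[X^M(\sigma),\kappa_\sigma]$-decomposition of $V_\sigma$'', which by \eqref{eq:2.2} for $M$ is precisely the assertion that $e^\sharp_\lambda V_\sigma$ meets every $M^\sharp$-isotypical component of $V_\sigma$. But $e^\sharp_\lambda = \sum_{a\in[L/H_\lambda]}\sum_{\gamma\in X^G(\fs/\lambda)} a e_{\lambda\otimes\gamma}a^{-1}$ is a sum containing, for each $a$, the terms with $\gamma$ ranging over $X^L(\fs/\lambda) \subset X^G(\fs/\lambda)$; that sub-sum is exactly $\sum_a \sum_{\gamma\in X^L(\fs)} a e_{\lambda\otimes\gamma} a^{-1}$ appearing in Lemma~\ref{lem:3.30}.d, whose image already meets every $M^\sharp$-isotypical component. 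Since adding further orthogonal idempotents (the $\gamma \in X^G(\fs/\lambda) \setminus X^L(\fs/\lambda)$, orthogonal by Lemma~\ref{lem:4.11}.b) only enlarges the image, the conclusion follows.

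The main obstacle I anticipate is making the identification ``$e^\sharp_{\lambda_G}$ on $I_{PM}^G(\sigma)$ $\leftrightarrow$ $e^\sharp_\lambda$ on $\sigma$'' genuinely precise, because $e^\sharp_{\lambda_G}$ involves conjugates $a e_{\lambda_G\otimes\gamma} a^{-1}$ by elements $a \in [L/H_\lambda] \subset L \subset M$ and these $a$ need not normalize $K_G$ or lie in any compact subgroup, so one cannot simply say ``$(K_G, \lambda_G\otimes\text{stuff})$ is a type''. The way around this is to avoid types altogether and argue at the level of the functor: the conjugated idempotents $a e_{\lambda_G\otimes\gamma}a^{-1}$ still factor through $K_G\cap U$ and $K_G\cap\overline U$ after conjugation by $a\in M$ (as in Lemma~\ref{lem:4.11}.b, since $a$ normalizes $U$ and $\overline U$), and the Jacquet-restriction/parabolic-induction adjunction intertwines $e^\sharp_{\lambda_G}$ with $e^\sharp_\lambda$ on the nose. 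A second, minor, point is that one should phrase Lemma~\ref{lem:3.30}.d's conclusion for the particular $\sigma$ that is a constituent of $I_{P\cap M}^M(\omega\otimes\chi)$ with $\pi$ a constituent of $I_{PM}^G(I_{P\cap M}^M(\omega\otimes\chi)) = I_P^G(\omega\otimes\chi)$; transitivity of induction and the fact that $I_{PM}^G$ restricts to an equivalence $\Rep^{\fs_M}(M)\to\Rep^\fs(G)$ handle this bookkeeping.
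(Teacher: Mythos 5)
Your reduction to the Levi subgroup $M$ has a genuine gap, and it is located precisely at the assertion $X^G(\pi) = X^M(\sigma)$. This equality is false in general. Writing $\pi = I_{PM}^G(\sigma)$, a character $\gamma\in X^G(\pi)$ satisfies $I_{PM}^G(\sigma\otimes\gamma)\cong I_{PM}^G(\sigma)$, but this does \emph{not} force $\sigma\otimes\gamma\cong\sigma$: if $\gamma\in X^G(\fs)\setminus X^L(\fs)$, then $\sigma\otimes\gamma$ lives in the block $\Rep^{w(\fs_M)}(M)$ for some nontrivial $w\in\mf R_\fs^\sharp$, and $I_{PM}^G$ restricted to \emph{that} block is again an equivalence onto $\Rep^\fs(G)$, so the isomorphism of the induced representations passes through a nontrivial Weyl element rather than coming from an isomorphism over $M$. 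Concretely, in Example \ref{ex:Weyl} one has $X^L(\omega)=\{1\}$ (hence $X^M(\sigma)\subset X^L(\omega)=\{1\}$), while $X^G(I_P^G(\omega))=\{1,\zeta,\zeta^2\}$ has order $3$. In general $X^G(\pi)$ exceeds $X^M(\sigma)$ by the contribution of $\mf R_\fs^\sharp$; correspondingly, the intertwining operators $I(\gamma,\pi)$ for those extra $\gamma$'s are \emph{not} obtained by parabolic induction of $L$-intertwiners as in \eqref{eq:2.25}, but involve the Harish-Chandra operators $J(w,\cdot)$ with $w\neq 1$ as in \eqref{eq:2.18}. So the translation ``$e^\sharp_{\lambda_G}V_\pi$ meets every $G^\sharp$-isotypical component'' $\leftrightarrow$ ``$e^\sharp_\lambda V_\sigma$ meets every $M^\sharp$-isotypical component'' breaks down: the two twisted group algebras governing the respective decompositions are not the same.

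The paper's own proof proceeds differently exactly to avoid this. It first restricts attention to the subgroup $X^G(\pi)\cap X^L(\fs,\lambda)\subset X^L(\omega\otimes\chi)$, where the argument of Lemma \ref{lem:3.30}.d genuinely applies and shows every irreducible of $\C[X^G(\pi)\cap X^L(\fs,\lambda),\kappa_{\omega\otimes\chi}]$ appears in some single subspace $a e_{\lambda_G\otimes\gamma}a^{-1}V_\pi$. Then, observing that the quotient $X^G(\pi)/\bigl(X^G(\pi)\cap X^L(\fs,\lambda)\bigr)$ permutes the set of orthogonal idempotents $\{a e_{\lambda_G\otimes\gamma}a^{-1}\}$ \emph{faithfully} (this is where the $\mf R_\fs^\sharp$ part acts via $w\in W(G,L)$, outside $M$), a Frobenius reciprocity argument inside the twisted group algebra bootstraps the conclusion from the small group to the whole of $X^G(\pi)$. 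That two-stage ``small group plus Frobenius reciprocity over the quotient'' mechanism is the essential ingredient your proposal is missing; simply enlarging the sum of idempotents from $\gamma\in X^L(\fs/\lambda)$ to $\gamma\in X^G(\fs/\lambda)$, as you do at the end, does not by itself force the new $\rho$'s indexed by $\mf R_\fs^\sharp$ to appear.
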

\begin{proof}
The twisted group algebra $\C [X^G (\pi), \kappa_\pi]$ acts on $V_\pi$ via intertwining
operators. In view of \eqref{eq:2.2}, we have to show that $e^\sharp_{\lambda_G} V_\pi$
intersects the $\rho$-isotypical part of $V_\pi$ nontrivially, for every $\rho \in
\Irr \big( \C [X^G (\pi), \kappa_\pi] \big)$.

Choose $\chi \in X_\nr (L)$ such that $\pi$ is a subquotient of $I_P^G (\omega
\otimes \chi)$. Then 
\[
X^G (\pi) \cap X^L (\fs,\lambda) \subset X^L (\omega \otimes \chi) .
\]
As observed in the proof of Lemma \ref{lem:3.30}.d, every irreducible representation of\\ 
$\C [X^G (\pi) \cap X^L (\fs,\lambda), \kappa_{\omega \otimes \chi}]$ appears in
$e^\sharp_{\lambda_G} V_\pi$. The idempotents 
\begin{equation}\label{eq:3.85}
\{a e_{\lambda_G \otimes \gamma} a^{-1} : a \in [L / H_\lambda], \gamma \in X^G (\fs) \}
\end{equation}
are invariant under $X^L (\fs,\lambda)$ because $e_{\lambda_G}$ is, and they are mutually
orthogonal by Lemma \ref{lem:4.11}. As these idempotents sum to $e^\sharp_{\lambda_G}$, it 
follows that every every irreducible representation of 
$\C [X^G (\pi) \cap X^L (\fs,\lambda), \kappa_{\omega \otimes \chi}]$ already 
appears in one subspace $a e_{\lambda_G \otimes \gamma} a^{-1} V_\pi$. The quotient
group $X^G (\pi) / X^G (\pi) \cap X^L (\fs,\lambda)$ permutes the set of idempotents
\eqref{eq:3.85} faithfully. With Frobenius reciprocity we conclude that every irreducible
representation of $\C [X^G (\pi), \kappa_\pi]$ appears in $e^\sharp_{\lambda_G} V_\pi$.
\end{proof}

\begin{lem}\label{lem:3.H}
\enuma{
\item The algebras $e^\sharp_{\lambda_G} \cH (G)^{X^G (\fs)} e^\sharp_{\lambda_G} =
( e^\sharp_{\lambda_G} \cH (G) e^\sharp_{\lambda_G} )^{X^G (\fs)}$ and \\
$\big( \cH (G)^\fs \big)^{X^G (\fs)}$ are Morita equivalent. 
\item $\big( \cH (G)^\fs \big)^{X^G (\fs) X_\nr (G  )} \sim_M
( e^\sharp_{\lambda_G} \cH (G) e^\sharp_{\lambda_G} )^{X^G (\fs) X_\nr (G  )}$.
}
\end{lem}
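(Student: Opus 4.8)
The plan is to combine the ``full idempotent'' principle with the equivariant‑restriction technique already used in Lemmas \ref{lem:3.1} and \ref{lem:3.11} and in the proof of Proposition \ref{prop:3.2}. First I would record two preliminary facts about $e^\sharp_{\lambda_G}$ from \eqref{eq:3.71}. (i) It lies in $\cH (G)^\fs$: each summand $a e_{\lambda_G \otimes \gamma} a^{-1}$ with $a \in [L / H_\lambda]$ and $\gamma \in X^G (\fs / \lambda)$ lies in $\cH (G)^{\fs \otimes \gamma} = \cH (G)^\fs$ (since $\gamma \in X^G (\fs)$ forces $\fs \otimes \gamma = \fs$), and $\cH (G)^\fs$ is stable under conjugation by $M \subset G$. (ii) It is invariant under the action $\alpha$ of $X^G (\fs)$: exactly as in Lemma \ref{lem:3.2}, $\alpha_{\gamma'}$ sends $e_{\lambda_G \otimes \gamma}$ to $e_{\lambda_G \otimes \gamma \gamma'}$, hence permutes $\{e_{\lambda_G \otimes \gamma} : \gamma \in X^G (\fs / \lambda)\}$ and fixes $e_{\mu_G}$, while $\alpha_{\gamma'}$ commutes with conjugation by the $a \in [L / H_\lambda]$ because $\gamma'$, a character of the abelian group $G / G^\sharp$, is invariant under $G$-conjugacy. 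Since $X^G (\fs)$ is finite (Lemma \ref{lem:2.4}.c) the averaging projection $|X^G (\fs)|^{-1} \sum_\gamma \gamma$ is available, and together with (ii) this gives at once the asserted equality $e^\sharp_{\lambda_G} \cH (G)^{X^G (\fs)} e^\sharp_{\lambda_G} = (e^\sharp_{\lambda_G} \cH (G) e^\sharp_{\lambda_G})^{X^G (\fs)}$; moreover $e^\sharp_{\lambda_G} \cH (G) e^\sharp_{\lambda_G} = e^\sharp_{\lambda_G} \cH (G)^\fs e^\sharp_{\lambda_G}$ by (i).

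The key step is fullness: $\cH (G)^\fs e^\sharp_{\lambda_G} \cH (G)^\fs = \cH (G)^\fs$. If this two-sided ideal were proper, the quotient $\cH (G)^\fs / \cH (G)^\fs e^\sharp_{\lambda_G} \cH (G)^\fs$ would be a nonzero direct limit of unital algebras (as in the proof of Proposition \ref{prop:3.2}.b), hence would have an irreducible module; pulling this module back we would get $\pi \in \Irr^\fs (G)$ with $e^\sharp_{\lambda_G} V_\pi = 0$, contradicting Lemma \ref{lem:3.31}. With fullness in hand, the standard Morita theory for an idempotent $e$ in an algebra $A$ (the bimodules $eA$ and $Ae$ give $eAe \sim_M AeA$) shows that $e^\sharp_{\lambda_G} \cH (G)^\fs e^\sharp_{\lambda_G}$ and $\cH (G)^\fs$ are Morita equivalent, via the bimodules $e^\sharp_{\lambda_G} \cH (G)^\fs$ and $\cH (G)^\fs e^\sharp_{\lambda_G}$.

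Finally I would run the equivariant restriction. Both bimodules $e^\sharp_{\lambda_G} \cH (G)^\fs$ and $\cH (G)^\fs e^\sharp_{\lambda_G}$ carry the action of $X^G (\fs)$ by pointwise multiplication of functions $G \to \C$, and this action is compatible with the actions on $\cH (G)^\fs$ and on $e^\sharp_{\lambda_G} \cH (G)^\fs e^\sharp_{\lambda_G}$ in the sense that $\gamma \cdot (fg) = (\gamma \cdot f)(\gamma \cdot g)$, so the bimodule isomorphisms implementing the Morita equivalence are $X^G (\fs)$-equivariant. Restricting all the relevant function-space identities to those supported on $\bigcap_{\gamma \in X^G (\fs)} \ker \gamma$ — literally the same manoeuvre carried out between \eqref{eq:3.60} and \eqref{eq:3.N}, and in Lemmas \ref{lem:3.1} and \ref{lem:3.11} — yields part (a). For part (b) one restricts further to functions supported on $G^1 = \bigcap_{\chi \in X_\nr (G)} \ker \chi$; this is legitimate because $e^\sharp_{\lambda_G}$ is a sum of idempotents supported on the compact groups $a K_G a^{-1}$, all contained in $G^1$, so $e^\sharp_{\lambda_G}$ is itself supported on $G^1$, and because $X_\nr (G / Z(G)) \subset X^L (\fs) \subset X^G (\fs)$ makes ``$X^G (\fs) X_\nr (G)$-invariant'' mean precisely ``$X^G (\fs)$-invariant and supported on $G^1$'', exactly as in the proof of Lemma \ref{lem:3.11}.

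The only genuine obstacle is the fullness of $e^\sharp_{\lambda_G}$ inside $\cH (G)^\fs$; this is exactly the content extracted from Lemma \ref{lem:3.31} (which in turn rests on the construction of $H_\lambda$ and the $[L/H_\lambda]$ in Lemma \ref{lem:3.30}). Everything after that — the membership and invariance statements for $e^\sharp_{\lambda_G}$, the idempotent Morita equivalence, and the passage to $X^G (\fs)$- and $X^G (\fs) X_\nr (G)$-invariants — is by now routine bookkeeping of the type repeatedly used earlier in this section.
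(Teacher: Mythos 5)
Your overall layout is sensible and the preliminary facts you record about $e^\sharp_{\lambda_G}$ (membership in $\cH(G)^\fs$, $X^G(\fs)$-invariance, the equality $e^\sharp_{\lambda_G}\cH(G)^{X^G(\fs)}e^\sharp_{\lambda_G}=(e^\sharp_{\lambda_G}\cH(G)e^\sharp_{\lambda_G})^{X^G(\fs)}$) are all correct, as is your observation that $e^\sharp_{\lambda_G}$ is supported on $G^1$ in part (b). However, the step you call ``routine bookkeeping'' at the end — the equivariant-restriction manoeuvre passing from the Morita equivalence $\cH(G)^\fs\sim_M e^\sharp_{\lambda_G}\cH(G)e^\sharp_{\lambda_G}$ to the $X^G(\fs)$-invariant statement — has a genuine gap, and it is precisely here that the paper's actual proof diverges from yours.

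Fullness of a $X$-fixed idempotent $e$ in $A$ does \emph{not} imply fullness of $e$ in $A^X$. Concretely, take $A=M_2(\C)$, $X=\Z/2$ acting by conjugation by $\mathrm{diag}(1,-1)$, and $e=e_{11}\in A^X$. Then $AeA=A$, but $A^XeA^X=\C e_{11}\neq A^X\cong\C^2$, so $A^X$ and $(eAe)^X$ are not Morita equivalent — in particular the map $(Ae)^X\otimes_{(eAe)^X}(eA)^X\to A^X$ fails to be surjective. Restricting the equations that make $eA$ and $Ae$ into Morita bimodules therefore does \emph{not} automatically yield Morita bimodules for the invariant algebras. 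What one must verify is fullness of $e^\sharp_{\lambda_G}$ in $(\cH(G)^\fs)^{X^G(\fs)}$, i.e. that $e^\sharp_{\lambda_G}$ acts nontrivially on every irreducible module of the invariant algebra, not merely on every $\pi\in\Irr^\fs(G)$. By the Clifford-theoretic decomposition \eqref{eq:2.2}, this is exactly the assertion that $e^\sharp_{\lambda_G}V_\pi$ meets \emph{every} $G^\sharp$-isotypical component of $V_\pi$ — which is the full force of Lemma~\ref{lem:3.31}, not the weaker corollary $e^\sharp_{\lambda_G}V_\pi\neq 0$ that you actually use.

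This is why the paper takes the crossed-product route rather than the support-restriction route: it establishes $(\cH(G)^\fs)^{X^G(\fs)}\sim_M\cH(G)^\fs\rtimes X^G(\fs)\sim_M e^\sharp_{\lambda_G}\cH(G)e^\sharp_{\lambda_G}\rtimes X^G(\fs)$ (easy, since full idempotents remain full after $\rtimes X^G(\fs)$), and then invokes the argument of Proposition~\ref{prop:3.2}.d — where the key step \eqref{eq:3.76} is explicitly replaced by the conclusion of Lemma~\ref{lem:3.31} — to pass from the crossed product back to the invariant subalgebra. The support-restriction manoeuvre does appear in the paper (Lemma~\ref{lem:3.1}, \eqref{eq:3.60}$\to$\eqref{eq:3.N}, and part (b) of the present lemma), but in each of those cases the requisite fullness on invariants is either supplied by a separate argument or, as in part (b), inherited from part (a) after it has been established. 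Your proposal can be repaired by making the fullness claim for $(\cH(G)^\fs)^{X^G(\fs)}$ explicit and citing Lemma~\ref{lem:3.31} for it, but as written the central step is a non sequitur.
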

\begin{proof}
(a) Because all the types $(K_G,\lambda_G \otimes \gamma)$
are for the same Bernstein component $\fs$, the idempotent $e^\sharp_{\lambda_G}$ sees
precisely the category of representations $\Rep^\fs (G)$.
Therefore the bimodules $e^\sharp_{\lambda_G} \cH (G)$ and
$\cH (G) e^\sharp_{\lambda_G}$ implement a Morita equivalence 
\begin{equation}\label{eq:3.74}
\cH (G)^\fs \sim_M e^\sharp_{\lambda_G} \cH (G) e^\sharp_{\lambda_G} .
\end{equation}
The same reasoning as in parts (b) and (c) of Proposition \ref{prop:3.2} establishes
Morita equivalences
\[
\big( \cH (G)^\fs \big)^{X^G (\fs)} \sim_M \cH (G)^\fs \rtimes X^G (\fs) \sim_M
\big( e^\sharp_{\lambda_G} \cH (G) e^\sharp_{\lambda_G} \big) \rtimes X^G (\fs) .
\]
To get from the right hand side to $\big( e^\sharp_{\lambda_G} \cH (G) e^\sharp_{\lambda_G} 
\big)^{X^G (\fs)}$ we follow the proof of Proposition \ref{prop:3.2}.d. This is
justified by Lemma \ref{lem:3.31}. \\
(b) The above argument also shows that the Morita 
equivalence of part (a) is implemented by the bimodules
\begin{equation}\label{eq:3.66}
e^\sharp_{\lambda_G} \cH (G)^{X^G (\fs)} \quad \text{and} \quad
\cH (G)^{X^G (\fs)} e^\sharp_{\lambda_G}.
\end{equation}
These bimodules are endowed with actions of $X_\nr (G)$. Taking invariants
under these group actions amounts to considering only functions supported on $G^1$.
We note that $e^\sharp_{\lambda_G}$ is supported on $G^1$ and that this is a normal subgroup
of $G$. Therefore the equations that make \eqref{eq:3.66} Morita bimodules restrict
to analogous equations for functions supported on $G^1$, which provides the desired
Morita equivalence.
\end{proof}

Recall the idempotents $e_\mu, e^\fs_M$ from \eqref{eq:3.61} and 
$e^\sharp_{\lambda_G}, e^\sharp_\lambda, e_{\mu_G}$ from \eqref{eq:3.71}.

\begin{prop}\label{prop:3.I}
There are algebra isomorphisms
\enuma{
\item $e^\sharp_{\lambda_G} \cH (G) e^\sharp_{\lambda_G} \cong 
e^\sharp_{\lambda} \cH (M \rtimes \mf R_\fs^\sharp) e^\sharp_{\lambda}$,
\item $(e^\sharp_{\lambda_G} \cH (G) e^\sharp_{\lambda_G})^{X^G (\fs)} \cong 
(e^\sharp_{\lambda} \cH (M \rtimes \mf R_\fs^\sharp) e^\sharp_{\lambda})^{X^G (\fs)} \cong
(e^\fs_M \cH (M) e^\fs_M )^{X^L (\fs)} \rtimes \mf R_\fs^\sharp$,
\item between the three algebras of $X_\nr (G )$-invariants in (b).
}
Moreover the isomorphisms in (b) and (c) can be chosen such that, for every $a_1,a_2 \in
[L / H_\lambda]$, they restrict to linear bijections
\begin{align*}
& (a_1 e_{\mu_G} \cH (G) e_{\mu_G} a_2^{-1})^{X^G (\fs)} \longleftrightarrow
(a_1 e_\mu \cH (M) e_\mu a_2^{-1} )^{X^L (\fs)} \rtimes \mf R_\fs^\sharp , \\
& (a_1 e_{\mu_G} \cH (G) e_{\mu_G} a_2^{-1})^{X^G (\fs) X_\nr (G )} 
\longleftrightarrow
(a_1 e_\mu \cH (M) e_\mu a_2^{-1} )^{X^L (\fs) X_\nr (G )} \rtimes \mf R_\fs^\sharp .
\end{align*}
\end{prop}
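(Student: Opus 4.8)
The plan is to build part (a) by hand, transporting the Iwahori decomposition of $K_G$ through the cover construction, and then deduce (b) and (c) by taking invariants and combining with the Morita equivalences already established. First I would recall that $\cH(G)^\fs \cong \cH(M)^{\fs_M}$ via the isomorphism \eqref{eq:3.50}, which sends $e_{\lambda_G}$ to $e_\lambda$ and, more generally, $e_{\lambda_G \otimes \gamma}$ to $e_{\lambda \otimes \gamma}$ for $\gamma \in X^G(\fs)$, because $\lambda_G \otimes \gamma$ is trivial on $K_G \cap U$ and $K_G \cap \overline U$ (this is exactly the mechanism used in the proof of Lemma \ref{lem:4.11}). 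Since $e^\sharp_{\lambda_G}$ is a sum of conjugates $a e_{\lambda_G \otimes \gamma} a^{-1}$ with $a \in [L/H_\lambda] \subset M$ and $e^\sharp_\lambda$ is the corresponding sum of $a e_{\lambda \otimes \gamma} a^{-1}$, the isomorphism $e_{\lambda_G}\cH(G)e_{\lambda_G} \cong e_\lambda \cH(M) e_\lambda$ should upgrade to an isomorphism $e^\sharp_{\lambda_G} \cH(G) e^\sharp_{\lambda_G} \cong e^\sharp_\lambda \cH(M) e^\sharp_\lambda$, because conjugation by $M$ and the ``spreading out'' by $[L/H_\lambda]$ are visible on both sides compatibly. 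The extra factor $\rtimes \mf R_\fs^\sharp$ on the $M$-side enters because $e^\sharp_\lambda \cH(M) e^\sharp_\lambda$ alone only sees $\bigoplus_{w\in\mf R_\fs^\sharp}\cH(M)^{w(\fs_M)}$-modules; the elements of $\mf R_\fs^\sharp$, realized as permutation matrices normalizing $K$ (as arranged after \eqref{eq:3.3}), supply the missing intertwiners between the components $w(\fs_M)$, exactly as in Lemma \ref{lem:3.14} and Proposition \ref{prop:3.2}(a)--(c). I would make this precise by writing down the support-decomposition of $e^\sharp_{\lambda_G}\cH(G)^\fs e^\sharp_{\lambda_G}$ over double cosets and matching it term-by-term with $e^\sharp_\lambda \cH(M \rtimes \mf R_\fs^\sharp)^\fs e^\sharp_\lambda$, using the Iwahori decomposition \eqref{eq:3.8} to see that a $C^\infty_c(G)$-function supported near $M \rtimes \mf R_\fs^\sharp$ which is $(e^\sharp_{\lambda_G}, e^\sharp_{\lambda_G})$-biequivariant is determined by its restriction to that region.

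For (b): taking $X^G(\fs)$-invariants of the isomorphism in (a) is automatic once one checks the actions match. The action of $X^G(\fs)$ on $\cH(G)$ is by $\alpha_\gamma(f)=\gamma^{-1}\cdot f$; on the $M \rtimes \mf R_\fs^\sharp$ side one uses the action \eqref{eq:3.35} extended to $\Stab(\fs,P\cap M)$ and the fact (Lemma \ref{lem:2.4}(d)) that $X^G(\fs)/X^L(\fs) \cong \mf R_\fs^\sharp$; the isomorphism of (a) intertwines these because it is built from $I_{PM}^G$ and conjugations, both of which commute with pointwise multiplication by characters trivial on $G^\sharp$. So $(e^\sharp_{\lambda_G}\cH(G)e^\sharp_{\lambda_G})^{X^G(\fs)} \cong (e^\sharp_\lambda \cH(M\rtimes\mf R_\fs^\sharp)e^\sharp_\lambda)^{X^G(\fs)}$. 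To get the third algebra $(e^\fs_M\cH(M)e^\fs_M)^{X^L(\fs)}\rtimes\mf R_\fs^\sharp$, I would run the same folklore argument (Clifford theory / \cite[Lemma A.3]{SolT}) as in Lemma \ref{lem:3.3} and Proposition \ref{prop:3.2}(d): the $X^G(\fs)$-invariants of $e^\sharp_\lambda \cH(M\rtimes\mf R_\fs^\sharp)e^\sharp_\lambda$ decompose over $\mf R_\fs^\sharp \times \mf R_\fs^\sharp$ with $(w_1,w_2)$-component $w_1 (e^\fs_M \cH(M) e^\fs_M)^{X^L(\fs)} w_2^{-1}$ (here one uses that $e^\sharp_\lambda = \sum_{w \in \mf R_\fs^\sharp} w^{-1} e^\fs_M w$ up to the obvious identification, since summing $e_{\lambda\otimes\gamma}$ over $X^G(\fs/\lambda)$ versus over $X^L(\fs/\lambda)$ together with the $\mf R_\fs^\sharp$-translates amounts to the same idempotent inside $\cH(M\rtimes\mf R_\fs^\sharp)$), and then identify this matrix algebra over $(e^\fs_M\cH(M)e^\fs_M)^{X^L(\fs)}$ with the crossed product. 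Part (c) is then obtained by restricting everything to functions supported on $G^1$ (equivalently taking $X_\nr(G)$-invariants), exactly as in the proof of Lemma \ref{lem:3.H}(b) and Lemma \ref{lem:3.11}: all the bimodules and idempotents are supported on $G^1$, $G^1$ is normal, and $X_\nr(G) = X_\nr(Z(G))$ inside $\Irr(G/G^\sharp)/X^G(\fs)$ by \eqref{eq:3.45}.

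For the final ``moreover'' clause: the isomorphisms constructed above are, by design, support-graded with respect to the double-coset decomposition $[L/H_\lambda] \backslash (\cdots) / [L/H_\lambda]$, since $e^\sharp_{\lambda_G}=\sum_{a} a e_{\mu_G} a^{-1}$ and the isomorphism of (a) is additive in these summands and sends $a_1 e_{\mu_G}(\,\cdot\,)e_{\mu_G}a_2^{-1}$ into the span of the $w$-translates of $a_1 e_\mu(\,\cdot\,)e_\mu a_2^{-1}$. So one just reads off the restriction to the $(a_1,a_2)$-block, and takes $X^G(\fs)$- (resp. $X^G(\fs)X_\nr(G)$-) invariants, which lands in $(a_1 e_\mu\cH(M)e_\mu a_2^{-1})^{X^L(\fs)}\rtimes\mf R_\fs^\sharp$ (resp. with $X_\nr(G)$ as well) because $\mf R_\fs^\sharp$ permutes the $X^G(\fs)$-cosets inside $X^G(\fs/\lambda)$ and the $X^L(\fs)$-action on the $M$-blocks is the one surviving after passing to invariants.

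The main obstacle I expect is part (a): making the isomorphism $e^\sharp_{\lambda_G}\cH(G)e^\sharp_{\lambda_G}\cong e^\sharp_\lambda\cH(M\rtimes\mf R_\fs^\sharp)e^\sharp_\lambda$ genuinely rigorous rather than ``morally clear''. The point \eqref{eq:3.50} is an isomorphism $e_{\lambda_G}\cH(G)e_{\lambda_G}\cong e_\lambda\cH(M)e_\lambda$, but $e^\sharp_{\lambda_G}$ is much larger: it involves the noncompact elements $a\in[L/H_\lambda]$ and $c_\gamma\in L$, so neither $(K_G,\lambda_G^\sharp)$-style types nor the cover formalism of \cite[\S 8]{BuKu3} apply directly, and one must instead argue at the level of the abstract algebras, tracking supports through the Iwahori decomposition and checking that the $\mf R_\fs^\sharp$-translates on the $M$-side exactly account for the part of $\cH(G)$ supported away from $M$ but inside $M\rtimes\mf R_\fs^\sharp$ (i.e.\ the part coming from $N_G(P\cap M)$). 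Getting the bookkeeping right for how $e_{\mu_G}$ and $e^\sharp_\lambda$ relate under \eqref{eq:3.50} — in particular that summing over $X^G(\fs/\lambda)$ on the $G$-side corresponds to summing over $X^L(\fs/\lambda)$ and then crossing with $\mf R_\fs^\sharp$ on the $M$-side — is the delicate combinatorial heart of the argument.
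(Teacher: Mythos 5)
Your plan for the second isomorphism in (b) (Clifford-theoretic decomposition over $\mf R_\fs^\sharp$, matching the paper's \eqref{eq:3.52}--\eqref{eq:3.59}) and for the ``moreover'' clause is sound and matches the paper. The genuine divergence is in (a) and in the first isomorphism of (b), and there you have a real gap that the paper deliberately routes around.

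For (a), you propose an explicit, support-by-support construction extending \eqref{eq:3.50} through the noncompact elements $a \in [L/H_\lambda]$ and $c_\gamma$, and you correctly flag this as the ``delicate combinatorial heart'' that you do not see how to complete. The paper never attempts such a map. Instead it proves (a) \emph{non-canonically}: both $e^\sharp_{\lambda_G}\cH(G)e^\sharp_{\lambda_G}$ and $e^\sharp_\lambda\cH(M\rtimes\mf R_\fs^\sharp)e^\sharp_\lambda$ are shown to be isomorphic to $e_{\lambda_G}\cH(G)e_{\lambda_G}\otimes M_n(\C)$ for the same $n=|L/H_\lambda|\,|X^G(\fs/\lambda)|$, using that the $a e_{\lambda_G\otimes\gamma}a^{-1}$ are orthogonal matrix units (Lemma~\ref{lem:4.11}) together with the cover isomorphism $e_\lambda\cH(M)e_\lambda\cong e_{\lambda_G}\cH(G)e_{\lambda_G}$ and the count $n'\,|\mf R_\fs^\sharp|=n$. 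This gives an isomorphism, but not a specific equivariant one.

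This matters for your (b): you say ``taking $X^G(\fs)$-invariants of the isomorphism in (a) is automatic once one checks the actions match,'' but there is no $X^G(\fs)$-equivariant map in (a) to take invariants of, and the paper explicitly concedes at the start of the (c) step that the isomorphism between the algebras in \eqref{eq:3.68} ``is not explicit'' and that ``we do not know for sure that it is equivariant''. The paper's actual mechanism for the first isomorphism of (b) is the free-rank-one bimodule trick: the abstract isomorphism of (a) forces the parabolic-induction Morita bimodules \eqref{eq:3.53} to be free of rank $1$ over both algebras; passing to $X^G(\fs)$-invariant functions (i.e.\ restricting support to $\bigcap_\gamma\ker\gamma$) preserves the Morita-equivalence identities and preserves freeness of rank $1$, because the $X^G(\fs)$-action on bimodules and algebras alike is pointwise multiplication by characters of $G$; hence the invariant bimodules \eqref{eq:3.67} are still free of rank $1$, and the two invariant algebras must be isomorphic. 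The same argument, applied once more with $X_\nr(G)$, gives (c). So what you are missing is not a computation but an idea: replace ``build and then restrict an equivariant isomorphism'' by ``deduce the isomorphism from a Morita bimodule that is \emph{a posteriori} free of rank one, and observe that this property survives taking invariants.''
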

\begin{proof}
For any $\gamma \in X^L (\fs)$ and $w \in \mf R_\fs^\sharp$ there exists a
$\gamma' \in X^G (\fs)$ such that $w(\lambda \otimes \gamma) \cong \lambda \otimes
\gamma'$ as representations of $K$. Hence
\begin{align}
\nonumber & e^\sharp_{\lambda} \cH (M \rtimes \mf R_\fs^\sharp) e^\sharp_{\lambda} \cap 
\cH (M)^{w (\fs_M)} = w e^\fs_M w^{-1} \cH (M) w e^\fs_M w^{-1} , \\
\label{eq:3.52} & e^\sharp_{\lambda} \cH (M \rtimes \mf R_\fs^\sharp) e^\sharp_{\lambda} = 
\Big( \bigoplus\nolimits_{w \in \mf R_\fs^\sharp} w e^\fs_M \cH (M) e^\fs_M w^{-1} \Big) 
\rtimes \mf R_\fs^\sharp .
\end{align}
We note that the right hand side of \eqref{eq:3.52} is isomorphic to
\begin{equation}\label{eq:3.51}
e^\fs_M \cH (M) e^\fs_M \otimes \End_\C (\C \mf R_\fs^\sharp)
\end{equation}
The equality \eqref{eq:3.52} also shows that
\[
(e^\sharp_{\lambda} \cH (M \rtimes \mf R_\fs^\sharp) e^\sharp_{\lambda} )^{X^G (\fs)} = 
\Big( \bigoplus\nolimits_{w \in \mf R_\fs^\sharp} ( w e^\fs_M \cH (M) e^\fs_M w^{-1} 
)^{X^L (\fs)} \rtimes \mf R_\fs^\sharp \Big)^{X^G (\fs) / X^L (\fs)} .
\]
We can apply the argument from the proof of Lemma \ref{lem:3.3} to the right hand 
side, which gives a canonical isomorphism
\begin{equation}\label{eq:3.59}
(e^\sharp_{\lambda} \cH (M \rtimes \mf R_\fs^\sharp) e^\sharp_{\lambda})^{X^G (\fs)} \cong
(e^\fs_M \cH (M) e^\fs_M )^{X^L (\fs)} \rtimes \mf R_\fs^\sharp. 
\end{equation}
Notice that for $a \in L / H_\lambda$ the idempotents $a e_\mu a^{-1}$ and $a \sum_{\gamma 
\in X^G (\fs / \lambda)} e_{\lambda \otimes \gamma} a^{-1}$ are invariant under $\Stab (\fs, 
P \cap M)$ and $X^G (\fs)$, respectively. Hence we can write 
\begin{equation}\label{eq:3.D}
\begin{aligned}
( e^\sharp_{\lambda_G} \cH (G) e^\sharp_{\lambda_G} )^{X^G (\fs)} =
\bigoplus\nolimits_{a_1, a_2 \in [L / H_\lambda]} 
(a_1 e_{\mu_G} \cH (G) e_{\mu_G} a_2^{-1})^{X^G (\fs)} , \\
(e^\fs_M \cH (M) e^\fs_M )^{X^L (\fs)} \rtimes \mf R_\fs^\sharp = 
\bigoplus\nolimits_{a_1, a_2 \in [L / H_\lambda]} 
(a_1 e^\mu \cH (M) e_\mu a_2^{-1} )^{X^L (\fs) X_\nr (G )} \rtimes \mf R_\fs^\sharp .
\end{aligned}
\end{equation}
It is clear from the proof of Lemma \ref{lem:3.3} that the isomorphism \eqref{eq:3.59}
respects these decompositions. Moreover \eqref{eq:3.59} is equivariant with respect to the 
actions of $X_\nr (G  )$, so it restricts to
\[
(e^\sharp_{\lambda} \cH (M \rtimes \mf R_\fs^\sharp) 
e^\sharp_{\lambda})^{X^G (\fs) X_\nr (G  )} \cong (e^\fs_M \cH (M) 
e^\fs_M )^{X^L (\fs) X_\nr (G  } \rtimes \mf R_\fs^\sharp.  
\]
We have proved the second isomorphism of part (b) and of part (c). 

For every $\gamma \in X^G (\fs / \lambda)$ and $a \in [L / H_\lambda]$ one has
\[
a e_{\lambda_G \otimes \gamma} a^{-1} \cH (G) a e_{\lambda_G \otimes \gamma} a^{-1} =
a \alpha_\gamma^{-1} ( e_{\lambda_G \otimes \gamma} \cH (G) e_{\lambda_G \otimes \gamma})
a^{-1} \cong e_{\lambda_G \otimes \gamma} \cH (G) e_{\lambda_G \otimes \gamma} .
\]
By Lemma \ref{lem:4.11} these are mutually orthogonal subalgebras of 
$e^\sharp_{\lambda_G} \cH (G) e^\sharp_{\lambda_G}$. The inclusion 
\[
a e_{\lambda_G \otimes \gamma} a^{-1} \cH (G) a e_{\lambda_G \otimes \gamma} a^{-1} \to
e^\sharp_{\lambda_G} \cH (G) e^\sharp_{\lambda_G}
\]
is a Morita equivalence and for all $V \in \Rep (G)^\fs$:
\[
e^\sharp_{\lambda_G} V = \bigoplus_{a \in [L / H_\lambda]} 
\bigoplus_{\gamma \in X^G (\fs / \lambda)} a e_{\lambda_G \otimes \gamma} a^{-1} V .
\]
It follows that the $a e_{\lambda_G \otimes \gamma} a^{-1}$ form the idempotent matrix
units in some subalgebra $M_n (\C) \subset e^\sharp_{\lambda_G} \cH (G) e^\sharp_{\lambda_G}$, 
and that 
\[
e^\sharp_{\lambda_G} \cH (G) e^\sharp_{\lambda_G} \cong 
e_{\lambda_G} \cH (G) e_{\lambda_G} \otimes M_n (\C)
\]
where $n = |L / H_\lambda| \, |X^G (\fs / \lambda)|$. 

The same argument shows that
\[
e^\fs_M \cH (M) e^\fs_M \cong e_\lambda \cH (M) e_\lambda \otimes M_{n'} (\C) , 
\]
where $n' = |L / H_\lambda| \, |X^L (\fs / \lambda)|$. Since $(K_G,\lambda_G)$ is a cover
of $(K,\lambda)$, 
\[
e_\lambda \cH (M) e_\lambda \cong e_{\lambda_G} \cH (G) e_{\lambda_G} . 
\]
By Lemma \ref{lem:4.11}
\[
n' \, |\mf R_\fs^\sharp| = |L / H_\lambda| \, |X^L (\fs / \lambda)| \, |\mf R_\fs^\sharp| 
= |L / H_\lambda| \, |X^G (\fs / \lambda)| = n . 
\]
With \eqref{eq:3.51} we deduce that 
\begin{equation}\label{eq:3.56}
e^\sharp_{\lambda} \cH (M \rtimes \mf R_\fs^\sharp) e^\sharp_{\lambda} \cong
e_{\lambda_G} \cH (G) e_{\lambda_G} \otimes M_n (\C) \cong
e^\sharp_{\lambda_G} \cH (G) e^\sharp_{\lambda_G} ,
\end{equation}
proving part (a). It entails from \eqref{eq:3.60} that 
\begin{equation}\label{eq:3.53}
e^\sharp_{\lambda_G} C_c^\infty (G / U) e^\sharp_{\lambda} \quad \text{and} \quad
e^\sharp_{\lambda} C_c^\infty (U \backslash G ) e^\sharp_{\lambda_G}
\end{equation}
are bimodules for a Morita equivalence
\begin{equation}\label{eq:3.54}
e^\sharp_{\lambda} \cH (M \rtimes \mf R_\fs^\sharp) e^\sharp_{\lambda} \sim_M
e^\sharp_{\lambda_G} \cH (G) e^\sharp_{\lambda_G} .
\end{equation}
But by \eqref{eq:3.56} these algebras are isomorphic, so the bimodules are free
of rank 1 over both algebras.

Similarly, it follows from \eqref{eq:3.N} that 
\begin{equation}\label{eq:3.57}
( C_c^\infty (G / U)^\fs )^{X^G (\fs)} \quad \text{and} \quad
( C_c^\infty (U \backslash G )^\fs )^{X^G (\fs)}  
\end{equation}
are bimodules for a Morita equivalence between $(\cH (G)^\fs )^{X^G (\fs)}$ and
$(\cH (M \rtimes \mf R_\fs^\sharp)^\fs )^{X^G (\fs)}$.

By Lemma \ref{lem:3.H}, Proposition \ref{prop:3.2} and \eqref{eq:3.59} there is a 
chain of Morita equivalences
\begin{equation}\label{eq:3.65}
\begin{split}
(e^\sharp_{\lambda_G} \cH (G) e^\sharp_{\lambda_G})^{X^G (\fs)}
\sim_M (\cH (G)^\fs )^{X^G (\fs)} \sim_M
(\cH (M \rtimes \mf R_\fs^\sharp)^\fs )^{X^G (\fs)} \\
\sim_M (e^\sharp_{\lambda} \cH (M \rtimes \mf R_\fs^\sharp) e^\sharp_{\lambda})^{X^G (\fs)} .
\end{split}
\end{equation}
The respective Morita bimodules are given by \eqref{eq:3.66}, \eqref{eq:3.57} and
\eqref{eq:3.25}. In relation to \eqref{eq:3.59} we can rewrite \eqref{eq:3.25} as
\begin{equation}\label{eq:3.69}
e^\sharp_{\lambda} (\cH (M \rtimes \mf R_\fs^\sharp)^\fs )^{X^G (\fs)} \quad \text{and}
\quad (\cH (M \rtimes \mf R_\fs^\sharp)^\fs )^{X^G (\fs)} e^\sharp_{\lambda} .
\end{equation}
It follows that Morita bimodules for the composition of \eqref{eq:3.65} are
\begin{equation}\label{eq:3.67}
(e^\sharp_{\lambda_G} C_c^\infty (G / U) e^\sharp_{\lambda} )^{X^G (\fs)} \quad \text{and} \quad
(e^\sharp_{\lambda} C_c^\infty (U \backslash G ) e^\sharp_{\lambda_G} )^{X^G (\fs)} .
\end{equation}
As the modules \eqref{eq:3.57} are free of rank 1 over both the algebras 
\eqref{eq:3.54}, and the actions of $X^G (\fs)$ on \eqref{eq:3.67} and the involved
algebras come from the action on functions $G \to \C$, the modules \eqref{eq:3.67}
are again free of rank 1 over 
\begin{equation}\label{eq:3.68}
(e^\sharp_{\lambda_G} \cH (G) e^\sharp_{\lambda_G})^{X^G (\fs)} \quad \text{and} \quad
(e^\sharp_{\lambda} \cH (M \rtimes \mf R_\fs^\sharp) e^\sharp_{\lambda})^{X^G (\fs)} .
\end{equation}
Therefore these two algebras are isomorphic.
Since the idempotents $a e_{\mu_G} a^{-1}$ and $a \sum_{\gamma \in X^G (\fs / \lambda)} 
e_{\lambda \otimes \gamma} a^{-1}$ are $X^G (\fs)$-invariant, $e^\sharp_{\lambda_G} \cH (G) 
e^\sharp_{\lambda_G})^{X^G (\fs)}$ and the bimodules \eqref{eq:3.67} can be decomposed in
the same way as \eqref{eq:3.D}. It follows that the isomorphism between the algebras in
\eqref{eq:3.68}, as just constructed from \eqref{eq:3.67}, respects the decompositions
indexed by $a_1,a_2 \in [L / H_\lambda]$. This settles part (b). 

It remains to prove the first isomorphism of part (c), but here we encounter the
problem that the isomorphism between the algebras \eqref{eq:3.68} is not explicit. 
In particular we do not know for 
sure that it is equivariant with respect to $X_\nr (G  )$. Nevertheless,
we claim that the chain of Morita equivalences \eqref{eq:3.65} remains valid upon
taking $X_\nr (G  )$-invariants. For the first equivalence that is Lemma
\ref{lem:3.H}.b and for the second equivalence it was checked in \eqref{eq:3.M}.
For the third equivalence we can use the same argument as for the first, the equations
making \eqref{eq:3.69} Morita bimodules can be restricted to functions $G \to \C$
supported on $G^1 G^\sharp$. Composing these three steps, we obtain 
\begin{equation}\label{eq:3.72}
(e^\sharp_{\lambda_G} \cH (G) e^\sharp_{\lambda_G})^{X^G (\fs) X_\nr (G  )} \sim_M
(e^\sharp_{\lambda} \cH (M \rtimes \mf R_\fs^\sharp) 
e^\sharp_{\lambda})^{X^G (\fs) X_\nr (G  )} .
\end{equation}
with Morita bimodules 
\begin{equation}\label{eq:3.B}
(e^\sharp_{\lambda_G} C_c^\infty (G / U) e^\sharp_{\lambda} )^{X^G (\fs) X_\nr (G  )} 
\quad \text{and} \quad (e^\sharp_{\lambda} C_c^\infty (U \backslash G ) 
e^\sharp_{\lambda_G} )^{X^G (\fs) X_\nr (G  )} .
\end{equation}
Since the modules \eqref{eq:3.67} are free of rank one over the algebras 
\eqref{eq:3.68}, the modules \eqref{eq:3.B} are free of rank one over both the 
algebras in \eqref{eq:3.72}. Therefore these two algebras are isomorphic.
The isomorphism respects the decompositions indexed by $a_1,a_2 \in [L / H_\lambda]$
for the same reasons as in part (b).
\end{proof}

We normalize the Haar measures on $G, G^\sharp$ and $G^\sharp Z(G)$ such that $K_G$ and
$K_G \cap G^\sharp$ and $K_G \cap G^\sharp Z(G)$ have the same volume. 
Consider the idempotent $e^\sharp_{\lambda_G} \in \cH (G)$ as a function $G \to \C$ and let 
$e^\sharp_{\lambda_{G^\sharp}}$ (respectively $e^\sharp_{\lambda_{G^\sharp Z(G)}}$) be its 
restriction to $G^\sharp$ (respectively $G^\sharp Z(G)$). It turns out that \label{i:09}
\begin{equation}\label{eq:3.98}
e^\sharp_{\lambda_{G^\sharp Z(G)}} \in \cH (G^\sharp Z(G)) \; \text{ and } \;
e^\sharp_{\lambda_{G^\sharp}} \in \cH (G^\sharp) .
\end{equation}
are idempotents. We describe the associated subalgebras of $\cH (G^\sharp Z(G))$ 
(respectively $\cH (G^\sharp)$ in two separate but analogous theorems.

\begin{thm}\label{thm:3.17}
The element $e^\sharp_{\lambda_{G^\sharp Z(G)}} \in \cH (G^\sharp Z(G))$ is idempotent and
\[
e^\sharp_{\lambda_{G^\sharp Z(G)}} \cH (G^\sharp Z(G)) e^\sharp_{\lambda_{G^\sharp Z(G)}} 
\cong e^\sharp_{\lambda_G} \cH (G)^{X^G (\fs)} e^\sharp_{\lambda_G} 
\cong (e^\fs_M \cH (M) e^\fs_M )^{X^L (\fs)} \rtimes \mf R_\fs^\sharp .
\]
These algebras are Morita equivalent with $\cH (G^\sharp Z(G))^\fs$ and with 
$\big( \cH (G)^\fs \big)^{X^G (\fs)}$.
\end{thm}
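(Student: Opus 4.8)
The plan is to deduce everything from the Morita equivalences already assembled --- Lemma~\ref{lem:3.1}, Lemma~\ref{lem:3.H} and Proposition~\ref{prop:3.I} --- together with the congruence-subgroup bookkeeping underlying the proof of Lemma~\ref{lem:3.1}. The only new input is the idempotency of the plain restriction $e^\sharp_{\lambda_{G^\sharp Z(G)}}$ and the identification of the corner it determines in $\cH(G^\sharp Z(G))$; granting these, the two displayed isomorphisms and the Morita equivalences follow by composition.

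First I would record the features of $e^\sharp_{\lambda_G}$ that matter. It is an idempotent of $\cH(G)$ seeing precisely $\Rep^\fs(G)$ (as in the opening of the proof of Lemma~\ref{lem:3.H}); each of its summands $a e_{\lambda_G\otimes\gamma}a^{-1}$ with $a\in[L/H_\lambda]$ and $\gamma\in X^G(\fs/\lambda)$ is supported on the conjugate $aK_Ga^{-1}$ of a compact open subgroup, so, the reduced norm being conjugation-invariant, $e^\sharp_{\lambda_G}$ is supported inside $G^1=\{g\in G:\Nrd(g)\in\mf o_F^\times\}$; and $e^\sharp_{\lambda_G}$ is $X^G(\fs)$-invariant, because $\alpha_\gamma$ permutes the orthogonal idempotents $e_{\lambda_G\otimes\gamma'}$ summing to $e_{\mu_G}$ and commutes with conjugation by the elements of $[L/H_\lambda]$. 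In particular $e^\sharp_{\lambda_G}\in(\cH(G)^\fs)^{X^G(\fs)}$.

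Next I would run the reduction in the proof of Lemma~\ref{lem:3.1}: choose the level $l$ so large that $C_l$, respectively $C'_l=C_l\cap G^\sharp Z(G)$, fixes $e^\sharp_{\lambda_G}$, respectively $e^\sharp_{\lambda_{G^\sharp Z(G)}}$, that every element of $X^G(\fs)$ is trivial on $C_l$, and that all representations in $\Rep^\fs(G)$ and $\Rep^\fs(G^\sharp Z(G))$ admit nonzero fixed vectors under $C_l$, resp. $C'_l$. Let $\psi:\cH(G^\sharp Z(G),C'_l)^\fs\to(\cH(G,C_l)^\fs)^{X^G(\fs)}$ be the algebra isomorphism provided by Lemma~\ref{lem:3.1}. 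Since $\psi$ comes from the injection~\eqref{eq:3.70}, which on functions is, over $G^\sharp Z(G)$, essentially the identity extended by zero, applying $\psi^{-1}$ to the element $e^\sharp_{\lambda_G}\in(\cH(G,C_l)^\fs)^{X^G(\fs)}$ yields, as a function on $G^\sharp Z(G)$, a scalar multiple of the restriction $e^\sharp_{\lambda_{G^\sharp Z(G)}}$ --- a scalar which, combining the comparison of Haar measures with $[\,\Irr(G/G^\sharp Z(G)C_l):X^G(\fs)\,]$, the normalisation in the statement is designed to make $1$. As $\psi$ is an algebra isomorphism and $e^\sharp_{\lambda_G}$ is idempotent, $\psi^{-1}(e^\sharp_{\lambda_G})$ is idempotent; this proves the first assertion and that $\psi(e^\sharp_{\lambda_{G^\sharp Z(G)}})=e^\sharp_{\lambda_G}$. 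Transporting the corner along $\psi$, and using that $e^\sharp_{\lambda_G}$ is $C_l$-biinvariant, $X^G(\fs)$-invariant and seen only by $\Rep^\fs(G)$, gives
\[
e^\sharp_{\lambda_{G^\sharp Z(G)}}\,\cH(G^\sharp Z(G))\,e^\sharp_{\lambda_{G^\sharp Z(G)}}
\;\cong\; e^\sharp_{\lambda_G}\,\cH(G)^{X^G(\fs)}\,e^\sharp_{\lambda_G},
\]
and Proposition~\ref{prop:3.I}(b) rewrites the right-hand side as $(e^\fs_M\cH(M)e^\fs_M)^{X^L(\fs)}\rtimes\mf R_\fs^\sharp$. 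For the final sentence I would chain Lemma~\ref{lem:3.H}(a), which gives $e^\sharp_{\lambda_G}\cH(G)^{X^G(\fs)}e^\sharp_{\lambda_G}\sim_M(\cH(G)^\fs)^{X^G(\fs)}$, with Lemma~\ref{lem:3.1}, which gives $(\cH(G)^\fs)^{X^G(\fs)}\sim_M\cH(G^\sharp Z(G))^\fs$.

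The hard part is exactly the idempotency together with the Haar-measure normalisation. The difficulty is that $e^\sharp_{\lambda_G}$ is supported in $G^1$ but, because the reduced norm of $K_G$ (and of the elements $a\in[L/H_\lambda]$) need not lie in $\Nrd(Z(G))$, its support is not contained in $G^\sharp Z(G)$ in general, so one cannot simply restrict the convolution identity for $e^\sharp_{\lambda_G}$ from $G$ to $G^\sharp Z(G)$; the argument must pass through the isomorphism of Lemma~\ref{lem:3.1}, and one has to verify that the normalisation in the statement makes the resulting scalar trivial. Everything else is formal once Lemmas~\ref{lem:3.1} and~\ref{lem:3.H} and Proposition~\ref{prop:3.I} are in hand.
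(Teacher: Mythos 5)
Your proposal reproduces the paper's argument essentially verbatim: pass to the congruence-subgroup corner algebras as in the proof of Lemma~\ref{lem:3.1}, transport $e^\sharp_{\lambda_G}$ across the isomorphism \eqref{eq:4.37} (equivalently, its inverse $\psi^{-1}$) to obtain both the idempotency of $e^\sharp_{\lambda_{G^\sharp Z(G)}}$ and the first displayed isomorphism, invoke Proposition~\ref{prop:3.I}(b) for the second, and compose Lemma~\ref{lem:3.H} with Lemma~\ref{lem:3.1} for the Morita equivalences. Your extra care about the support of $e^\sharp_{\lambda_G}$ (lying in $G^1$ but not in $G^\sharp Z(G)$) and the Haar-measure normalisation needed so that the transported element is literally $e^\sharp_{\lambda_{G^\sharp Z(G)}}$ rather than a nonzero scalar multiple is a useful amplification of a step the paper states rather briskly.
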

\begin{proof}
Consider the $l$-th congruence subgroup $C_l \subset GL_m (\mf o_D)$, as in the proof 
of Lemma \ref{lem:3.1}. We choose the level $l$ so high that all representations in 
$\Rep^\fs (G)$ have nonzero $C_l$-fixed vectors and that $e^\sharp_{\lambda_G}$ is 
$C_l$-biinvariant. Put $C'_l = C_l \cap G^\sharp Z(G)$. The proof of Lemma \ref{lem:3.1} 
shows that the algebra isomorphism
\begin{equation}\label{eq:4.37}
\cH (G^\sharp Z(G),C'_l)^\fs \to \big( \cH (G,C_l)^\fs \big)^{X^G (\fs)}
\end{equation}
coming from \eqref{eq:3.70} maps $e^\sharp_{\lambda_{G^\sharp Z(G)}}$ to $e^\sharp_{\lambda_G}$. 
As $e^\sharp_{\lambda_G}$ is idempotent, so is $e^\sharp_{\lambda_{G^\sharp Z(G)}}$. 
It follows that \eqref{eq:4.37} restricts to an isomorphism
\begin{multline}\label{eq:4.39}
e^\sharp_{\lambda_{G^\sharp Z(G)}} \cH (G^\sharp Z(G)) e^\sharp_{\lambda_{G^\sharp Z(G)}} =
e^\sharp_{\lambda_{G^\sharp Z(G)}} \cH (G^\sharp Z(G),C'_l)^\fs 
e^\sharp_{\lambda_{G^\sharp Z(G)}} \to \\
e^\sharp_{\lambda_G} \big( \cH (G,C_l)^\fs \big)^{X^G (\fs)} e^\sharp_{\lambda_G} = 
e^\sharp_{\lambda_G} \cH (G)^{X^G (\fs)} e^\sharp_{\lambda_G} . 
\end{multline}
The second isomorphism of the lemma is Proposition \ref{prop:3.I}.b. 
By Lemma \ref{lem:3.H} these algebras are Morita equivalent with 
$\big( \cH (G)^\fs \big)^{X^G (\fs)}$, and by 
Lemma \ref{lem:3.1} also with $\cH (G^\sharp Z(G))^\fs$.
\end{proof}

\begin{thm}\label{thm:3.18}
The element $e^\sharp_{\lambda_{G^\sharp}} \in \cH (G^\sharp)$ is idempotent and
\[
e^\sharp_{\lambda_{G^\sharp}} \cH (G^\sharp) e^\sharp_{\lambda_{G^\sharp}} \cong
e^\sharp_{\lambda_G} \cH (G)^{X^G (\fs) X_\nr (G  )} e^\sharp_{\lambda_G} \cong
(e^\fs_M \cH (M) e^\fs_M )^{X^L (\fs) X_\nr (G  )} \rtimes \mf R_\fs^\sharp .
\]
These algebras are Morita equivalent with $\cH (G^\sharp)^\fs$ and with 
$\big( \cH (G)^\fs \big)^{X^G (\fs) X_\nr (G  )}$.
\end{thm}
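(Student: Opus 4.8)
The plan is to repeat the proof of Theorem~\ref{thm:3.17}, now descending one further step from $G^\sharp Z(G)$ to $G^\sharp$ and keeping track of $X_\nr(G)$ throughout, exactly as Lemma~\ref{lem:3.11} refines Lemma~\ref{lem:3.1}. The preliminary observation is that, as noted in the proof of Lemma~\ref{lem:3.H}, $e^\sharp_{\lambda_G}$ is supported on $G^1$; hence $e^\sharp_{\lambda_{G^\sharp Z(G)}}$ is supported on $G^1 \cap G^\sharp Z(G)$, the subgroup of $G^\sharp Z(G)$ generated by its compact subgroups, so that $e^\sharp_{\lambda_{G^\sharp Z(G)}}$ is invariant under $X_\nr(G^\sharp Z(G)) = X_\nr(Z(G))$, and its restriction to $G^\sharp$ is $e^\sharp_{\lambda_{G^\sharp}}$.

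First I would fix, as in the proof of Theorem~\ref{thm:3.17}, a congruence subgroup $C_l \subset \GL_m(\mf o_D)$ deep enough that every representation in $\Rep^\fs(G)$ has nonzero $C_l$-fixed vectors and $e^\sharp_{\lambda_G}$ is $C_l$-biinvariant, and put $C'_l = C_l \cap G^\sharp Z(G)$ and $C''_l = C_l \cap G^\sharp$. The isomorphism \eqref{eq:4.37}, which arises from the double coset map \eqref{eq:3.70} and sends $e^\sharp_{\lambda_{G^\sharp Z(G)}}$ to $e^\sharp_{\lambda_G}$, is equivariant for the action of $X_\nr(Z(G))$: on both sides this action is by characters trivial on $G^\sharp$ (using the identification \eqref{eq:3.45}), and restriction of such characters from $G$ to $G^\sharp Z(G)$ is compatible with \eqref{eq:3.70}. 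Taking $X_\nr(Z(G))$-invariants on both sides, and using \eqref{eq:3.75} on the left as in the proof of Lemma~\ref{lem:3.11}, I obtain an algebra isomorphism $\cH(G^\sharp, C''_l)^\fs \cong \big(\cH(G,C_l)^\fs\big)^{X^G(\fs) X_\nr(G)}$ carrying $e^\sharp_{\lambda_{G^\sharp}}$ to $e^\sharp_{\lambda_G}$. In particular $e^\sharp_{\lambda_{G^\sharp}}$ is idempotent, and this isomorphism restricts to $e^\sharp_{\lambda_{G^\sharp}} \cH(G^\sharp) e^\sharp_{\lambda_{G^\sharp}} \cong e^\sharp_{\lambda_G} \cH(G)^{X^G(\fs) X_\nr(G)} e^\sharp_{\lambda_G}$. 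The remaining isomorphism of the theorem is then Proposition~\ref{prop:3.I}.c applied verbatim, this being precisely the $X_\nr(G)$-invariant companion of the statement used at the corresponding point of Theorem~\ref{thm:3.17}; and the two Morita equivalences follow by quoting Lemma~\ref{lem:3.H}.b and Lemma~\ref{lem:3.11} in place of Lemma~\ref{lem:3.H}.a and Lemma~\ref{lem:3.1}.

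I expect the only point demanding genuine care, rather than a citation, to be the identification of the $X_\nr(Z(G))$-invariants of $\cH(G^\sharp Z(G), C'_l)^\fs$ with $\cH(G^\sharp, C''_l)^\fs$: this rests on $(G^\sharp Z(G))^1 / G^\sharp$ being compact, so that an $X_\nr(Z(G))$-invariant function on $G^\sharp Z(G)$ is determined by its restriction to $G^\sharp$; on the Haar-measure normalization fixed just before the theorem, so that $e_{C''_l}$ is literally the restriction of $e_{C'_l}$ and the relevant double coset inclusions compose; and on the compatibility of \eqref{eq:3.70} with these restrictions. All of this was already arranged in the passage from Lemma~\ref{lem:3.1} to Lemma~\ref{lem:3.11}, so no new difficulty arises; the substance of the theorem is contained in Theorem~\ref{thm:3.17}, Proposition~\ref{prop:3.I}.c, Lemma~\ref{lem:3.H}.b and Lemma~\ref{lem:3.11}, and the proof is simply their assembly.
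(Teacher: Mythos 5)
Your overall plan is reasonable, and the Morita equivalences you cite (Lemma~\ref{lem:3.H}.b, Lemma~\ref{lem:3.11}) and the isomorphism from Proposition~\ref{prop:3.I}.c are indeed the right ingredients for the final assertions. But there is a genuine gap precisely at the step you flag as "the only point demanding genuine care", and the reason you offer for why it should work is not correct.

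You claim that the $X_\nr(Z(G))$-invariants of $\cH(G^\sharp Z(G), C'_l)^\fs$ can be identified with $\cH(G^\sharp, C''_l)^\fs$ because $(G^\sharp Z(G))^1 / G^\sharp$ is compact, "so that an $X_\nr(Z(G))$-invariant function on $G^\sharp Z(G)$ is determined by its restriction to $G^\sharp$." This is false. The $X_\nr(Z(G))$-invariant functions on $G^\sharp Z(G)$ are precisely those supported on $(G^\sharp Z(G))^1 = G^\sharp (Z(G) \cap G^1)$, and since $Z(G) \cap G^1 = \mf o_F^\times \cdot 1_m$ is strictly larger than $Z(G) \cap G^\sharp$ (the $md$-th roots of unity), a function on $G^\sharp (Z(G) \cap G^1)$ is certainly not determined by its restriction to $G^\sharp$ merely by compactness of the quotient. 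What makes the identification true is that the relevant algebra knows its central character on $Z(G) \cap G^1$: since $(K_G, \lambda_G)$ is a type for a single Bernstein component, $Z(G) \cap K_G = Z(G) \cap G^1$ acts on $\lambda_G$ by a fixed character $\zeta_\lambda$, and functions in $e^\sharp_{\lambda_{G^\sharp Z(G)}} \cH(G^\sharp Z(G)) e^\sharp_{\lambda_{G^\sharp Z(G)}}$ therefore transform by $\zeta_\lambda$ under this central subgroup. Only with this bi-equivariance is a function supported on $G^\sharp (Z(G) \cap G^1)$ recovered from its restriction to $G^\sharp$.

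The paper's proof makes this precise and does not re-run the congruence-subgroup argument at all. Instead it builds directly on the isomorphism \eqref{eq:4.39} already established in Theorem~\ref{thm:3.17}, and relates the $G^\sharp$- and $G^\sharp Z(G)$-algebras via an explicit injective homomorphism $f \mapsto f e_{\zeta_\lambda}$, where $e_{\zeta_\lambda}$ is the idempotent attached to $(Z(G)\cap G^1, \zeta_\lambda)$ with a normalised Haar measure on $Z(G)$. The image of this map is $e^\sharp_{\lambda_{G^\sharp Z(G)}} \cH(G^\sharp(Z(G) \cap G^1)) e^\sharp_{\lambda_{G^\sharp Z(G)}}$, which is then identified with the $X_\nr(G^\sharp Z(G))$-invariant subalgebra of $e^\sharp_{\lambda_{G^\sharp Z(G)}} \cH(G^\sharp Z(G)) e^\sharp_{\lambda_{G^\sharp Z(G)}}$, and the rest follows from \eqref{eq:4.39} and Proposition~\ref{prop:3.I}.c as you anticipated. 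So the missing idea in your proposal is exactly this central-character idempotent $e_{\zeta_\lambda}$; replace your compactness justification with it and the argument closes.
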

\begin{proof}
Recall that $(K_G,\lambda_G)$ is a type for the single Bernstein component $\fs$.
The representations in $\Rep^\fs (G)$ contain only one character of $Z(G) \cap G^1$,
so we must have 
\[
Z(G) \cap K_G = Z(G) \cap G^1 = \mf o_F^\times \cdot 1_m . 
\]
Because $\lambda_G$ is irreducible as a representation of $K_G$,
$Z(G) \cap K_G$ acts on it by a character, say $\zeta_\lambda$. 

Endow $Z(G)$ with the Haar measure for which $Z(G) \cap K_G$ gets volume 
$| Z(G) \cap G^\sharp |$. There is an equality
\[
e^\sharp_{\lambda_{G^\sharp Z(G)}} = e^\sharp_{\lambda_{G^\sharp}} e_{\zeta_\lambda} 
\]
of distributions on $G^\sharp Z(G)$, where $e_{\zeta_\lambda}$ denotes the idempotent 
associated to $(Z(G) \cap G^1, \zeta_\lambda)$. Then
\begin{equation}\label{eq:4.51}
e^\sharp_{\lambda_{G^\sharp}} \cH (G^\sharp) e^\sharp_{\lambda_{G^\sharp}} \to
e^\sharp_{\lambda_{G^\sharp Z(G)}} \cH (G^\sharp Z(G)) e^\sharp_{\lambda_{G^\sharp Z(G)}}  :
f \mapsto f e_{\zeta_\lambda}
\end{equation}
is an injective algebra homomorphism with image
\[
e^\sharp_{\lambda_{G^\sharp Z(G)}} \cH (G^\sharp (Z(G) \cap G^1)) 
e^\sharp_{\lambda_{G^\sharp Z(G)}} .
\]
This is precisely the subalgebra of $e^\sharp_{\lambda_{G^\sharp Z(G)}} 
\cH (G^\sharp Z(G)) e^\sharp_{\lambda_{G^\sharp Z(G)}}$ which is invariant under
$X_\nr (G^\sharp Z(G)  )$. Under the isomorphism \eqref{eq:4.39} it 
corresponds to
\[
e^\sharp_{\lambda_G} \cH (G)^{X^G (\fs) X_\nr (G  )} e^\sharp_{\lambda_G} .
\]
That algebra is isomorphic to
\[
(e^\fs_M \cH (M) e^\fs_M )^{X^L (\fs) X_\nr (G  )} \rtimes \mf R_\fs^\sharp
\]
by Proposition \ref{prop:3.I}.c and Morita equivalent to 
$\big( \cH (G)^\fs \big)^{X^G (\fs) X_\nr (G  )}$ by Lemma \ref{lem:3.H}.b.
In Lemma \ref{lem:3.11} we already observed that this last algebra is Morita
equivalent with $\cH (G^\sharp)^\fs$.
\end{proof}

\section{The structure of the Hecke algebras}
\label{sec:Hecke}

\subsection{Hecke algebras for general linear groups} \
\label{subsec:Secherre}

Consider the inertial equivalence class $\fs = [L,\omega]_G$.
Via the map $\chi \mapsto \omega \otimes \chi$ we identify $T_\fs$ with the complex torus
$X_\nr (L) / X_\nr (L,\omega)$. This gives us the lattices $X^* (T_\fs)$ and $X_* (T_\fs)$
of algebraic characters and cocharacters, respectively. We emphasize that this depends on
the choice of the basepoint $\omega$ of $T_\fs$. Under the Conditions \ref{cond}, any other
basepoint is the form $\omega' = \omega \otimes \chi'$ where $\chi' \in X_\nr (L)^{W_\fs}$.
There is a natural isomorphism $x \mapsto x'$ from $X^* (T_\fs)$ with respect to $\omega$
to $X^* (T_\fs)$ with respect to $\omega'$. As functions on $T_\fs$, it works out to
\begin{equation}\label{eq:3.39}
x' (\omega \otimes \chi) = x (\omega \otimes \chi) x (\omega \otimes \chi')^{-1} .
\end{equation}
The inertial equivalence class $\fs$ comes not only with the torus $T_\fs$ and the group
$W_\fs$, but also with a root system $R_\fs \subset X^* (T_\fs)$, whose Weyl group is $W_\fs$.
From \eqref{eq:3.39} we see that a character $x$ is independent of the choice of a 
basepoint of $T_\fs$ if it is invariant under $X_\nr (L)^{W_\fs}$, that is, if $x$ lies in
the lattice $\Z R_\fs$ spanned by $R_\fs$.

Let $\cH (X^* (T_\fs) \rtimes W_\fs,q_\fs)$ denote the affine Hecke algebra
associated to the root datum $(X^* (T_\fs), X_* (T_\fs), R_\fs, R^\vee_\fs)$
and the parameter function $q_\fs$ as in \cite{Sec3}. It has a standard basis 
$\{[x] : x \in X^* (T_\fs) \rtimes W_\fs \}$, with multiplication rules described first 
by Iwahori and Matsumoto \cite{IwMa}. \label{i:24}

We remark that here $q_\fs$ is not a just one real number, but a collection of parameters 
$q_{\fs,i} > 0$, one for each factor $M_i$ of $M$, or
equivalently one for each irreducible component of the root system $R_\fs$. 
The parameter $q_\fs$ has a natural extension to a map 
\[
q_\fs : X^* (T_\fs) \rtimes W_\fs \to \R_{>0} ,
\]
see \cite[\S 1]{Lus1}. On the part of $X^* (T_\fs)$ that is positive with respect to 
$P \cap M$ it can be defined as follows. Since $T_\fs$ 
is a quotient of $X_\nr (L)$, $X^* (T_\fs)$ is naturally isomorphic to a subgroup of 
$L / L^1$. In this way $q_\fs$ corresponds to $\delta_u^{-1}$, the inverse of the modular 
character for the action of $L$ on the unipotent radical of $P \cap M$.

Let us recall the Bernstein presentation of an affine Hecke algebra \cite[\S 3]{Lus1}.
For $x \in X^* (T_\fs)$ positive with respect to $P \cap M$ we write \label{i:56} 
\begin{equation}\label{eq:4.2}
\theta_x := q_\fs (x)^{-1/2} [x] = \delta_u^{1/2}(x) [x] .
\end{equation}
The map $x \mapsto \theta_x$ can be extended in a unique way to a group homomorphism
$X^* (T_\fs) \to \cH (X^* (T_\fs) \rtimes W_\fs,q_\fs)^\times$ \cite[2.6]{Lus1}, for
which we use the same notation. By \cite[Proposition 3.7]{Lus1}
\[
\{ \theta_x [w] : x \in X^* (T_\fs), w \in W_\fs \}
\] 
is a basis of $\cH (X^* (T_\fs) \rtimes W_\fs ,q_\fs)$. Furthermore the span of the
$\theta_x$ is a subalgebra \label{i: A} $\mc A$ isomorphic to $\C [X^* (T_\fs)] \cong \mc O (T_\fs)$ and
the span of the $[w]$ with $w \in W_\fs$ is the Iwahori--Hecke algebra 
$\cH (W_\fs,q_\fs)$. The multiplication map \label{i:25}
\begin{equation}\label{eq:3.40}
\mc A \otimes \cH (W_\fs,q_\fs) \to \cH (X^* (T_\fs) \rtimes W_\fs,q_\fs)
\end{equation}
is a linear bijection. The commutation relations between these two subalgebras are
known as the Bernstein--Lusztig--Zelevinsky relations. Let $\alpha \in R_\fs$ be
a simple root, with corresponding reflection $s \in W_\fs$. 
By \cite[Proposition 3.6]{Lus1}, for any $x \in X (T_\fs)$
\begin{equation}\label{eq:3.41}
\theta_x [s] - [s] \theta_{s(x)} = (q_\fs (s) - 1)(\theta_x - \theta_{s(x)})
(1 - \theta_{-\alpha})^{-1} \in \mc A.
\end{equation}
Since the elements $[s]$ generate $\cH (W_\fs,q_\fs)$, this determines the commutation
relations for $\mc A$ with all $[w] \; (w \in W_\fs)$. It follows from \eqref{eq:3.41} that
\begin{equation}\label{eq:4.30}
Z ( \cH (X^* (T_\fs) \rtimes W_\fs ,q_\fs) ) = \mc A^{W_\fs} .
\end{equation}
In view of \eqref{eq:3.39}, \eqref{eq:3.40} and \eqref{eq:3.41}, we can also regard
$\cH (X^* (T_\fs) \rtimes W_\fs ,q_\fs)$ as the algebra whose underlying vector space is
\begin{equation}\label{eq:3.42}
\mc O (T_\fs) \otimes \cH (W_\fs,q_\fs)
\end{equation}
and whose multiplication satisfies
\begin{equation}\label{eq:3.43}
f [s] - [s] (s \cdot f) = (q_\fs (s) - 1)(f - (s \cdot f)) (1 - \theta_{-\alpha})^{-1}
\qquad f \in \mc O (T_\fs) ,
\end{equation}
with respect to the canonical action of $W_\fs$ on $\mc O (T_\fs)$. The advantage is that,
written in this way, the multiplication does not depend on the choice of a basepoint
$\omega \in T_\fs$ used to define $X^* (T_\fs)$. We will denote this interpretation of 
$\cH (X^* (T_\fs) \rtimes W_\fs ,q_\fs)$ by $\cH (T_\fs,W_\fs,q_\fs)$. \label{i:23}

Let $\varpi_D$ be a uniformizer of $D$. Consider the group of diagonal matrices in $L$ 
all whose diagonal entries are powers of $\varpi_D$ and whose components in each $L_i$ are 
multiples of the identity. It can be identified with a sublattice of $X^* (X_\nr (L))$.
The lattice $X^* (T_\fs)$ can be represented in a unique way by such matrices, say by
the group $\widetilde{X^* (T_\fs)} \subset L$.

Recall that $(K,\lambda)$ is a type for $\fs_M$ and that $(K_L,\lambda_L)$ is a 
$\fs_L$-type. The next result is largely due to S\'echerre \cite{Sec3}.

\begin{thm}\label{thm:3.7}
For every $(w,\gamma) \in \Stab (\fs,P \cap M)$ there are isomorphisms
\begin{align*}
e_{w(\lambda_L ) \otimes \gamma} \cH (L) e_{w(\lambda_L) \otimes \gamma} & \cong 
\cH (L,w(\lambda_L) \otimes \gamma) \otimes \End_\C (V_{w(\lambda_L) \otimes \gamma}) \\
& \cong \mathcal O (T_\fs) \otimes \End_\C (V_{w(\lambda) \otimes \gamma}) ,\\
e_{w(\lambda) \otimes \gamma} \cH (M) e_{w(\lambda) \otimes \gamma} & \cong 
\cH (M,w(\lambda) \otimes \gamma) \otimes \End_\C (V_{w(\lambda) \otimes \gamma}) \\
& \cong \cH (T_\fs , W_\fs, q_\fs) \otimes \End_\C (V_{w(\lambda) \otimes \gamma}) .
\end{align*}
In both cases the first isomorphism is canonical, and the second depends only on the 
choice of the parabolic subgroup $P$. The support of these algebras is, respectively, 
$K_L \widetilde{X^* (T_\fs)} K_L$ and $K \widetilde{X^* (T_\fs)} W_\fs K$.
\end{thm}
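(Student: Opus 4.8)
The plan is to reduce to the structure theory of S\'echerre and Stevens, applied to each of the twisted types directly.

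First I would check that the pairs occurring in the statement are again types of the shape treated in~\cite{Sec3,SeSt4}. For $(w,\gamma)\in\Stab(\fs,P\cap M)$ one has $w\in\mf R_\fs^\sharp$ (Lemmas~\ref{lem:2.2}, \ref{lem:2.4}); by the choices made after~\eqref{eq:3.3} this $w$ normalizes $K$ and hence $K_L=K\cap L$, and permutes the factors $M_i$ of $M$ compatibly with Conditions~\ref{cond}, while $\gamma$ restricts to a character of $K$. Thus $w(\lambda)\otimes\gamma$ and $w(\lambda_L)\otimes\gamma$ are irreducible representations on the same underlying spaces $V_\lambda$ and $V_{\lambda_L}$ — in particular the two occurrences of $\End_\C(\cdot)$ in the statement coincide — and each is a product, over the (permuted) factors, of maximal simple types of the form~\cite[5.2.3]{Sec3}. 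Moreover, being a type is preserved under conjugation by the $K$-normalizing element $w$ and under twisting by $\gamma$, and $w(\gamma\otimes\omega)=\gamma\otimes(w\cdot\omega)$ (the actions of $W(G,L)$ and $\Irr(L/L^\sharp)$ commuting) lies in $[L,\omega]_L$ by definition of $\Stab(\fs,P\cap M)$; so $(K_L,w(\lambda_L)\otimes\gamma)$ is an $\fs_L$-type and $(K,w(\lambda)\otimes\gamma)$ an $\fs_M$-type.

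The first isomorphism in each row is then the canonical isomorphism~\eqref{eq:1.5} of~\cite[(2.12)]{BuKu3} for these types; under it the factor $\End_\C(V_{w(\lambda)\otimes\gamma})$ is the image of $e_{w(\lambda)\otimes\gamma}\cH(K)e_{w(\lambda)\otimes\gamma}$ — the part supported on $K$ — while $\cH(M,w(\lambda)\otimes\gamma)$ is the part supported on all of $M$; likewise over $L$. For the second isomorphism I would invoke S\'echerre's structure theory for these semisimple types. Over $L$, \cite[Th\'eor\`eme~4.6]{Sec3} (recalled in~\eqref{eq:3.O}) gives $\cH(L,w(\lambda_L)\otimes\gamma)\cong\mc O(T_\fs)$, supported on $K_L\widetilde{X^*(T_\fs)}K_L$. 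Over $M$, the factorizations $M=\prod_iM_i$, $K=\prod_iK_i$, $w(\lambda)\otimes\gamma=\bigotimes_i\lambda_i'$ (each $\lambda_i'$ a maximal simple type over $M_i$) give $\cH(M,w(\lambda)\otimes\gamma)\cong\bigotimes_i\cH(M_i,\lambda_i')$, each factor being — by~\cite{Sec3} on top of~\cite{SeSt4} — an affine Hecke algebra of type $\GL_{e_i}$ with the single parameter $q_{\fs,i}$; the tensor product is the affine Hecke algebra of the root datum $(X^*(T_\fs),X_*(T_\fs),R_\fs,R_\fs^\vee)$ with parameters $q_\fs$, supported on $K\widetilde{X^*(T_\fs)}W_\fs K$. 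Here I would use that the torus $T_\fs$, the lattice $X^*(T_\fs)\subset L/L^1$ (hence its representation $\widetilde{X^*(T_\fs)}$ by $\varpi_D$-power diagonal matrices), the group $W_\fs=W(M,L)$, the root system $R_\fs$ and the parameters $q_\fs$ depend only on $\fs_M=[L,w(\gamma\otimes\omega)]_M$ and on $P\cap M$, not on a choice of basepoint; rewriting the algebra in the basepoint-free form~\eqref{eq:3.42}--\eqref{eq:3.43} then yields $\cH(T_\fs,W_\fs,q_\fs)$. One may also obtain these isomorphisms from the ones for $(K,\lambda)$ by conjugating with the element $c_\gamma\in L$ of Proposition~\ref{prop:3.3}.

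Finally, the first isomorphism, being~\cite[(2.12)]{BuKu3}, is canonical; in the second the only choices are the basepoint $\omega$ of $T_\fs$, removed by passing to $\cH(T_\fs,W_\fs,q_\fs)$ (cf.~\eqref{eq:3.39}), and the parabolic $P$, which enters through $q_\fs=\delta_u^{-1}$ and the normalization $\theta_x=\delta_u^{1/2}(x)[x]$ of~\eqref{eq:4.2}, $U$ being the unipotent radical of $P\cap M$; while $T_\fs$, $W_\fs$ and $R_\fs$ are intrinsic to $\fs_L$. The substantive content is S\'echerre's identification of the Hecke algebra of a product of simple types with the explicit affine Hecke algebra above, with the stated parameters and support; the main thing to be careful about is extracting that statement from~\cite{Sec3} uniformly for all the twisted types $(K,w(\lambda)\otimes\gamma)$ and reconciling the basepoint conventions, which is where I expect the real work to lie.
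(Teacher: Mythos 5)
Your overall strategy matches the paper's: reduce everything to S\'echerre's structure theory, invoke \eqref{eq:1.5} for the first isomorphism in each line, and cite \cite[Th\'eor\`eme~4.6]{Sec3} for the second, with the reduction to $(w,\gamma)=(1,1)$ either by arguing that each twisted pair is again a (product of) simple type(s) of the right shape, or by conjugating with $c_\gamma$ (the latter is exactly how the paper opens its proof). Your description of the support and of the $\End_\C(V_\cdot)$ factor as the image of $e_{w(\lambda)\otimes\gamma}\cH(K)e_{w(\lambda)\otimes\gamma}$ is also the paper's.

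Where there is a genuine gap is in the canonicity assertion for the $M$-side. You claim that the residual freedom in S\'echerre's isomorphism ``is removed by passing to $\cH(T_\fs,W_\fs,q_\fs)$,'' with $P$ entering only through $\delta_u$ and $\theta_x=\delta_u^{1/2}(x)[x]$. That does not yet pin down the isomorphism: the target algebra $\cH(T_\fs,W_\fs,q_\fs)$ is basepoint-free as an abstract algebra, but the \emph{map} $\cH(M,\lambda)\to\cH(T_\fs,W_\fs,q_\fs)$ obtained via \cite[Th\'eor\`eme~4.6]{Sec3} still has, for each factor $M_i$, a one-parameter freedom in how the subalgebra generated by $\mathbb Z R_\fs\rtimes W_\fs$ (where it is canonical) is extended to all of $X^*(T_\fs)\rtimes W_\fs$. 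The ingredient the paper uses to resolve this — which your proposal never mentions — is the cover structure: $(K,\lambda)$ is a cover of $(K_L,\lambda_L)$, and the unique embedding $t_{P,\lambda}:\cH(L,\lambda_L)\to\cH(M,\lambda)$ of \cite[Corollaries~7.2 and 7.11]{BuKu3} forces, via the commutative diagram \eqref{eq:3.23}, that $i_{P,\lambda}(x)=\theta_x$ for $x$ positive with respect to $P\cap M$; this both fixes the normalization of \eqref{eq:3.27} and, together with \eqref{eq:3.39}, shows the resulting map is independent of the basepoint of $T_\fs$. Without $t_{P,\lambda}$ one has no mechanism to say which of S\'echerre's isomorphisms one is using, so ``depends only on $P$'' remains unjustified. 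You anticipate that ``the real work'' lies in the basepoint conventions, but the work is precisely this cover-map argument, which is absent from the proposal.
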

\begin{proof}
Since all the types $(K,w(\lambda) \otimes \gamma)$ have the same properties, 
it suffices to treat the case $(w,\gamma) = (1,1)$. The first and third isomorphisms 
are instances of \eqref{eq:1.5}. The support of the algebras was determined in
\cite[\S 4]{Sec3}. S\'echerre also proved that the remaining 
isomorphisms exist, but some extra work is needed to make them canonical.

The $L$-representations $\omega \otimes \chi$ with $\chi \in X_\nr (L)$ paste to an
algebra homomorphism \label{i:14}
\begin{equation}\label{eq:3.26}
\mathcal F_L : e_{\lambda_L} \cH (L) e_{\lambda_L} \to \mathcal O (X_\nr (L))
\otimes \End_\C (e_{\lambda_L} V_\omega) ,
\end{equation}
which is injective because these are all irreducible representations in $\Rep^{\fs_L}(L)$.
By \cite[Th\'eor\`eme 4.6]{Sec3} $e_{\lambda_L} \cH (L) e_{\lambda_L}$ is isomorphic to 
$\mathcal O (T_\fs) \otimes \End_\C (V_{\lambda_L})$. Hence 
\begin{equation}\label{eq:3.28}
e_{\lambda_L} V_\omega \cong V_{\lambda_L} = V_\lambda
\end{equation} 
and \eqref{eq:3.26} restricts to a canonical isomorphism
\begin{equation}\label{eq:3.15}
\mathcal F_L : e_{\lambda_L} \cH (L) e_{\lambda_L} \to 
\mathcal O (T_\fs) \otimes \End_\C (V_{\lambda_L}) .
\end{equation}
Here $\mathcal O (T_\fs)$ is the centre of the right hand side, so it corresponds to 
$\cH (L,\lambda_L)$. Consider the isomorphism
\begin{equation}\label{eq:3.27}
e_{\lambda} \cH (M) e_{\lambda} \cong \cH (X^* (T_\fs) \rtimes W_\fs,q_\fs)
\otimes \End_\C (V_{\lambda}) .
\end{equation}
from \cite[Th\'eor\`eme 4.6]{Sec3}. It comes from $\cH (X^* (T_\fs) \rtimes W_\fs,q_\fs)
\cong \cH (M,\lambda)$. We define \label{i:15}
\begin{equation}\label{eq:4.76}
f_{x,\lambda} \in \cH (M,\lambda) 
\text{ as the image of } [x] \text{ under \eqref{eq:3.27}.}
\end{equation}
Because $(K,\lambda)$ is a cover of $(K_L,\lambda_L)$, we may use the results of 
\cite[\S 7]{BuKu3}. By \cite[Corollaries 7.2 and 7.11]{BuKu3} 
there exists a unique injective algebra homomorphism \label{i:57}
\begin{equation}\label{eq:3.21}
t_{P,\lambda} : \cH (L,\lambda_L) \to \cH (M,\lambda) 
\end{equation}
such that the diagram
\[
\xymatrix{
\Mod (\cH (M,\lambda)) \ar[r] \ar[d]_{t_{P,\lambda}^*} & \Rep (M) \ar[d]_{r^M_{P \cap M}} \\
\Mod (\cH (L,\lambda_L)) \ar[r] & \Rep (L)
}
\]
commutes. We note
that in \cite{BuKu3} unnormalized Jacquet restriction is used, whereas we prefer 
the normalized version. Therefore our $t_{P,\lambda}$ equals $t_{\delta_u^{1/2}}$
in the notation of \cite[\S 7]{BuKu3}, where $\delta_u$ denotes the modular character 
for the action of $L$ on the unipotent radical of $P \cap M$.

Consider the diagram 
\begin{equation}\label{eq:3.23}
\begin{array}{ccc}
\cH (M,\lambda) & \to & \cH (X^* (T_\fs) \rtimes W_\fs ,q_\fs) \cong 
\cH (T_\fs, W_\fs ,q_\fs) \\
 \uparrow {\scriptstyle t_{P,\lambda}} & & \uparrow \scriptstyle{i_{P,\lambda}} \\
\cH (L,\lambda_L) & \to & \mathcal O (T_\fs) \cong \C [X^* (T_\fs)] ,
\end{array}
\end{equation}
where the upper map is \eqref{eq:3.27} and the lower map comes from \eqref{eq:3.15}. 
The horizontal maps are isomorphisms and $t_{P,\lambda}$ is injective. We want to define 
the right vertical map $i_{P,\lambda}$ so that the diagram commutes.

The construction of the upper map in \cite[\S 4]{Sec3} shows that it is canonical on 
the subalgebra of $\cH (X^* (T_\fs) \rtimes W_\fs ,q_\fs)$ generated by the elements
$[s]$ with $s \in X^* (T_\fs) \rtimes W_\fs$ a simple affine reflection. This 
subalgebra has a basis $\{ [x] : x \in \Z R_\fs \rtimes W_\fs \}$, where $\Z R_\fs$ 
is the sublattice of $X^* (T_\fs)$ spanned by the root system $R_\fs$. In particular
the image $f_{x,\lambda} \in \cH (M,\lambda)$ of $[x]$ with $x \in \Z R_\fs \rtimes W_\fs$
is defined canonically.

By \cite[Th\'eor\`eme 4.6]{Sec3} the remaining freedom for \eqref{eq:3.27} boils down 
to, for each factor $M_i$ of $M$, the choice of a nonzero element in a 
one-dimensional vector space. This is equivalent to the freedom in the choice of
the basepoint $\omega$ of $T_\fs$.

Take a $x \in X^* (T_\fs)$ which is positive with respect to $P \cap M$, and let
$f_{x,\lambda_L}$ be the corresponding element of $\cH (L,\lambda_L)$. For such 
elements $t_{P,\lambda}$ is described explicitly by \cite[Theorem 7.2]{BuKu3}.
In our notation \label{i:16}
\[
t_{P,\lambda}(f_{x,\lambda_L}) = t_{\delta_u^{1/2}} (f_{x,\lambda_L}) =
\delta_u^{1/2}(x) f_{x,\lambda} .
\]
Suppose that furthermore $x \in \Z R_\fs$, considered as subset of $\C[X^* (T^\fs)]$.
Then its images three of the maps in \eqref{eq:3.23} are canonically determined.
In order that the diagram commutes, it is necessary that 
\begin{equation}\label{eq:3.24}
i_{P,\lambda} (x) = \theta_x,
\end{equation}
with $\theta_x$ as in \eqref{eq:4.2}.
The condition \eqref{eq:3.24} determines $i_{P,\lambda} (x)$ for all $x \in \Z R_\fs$. 
Now every way to extend $i_{P,\lambda}$ to the whole of $\C [X^* (T_\fs)]$ corresponds 
to precisely one choice of an isomorphism \eqref{eq:3.27}. Thus we can normalize
\eqref{eq:3.27} by requiring that \eqref{eq:3.24} holds for all $x \in X^* (T_\fs)$
which are positive with respect to $P \cap M$. 

In effect, we defined $i_{P,\lambda}$ to be the identity of $\mc O (T_\fs)$ with
respect to the isomorphisms 
\[
\mc A \cong \C[ X^* (T_\fs)] \cong \mc O (T_\fs).
\]
So we turned \eqref{eq:3.24} into an algebra homomorphism
\[
i_{P,\lambda} : \mc O (T_\fs) \to \cH (T_\fs,W_\fs,q_\fs) .
\]
A priori it depends on the choice of a basepoint of $T_\fs$, but since we use the
same basepoint on both sides and by \eqref{eq:3.39}, any other basepoint would
produce the same map $i_{P,\lambda}$. Thus \eqref{eq:3.27} becomes canonical if
we interpret the right hand side as 
$\cH (T_\fs, W_\fs,q_\fs) \otimes \End_\C (V_{\lambda})$.
\end{proof}

\subsection{Projective normalizers} \

We will subject the algebra $e^\fs_M \cH (M) e^\fs_M$ to a closer study, and 
describe its structure explicitly. At the same time we investigate how close
$e_\mu$ and $e^\fs_M$ from \eqref{eq:3.61} are to the idempotent of a type. A natural 
candidate for such a type would involve the projective normalizer of $(K,\lambda)$, 
but unfortunately it will turn out that this is in general not sufficiently sophisticated.

Recall the groups defined in \eqref{eq:3.86} and \eqref{eq:3.73} and consider
the vector spaces \label{i:61} \label{i:62}
\begin{equation}\label{eq:4.40}
\begin{aligned}
& V_{\mu^1} = V_{\mu_L^1} := 
\sum_{(w,\gamma) \in \Stab (\fs, P \cap M)^1} \! e_{w(\lambda_L) \otimes \gamma} 
V_\omega = \sum_{\gamma \in X^L (\fs)^1} \! e_{\lambda_L \otimes \gamma} V_\omega , \\
& V_{\mu_L} := V_\mu = 
\sum_{(w,\gamma) \in \Stab (\fs, P \cap M)} e_{w(\lambda_L) \otimes \gamma} V_\omega
= \sum_{\gamma \in X^L (\fs)} e_{\lambda_L \otimes \gamma} V_\omega .
\end{aligned}
\end{equation}
They carry in a natural way representations of $K_L$, namely \label{i:36}
\begin{equation}\label{eq:4.m}
\begin{aligned}
& \mu_L^1 = \bigoplus_{(w,\gamma) \in \Stab (\fs, P \cap M)^1 / \Stab (\fs,\lambda)} 
\! w(\lambda_L) \otimes \gamma
= \bigoplus_{\gamma \in X^L (\fs / \lambda)^1} \! \lambda_L \otimes \gamma , \\
& \mu_L  = \bigoplus_{(w,\gamma) \in \Stab (\fs, P \cap M) / \Stab (\fs,\lambda)} 
w(\lambda_L) \otimes \gamma
= \bigoplus_{\gamma \in X^L (\fs / \lambda)} \lambda_L \otimes \gamma .
\end{aligned}
\end{equation}
We lift them to representations
\begin{equation}\label{eq:3.13}
\begin{aligned}
& \mu^1 = \bigoplus\nolimits_{\gamma \in X^L (\fs / \lambda)^1} \lambda \otimes \gamma , \\
& \mu  = \bigoplus\nolimits_{\gamma \in X^L (\fs / \lambda)} \lambda \otimes \gamma 
\end{aligned}
\end{equation}
of $K$ by making it trivial on $K \cap U$ and on $K \cap \overline{U}$. 
In particular $\mu_L^1$ is the restriction of $\mu^1$ to $K \cap L$.
They relate to the idempotent $e^\fs_M$ by
\[
\sum_{a \in [L / H_\lambda]} \sum_{\gamma \in X^L (\fs / \lambda)} a e_{\gamma 
\otimes \mu^1} a^{-1} = \sum_{a \in [L / H_\lambda]} a e_{\mu } a^{-1} = e^\fs_M .
\]
It will turn out that $e_{\mu^1} \in \cH (K)$ is the idempotent of a type, for a
compact open subgroup of $M$ that contains $K$. 

The normalizer of the pair $(K_L,\lambda_L)$ is \label{i:38}
\[
N(K_L,\lambda_L) := \{ m \in N_L (K_L) \mid m \cdot \lambda_L \cong \lambda_L \} .
\]
\begin{lem}\label{lem:4.19}
$N(K_L,\lambda_L) = \widetilde{X^* (T_\fs)} K_L = K_L \widetilde{X^* (T_\fs)}$.
\end{lem}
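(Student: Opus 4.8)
The plan is to prove the two equalities $N(K_L,\lambda_L) = \widetilde{X^*(T_\fs)} K_L = K_L \widetilde{X^*(T_\fs)}$ by relating the normalizer of the type to the support of its Hecke algebra, which was already computed in Theorem~\ref{thm:3.7}. First I would note that since $(K_L,\lambda_L)$ is a type for $\fs_L = [L,\omega]_L$ and $L = \prod_i \GL_{m_i}(D)^{e_i}$ decomposes compatibly with $(K_L,\lambda_L)$ by \eqref{eq:3.3}, the claim factors over the $i$'s, so one may as well assume $L = \GL_{m}(D)^e$ with a single supercuspidal $\omega_i^{\otimes e}$; in fact, since $T_\fs$ is a product of the $T_{\fs_i}$ and $K_L$ a product of maximal simple types, one reduces further to the case $L = \GL_m(D)$ with a maximal simple type $(J_L,\lambda_L)$ for a single supercuspidal $[L,\omega_i]_L$.

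For the inclusion $\widetilde{X^*(T_\fs)} K_L \subseteq N(K_L,\lambda_L)$: the elements of $\widetilde{X^*(T_\fs)}$ are, in each block, powers of $\varpi_D \cdot 1_{m_i}$, hence lie in $Z(L)$ modulo the centre of the appropriate $\GL$; being central they normalize $K_L$, and twisting $\lambda_L$ by conjugation with such an element does not change the inertial class, while $\lambda_L$ is determined up to isomorphism among types of $\fs_L$ supported on $K_L$, so $m \cdot \lambda_L \cong \lambda_L$. (Here one uses that conjugation by a uniformizer-scalar fixes $\omega$ up to unramified twist and fixes $K_L$, so it permutes the finitely many $K_L$-types for $\fs_L$ trivially.) For the reverse inclusion $N(K_L,\lambda_L) \subseteq \widetilde{X^*(T_\fs)} K_L$: I would use that $m \in N(K_L,\lambda_L)$ gives a nonzero element of $\cH(L,\lambda_L)$ supported on the single coset $K_L m K_L = m K_L$ (as $m$ normalizes $K_L$), so $K_L m$ lies in the support of $\cH(L,\lambda_L)$. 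By Theorem~\ref{thm:3.7} that support is exactly $K_L \widetilde{X^*(T_\fs)} K_L = K_L \widetilde{X^*(T_\fs)}$ (the last equality because $\widetilde{X^*(T_\fs)}$ normalizes $K_L$, being diagonal uniformizer-scalars in each block). Hence $m \in K_L \widetilde{X^*(T_\fs)} K_L$; writing $m = k_1 t k_2$ with $t \in \widetilde{X^*(T_\fs)}$ and using that both $m$ and $t$ normalize $K_L$, one gets $m \in \widetilde{X^*(T_\fs)} K_L$. Finally $\widetilde{X^*(T_\fs)} K_L = K_L \widetilde{X^*(T_\fs)}$ because $\widetilde{X^*(T_\fs)}$ normalizes $K_L$.

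The step I expect to be the main obstacle is making the reduction to a single block rigorous and verifying that conjugation by a uniformizer-scalar really fixes the isomorphism class of $\lambda_L$ rather than merely its inertial class --- i.e.\ that the maximal simple type for $[\GL_{m}(D),\omega_i]_{\GL_m(D)}$ is unique up to isomorphism as a representation of the fixed group $J_L$ (not just up to $\GL_m(D)$-conjugacy). This is where one must invoke the intertwining/conjugacy results of S\'echerre--Stevens (as in Proposition~\ref{prop:3.3} and \cite[Corollary 7.3]{SeSt6}): a central element normalizing $J_L$ and fixing the inertial class conjugates $\lambda_L$ to another $J_L$-type for the same block, which by that uniqueness must be isomorphic to $\lambda_L$. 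Once this is in place the rest is a clean bookkeeping argument using the support computation of Theorem~\ref{thm:3.7}.
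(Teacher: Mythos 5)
Your handling of the reverse inclusion $N(K_L,\lambda_L) \subseteq K_L\widetilde{X^*(T_\fs)}$ via the support computation in Theorem~\ref{thm:3.7} matches the paper's argument, but the forward inclusion as you sketch it has a genuine gap, located exactly in the step you yourself flag as the ``main obstacle.'' The first problem is that the elements $\varpi_D\cdot 1_{m_i}$ are \emph{not} central in $\GL_{m_i}(D)$ when $D\neq F$: the centre of $\GL_{m_i}(D)$ is $F^\times\cdot 1_{m_i}$, not $D^\times\cdot 1_{m_i}$, since $\varpi_D$ does not commute with all of $D$ (Example~\ref{ex:5.2} exploits exactly this, conjugation by $\varpi_D$ acting as Frobenius on residue fields). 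So ``being central they normalize $K_L$'' is not a valid step. The correct reason that $z_i\in D^\times 1_{L_i}\cap\widetilde{X^*(T_\fs)}$ normalizes $K_{L_i}$ is structural: $K_{L_i}=J(\beta,\mathfrak A)$ is built from a stratum in $\GL_{m_i}(D)$, and such groups are normalized by scalar uniformizers (\cite[\S 2.3]{Sec3}).

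Second, and more seriously, the uniqueness you appeal to --- that a maximal simple type for $[\GL_m(D),\omega_i]$ is unique up to isomorphism as a representation of the fixed group $J_L$ --- is false in general. If $\gamma\in X^L(\omega)$ is ramified, an $L$-isomorphism $\omega\cong\omega\otimes\gamma$ shows that $\omega|_{J_L}$ contains both $\lambda_L$ and $\lambda_L\otimes\gamma^{-1}$, which are typically non-isomorphic $J_L$-representations; indeed the family $(K_L,\lambda_L\otimes\gamma)$ with $\gamma\in X^L(\fs)$ consists of types for $[L,\omega]_L$ on the same $K_L$ that are generally pairwise non-isomorphic, and tracking when they are (or are not) conjugate inside a compact subgroup is a central concern of the whole paper (see Proposition~\ref{prop:3.3} and Lemma~\ref{lem:3.4}). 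Corollary~7.3 of \cite{SeSt6} gives only $\GL_m(D)$-conjugacy, which does not pin down an isomorphism class on a fixed $J_L$. The repair is to invoke Theorem~\ref{thm:3.7} in the forward direction too: since $z_i$ lies in the support $K_{L_i}\widetilde{X^*(T_{\fs_i})}K_{L_i}$ of $e_{\lambda_i}\cH(L_i)e_{\lambda_i}$, it intertwines $\lambda_i$; combined with the stratum-based fact that $z_i$ normalizes $K_{L_i}$, irreducibility of $\lambda_i$ gives $z_i\cdot\lambda_i\cong\lambda_i$, hence $z_i\in N(K_{L_i},\lambda_i)$. This is the paper's argument, and it sidesteps the false uniqueness claim entirely.
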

\begin{proof}
By Theorem \ref{thm:3.7} 
\begin{equation}\label{eq:4.z}
N(K_L,\lambda_L) \subset K_L \widetilde{X^* (T_\fs)} K_L. 
\end{equation}
With conditions \ref{cond} and \eqref{eq:3.3}
we can be more precise. As discussed in the proof of Proposition \ref{prop:3.3},
\begin{equation}
K_L \widetilde{X^* (T_\fs)} = \prod\nolimits_i \Big( K_{L_i} (L_i \cap 
\widetilde{X^* (T_\fs)}) \Big)^{e_i} = \prod\nolimits_i \Big( K_{L_i} 
( D^\times 1_{L_i} \cap \widetilde{X^* (T_\fs)} ) \Big)^{e_i} .
\end{equation}
As $\lambda_L = \bigotimes\nolimits_i \lambda_{L_i}^{\otimes e_i}$, the group 
$N(K_L,\lambda_L)$ can be factorized similarly.
Consider any element of the form
\begin{equation}
k_i z_i \text{ with } k_i \in K_{L_i}, 
z_i \in D^\times 1_{L_i} \cap \widetilde{X^* (T_\fs)}.
\end{equation}
The group $K_{L_i}$, called $J(\beta,\mathfrak A)$ in \cite{Sec3}, is made from 
a stratum in $L_i = \GL_{m_i}(D)$, and therefore it is normalized by $z_i$, see
\cite[\S 2.3]{Sec3}. Furthermore $z_i$ 
belongs to the support of $e_{\lambda_i} \cH (L_i) e_{\lambda_i}$,
so it normalizes $(K_{L_i},\lambda_i)$. Knowing that, we can follow the proof of 
Proposition \ref{prop:3.3} with $k_i z_i$ in the role of $c_i$. It leads to the 
conclusion that $k_i z_i \in N(K_L,\lambda_L)$. It follows that 
\[
K_L \widetilde{X^* (T_\fs)} = \widetilde{X^* (T_\fs)} K_L \subset N(K_L,\lambda_L) .
\]
Combine this with \eqref{eq:4.z}.
\end{proof}

Inspired by \cite{BuKu2} we define two variants of the projective normalizer of
$(K_L,\lambda_L)$: \label{i:42}\label{i:43}
\begin{align*}
& PN (K_L,\lambda_L) := \{ m \in N_L (K_L) \mid m \cdot \lambda_L \cong
\lambda_L \otimes \gamma \text{ for some } \gamma \in X^L (\fs) \} , \\
& PN^1 (K_L,\lambda_L) := PN (K_L,\lambda_L) \cap L^1 .
\end{align*}

\begin{lem}\label{lem:3.4}
Recall the $K_L$-representations $\mu_L^1$ and 
$\mu_L$ from \eqref{eq:4.40} and \eqref{eq:4.m}.
\enuma{
\item $(\mu_L^1, V_{\mu_L^1})$ extends to an irreducible  
representation of $PN^1 (K_L,\mu_L)$.
\item $(PN^1 (K_L,\mu_L),\mu_L^1)$ is an $[L,\omega]_L$-type and 
\[
[PN^1 (K_L,\lambda_L) : K_L] = [\Stab (\fs,P \cap M)^1 : \Stab (\fs,\lambda)] = 
| X^L (\fs / \lambda)^1 | .
\]
\item $(\mu_L, V_{\mu_L })$ extends to an irreducible 
representation of $PN (K_L,\lambda_L)$ and
\[
[PN (K_L,\lambda_L) : N (K_L,\lambda_L)] = 
[\Stab (\fs,P \cap M) : \Stab (\fs,\lambda)] = | X^L (\fs / \lambda) | .
\]
}
\end{lem}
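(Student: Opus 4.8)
The plan is to treat the three statements in parallel, reducing everything to the structural description of the Hecke algebras in Theorem~\ref{thm:3.7} together with the conjugacy results of Proposition~\ref{prop:3.3} and Lemma~\ref{lem:4.19}. The common mechanism is this: the spaces $V_{\mu_L^1}$ and $V_{\mu_L}$ in \eqref{eq:4.40} are sums of the one-dimensional-up-to-$\End_\C(V_{\lambda_L})$ pieces $e_{w(\lambda_L)\otimes\gamma}V_\omega$, each of which is irreducible as a $K_L$-representation (as already observed in the proof of Lemma~\ref{lem:3.30}, using \cite[Th\'eor\`eme 4.6]{Sec3}); by Proposition~\ref{prop:3.3} the elements $c_\gamma\in L$ permute these pieces, and by construction $c_\gamma$ can be taken in $L^1$ exactly when $\gamma\in X^L(\fs)^1$. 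So the group generated by $K_L$ (resp.\ $N(K_L,\lambda_L)=K_L\widetilde{X^*(T_\fs)}$, by Lemma~\ref{lem:4.19}) together with the relevant $c_\gamma$ acts on $V_{\mu_L^1}$ (resp.\ $V_{\mu_L}$) permuting the summands transitively, which forces irreducibility.

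For part (a): first I would check that $PN^1(K_L,\lambda_L)$ is generated by $K_L$ and elements $c_\gamma\in L^1$ with $\gamma$ ranging over $X^L(\fs)^1$, modulo $\Stab(\fs,\lambda)$; this is where the definition of $PN^1$ as the elements of $N_L(K_L)\cap L^1$ conjugating $\lambda_L$ into an $X^L(\fs)$-twist is used, combined with Lemma~\ref{lem:4.19} to see that the ``$\widetilde{X^*(T_\fs)}$-part'' already lies in $N(K_L,\lambda_L)$ and fixes $\lambda_L$, hence contributes nothing new to the index over $K_L$. Then the $c_\gamma$, $\gamma\in X^L(\fs/\lambda)^1$, permute the distinct summands $e_{\lambda_L\otimes\gamma}V_\omega$ of $V_{\mu_L^1}$ transitively, each summand being $K_L$-irreducible, so Clifford theory gives that $\mu_L^1$ extends irreducibly to $PN^1(K_L,\lambda_L)$ (Mackey/Clifford: an induced representation from $K_L$ to this group is irreducible precisely because the conjugates $\lambda_L\otimes\gamma$ are pairwise inequivalent, and then the extension is the natural action on $V_{\mu_L^1}$). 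This simultaneously gives the index formula $[PN^1(K_L,\lambda_L):K_L]=|X^L(\fs/\lambda)^1|$.

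For part (b): given the extension from (a), to show $(PN^1(K_L,\lambda_L),\mu_L^1)$ is an $[L,\omega]_L$-type I would verify that $e_{\mu_L^1}\cH(L)e_{\mu_L^1}$ sees exactly $\Rep^{\fs_L}(L)$. Since $e_{\mu_L^1}=\sum_{\gamma\in X^L(\fs/\lambda)^1}e_{\lambda_L\otimes\gamma}$ and each $(K_L,\lambda_L\otimes\gamma)$ is an $[L,\omega\otimes\gamma^{-1}]_L=[L,\omega]_L$-type (the twists $\gamma\in X^L(\fs)$ fix the inertial class by definition of $X^L(\fs)$), the category cut out by $e_{\mu_L^1}$ is a sum of copies of $\Rep^{\fs_L}(L)$; because all these types are for the \emph{same} Bernstein component, $e_{\mu_L^1}$ sees precisely $\Rep^{\fs_L}(L)$, and by \cite[Proposition 3.3]{BuKu3} (quoted in the excerpt) this closure-under-subquotients property is exactly what makes $(PN^1(K_L,\lambda_L),\mu_L^1)$ a type. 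The index formula is just the count of $\gamma$'s in $X^L(\fs/\lambda)^1$, identified via Lemma~\ref{lem:4.2} with $[\Stab(\fs,P\cap M)^1:\Stab(\fs,\lambda)]$.

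For part (c): the argument is the same as (a), now with $PN(K_L,\lambda_L)$ in place of $PN^1$, $N(K_L,\lambda_L)=K_L\widetilde{X^*(T_\fs)}$ in place of $K_L$, the full group $X^L(\fs/\lambda)$ in place of $X^L(\fs/\lambda)^1$, and the full set of $c_\gamma$ (not required to lie in $L^1$). One needs $\widetilde{X^*(T_\fs)}$ to fix $\lambda_L$ (Lemma~\ref{lem:4.19}) so that it acts on each $e_{\lambda_L\otimes\gamma}V_\omega$ and commutes with the permutation action of the $c_\gamma$; then $V_{\mu_L}$ becomes an irreducible $PN(K_L,\lambda_L)$-representation extending $\mu_L$, and the index $[PN(K_L,\lambda_L):N(K_L,\lambda_L)]$ equals the number of $X^L(\fs/\lambda)$-twists, which is $|X^L(\fs/\lambda)|=[\Stab(\fs,P\cap M):\Stab(\fs,\lambda)]$ by Lemma~\ref{lem:4.2}. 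The main obstacle I anticipate is part (b): making rigorous that the various $(K_L,\lambda_L\otimes\gamma)$ genuinely assemble into a \emph{single} type rather than merely a module category equivalent to several copies of $\Rep^{\fs_L}(L)$ — i.e.\ that the extension built in (a) is compatible with convolution so that $e_{\mu_L^1}\cH(L)e_{\mu_L^1}$ is Morita equivalent to $\cH(L)^{\fs_L}$ rather than to a matrix amplification that fails the type axiom — and this is where one has to use carefully that distinct $\gamma\in X^L(\fs/\lambda)^1$ give \emph{orthogonal} idempotents (as in Lemma~\ref{lem:3.2}) lying in the \emph{same} block $\cH(L)^{\fs_L}$, rather than in different blocks.
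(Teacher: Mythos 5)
Your proposal is correct in its broad strokes but follows a genuinely different route from the paper: you build the extension via Clifford theory/induction and derive irreducibility from the transitivity of the $c_\gamma$'s permutation action, whereas the paper runs everything through the explicit injective Hecke algebra homomorphism $\mathcal F_L : e_{\mu^1_L}\cH(L)e_{\mu^1_L}\to \mathcal O(T_\fs)\otimes\End_\C(V_{\mu^1_L})$. The paper's route has the advantage of producing the group decomposition $PN^1(K_L,\lambda_L)=K_L\cdot\{c_\gamma:\gamma\in X^L(\fs/\lambda)^1\}$ as a \emph{byproduct}: for $m\in PN^1(K_L,\lambda_L)$, the image $\mathcal F_L(me_{\mu^1_L})$ lies in $\End_\C(V_{\mu^1_L})$ because $m\in L^1$, and then support considerations force $m\in K_L\{c_\gamma\}$; irreducibility of $V_{\mu^1_L}$ then falls out from the surjectivity of $\mathcal F_L$ onto $\End_\C(V_{\mu^1_L})$. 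Your route buys transparency (it is plain Clifford theory), at the price of having to establish the coset decomposition of $PN^1(K_L,\lambda_L)$ separately.

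That separate step is where your sketch is thinnest. When you say the $\widetilde{X^*(T_\fs)}$-part ``contributes nothing new to the index over $K_L$,'' you should be explicit that this uses two facts: $N(K_L,\lambda_L)\cap L^1=K_L$ (because $\widetilde{X^*(T_\fs)}\cap L^1=\{1\}$, as $\widetilde{X^*(T_\fs)}$ consists of $\varpi_D$-diagonals detected by unramified characters), and the convention fixed after \eqref{eq:3.86} that $c_\gamma$ is chosen in $L^1$ whenever possible. Given $m\in PN^1(K_L,\lambda_L)$ conjugating $\lambda_L$ to $\lambda_L\otimes\gamma$, a priori $\gamma\in X^L(\fs)$ only; you must observe that since $m\in L^1$, the element $m$ itself witnesses that $\gamma\in X^L(\fs)^1$, and then $c_\gamma^{-1}m\in N(K_L,\lambda_L)\cap L^1=K_L$. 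Without this chain the index formula is unsupported. Similarly, in part (c) the phrase ``$\widetilde{X^*(T_\fs)}$ commutes with the permutation action of the $c_\gamma$'' is not quite right and not what is needed; the paper instead passes through the intermediate group $PN^1(K_L,\lambda_L)N(K_L,\lambda_L)$, extends $\gamma\otimes\mu^1_L$ there for each $\gamma$, and only then uses the $c_\gamma$ to permute the $V_{\gamma\otimes\mu^1_L}$ transitively. You should restructure (c) along those lines. Finally, your closing worry about whether the extension is ``compatible with convolution'' so that one gets the block $\cH(L)^{\fs_L}$ rather than a matrix amplification is a red herring: the Bushnell--Kutzko criterion \cite[Proposition 3.3]{BuKu3} only asks that $\Rep^{\mu^1_L}(L)$ be closed under subquotients, and since each $e_{\lambda_L\otimes\gamma}$ with $\gamma\in X^L(\fs)$ cuts out precisely $\Rep^{\fs_L}(L)$ (a Bernstein block, hence automatically closed under subquotients), so does their sum $e_{\mu^1_L}$; no further compatibility needs to be checked.
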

\begin{proof}
(a) Just as in \eqref{eq:3.26}, there is a canonical injective algebra homomorphism
\begin{equation}\label{eq:3.10}
\mathcal F_L : e_{\mu_L^1} \cH (L) e_{\mu_L^1} \to 
\mathcal O (X_{\nr}(L)) \otimes \End_\C (e_{\mu_L^1} V_\omega) .
\end{equation}
For $\gamma \in X^L (\fs)^1$ the element $c_\gamma \in L^1$ from Proposition 
\ref{prop:3.3} maps to $\End_\C (V_\omega)$ by the definition of $L^1$. Moreover
\[
e_{\lambda_L \otimes \gamma_1} \cH (L) e_{\lambda_L \otimes \gamma_2} =
c_{\gamma_1} e_{\lambda_L} \cH (L) e_{\lambda_L} c_{\gamma_2}^{-1} ,
\]
so by \eqref{eq:3.15} the image of \eqref{eq:3.10} is contained in
\[
\mathcal O (T_\fs) \otimes \End_\C (V_{\mu_L^1}) .
\]
As the different idempotents $e_{\lambda_L \otimes \gamma}$ are orthogonal, 
\begin{align*}
& V_{\mu_L^1} = \bigoplus\nolimits_{\gamma \in X^L (\fs / \lambda)^1} 
e_{\lambda_L \otimes \gamma} V_\omega ,\\
& \cH (K_L) e_{\mu_L^1} = e_{\mu_L^1} \cH (K_L) \cong
\bigoplus\nolimits_{\gamma \in X^L (\fs / \lambda)^1} 
\End_\C (e_{\lambda_L \otimes \gamma} V_\omega) .
\end{align*}
Furthermore $\mathcal F_L (\C \{ c_\gamma e_{\mu_L^1} : 
\gamma \in X^L (\fs / \lambda)^1 \})$
is a subspace of $\End_\C (V_{\mu_L^1})$ of dimension $|X^L (\fs / \lambda)^1|$.
So by the injectivity of \eqref{eq:3.10} the algebra homomorphism
\begin{equation}\label{eq:3.11}
\mathcal F_L : \cH (K_L) e_{\mu_L^1} \otimes \C \{ c_\gamma e_{\mu_L^1} : 
\gamma \in X^L (\fs / \lambda)^1 \} \to \End_\C (V_{\mu_L^1})
\end{equation}
is bijective. Consider any $m \in PN^1 (K_L,\lambda_L)$. It permutes the 
$\lambda_L \otimes \gamma$ with $\gamma \in X^L (\fs)^1$, so it commutes with 
$e_{\mu_L^1}$. Also $\mathcal F_L (m e_{\mu_L^1}) \in \End_\C (V_{\mu_L^1})$ 
because $m \in L^1$. So by the injectivity of \eqref{eq:3.10} and the 
surjectivity of \eqref{eq:3.11}, $m e_{\mu_L^1} = f e_{\mu_L^1}$ for some 
\[
f \in \cH (K_L \cdot \{ c_\gamma : \gamma \in X^L (\fs / \lambda)^1 \}) .
\]
Consequently $m \in K_L \cdot \{ c_\gamma : \gamma \in X^L (\fs / \lambda)^1 \}$ and
\begin{equation}\label{eq:3.12}
PN^1 (K_L,\lambda_L) = K_L \cdot \{ c_\gamma : \gamma \in X^L (\fs / \lambda)^1 \}
\end{equation}
Now \eqref{eq:3.11} shows that $V_{\mu_L^1} = e_{\mu_L^1} V_\omega$ 
is an irreducible representation of \eqref{eq:3.12}. \\
(b) All the pairs $(K_L,\lambda_L \otimes \gamma)$ are simple types for the same
supercuspidal equivalence class $[L,\omega]_L$, so by \cite[Corollary 7.3]{SeSt6}
the idempotents $e_{\lambda_L \otimes \gamma}$ are $L$-conjugate. (In fact 
$e_{\lambda_L}$ and $e_{\lambda_L \otimes \gamma}$ are conjugate by the element
$c_\gamma$ from Proposition \ref{prop:3.3}.)
Hence the category $\Rep^{\mu_L^1}(L)$ equals $\Rep^{\lambda_L}(L)$, and 
$(PN^1 (K_L,\lambda_L),\mu_L^1)$ is a type for this factor of $\Rep (L)$. 
The claims about the indices follow from \eqref{eq:3.12}. \\
(c) For every $\gamma \in X^L (\fs)$ the element $c_\gamma \in L$ is 
unique up to $N(K_L,\lambda_L)$, so 
\begin{equation}\label{eq:4.81}
PN (K_L,\lambda_L) = N(K_L,\lambda_L) \{ c_\gamma \mid \gamma \in X^L (\fs) \} .
\end{equation}
Together with Proposition \ref{prop:3.3} this proves the claims about 
$[PN (K_L,\lambda_L) : N(K_L,\lambda_L)]$. 

Part (a), and the map \eqref{eq:3.10} show that $\mu_L^1$ extends to an 
irreducible representation of $PN^1 (K_L,\lambda) N(K_L,\lambda_L)$.
The same holds for $\gamma \otimes \mu_L^1$ with $\gamma \in X^L (\fs)$. We have
\[
V_{\mu_L } = \bigoplus\nolimits_{\gamma \in X^L (\fs / \lambda)} 
V_{\gamma \otimes \mu_L^1}
\]
as representations of $PN^1 (K_L,\lambda_L) N(K_L,\lambda_L)$, and these subspaces 
are permuted transitively by the $c_\gamma$ with $\gamma \in X^L (\fs)$. 
This and \eqref{eq:4.81} show that $V_{\mu_L }$ extends to an irreducible 
representation of $PN (K_L,\lambda_L)$.
\end{proof}

Lemma \ref{lem:3.4} has an analogue in $M$. 
To state it we rather start with the sets \label{i:40} \label{i:41}
\begin{equation}\label{eq:4.5}
\begin{aligned}
& PN^1 (K,\lambda) := (K \cap \overline{U}) PN^1 (K_L,\lambda_L) (K \cap U) , \\
& PN (K,\lambda) \; := (K \cap \overline{U}) PN (K_L,\lambda_L) (K \cap U) .
\end{aligned}
\end{equation}

\begin{lem}\label{lem:3.5}
\enuma{
\item The multiplication map 
\[
(K \cap \overline{U}) \times PN^1 (K_L,\lambda_L) \times (K \cap U) \to PN^1 (K,\lambda)
\]
is bijective and $PN^1 (K,\lambda)$ is a compact open subgroup of $M$.
\item $\mu^1$ extends to an irreducible $PN^1 (K,\lambda)$-representation
and $(PN^1 (K,\lambda),\mu^1)$ is an $\fs_M$-type.
\item $PN (K,\lambda)$ is a group and the multiplication map
\[
(K \cap \overline{U}) \times PN (K_L,\lambda_L) \times (K \cap U) \to PN (K,\lambda)
\]
is bijective. Furthermore $\mu $ extends to an irreducible $PN (K,\lambda)$-representation.
}
\end{lem}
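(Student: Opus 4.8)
The plan is to reduce everything to the structure of $PN^1(K_L,\lambda_L)$ and $PN(K_L,\lambda_L)$ established in Lemma \ref{lem:3.4}, together with the Iwahori decomposition of $K$ and the explicit form of the elements $c_\gamma$ from Proposition \ref{prop:3.3}. First I would recall from \eqref{eq:3.8} (applied factor-by-factor over $M=\prod_i M_i$) that $K$ itself admits an Iwahori decomposition $K=(K\cap\overline U)K_L(K\cap U)$, that $\lambda$ is trivial on $K\cap U$ and $K\cap\overline U$, and that $\lambda|_{K_L}=\lambda_L$. The key point for part (a) is that the elements $c_\gamma$ with $\gamma\in X^L(\fs)^1$ lie in $L$ and hence normalize both $U$ and $\overline U$; moreover, as shown in the proof of Proposition \ref{prop:3.3}, each $c_\gamma$ (built from the $c_i$ which normalize $J_i$ and $H_i^1$) normalizes the groups $J=J_P$ and $H^1$ from which $K\cap U$ and $K\cap\overline U$ are assembled in block form. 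Since by \eqref{eq:3.12} we have $PN^1(K_L,\lambda_L)=K_L\cdot\{c_\gamma:\gamma\in X^L(\fs/\lambda)^1\}$, it follows that $PN^1(K_L,\lambda_L)$ normalizes $K\cap U$ and $K\cap\overline U$. This immediately gives that $PN^1(K,\lambda)=(K\cap\overline U)PN^1(K_L,\lambda_L)(K\cap U)$ is a subgroup of $M$; it is open because $K$ is open, and compact because it is a finite union of cosets $(K\cap\overline U)c_\gamma K_L(K\cap U)c_{\gamma'}^{-1}$ inside the compact-mod-centre set $K\widetilde{X^*(T_\fs)}K\cdot(\text{finite})$ — more simply, because $c_\gamma\in L^1$ is compact and $PN^1(K_L,\lambda_L)$ is compact by Lemma \ref{lem:3.4}(b). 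Bijectivity of the multiplication map follows from bijectivity of $(K\cap\overline U)\times K_L\times(K\cap U)\to K$ combined with the fact that $PN^1(K_L,\lambda_L)$ is a disjoint union of finitely many cosets $c_\gamma K_L$, each contributing a distinct "middle slice."

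For part (b), I would extend $\mu^1$ to $PN^1(K,\lambda)$ by declaring it trivial on $K\cap U$ and $K\cap\overline U$ and equal on $PN^1(K_L,\lambda_L)$ to the irreducible extension of $\mu_L^1$ furnished by Lemma \ref{lem:3.4}(a); one must check this is well-defined as a representation, i.e.\ compatible with the group law under the Iwahori factorization, which is exactly the same bookkeeping as in \cite[\S 5.2]{Sec3} / \eqref{eq:3.8} showing $\lambda_P$ is built from $\lambda_i^{\otimes e}$ — the relations $u' k u=k\cdot(k^{-1}u'k)(k^{-1}uk)$ with $k^{-1}u'k\in K\cap\overline U$, $k^{-1}uk\in K\cap U$ for $k\in PN^1(K_L,\lambda_L)$ (using the normalization just established) force the values on the unipotent parts to stay trivial. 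Irreducibility of $\mu^1$ on $PN^1(K,\lambda)$ then follows from irreducibility of $\mu_L^1$ on $PN^1(K_L,\lambda_L)$ together with the triviality on the unipotent radicals (any invariant subspace is $(K\cap\overline U)(K\cap U)$-invariant, hence determined by its $PN^1(K_L,\lambda_L)$-structure). To see that $(PN^1(K,\lambda),\mu^1)$ is an $\fs_M$-type: by Lemma \ref{lem:3.4}(b) the pair $(PN^1(K_L,\lambda_L),\mu_L^1)$ is an $[L,\omega]_L$-type, and by construction $PN^1(K,\lambda)$ is built from it by the cover recipe of \cite[\S 5.2]{Sec3}, so it is a cover of that type in the sense of \cite[\S 8]{BuKu3}; hence it is an $[L,\omega]_M$-type $=\fs_M$-type. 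Alternatively, since $e_{\mu^1}=\sum_{\gamma\in X^L(\fs/\lambda)^1}e_{\lambda\otimes\gamma}$ and all the $(K,\lambda\otimes\gamma)$ are $\fs_M$-types with the $e_{\lambda\otimes\gamma}$ mutually orthogonal (Lemma \ref{lem:3.2}), $\Rep^{\mu^1}(M)=\Rep^{\fs_M}(M)$ and $PN^1(K,\lambda)$ captures this category.

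For part (c), the argument for showing $PN(K,\lambda)$ is a group and that the multiplication map is bijective is identical to part (a), once one observes — using \eqref{eq:4.81}, $PN(K_L,\lambda_L)=N(K_L,\lambda_L)\{c_\gamma\mid\gamma\in X^L(\fs)\}$, and Lemma \ref{lem:4.19}, $N(K_L,\lambda_L)=K_L\widetilde{X^*(T_\fs)}$ — that the full $PN(K_L,\lambda_L)$ still normalizes $K\cap U$ and $K\cap\overline U$. This normalization property is the crux: the $c_\gamma$ (for general $\gamma\in X^L(\fs)$, not just $X^L(\fs)^1$) normalize $J$ and $H^1$ by the proof of Proposition \ref{prop:3.3}, and the elements of $\widetilde{X^*(T_\fs)}$ are diagonal in $L$ with block-scalar $\varpi_D$-power entries, which normalize the block-matrix groups $K\cap U$, $K\cap\overline U$ by the explicit stratum description in \cite[\S 2.3, \S 5.2]{Sec3}. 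Then $\mu$ extends to an irreducible $PN(K,\lambda)$-representation exactly as in part (b), using the irreducible extension of $\mu_L$ to $PN(K_L,\lambda_L)$ from Lemma \ref{lem:3.4}(c) and triviality on the unipotent radicals. The main obstacle, and the place where care is genuinely needed, is verifying that $PN(K_L,\lambda_L)$ — and in particular the $\widetilde{X^*(T_\fs)}$-part, whose elements are noncompact — really does normalize the Iwahori pieces $K\cap U$ and $K\cap\overline U$ of $K$; this is not a formal consequence of $PN(K_L,\lambda_L)\subset N_L(K_L)$ but requires going into the block-matrix construction of \cite[\S 5.2]{Sec3}, just as in the proof of Proposition \ref{prop:3.3}. (Note that, unlike part (b), one does not claim $PN(K,\lambda)$ is a \emph{compact} open subgroup nor that $(PN(K,\lambda),\mu)$ is a type — it is not, in general, which is precisely the point of the surrounding discussion and of Remark \ref{rem:4.notype}.)
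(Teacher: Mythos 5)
Your proof takes essentially the same route as the paper's: Iwahori decomposition of $K$ from \cite[Proposition 5.3]{Sec3}, the explicit description of $PN^1(K_L,\lambda_L)$ resp.\ $PN(K_L,\lambda_L)$ from \eqref{eq:3.12} resp.\ \eqref{eq:4.81}, the fact (from the proof of Proposition \ref{prop:3.3}) that the $c_\gamma$ normalize $K\cap U$ and $K\cap\overline U$, and extension of $\mu^1$ (resp.\ $\mu$) by triviality on the unipotent pieces. Two small remarks. First, for the type assertion in (b) the paper uses exactly your ``alternatively'' argument (the idempotent $e_{\mu^1}$ sees precisely $\Rep^{\fs_M}(M)$, so $\Rep^{\mu^1}(M)=\Rep^\lambda(M)=\Rep^{\fs_M}(M)$, and $\mu^1$ is irreducible); the cover argument you sketch first is what the paper proves separately afterwards as Lemma \ref{lem:3.8}.a, so you'd be proving a stronger later statement to get the weaker current one, which is fine but circular in presentation. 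Second, the normalization subtlety you flag in (c) — that the noncompact $\widetilde{X^*(T_\fs)}$-part of $N(K_L,\lambda_L)$ must also normalize $K\cap U$ and $K\cap\overline U$ — is real but is already handled in the proof of Lemma \ref{lem:4.19}, which notes that one can re-run the proof of Proposition \ref{prop:3.3} with $k_i z_i$ in the role of $c_i$; so the issue is covered by the paper's references rather than being a genuine gap.
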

\emph{Remark.} We will show later that $PN^1 (K,\lambda)$ and $PN (K,\lambda)$ 
are really the projective normalizers of $(K,\lambda)$ in $M^1$ and $M$, respectively.
\begin{proof}
(a) By \cite[Proposition 5.3]{Sec3} the multiplication map
\begin{equation}\label{eq:3.32}
(K \cap \overline{U}) \times (K \cap L) \times (K \cap U) \to K 
\end{equation}
is a homeomorphism. By \eqref{eq:3.12} and because $c_\gamma \in L^1$ normalizes 
$K \cap U$ and $K \cap \overline{U}$ (see the proof of Proposition \ref{prop:3.3}), 
the analogue of \eqref{eq:3.32} for $PN^1 (K,\lambda)$ holds as well. At the same time 
this shows that $PN^1 (K,\lambda)$ is compact, for its three factors are. 
As $PN^1 (K_L,\lambda_L)$ normalizes $K \cap U$ and $K \cap \overline{U}$ this 
factorization also proves that $PN^1 (K,\lambda)$ is a group. Since $K$ is
open in $M$, so is the larger group $PN^1 (K,\lambda)$.\\
(b) In view of Lemma \ref{lem:3.4}.a and \eqref{eq:3.13} we can extend $\mu^1$ to
$PN^1 (K,\lambda)$ by
\[
\mu^1 (\overline{n} m n) := \mu_L^1 (m) ,
\]
where $\overline{n} m n$ is as in the decomposition from part (a). Then $\mu^1$ 
is irreducible because $\mu_L^1$ is. Because $(K,w(\lambda) \otimes \gamma)$ is an 
$\fs_M$-type, for each $(w,\gamma) \in \Stab (\fs, P \cap M)$, the category 
$\Rep^{\mu^1}(M)$ equals $\Rep^\lambda (M) = \Rep^{\fs_M}(M)$, and 
$(PN^1 (K,\lambda), \mu^1)$ is an $\fs_M$-type.\\
(c) The first two claims can be shown in the same way as part (a), using \eqref{eq:4.81}
instead of \eqref{eq:3.12}. For the last assertion we employ Lemma \ref{lem:3.4}.c
and set 
\[
\mu  (\overline{n} m n) := \mu_L  (m),
\]
with respect to the factorization we just established.
\end{proof}

Now we can determine the structure of $e^\fs_M \cH (M) e^\fs_M$ and some related algebras.

\begin{thm}\label{thm:3.6}
\enuma{
\item
There exist canonical algebra isomorphisms 
\[
e_{\mu_L^1} \cH (L) e_{\mu_L^1} \cong \cH (L,\mu_L^1) \otimes 
\End_\C (V_{\mu_L^1}) \cong \mathcal O (T_\fs) \otimes \End_\C (V_{\mu^1}) .
\]
The support of the left hand side is
$PN^1 (K_L,\lambda_L) \widetilde{X^* (T_\fs)} PN^1 (K_L,\lambda_L)$
\item Part (a) extends to algebra isomorphisms
\begin{align*}
& e_{\mu_L} \cH (L) e_{\mu_L} \cong \mathcal O (T_\fs) \otimes \End_\C (V_\mu) , \\
& e^\fs_L \cH (L) e^\fs_L \cong 
\mathcal O (T_\fs) \otimes \End_\C (V_\mu) \otimes M_{|L / H_\lambda|}(\C) .
\end{align*}
The support of $e_{\mu_L} \cH (L) e_{\mu_L}$ is 
$PN (K_L,\lambda_L) \widetilde{X^* (T_\fs)} PN (K_L,\lambda_L)$.
\item There exist algebra isomorphisms
\[
e_{\mu^1} \cH (M) e_{\mu^1} \cong \cH (M,\mu^1) \otimes \End_\C (V_{\mu^1}) 
\cong \cH (T_\fs, W_\fs, q_\fs) \otimes \End_\C (V_{\mu^1}) ,
\]
which are canonical up to the choice of the parabolic subgroup $P$. 
The support of the left hand side is
$PN^1 (K,\lambda) \widetilde{X^* (T_\fs)} W_\fs PN^1 (K,\lambda)$.
\item Part (c) extends to algebra isomorphisms
\begin{align*}
& e_\mu \cH (M) e_\mu \cong \cH (T_\fs, W_\fs, q_\fs) \otimes \End_\C (V_\mu) , \\
& e^\fs_M \cH (M) e^\fs_M \cong \cH (T_\fs, W_\fs, q_\fs)
\otimes \End_\C (V_\mu) \otimes M_{|L / H_\lambda|}(\C) .
\end{align*}
The support of $e_\mu \cH (M) e_\mu$ is 
$PN (K,\lambda) \widetilde{X^* (T_\fs)} W_\fs PN (K,\lambda)$.
}
\end{thm}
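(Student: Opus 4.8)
The plan is to refine Theorem~\ref{thm:3.7}. For each of the four algebras I would decompose the corner $e_?\cH(?)e_?$ into blocks indexed by $X^L(\fs/\lambda)$ (and, for the last algebra in (b) and in (d), additionally by $[L/H_\lambda]$), identify the diagonal blocks by Theorem~\ref{thm:3.7}, and then glue the identifications along conjugation by the elements $c_\gamma\in L$ of Proposition~\ref{prop:3.3} and the representatives $a\in[L/H_\lambda]$ of Lemma~\ref{lem:3.30}. I would treat (a) and (c) first. For the first isomorphism in each: by Lemma~\ref{lem:3.4}.b the pair $(PN^1(K_L,\lambda_L),\mu_L^1)$ is an $[L,\omega]_L$-type with $\mu_L^1$ irreducible, and by Lemma~\ref{lem:3.5}.b the pair $(PN^1(K,\lambda),\mu^1)$ is an $\fs_M$-type with $\mu^1$ irreducible. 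Since $\mu^1|_K=\bigoplus_{\gamma\in X^L(\fs/\lambda)^1}\lambda\otimes\gamma$ and these $K$-summands are permuted transitively by the $c_\gamma$ inside $PN^1(K,\lambda)=K\{c_\gamma\}$ (see~\eqref{eq:3.12}), the central idempotent of $\cH(PN^1(K,\lambda))$ attached to $\mu^1$ restricts, under the Haar normalisation already fixed, to $e_{\mu^1}=\sum_\gamma e_{\lambda\otimes\gamma}\in\cH(K)$, and likewise for $e_{\mu_L^1}\in\cH(K_L)$; hence the general type formula~\eqref{eq:1.5} gives $e_{\mu_L^1}\cH(L)e_{\mu_L^1}\cong\cH(L,\mu_L^1)\otimes\End_\C(V_{\mu_L^1})$ and $e_{\mu^1}\cH(M)e_{\mu^1}\cong\cH(M,\mu^1)\otimes\End_\C(V_{\mu^1})$.

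For the second isomorphism I would write $e_{\mu^1}\cH(M)e_{\mu^1}=\bigoplus_{\gamma_1,\gamma_2\in X^L(\fs/\lambda)^1}e_{\lambda\otimes\gamma_1}\cH(M)e_{\lambda\otimes\gamma_2}$ (the $e_{\lambda\otimes\gamma}$ being mutually orthogonal by Lemma~\ref{lem:4.11}), and similarly over $L$. Theorem~\ref{thm:3.7} identifies a diagonal block $e_{\lambda\otimes\gamma}\cH(M)e_{\lambda\otimes\gamma}$ with $\cH(T_\fs,W_\fs,q_\fs)\otimes\End_\C(V_{\lambda\otimes\gamma})$, canonically up to the choice of $P$, and an off-diagonal block equals $c_{\gamma_1}\bigl(e_\lambda\cH(M)e_\lambda\bigr)c_{\gamma_2}^{-1}$ because $c_\gamma$ conjugates the idempotent $e_\lambda$ to $e_{\lambda\otimes\gamma}$. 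To fit these together I would use the explicit shape of $c_\gamma$ in Proposition~\ref{prop:3.3}: $c_\gamma\in L^1$ normalises $K$ and has block form $\prod_i\mathrm{diag}(c_i,\ldots,c_i)$, hence centralises the scalar-block matrices $\widetilde{X^*(T_\fs)}$ and commutes with the permutation matrices realising $W_\fs$. Therefore conjugation by $c_\gamma$ preserves the double coset $K\widetilde{X^*(T_\fs)}W_\fs K$, carries $e_\lambda\cH(K)e_\lambda$ to $e_{\lambda\otimes\gamma}\cH(K)e_{\lambda\otimes\gamma}$, and transports the isomorphism of Theorem~\ref{thm:3.7} for $\lambda$ to the one for $\lambda\otimes\gamma$ compatibly with the Bernstein--Lusztig--Zelevinsky presentation (identity on the $\theta_x$ and the $[w]$, and $q_\fs$ depends only on $\fs$ and $P$). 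Summing the blocks, and using $\bigoplus_\gamma V_{\lambda\otimes\gamma}\cong V_{\mu^1}$ together with $V_{\mu_L^1}\cong V_{\mu^1}$ from~\eqref{eq:3.28}, yields the second isomorphisms of (a) and (c); the supports then follow from the supports in Theorem~\ref{thm:3.7}, from Lemma~\ref{lem:4.19} and~\eqref{eq:3.12}, again because $c_\gamma$ centralises $\widetilde{X^*(T_\fs)}$ and $W_\fs$.

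Parts (b) and (d) should be the same argument with the index set $X^L(\fs/\lambda)$ in place of $X^L(\fs/\lambda)^1$: the only point to check is that a $c_\gamma$ with $\gamma\notin X^L(\fs)^1$, although possibly not in $L^1$, still normalises $K$ and still has the block-diagonal form above (Proposition~\ref{prop:3.3}), so that it centralises $\widetilde{X^*(T_\fs)}$ and commutes with $W_\fs$ and the glueing is unchanged. This gives $e_{\mu_L}\cH(L)e_{\mu_L}\cong\mathcal{O}(T_\fs)\otimes\End_\C(V_\mu)$ and $e_\mu\cH(M)e_\mu\cong\cH(T_\fs,W_\fs,q_\fs)\otimes\End_\C(V_\mu)$, with supports $PN(K_L,\lambda_L)\widetilde{X^*(T_\fs)}PN(K_L,\lambda_L)$ and $PN(K,\lambda)\widetilde{X^*(T_\fs)}W_\fs PN(K,\lambda)$ via Lemma~\ref{lem:4.19}, \eqref{eq:4.81} and Lemma~\ref{lem:3.5}. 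Finally, for $e^\fs_L=\sum_{a\in[L/H_\lambda]}ae_{\mu_L}a^{-1}$ and $e^\fs_M=\sum_{a\in[L/H_\lambda]}ae_\mu a^{-1}$, the idempotents $\{a\,e_{\lambda\otimes\gamma}a^{-1}:a\in[L/H_\lambda],\ \gamma\in X^L(\fs/\lambda)\}$ are pairwise orthogonal (proof of Lemma~\ref{lem:3.2}), so $e^\fs_M\cH(M)e^\fs_M=\bigoplus_{a_1,a_2}a_1\bigl(e_\mu\cH(M)e_\mu\bigr)a_2^{-1}$ is an algebra of $|L/H_\lambda|\times|L/H_\lambda|$ matrices over $e_\mu\cH(M)e_\mu$; and by Lemma~\ref{lem:3.30}.b each $a$ has the form $\prod_i a_i^{\oplus e_i}$ with $a_i$ a cyclic-permutation matrix in $\GL_{m_i}(D)$ with one $D^\times$-entry, so it commutes with $W_\fs$ and centralises $\widetilde{X^*(T_\fs)}$, whence conjugation by $a$ transports the isomorphism for $e_\mu\cH(M)e_\mu$ compatibly with the presentation. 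This yields $e^\fs_M\cH(M)e^\fs_M\cong\cH(T_\fs,W_\fs,q_\fs)\otimes\End_\C(V_\mu)\otimes M_{|L/H_\lambda|}(\C)$, and $e^\fs_L$ is handled identically.

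The hard part will be the glueing step: showing that the block-by-block isomorphisms of Theorem~\ref{thm:3.7} assemble into a single isomorphism, i.e.\ that conjugation by $c_\gamma$ and by $a$ acts as the identity on the presentation of $\cH(T_\fs,W_\fs,q_\fs)$ while merely permuting the $\End_\C(V)$-directions. This is exactly what forces the use of the explicit block-matrix descriptions of $c_\gamma$ (Proposition~\ref{prop:3.3}) and of the coset representatives $a$ (Lemma~\ref{lem:3.30}.b) --- namely that they centralise the lattice part $\widetilde{X^*(T_\fs)}$ and the permutation matrices realising $W_\fs$, and normalise $K$ (resp.\ conjugate $K$ to $aKa^{-1}$) --- rather than abstract properties; once that is in place the remainder is bookkeeping with orthogonal idempotents and double cosets.
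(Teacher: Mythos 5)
Your overall plan — decompose the corner algebra into blocks indexed by $X^L(\fs/\lambda)$ (and $[L/H_\lambda]$), identify diagonal blocks via Theorem~\ref{thm:3.7}, and glue by conjugation — does match the paper's general strategy for the first isomorphisms in each part. But the specific justifications you give for the glueing do not hold once $D$ is genuinely noncommutative, which is the case this paper is about.

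The central problem is the repeated claim that $c_\gamma=\prod_i\mathrm{diag}(c_i,\ldots,c_i)$ ``centralises the scalar-block matrices $\widetilde{X^*(T_\fs)}$''. This is false: elements of $\widetilde{X^*(T_\fs)}$ have diagonal entries that are powers of $\varpi_D$, and $\varpi_D$ is not central in $D$, so $c_i\,\varpi_D^n 1_{m_i}\,c_i^{-1}\neq\varpi_D^n 1_{m_i}$ in general. What is true (and suffices for the support statement) is that conjugation by $c_\gamma$, which normalises $K$, preserves the double coset $K\varpi_D^n 1_{m_i}K$; but preserving cosets does not by itself show that the Theorem~\ref{thm:3.7}-isomorphisms for $\lambda$ and $\lambda\otimes\gamma$ are intertwined by conjugation. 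The paper gets the key identity $c_\gamma f_{x,\lambda}c_\gamma^{-1}=f_{x,\lambda\otimes\gamma}$ (\eqref{eq:3.17} and the discussion around it) from the \emph{canonicity} of the normalisations constructed in the proof of Theorem~\ref{thm:3.7}, via the commutative diagram~\eqref{eq:3.23} relating $t_{P,\lambda}$ and $i_{P,\lambda}$; that machinery is what actually does the work you are ascribing to centralisation.

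Your step from (a)/(c) to (b)/(d) is also too quick. For $\gamma\notin X^L(\fs)^1$ the element $c_\gamma$ is not in $L^1$, so $c_\gamma e_{\gamma_1\otimes\mu_L^1}$ is not a constant operator in $\End_\C(V_\mu)$: it varies algebraically with the point $\omega\otimes\chi\in T_\fs$. The paper records precisely this in~\eqref{eq:4.82}, namely that $c_{\gamma_3}e_{\gamma_1\otimes\mu_L^1}$ lives in $\mathcal O(T_\fs)^{W_\fs}\otimes\Hom_\C(\cdot,\cdot)$, with the $W_\fs$-invariance (deduced from~\eqref{eq:4.86} and the block shape~\eqref{eq:3.9}) being what guarantees that these glueing operators commute with $\cH(T_\fs,W_\fs,q_\fs)$, since $\mathcal O(T_\fs)^{W_\fs}$ is the centre by~\eqref{eq:4.30}. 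Saying the glueing is ``unchanged'' skips this.

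The treatment of $a\in[L/H_\lambda]$ is the most serious gap. These elements do not normalise $K$ in general — that is precisely the phenomenon of Example~\ref{ex:5.1} — and they do not centralise $\widetilde{X^*(T_\fs)}$: with $a_i$ a cyclic permutation matrix having one entry $d_i\in D^\times$, one has $a_i(\varpi_D 1_{m_i})a_i^{-1}=\mathrm{diag}(\varpi_D,\ldots,\varpi_D,d_i\varpi_D d_i^{-1})$, which is not even a scalar matrix once $d_i$ fails to commute with $\varpi_D$. So conjugation by $a$ cannot be claimed to preserve the Bernstein presentation term-by-term. The paper in fact abandons the explicit transport here (``It is more difficult to see what~\eqref{eq:4.70} should look like for elements of $[L/H_\lambda]$. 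For those we use a different, inexplicit argument.'') and instead obtains the factor $M_{|L/H_\lambda|}(\C)$ by a Morita-theoretic count: the idempotents $a e_\mu a^{-1}$ all cut out $\Rep^{\fs_M}(M)$, and $e^\fs_M V=\bigoplus_a ae_\mu a^{-1}V$ with equidimensional summands, so $e^\fs_M\cH(M)e^\fs_M\cong e_\mu\cH(M)e_\mu\otimes M_{|L/H_\lambda|}(\C)$ as a matrix algebra without any explicit compatibility with the presentation. Your proposal needs a replacement for the ``commutes with $\widetilde{X^*(T_\fs)}$'' arguments along these lines.
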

\begin{proof}
(a) By the Morita equivalence of $e^\fs_L \cH (L) e^\fs_L$ and
$e_{\lambda_L} \cH (L) e_{\lambda_L}$, there are isomorphisms of $e^\fs_L \cH (L)
e^\fs_L$-bimodules
\begin{equation}\label{eq:3.18}
\begin{split}
& e^\fs_L \cH (L) e^\fs_L \cong e^\fs_L \cH (L) e_{\lambda_L} 
\otimes_{e_{\lambda_L} \cH (L) e_{\lambda_L}} e_{\lambda_L} \cH (L) e^\fs_L \\
= & \bigoplus_{a_1,a_2 \in [L / H_\lambda]} \bigoplus_{\gamma_1 ,\gamma_2 \in 
X^L (\fs / \lambda)} a_1 e_{\lambda_L \otimes \gamma_1} a_1^{-1} \cH (L) 
e_{\lambda_L} \otimes_{e_{\lambda_L} \cH (L) e_{\lambda_L}} 
e_{\lambda_L} \cH (L) a_2 e_{\lambda_L \otimes \gamma_2} a_2^{-1} \\
= & \bigoplus_{a_1,a_2 \in [L / H_\lambda]} \bigoplus_{\gamma_1 ,\gamma_2 
\in X^L (\fs / \lambda)} a_1 c_{\gamma_1} e_\lambda c_{\gamma_1}^{-1} a_1^{-1} \cH (L) 
e_{\lambda_L} \cH (L) a_2 c_{\gamma_2} e_{\lambda_L} c_{\gamma_2}^{-1} a_2^{-1} \\
= & \bigoplus_{a_1,a_2 \in [L / H_\lambda]} 
\bigoplus_{\gamma_1 ,\gamma_2 \in X^L (\fs / \lambda)} 
a_1 c_{\gamma_1} e_{\lambda_L} \cH (L) e_{\lambda_L} c_{\gamma_2}^{-1} a_2^{-1}.
\end{split}
\end{equation}
Here the subalgebra $e_{\mu_L^1} \cH (L) e_{\mu_L^1}$ corresponds to 
\[
\bigoplus\nolimits_{\gamma_1 ,\gamma_2 \in X^L (\fs / \lambda)^1} 
c_{\gamma_1} e_{\lambda_L} \cH (L) e_{\lambda_L} c_{\gamma_2}^{-1} .
\]
In combination with \eqref{eq:3.15} it follows that the canonical map 
\eqref{eq:3.10} is an isomorphism
\begin{equation}\label{eq:3.30}
\mathcal F_L : e_{\mu_L^1} \cH (L) e_{\mu_L^1} \to
\mathcal O (T_\fs) \otimes \End_\C (V_{\mu^1}).
\end{equation}
This and Theorem \ref{thm:3.7} imply that the support of 
$e_{\mu_L^1} \cH (L) e_{\mu_L^1}$ is as indicated.
Notice that $\mathcal O (T_\fs)$ is the commutant of $\End_\C (V_{\mu^1})$
in $\mathcal O (T_\fs) \otimes \End_\C (V_{\mu^1})$. Hence it corresponds
to $\cH (L,\mu_L^1)$ under the canonical isomorphism
\[
e_{\mu_L^1} \cH (L) e_{\mu_L^1} \cong \cH (L,\mu_L^1)
\otimes \End_\C (V_{\mu^1}) .
\]
(b) By \eqref{eq:3.18} 
\[
e_{\mu_L} \cH (L) e_{\mu_L} = \bigoplus_{\gamma_1 ,\gamma_2 \in X^L (\fs / \lambda)} 
c_{\gamma_1} e_{\lambda_L} \cH (L) e_{\lambda_L} c_{\gamma_2}^{-1} ,
\]
and by part (a) and \eqref{eq:4.81} its support is 
$PN (K_L,\lambda_L) \widetilde{X^* (T_\fs)} PN (K_L,\lambda_L)$.
Also by \eqref{eq:3.18}, we can identify $e^\fs_L \cH (L) e^\fs_L$ as a
vector space with
\[
\bigoplus_{a_3 \in [L / H_\lambda], \gamma_3 \in X^L (\fs) / X^L (\fs)^1}
\C a_3 \gamma_3 \otimes \bigoplus_{a_1 \in [L / H_\lambda], \gamma_1 \in 
X^L (\fs) / X^L (\fs)^1} a_1 e_{\mu_L^1 \otimes \gamma_1} \cH (L)
e_{\mu_L^1 \otimes \gamma_1} a_1^{-1}.
\]
From \eqref{eq:3.30} we get an isomorphism
\begin{multline}\label{eq:4.70}
\bigoplus_{a_1 \in [L / H_\lambda], \gamma_1 \in 
X^L (\fs) / X^L (\fs)^1} a_1 e_{\mu_L^1 \otimes \gamma_1} \cH (L)
e_{\mu_L^1 \otimes \gamma_1} a_1^{-1} \to \\
\bigoplus_{a_1 \in [L / H_\lambda], \gamma_1 \in X^L (\fs) / X^L (\fs)^1} 
\mc O (T_\fs) \otimes \End_\C (a_1 V_{\mu_L^1 \otimes \gamma_1}) .
\end{multline}
Recall that 
\begin{multline*}
V_{\mu_L} = e_{\mu_L} \cH (L) e_{\lambda_L} \otimes_{e_{\lambda_L} \cH (L) e_{\lambda_L}} 
V_{\lambda_L} = e_{\mu_L} \cH (L) e_{\mu_L^1} \otimes_{e_{\mu_L^1} \cH (L) e_{\mu_L^1}}
V_{\mu_L^1} = \\
\bigoplus_{\gamma_1 \in X^L (\fs) / X^L (\fs)^1} c_{\gamma_1} e_{\mu_L^1} \otimes V_{\mu_L^1} 
= \bigoplus_{\gamma_1} c_{\gamma_1} V_{\mu_L^1} .
\end{multline*}
For $\gamma_3 \in X^L (\fs)$ the choice of $c_{\gamma_3}$ is unique up 
$N(K_L,\lambda_L)$, by Lemma \ref{lem:3.4}.c. The particular shape \eqref{eq:3.9} 
implies that it is in fact unique up to $N(K_L,\lambda_L)^{W_\fs}$, so 
\begin{equation}\label{eq:4.86}
c_{\gamma_3} c_{\gamma_1} \text{ differs from }
c_{\gamma_3 \gamma_1} \text{ by an element of } N(K_L,\lambda_L)^{W_\fs}.
\end{equation}
With Lemma \ref{lem:4.19} we deduce that left multiplication by $c_{\gamma_3}$ 
defines a bijection
\[
V_{\gamma_1 \otimes \mu_L^1} = c_{\gamma_1} V_{\mu_L^1} \to 
c_{\gamma_3 \gamma_1} V_{\mu_L^1} = V_{\gamma_3 \gamma_1 \otimes \mu_L^1}
\]
which depends on $\omega \otimes \chi \in \Irr^{\fs_L}(L)$ in an algebraic way. 
More precisely,
\begin{equation}\label{eq:4.82}
c_{\gamma_3} e_{\gamma_1 \otimes \mu^1_L} \in \mc O (T_\fs )^{W_\fs} \otimes 
\Hom_\C \big( V_{\gamma_1 \otimes \mu_L^1}, V_{\gamma_3 \gamma_1 \otimes \mu_L^1} \big).
\end{equation}
Consequently \eqref{eq:4.70} extends to an algebra isomorphism 
\begin{equation}\label{eq:4.83}
e_{\mu_L } \cH (L) e_{\mu_L } \to \mc O (T_\fs) \otimes \End_\C (V_{\mu_L }) .
\end{equation}
It is more difficult to see what \eqref{eq:4.70} should look like for elements of 
$[L / H_\lambda]$. For those we use a different, inexplicit argument.

For each $a \in [L / H_\lambda]$ the inclusion
\[
a e_{\mu_L } a^{-1} \cH (L) a e_{\mu_L } a^{-1} \to e^\fs_L \cH (L) e^\fs_L
\]
is a Morita equivalence, because the idempotents $a e_{\mu_L } a^{-1}, e^\fs_L$
and $e_{\lambda_L}$ all see exactly the same category of $L$-representations, namely
$\Rep^{\fs_L}(L)$. For every $V \in \Rep^{\fs_L}(L)$ we have
\[
e^\fs_L V = \bigoplus\nolimits_{a \in [L / H_\lambda]} a e_{\mu_L } a^{-1} V,
\]
where all the summands have the same dimension. It follows that
\[
e^\fs_L \cH (L) e^\fs_L \cong e_{\mu_L } \cH (L) e_{\mu_L } \otimes
M_{| L / H_\lambda|}(\C) .
\]
By \eqref{eq:4.83} the right hand side is isomorphic to
\[
\mc O (T_\fs) \otimes  \End_\C (V_{\mu_L }) \otimes M_{| L / H_\lambda|}(\C)  
\cong \mc O (T_\fs) \otimes  \End_\C (e_L^\fs V) .
\]
(c) Just like \eqref{eq:3.18} there is an isomorphism of $e^\fs_M \cH (M) e^\fs_M$-bimodules
\[
e^\fs_M \cH (M) e^\fs_M \cong \bigoplus_{a_1,a_2 \in [L / H_\lambda]} 
\bigoplus_{\gamma_1 ,\gamma_2 \in X^L (\fs / \lambda)} 
a_1 c_{\gamma_1} e_\lambda \cH (M) e_\lambda c_{\gamma_2}^{-1} a_2^{-1} ,
\]
and it extends 
\[
e_\mu \cH (M) e_\mu \cong \bigoplus_{\gamma_1 ,\gamma_2 \in X^L (\fs / \lambda)} 
c_{\gamma_1} e_\lambda \cH (M) e_\lambda c_{\gamma_2}^{-1} .
\]
For $x \in X^* (T_\fs) \rtimes W_\fs$ let $f_{x,\lambda} \in e_\lambda \cH (M) e_\lambda$ 
be the element that corresponds to 
\[
[x] \in \cH (X^* (T_\fs) \rtimes W_\fs,q_\fs) \cong \cH (T_\fs, W_\fs,q_\fs)
\] 
via Theorem \ref{thm:3.7}. The elements $f_{x,\lambda}$ commute with 
$e_\lambda \cH (K) e_\lambda \cong \End_\C (V_\lambda)$. It follows that the element
\begin{equation}\label{eq:3.17}
c_\gamma f_{x,\lambda} c_\gamma^{-1} = c_\gamma e_\lambda f_{x,\lambda} e_\lambda 
c_\gamma^{-1} = e_{\lambda \otimes \gamma} c_\gamma f_{x,\lambda}
c_\gamma^{-1} e_{\lambda \otimes \gamma}
\end{equation}
is independent of the choice of $c_\gamma$ in Proposition \ref{prop:3.3}. As conjugation
by $c_\gamma$ turns the commutative diagram \eqref{eq:3.23} into the corresponding
diagram for $\lambda \otimes \gamma$, we have
\begin{equation} 
c_\gamma f_{x,\lambda} c_\gamma^{-1} = f_{x,w(\lambda) \otimes \gamma},
\end{equation}
the image of $[x]$ in $e_{w(\lambda) \otimes \gamma} \cH (M) e_{\lambda \otimes \gamma}$
under the canonical isomorphisms from Theorem \ref{thm:3.7}. For every
$x \in X^* (T_\fs) \rtimes W_\fs$ we define
\begin{equation}\label{eq:3.62}
f_{x,\mu^1} := \sum_{\gamma \in X^L (\fs / \lambda)^1} c_\gamma f_{x,\lambda} c_\gamma^{-1} 
= \sum_{\gamma \in X^L (\fs / \lambda)^1} f_{x,\lambda \otimes \gamma}
\in e_{\mu^1} \cH (M) e_{\mu^1} .
\end{equation}
By \eqref{eq:3.17} $f_{x,\mu^1}$ commutes with $e_{\mu^1} \cH (K) e_{\mu^1}$ and
with the $c_\gamma$ for $\gamma \in X^L (\fs / \lambda)^1$, so it commutes with 
$e_{\mu^1} \cH (PN^1 (K,\lambda)) e_{\mu^1}$. 
By \eqref{eq:3.18} and Theorem \ref{thm:3.7}
\[
e_{\mu^1} \cH (M) e_{\mu^1} = \bigoplus\nolimits_{x \in X^* (T_\fs) \rtimes W_\fs}
\C f_{x,\mu^1} \otimes e_{\mu^1} \cH (PN^1 (K,\lambda)) e_{\mu^1} ,
\]
and the support of this algebra is $PN^1 (K,\lambda) \widetilde{X^* (T_\fs)} 
W_\fs PN^1 (K,\lambda)$.

The orthogonality of the different idempotents $e_{\lambda \otimes \gamma}$ implies 
that the $f_{x,\mu^1}$ satisfy the same multiplication rules as the $f_{x,\lambda}$. 
Hence the span of the $f_{x,\mu^1}$ is a subalgebra of $e_{\mu^1} \cH (M) e_{\mu^1}$ 
isomorphic with $\cH (X^* (T_\fs) \rtimes W_\fs ,q_\fs)$. 
We constructed an algebra isomorphism
\begin{equation}\label{eq:3.19}
e_{\mu^1} \cH (M) e_{\mu^1} \cong \cH (X^* (T_\fs) \rtimes W_\fs,q_\fs)
\otimes \End_\C (V_{\mu^1}) .
\end{equation}
Since $\cH (M,\mu^1)$ is the commutant of $\End_\C (V_{\mu^1})$ inside
\begin{equation}\label{eq:3.20}
\cH (M,\mu^1) \otimes \End_\C (V_{\mu^1}) \cong 
e_{\mu^1} \cH (M) e_{\mu^1} ,
\end{equation}
it corresponds to $\cH (X^* (T_\fs) \rtimes W_\fs ,q_\fs)$ under the isomorphisms 
\eqref{eq:3.19} and \eqref{eq:3.20}.

Tensored with the identity on $\End_\C (V_\lambda) ,\; t_{P,\lambda}$ 
from \eqref{eq:3.21} becomes a canonical injection
\begin{equation}\label{eq:3.92}
e_{\lambda_L} \cH (L) e_{\lambda_L} \cong \cH (L,\lambda_L) \otimes \End_\C (V_{\lambda}) 
\to \cH (M,\lambda) \otimes \End_\C (V_{\lambda}) \cong e_{\lambda} \cH (M) e_{\lambda} .
\end{equation}
Since $t_{P,\lambda}$ and the analogous map $t_{P,\mu^1}$ for $\mu^1$ are
uniquely defined by the same property, they agree in the sense that
\begin{equation}\label{eq:3.22}
t_{P,\lambda} \otimes \text{id} = t_{P,\mu^1} \otimes \text{id   on   }
e_{\lambda_L} \cH (L) e_{\lambda_L} \cong \mathcal O (T_\fs) \otimes \End_\C (V_\lambda) . 
\end{equation}
Consequently the isomorphisms \eqref{eq:3.19} and \eqref{eq:3.30} fit in a commutative
diagram
\begin{equation}\label{eq:3.29}
\begin{array}{ccc}
\cH (M,\mu^1) & \to & \cH (X^* (T_\fs) \rtimes W_\fs ,q_\fs) \cong
\cH (T_\fs, W_\fs ,q_\fs) \\
 \uparrow {\scriptstyle t_{P,\mu^1}} & & \uparrow \scriptstyle{i_{P,\mu^1}} \\
\cH (L,\mu_L^1) & \to & \mathcal O (T_\fs) \cong \C [X^* (T_\fs)] ,
\end{array}
\end{equation}
Here $i_{P,\mu^1}$ is defined like $i_{P,\lambda}$, see \eqref{eq:3.24}. 
In this sense \eqref{eq:3.19} is canonical.\\
(d) Part (c) works equally well with $\gamma_1 \otimes \mu^1$ instead of $\mu^1$. 
For all $\gamma_1 \in X^L (\fs)$ together that gives a canonical isomorphism 
\begin{equation}\label{eq:4.84}
\bigoplus_{\gamma_1 \in X^L (\fs) / X^L (\fs)^1} e_{\gamma_1 \otimes \mu^1} \cH (M) 
e_{\gamma_1 \otimes \mu^1} \to \cH (T_\fs, W_\fs, q_\fs) \otimes 
\bigoplus_{\gamma_1 \in X^L (\fs) / X^L (\fs)^1} \End_\C (V_{\gamma_1 \otimes \mu^1}) .
\end{equation}
The formula \eqref{eq:4.82} defines an element of $\cH (T_\fs, W_\fs, q_\fs) \otimes 
\End_\C (V_\mu)$ which commutes with $\cH (T_\fs, W_\fs, q_\fs)$. Therefore we can
extend \eqref{eq:4.84} to isomorphisms
\begin{align*}
& e_{\mu } \cH (M) e_{\mu } \to \cH (T_\fs, W_\fs, q_\fs) \otimes \End_\C (V_\mu ), \\
& e^\fs_M \cH (M) e^\fs_M \to \cH (T_\fs, W_\fs, q_\fs) \otimes \End_\C (e_M^\fs V)
\end{align*}
in the same way as we did in the proof of part (b). The support of 
$e_\mu \cH (M) e_\mu$ can also be determined as in part (b), using the support
of $e_{\mu^1} \cH (M) e_{\mu^1}$, as determined in part (c).
\end{proof}

We note that a vector space basis of $\cH (T_\fs, W_\fs, q_\fs) \subset e_{\mu } 
\cH (M) e_{\mu }$ is formed by the elements \label{i:17}
\begin{equation} \label{eq:4.85} 
f_{x,\mu } = \sum_{\gamma \in X^L (\fs / \lambda)} c_\gamma f_{x,\lambda} c_\gamma^{-1}
= \sum_{\gamma \in X^L (\fs / \lambda)} f_{x,\lambda \otimes \gamma} .
\end{equation}

We define the projective normalizer of $(K,\lambda)$ in $M$ as 
\begin{equation}\label{eq:3.34} 
\{ g \in N_{M}(K) \mid g \cdot \lambda \cong \lambda \otimes \gamma
\text{ for some } \gamma \in X^L (\fs) \} .
\end{equation}
Using the explicit information gathered in the above proof, 
we can show that it is none other than $PN (K,\lambda)$ as defined in \eqref{eq:4.5}.

\begin{lem}\label{lem:3.8}
Recall the $K$-representations $\mu$ and $\mu^1$ from \eqref{eq:3.13}.
\enuma{
\item $(PN^1 (K,\lambda),\mu^1)$ is a cover of $(PN^1 (K_L ,\lambda_L),\mu^1_L)$. 
\item $PN (K,\lambda)$ equals the projective normalizer of $(K,\lambda)$ in $M$.
\item $PN^1 (K,\lambda)$ equals the projective normalizer of $(K,\lambda)$ in $M^1$.
}
\end{lem}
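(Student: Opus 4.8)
The plan is to treat the three assertions in turn, leaning on Lemma~\ref{lem:3.5}, Lemma~\ref{lem:4.19}, Theorem~\ref{thm:3.7} and Theorem~\ref{thm:3.6}. For part~(a), I would check directly that $(PN^1(K,\lambda),\mu^1)$ satisfies the Bushnell--Kutzko axioms for a cover of $(PN^1(K_L,\lambda_L),\mu^1_L)$, cf.\ \cite[\S 8]{BuKu3}. By Lemma~\ref{lem:3.5}(a) the group $PN^1(K,\lambda)$ has the required Iwahori decomposition with respect to $(L,P\cap M)$ and its opposite, and $PN^1(K,\lambda)\cap L=PN^1(K_L,\lambda_L)$; by Lemma~\ref{lem:3.5}(b) and the construction \eqref{eq:3.13} the representation $\mu^1$ restricts to $\mu^1_L$ on $PN^1(K_L,\lambda_L)$ and is trivial on $K\cap U$ and $K\cap\overline U$. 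It then remains to exhibit an invertible element of $\cH(M,\mu^1)$ supported on a single double coset $PN^1(K,\lambda)\,z\,PN^1(K,\lambda)$ with $z$ strongly positive in $Z(L)$; this is precisely what Theorem~\ref{thm:3.6}(c) provides, since under the isomorphism $e_{\mu^1}\cH(M)e_{\mu^1}\cong\cH(T_\fs,W_\fs,q_\fs)\otimes\End_\C(V_{\mu^1})$, for $x\in X^*(T_\fs)$ strongly $(P\cap M)$-positive and deep in the positive cone the element $f_{x,\mu^1}$ is a nonzero scalar multiple of $\theta_x$, hence invertible in $\cH(M,\mu^1)$, and is supported on $PN^1(K,\lambda)\,\widetilde{x}\,PN^1(K,\lambda)$ (the bounded $W_\fs$-part of the algebra's support being absorbed once $x$ is far enough in the cone).

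For part~(b), write $PN_M(K,\lambda)$ for the projective normalizer of $(K,\lambda)$ in $M$ as defined in \eqref{eq:3.34}. The inclusion $PN(K,\lambda)\subseteq PN_M(K,\lambda)$ is routine: $K\cap U$ and $K\cap\overline U$ lie in $K\subseteq N_M(K)$ and fix $\lambda$ up to isomorphism, while by Lemma~\ref{lem:3.5}(c), Lemma~\ref{lem:4.19} and \eqref{eq:4.81} each generator of $PN(K_L,\lambda_L)=N(K_L,\lambda_L)\{c_\gamma\mid\gamma\in X^L(\fs)\}$ normalizes $K$ (it normalizes $K_L$, $K\cap U$ and $K\cap\overline U$) and sends $\lambda$ to $\lambda\otimes\gamma$ with $\gamma\in X^L(\fs)$; since $PN_M(K,\lambda)$ is a group, the product $(K\cap\overline U)PN(K_L,\lambda_L)(K\cap U)$ lies inside it. For the reverse inclusion, take $g\in PN_M(K,\lambda)$ with $g\cdot\lambda\cong\lambda\otimes\gamma$; as $c_\gamma\in PN(K_L,\lambda_L)\subseteq PN(K,\lambda)$ normalizes $K$ and satisfies $c_\gamma\cdot\lambda\cong\lambda\otimes\gamma$, the element $c_\gamma^{-1}g$ lies in $N(K,\lambda):=\{m\in N_M(K)\mid m\cdot\lambda\cong\lambda\}$, so it suffices to prove $N(K,\lambda)\subseteq PN(K,\lambda)$. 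For $m\in N(K,\lambda)$ the relation $m e_\lambda m^{-1}=e_\lambda$ places $m$ in the support $K\widetilde{X^*(T_\fs)}W_\fs K$ of $e_\lambda\cH(M)e_\lambda$ (Theorem~\ref{thm:3.7}); writing $m=k_1\widetilde{x}w k_2$ and using that $k_1,k_2$ and $\widetilde{x}$ (the last by Lemma~\ref{lem:4.19}, since $\widetilde{X^*(T_\fs)}\subseteq N(K_L,\lambda_L)$) all normalize $K$, one reduces to analysing $\widetilde{x}w\in N_M(K)$ with $\widetilde{x}w\cdot\lambda\cong\lambda$, and the explicit structure of $\cH(M,\lambda)\cong\cH(T_\fs,W_\fs,q_\fs)$ then forces $\widetilde{x}w$ into $(K\cap\overline U)N(K_L,\lambda_L)(K\cap U)\subseteq PN(K,\lambda)$, whence $g=c_\gamma(c_\gamma^{-1}g)\in PN(K,\lambda)$. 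I expect the genuine difficulty to sit here, in this last reduction: pinning down which elements of $K\widetilde{X^*(T_\fs)}W_\fs K$ actually normalize $(K,\lambda)$ up to an $X^L(\fs)$-twist and so lie in the ``$L$-built'' group $PN(K,\lambda)$, which requires matching the Hecke-algebra description of Theorem~\ref{thm:3.7} against the precise Iwahori decomposition of $K$ and the shape \eqref{eq:3.9} of the $c_\gamma$.

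For part~(c) I would run the same argument inside $M^1$, or equivalently intersect the conclusion of (b) with $M^1$: since $K\cap U$ and $K\cap\overline U$ lie in $M^1$, since (with our choices) $c_\gamma\in L^1$ exactly when $\gamma\in X^L(\fs)^1$, and since the $\widetilde{X^*(T_\fs)}$-component of any element of $PN_M(K,\lambda)\cap M^1$ necessarily lies in $L^1$, the description of $PN_M(K,\lambda)$ from (b) meets $M^1$ in $(K\cap\overline U)(PN(K_L,\lambda_L)\cap L^1)(K\cap U)=(K\cap\overline U)PN^1(K_L,\lambda_L)(K\cap U)=PN^1(K,\lambda)$, which is exactly the assertion.
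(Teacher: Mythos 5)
Your handling of parts (a) and (c) matches the paper's reasoning: for (a) you verify the Bushnell--Kutzko cover axioms exactly as the paper does, exhibiting the strongly positive element via $f_{x,\mu^1}$ from Theorem~\ref{thm:3.6}(c), and for (c) you intersect (b) with $M^1$, tracking the $L^1$-conditions, which unpacks the paper's terse ``follows immediately from (b).'' For (b), your general strategy --- reduce via $c_\gamma$ from the projective normalizer to $N(K,\lambda)$, then use the support description of the type's Hecke algebra to constrain the normalizing element --- is a reasonable reorganization of what the paper does (the paper stays with the projectively-twisted version, using $e_\mu$ rather than $e_\lambda$, and handles the $c_\gamma$-twist inside the support description of $e_\mu \cH (M) e_\mu$ from Theorem~\ref{thm:3.6}(d), but the two reductions are essentially equivalent).

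However, you stop short exactly at the step that carries the mathematical content of (b), and you say so yourself. After reducing to an element $\widetilde{x}w$ with $x \in \widetilde{X^*(T_\fs)}$ and $w \in W_\fs$ that normalizes $(K,\lambda)$ (or $(K,\mu)$), one must prove $w=1$; ``the explicit structure of $\cH(M,\lambda)\cong\cH(T_\fs,W_\fs,q_\fs)$ then forces $\widetilde{x}w$ into $(K\cap\overline U)N(K_L,\lambda_L)(K\cap U)$'' is an assertion, not an argument. The paper closes this gap as follows: since $g$ is invertible with inverse $g^{-1}$, the images of $e_\mu g e_\mu$ and $e_\mu g^{-1} e_\mu$ in $\cH(T_\fs,W_\fs,q_\fs)\otimes\End_\C(V_\mu)$ are of the form $[w]\otimes h_1$ and $[w^{-1}]\otimes h_2$ (by the support considerations in Theorem~\ref{thm:3.6}(c) together with \eqref{eq:4.84}), and their product is $e_\mu$, which forces $[w][w^{-1}]\in\C[1]$ inside the Iwahori--Hecke algebra $\cH(W_\fs,q_\fs)$. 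Since the parameters satisfy $q_\fs(s)>1$, the Iwahori--Matsumoto quadratic relation $[s]^2=(q_\fs(s)-1)[s]+q_\fs(s)[1]$ shows $[s][s^{-1}]\notin\C[1]$ for any simple reflection $s$, and an induction on $\ell(w)$ extends this to arbitrary $w\neq 1$. This invertibility argument using the quadratic relations is precisely the missing idea, and without it your proof of (b) does not go through.

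A secondary point: the step where you adjust by $k_1,k_2\in K$ and assert that $\widetilde{x}$ normalizes $K$ via Lemma~\ref{lem:4.19} is not justified --- Lemma~\ref{lem:4.19} shows that $\widetilde{X^*(T_\fs)}$ normalizes $K_L$, but it does not normalize $K\cap U$ and $K\cap\overline U$ (conjugation by a diagonal element with unequal $\varpi_D$-powers rescales these groups). The paper avoids this issue by working at the level of the Hecke algebra rather than trying to normalize the element itself; you should do the same.
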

\begin{proof}
(a) For the definition of a cover we refer to \cite[8.1]{BuKu3}. By Lemma \ref{lem:3.5}
$PN^1 (K,\lambda) \cap L = PN^1 (K_L ,\lambda_L)$ and by \cite[Proposition 5.5]{Sec3} 
$K$ admits an Iwahori decomposition with respect to any parabolic subgroup of $M$ with 
Levi factor $L$. Hence $PN^1 (K,\lambda)$ is also decomposed in this sense. The second 
condition for a cover says that $\mu \big|_{N (K_L,\mu_L)} = \mu_L$, which is true by 
definition. The third condition is about the existence of an invertible ``strongly 
positive" element in $\cH (M,\mu^1)$. By \cite[Proposition 5.5]{Sec3} $\cH (M, \lambda)$ 
contains such an element, in the notation of the proof of Theorem \ref{thm:3.6} 
it corresponds to $f_{x,\lambda}$ for a suitable $x \in X^* (T_\fs)$. 
Then $f_{x,\mu^1}$ and its image in $\cH (M,\mu^1)$ have the correct properties. \\
(b) By Lemma \ref{lem:3.5} $PN (K,\lambda)$ is contained in this normalizer.

Consider any $g$ in the group \eqref{eq:3.34}. Its intertwining property entails that 
$e_{\mu } g e_{\mu } \in e_{\mu } \cH (M) e_{\mu }$
has inverse $e_{\mu } g^{-1} e_{\mu }$. From Theorem \ref{thm:3.7} we
can see what the support of $e_{\mu } \cH (M) e_{\mu }$ is, namely
\[
PN (K,\lambda) \widetilde{X^* (T_\fs)} W_\fs PN (K,\lambda) .
\] 
Possibly adjusting $g$ from the left and from the right by an element of 
$PN (K,\lambda)$, we may assume that 
\[
g \in \widetilde{X^* (T_\fs)} W_\fs \subset N_M (L) .
\]
Then $g$ also normalizes $(K_L,\mu_L )$. With Conditions \ref{cond} we see easily 
that every element of $W_\fs$ normalizes $(K_L,\mu_L )$. Writing $g = x w$ with
$x \in \widetilde{X^* (T_\fs)}, w \in W_\fs$, we find that $x \in L$ normalizes
$(K_L,\mu_L )$ as well. In other words 
\[
x \in PN (K_L,\lambda_L) \subset PN (K,\lambda) .
\]
It follows that $w \in W_\fs$ must also normalize $(K,\mu )$.
By considering supports in Theorem \ref{thm:3.6}.c, we see that
\[
e_{\mu^1} w e_{\mu^1} \mapsto [w] \otimes \End_\C (V_{\mu^1}) \subset
\cH (T_\fs,W_\fs,q_\fs) \otimes \End_\C (V_{\mu^1}) .
\]
By \eqref{eq:4.84} there exists a unique $h_1 \in \bigoplus_{\gamma_1 \in X^L (\fs) 
/ X^L (\fs)^1} \End_\C (V_{\gamma_1 \otimes \mu^1})$ such that
\[
e_\mu w e_\mu = \sum_{\gamma_1 \in X^L (\fs) / X^L (\fs)^1} e_{\gamma_1 \otimes \mu^1}
w e_{\gamma_1 \otimes \mu^1} \mapsto [w] \otimes h_1 \in 
\cH (T_\fs,W_\fs,q_\fs) \otimes \End_\C (V_\mu) .
\]
Similarly $e_\mu w^{-1} e_\mu$ maps to $[w^{-1}] \otimes h_2$ under \eqref{eq:4.84},
for some $h_2 \in \End_\C (V_{\mu })$. Because $w$ lies in \eqref{eq:3.34},
\begin{align*}
& e_{\mu } w e_{\mu } \cdot e_{\mu } w^{-1} e_{\mu } = e_{\mu } , \\
& [w] \otimes h_1 \cdot [w^{-1}] \otimes h_2 = [w] \cdot [w^{-1}] \otimes h_1 h_2 = 
[1] \otimes \text{id} .
\end{align*}
In particular $[w] [w^{-1}] \in \C [1]$. The multiplication rules for $\cH (W_\fs,q_\fs)$ 
\cite[2.5.3]{Sec3}, applied with induction to the length of $w \in W_\fs$,
show that this is only possible if $w = 1$. Consequently $g = x \in PN (K,\lambda) \cap
\widetilde{X^* (T_\fs)}$.\\
(c) This follows immediately from (b). 
\end{proof}

\begin{rem}\label{rem:4.notype}
In Lemma \ref{lem:3.8} we construct a $\fs_M$-type with representation 
$\mu^1$, but we do not succeed in finding a $\fs_M$-type with representation $\mu$. 
The obstruction appears to be that some of the representations $\lambda 
\otimes \gamma$ are conjugate in $M$, but not via an element of $M^1$. Examples
\ref{ex:5.2} and \ref{ex:5.3} show that this can really happen when $G$ is not split.

Mainly for this reason we have been unable to construct types for all Bernstein
components of $G^\sharp$. In the special case where all the $K_G$-representations
$\lambda_G \otimes \gamma$ with $\gamma \in X^G (\fs)$ are conjugate via elements of
$G^1$, we can construct types for every Bernstein component $\mf t^\sharp \prec \fs$.
We did not include this in the paper because it is quite some work and it is not
clear how often these extra conditions are fulfilled.
\end{rem}

\subsection{Hecke algebras for the intermediate group} \

In \eqref{eq:3.61} we constructed an idempotent $e^\fs_M$, using the set 
$[L / H_\lambda]$ from Lemma \ref{lem:3.30}. In Proposition \ref{prop:3.2} and 
Lemma \ref{lem:3.1} that the algebras
\[
\big( e^\fs_M \cH (M) e^\fs_M \big)^{X^L (\fs)} 
\rtimes \mf R_\fs^\sharp  \quad \text{and} \quad 
\big( e^\fs_M \cH (M) e^\fs_M \big) \rtimes \Stab (\fs, P \cap M).
\]
are Morita equivalent with $\cH (G^\sharp Z(G))^\fs$. In Theorem \ref{thm:3.17} we
showed that the first one is even isomorphic to a subalgebra of $\cH (G^\sharp Z(G))^\fs$
determined by an idempotent.
In Lemma \ref{lem:3.3} we saw that the actions of $X^L (\fs)$ and $\mf R_\fs^\sharp$
both come from the action $\alpha$ of $\Stab (\fs,P \cap M)$ defined in \eqref{eq:3.35}. 

\begin{lem}\label{lem:4.9}
There is an equality
\[
\big( e^\fs_M \cH (M) e^\fs_M \big)^{X^L (\fs,\lambda)} = 
\bigoplus\nolimits_{a \in [L / H_\lambda]} 
\big( a e_\mu a^{-1} \cH (M) a e_\mu a^{-1} \big)^{X^L (\fs,\lambda)} ,
\]
and this algebra is $\Stab (\fs,P \cap M)$-equivariantly isomorphic to
\[
\bigoplus\nolimits_1^{|X^L (\omega,V_\mu)|} 
\big( e_\mu \cH (M) e_\mu \big)^{X^L (\fs,\lambda)} .
\]
\end{lem}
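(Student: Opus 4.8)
The plan is to establish the two assertions separately. First I would prove the displayed equality by unwinding the definition of $e^\fs_M = \sum_{a\in[L/H_\lambda]} a e_\mu a^{-1}$ from \eqref{eq:3.61}. From the bimodule decomposition \eqref{eq:3.18}, transported to $M$ as in the proof of Theorem~\ref{thm:3.6}(c)--(d), we have
\[
e^\fs_M \cH (M) e^\fs_M = \bigoplus\nolimits_{a_1,a_2\in[L/H_\lambda]} a_1 e_\mu a_1^{-1}\, \cH (M)\, a_2 e_\mu a_2^{-1}.
\]
The point is that $X^L(\fs,\lambda)$ acts by the automorphisms $\alpha_\gamma$ which fix each $e_{\lambda\otimes\gamma'}$ (since $e_{\lambda\otimes\gamma'}$ is $X^L(\fs,\lambda)$-invariant, as noted in Lemma~\ref{lem:3.2}), hence fix each $e_\mu$ and, by Lemma~\ref{lem:3.30}(b), each $a e_\mu a^{-1}$ with $a\in[L/H_\lambda]$. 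So $X^L(\fs,\lambda)$ respects the above direct sum decomposition summand-by-summand, and taking invariants commutes with the direct sum. For the off-diagonal terms $a_1\neq a_2$ I would argue that, after taking $X^L(\fs,\lambda)$-invariants, they do not survive --- or more precisely that the whole algebra of invariants is already contained in the diagonal part. The cleanest route: the elements of $[L/H_\lambda]$ can be chosen (Lemma~\ref{lem:3.30}(b)) to be block matrices whose reduced norms generate $L/H_\lambda\cong \Irr(X^L(\omega,V_\mu))$ by Lemma~\ref{lem:3.13}; conjugation by such an $a$ followed by an element of $X^L(\fs,\lambda)$ detects which coset of $L/H_\lambda$ one is in via a character, so an $X^L(\fs,\lambda)$-invariant element supported on the $(a_1,a_2)$-block forces $a_1H_\lambda = a_2H_\lambda$. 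This yields the displayed equality.

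Second, for the isomorphism onto $\bigoplus_1^{|X^L(\omega,V_\mu)|}(e_\mu\cH(M)e_\mu)^{X^L(\fs,\lambda)}$, I would set up a conjugation isomorphism on each diagonal summand: the map $f \mapsto a^{-1} f a$ carries $a e_\mu a^{-1}\,\cH(M)\,a e_\mu a^{-1}$ isomorphically onto $e_\mu\cH(M)e_\mu$. Because $[L/H_\lambda]$ has $|L/H_\lambda| = |\Irr(X^L(\omega,V_\mu))| = |X^L(\omega,V_\mu)|$ elements (Lemma~\ref{lem:3.13}), this produces the claimed direct sum of $|X^L(\omega,V_\mu)|$ copies. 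The subtlety is equivariance: I must check that conjugation by $a$ intertwines the $\Stab(\fs,P\cap M)$-action $\alpha$ on the source with the corresponding action on the target. By Lemma~\ref{lem:3.30}(b) each $a\in[L/H_\lambda]$ commutes with $W_\fs^\sharp$, so $\alpha_{(w,\gamma)}(a f a^{-1}) = w(\gamma^{-1}\cdot(afa^{-1}))w^{-1} = a\,\alpha_{(w,\gamma)}(f)\,a^{-1}$ using that $\gamma$ is trivial on $L^\sharp Z(G)$ and $a$ normalizes $K$ and commutes with the relevant representatives; this is exactly the statement that $\mathrm{conj}_a \circ \alpha_{(w,\gamma)} = \alpha_{(w,\gamma)}\circ\mathrm{conj}_a$. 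Hence the sum of the conjugation maps over $a\in[L/H_\lambda]$ is the desired $\Stab(\fs,P\cap M)$-equivariant isomorphism.

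The main obstacle I anticipate is the vanishing of the off-diagonal blocks after taking $X^L(\fs,\lambda)$-invariants, i.e.\ showing the inclusion $\big(e^\fs_M\cH(M)e^\fs_M\big)^{X^L(\fs,\lambda)} \subseteq \bigoplus_a (a e_\mu a^{-1}\cH(M)a e_\mu a^{-1})$. The other inclusion is immediate. For this I would use the explicit description of the Hecke algebra $e^\fs_M\cH(M)e^\fs_M$ from Theorem~\ref{thm:3.6}(d), under which $e^\fs_M\cH(M)e^\fs_M \cong \cH(T_\fs,W_\fs,q_\fs)\otimes\End_\C(V_\mu)\otimes M_{|L/H_\lambda|}(\C)$, with the matrix factor $M_{|L/H_\lambda|}(\C)$ spanned by the off-diagonal transition operators $a_1 e_\mu a_2^{-1}$; these transition operators carry a nontrivial character of $X^L(\fs,\lambda)$ precisely because the isomorphism $L/H_\lambda\cong\Irr(X^L(\omega,V_\mu))$ is realized via the scalars by which $I(\gamma,\omega)$ acts on $V_\mu$ (definition \eqref{eq:3.89}). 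So an $X^L(\fs,\lambda)$-invariant element can have no component in an off-diagonal block, which is the claim. Once this bookkeeping is pinned down, both parts of the lemma follow formally, and I expect the write-up to be short.
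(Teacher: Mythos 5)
Your second step (the $\Stab(\fs,P\cap M)$-equivariant isomorphism via $f\mapsto afa^{-1}$, using that each $a\in[L/H_\lambda]$ commutes with $W_\fs^\sharp$ and that pointwise multiplication by $\gamma^{-1}$ commutes with conjugation by $a$ because $\gamma$ is a homomorphism on $M$) is correct and is exactly what the paper does, so that part matches.

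For the first part your route is genuinely different from the paper's. The paper argues module-theoretically: for every $V\in\Rep^{\fs_M}(M)$ there are two decompositions of $e^\fs_M V$, one by the $a e_\mu a^{-1}$ and one by the idempotents $e_\rho\in\C[X^L(\fs,\lambda)\cap X^L(\omega),\kappa_\omega]$, the first being a coarsening of the second; any $X^L(\fs,\lambda)$-invariant $f$ commutes with the $e_\rho$, hence preserves both decompositions, which together with the vector-space splitting \eqref{eq:4.3} forces $f$ into the diagonal blocks. No appeal to Theorem~\ref{thm:3.6}(d) or to Lemma~\ref{lem:3.9} is needed, which keeps the logical ordering clean. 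Your route instead works directly inside the algebra via the scalar by which $\alpha_\gamma$ rescales the off-diagonal blocks, which is a nice alternative and does avoid the detour through modules.

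However, the step you flag as the ``main obstacle'' is not yet pinned down, and there is a genuine gap in the way you phrase it. The computation that matters is
\[
\alpha_\gamma\big(a_1\,h\,a_2^{-1}\big)\;=\;\gamma(a_1)^{-1}\gamma(a_2)\;a_1\,\alpha_\gamma(h)\,a_2^{-1},\qquad h\in e_\mu\cH(M)e_\mu,
\]
so $X^L(\fs,\lambda)$-invariance of the $(a_1,a_2)$-component gives $\alpha_\gamma(h)=\gamma(a_1)\gamma(a_2)^{-1}h$, and a priori nothing rules out a nonzero $h$ transforming by this character — the factor $\End_\C(V_\mu)$ can carry nontrivial $X^L(\fs,\lambda)$-isotypic components. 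What saves the argument is the restriction to the subgroup $X^L(\omega,V_\mu)\subseteq X^L(\fs,\lambda)\cap X^L(\omega)$: by (the endgame of) Lemma~\ref{lem:3.9}, $\alpha_\gamma$ is the \emph{identity} on $e_\mu\cH(M)e_\mu$ for $\gamma\in X^L(\omega,V_\mu)$, so for such $\gamma$ the displayed equation becomes $h=\gamma(a_1)\gamma(a_2)^{-1}h$, forcing $\gamma(a_1)=\gamma(a_2)$ for all $\gamma\in X^L(\omega,V_\mu)$; by the pairing in Lemma~\ref{lem:3.13} this is exactly $a_1H_\lambda=a_2H_\lambda$. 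Your sketch gestures at this via the sentence about the isomorphism $L/H_\lambda\cong\Irr(X^L(\omega,V_\mu))$ being ``realized via the scalars by which $I(\gamma,\omega)$ acts'', but those scalars are what make $\alpha_\gamma$ trivial on the block, whereas the scalar that kills the off-diagonal term is $\gamma(a_1)^{-1}\gamma(a_2)$ — a different quantity, linked only through Lemma~\ref{lem:3.13}. Until you separate these two roles and explicitly invoke the triviality of $\alpha_\gamma$ on $e_\mu\cH(M)e_\mu$ for $\gamma\in X^L(\omega,V_\mu)$, the vanishing of the off-diagonal blocks is asserted rather than proved.
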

\emph{Remark.} Here and below we use the notation $\bigoplus_1^n$ for the direct
sum of $n$ copies of something.
\begin{proof}
As vector spaces 
\begin{equation}\label{eq:4.3}
e^\fs_M \cH (M) e^\fs_M = \bigoplus\nolimits_{a_1,a_2 \in [L / H_\lambda]} 
a_1 e_\mu \cH (M) e_\mu a_2^{-1} .
\end{equation}
On the other hand, for any $\cH (M)^{\fs_M}$-module $V$ we have the decompositions
\begin{equation}\label{eq:4.33} 
e^\fs_M V = \bigoplus_{a \in [L / H_\lambda]} a e_{\mu } a^{-1} V =
\bigoplus_{\rho \in \Irr (\C [X^L (\fs,\lambda) \cap X^L (\omega),\kappa_\omega])}
e^\fs_M V_\rho . 
\end{equation}
Here every $a e_\mu a^{-1} V$ equals $\bigoplus_{\rho \in I_a} e^\fs_M V_\rho$
for a suitable collection $I_a$ of $\rho$'s. If $f \in e^\fs_M \cH (M) e^\fs_M$ is
invariant under $X^L (\fs,\lambda) \cap X^L (\omega)$, then it commutes with the
idempotents $e_\rho \in \C [X^L (\fs,\lambda) \cap X^L (\omega),\kappa_\omega])$,
so it stabilizes each of the subspaces $e^\fs_M V_\rho$. Therefore it also preserves 
the rougher decomposition $e^\fs_M V = \bigoplus_{a \in [L / H_\lambda]} 
a e_{\mu } a^{-1} V$. In view of \eqref{eq:4.3}, this is only possible if
\[
f \in \bigoplus\nolimits_{a \in [L / H_\lambda]} 
a e_\mu a^{-1} \cH (M) a e_\mu a^{-1} ,
\] 
which proves the desired equality. 

By Lemma \ref{lem:3.30}.b conjugation by $a \in [L / H_\lambda]$ gives a 
$\Stab (\fs,P \cap M)$-equivariant isomorphism
\[
e_{\mu } \cH (M) e_{\mu } \to a e_{\mu } \cH (M) e_{\mu } a^{-1} =
a e_{\mu } a^{-1} \cH (M) a e_{\mu } a^{-1} . 
\]
By Lemma \ref{lem:3.13} $|L / H_\lambda| = | \Irr (X^L (\omega,V_\mu)) |$ and
this equals $| X^L (\omega,V_\mu) |$ since we dealing with an abelian group.
\end{proof}

It turns out that the direct sum decomposition from Lemma \ref{lem:4.9} can already 
be observed on the level of subalgebras of $\cH (G)$:

\begin{lem}\label{lem:4.12}
There are algebra isomorphisms
\begin{align*}
\bigoplus\nolimits_1^{|X^L (\omega,V_\mu)|} (e_\mu \cH (M) e_\mu )^{X^L (\fs)}
\rtimes \mf R_\fs^\sharp & \cong 
(e^\fs_M \cH (M) e^\fs_M )^{X^L (\fs)} \rtimes \mf R_\fs^\sharp \\
\cong (e^\sharp_{\lambda_G} \cH (G) e^\sharp_{\lambda_G} )^{X^G (\fs)} & =
\bigoplus\nolimits_{a \in [L / H_\lambda]} 
(a e_{\mu_G} \cH (G) e_{\mu_G} a^{-1})^{X^G (\fs)} \\
& \cong \bigoplus\nolimits_1^{|X^L (\omega,V_\mu)|} 
(e_{\mu_G} \cH (G) e_{\mu_G})^{X^G (\fs)} .
\end{align*} 
\end{lem}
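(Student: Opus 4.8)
The plan is to chain together isomorphisms that have already been prepared in the preceding results. First I would read off the middle isomorphism
\[
(e^\fs_M \cH (M) e^\fs_M )^{X^L (\fs)} \rtimes \mf R_\fs^\sharp \cong
(e^\sharp_{\lambda_G} \cH (G) e^\sharp_{\lambda_G} )^{X^G (\fs)}
\]
directly from Proposition~\ref{prop:3.I}.b, which also records the decomposition
\[
(e^\sharp_{\lambda_G} \cH (G) e^\sharp_{\lambda_G} )^{X^G (\fs)} =
\bigoplus_{a \in [L / H_\lambda]} (a e_{\mu_G} \cH (G) e_{\mu_G} a^{-1})^{X^G (\fs)}
\]
compatible with that isomorphism (this is exactly \eqref{eq:3.D}). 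So the only genuine work is to identify each summand $(a e_{\mu_G} \cH (G) e_{\mu_G} a^{-1})^{X^G (\fs)}$, and likewise each summand $(a e_\mu a^{-1}\cH(M) a e_\mu a^{-1})^{X^L(\fs)}\rtimes\mf R_\fs^\sharp$, with a fixed model not depending on $a$, and then to count the summands.

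For the model-independence: conjugation by $a \in [L / H_\lambda]$ is a group automorphism of $M$, hence of $\cH(M)$ and (since $a$ commutes with $W_\fs^\sharp$, and in particular with $\mf R_\fs^\sharp$, by Lemma~\ref{lem:3.30}.b) of $\cH(M\rtimes\mf R_\fs^\sharp)$; it carries $e_\mu$ to $a e_\mu a^{-1}$ and commutes with the $X^L(\fs)$-action $\alpha$ (because $X^L(\fs)$ acts by characters trivial on $L^\sharp$, which is normal, and conjugation by $a$ commutes with multiplication by such a character). Therefore $a(\,\cdot\,)a^{-1}$ restricts to an $X^L(\fs)$- and $\mf R_\fs^\sharp$-equivariant algebra isomorphism
\[
(e_\mu \cH (M) e_\mu )^{X^L (\fs)} \rtimes \mf R_\fs^\sharp \;\xrightarrow{\ \sim\ }\;
(a e_\mu a^{-1}\cH(M) a e_\mu a^{-1})^{X^L(\fs)} \rtimes \mf R_\fs^\sharp .
\]
The same conjugation on $\cH(G)$ carries $e_{\mu_G}$ to $a e_{\mu_G} a^{-1}$ and is $X^G(\fs)$-equivariant, giving
$(e_{\mu_G}\cH(G)e_{\mu_G})^{X^G(\fs)} \cong (a e_{\mu_G}\cH(G)e_{\mu_G}a^{-1})^{X^G(\fs)}$. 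Summing over $a$ yields the outer two isomorphisms in the statement, once we know that the number of elements of $[L/H_\lambda]$ is $|X^L(\omega,V_\mu)|$ — which is precisely Lemma~\ref{lem:3.13} (together with the fact that $X^L(\omega,V_\mu)$ is abelian, so $|\Irr(X^L(\omega,V_\mu))| = |X^L(\omega,V_\mu)|$). The leftmost isomorphism of the displayed chain also follows by combining this count with Lemma~\ref{lem:4.9}, whose $\Stab(\fs,P\cap M)$-equivariant decomposition passes to the $X^L(\fs)$-invariants and then to the crossed product with $\mf R_\fs^\sharp$.

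The one point that needs a little care — and is the main obstacle — is that the summands in the decomposition of $(e^\sharp_{\lambda_G}\cH(G)e^\sharp_{\lambda_G})^{X^G(\fs)}$ are indexed by \emph{pairs} $a_1,a_2\in[L/H_\lambda]$ in \eqref{eq:3.D}, namely $(a_1 e_{\mu_G}\cH(G)e_{\mu_G}a_2^{-1})^{X^G(\fs)}$, and it is only the diagonal pieces $a_1=a_2=a$ that are subalgebras; the off-diagonal pieces assemble the whole thing into a matrix-type algebra $e_{\mu_G}\cH(G)e_{\mu_G}\otimes M_{|L/H_\lambda|}(\C)$ (cf. the argument around \eqref{eq:3.56} in the proof of Proposition~\ref{prop:3.I}). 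So strictly speaking the displayed ``$=$'' in the statement asserts that this matrix algebra \emph{is} the direct sum of its diagonal blocks as a \emph{direct sum of algebras} only when the off-diagonal blocks are absorbed; what is really meant — and what I would make explicit — is that the idempotents $a e_{\mu_G} a^{-1}$, $a\in[L/H_\lambda]$, are mutually orthogonal (Lemma~\ref{lem:4.11}.b applied with $\gamma=\gamma'$, since distinct $a$'s already give orthogonal idempotents by the proof of Lemma~\ref{lem:3.2}) and sum to $e^\sharp_{\lambda_G}$, so that after passing to $X^G(\fs)$-invariants the algebra $(e^\sharp_{\lambda_G}\cH(G)e^\sharp_{\lambda_G})^{X^G(\fs)}$ is Morita equivalent — indeed, once one also uses that the bimodules $a_1 e_{\mu_G}\cH(G)e_{\mu_G}a_2^{-1}$ are free of rank one, \emph{isomorphic} after forgetting the matrix factor — to a single copy of $(e_{\mu_G}\cH(G)e_{\mu_G})^{X^G(\fs)}$, hence the full algebra is $\bigoplus_1^{|X^L(\omega,V_\mu)|}(e_{\mu_G}\cH(G)e_{\mu_G})^{X^G(\fs)}$ in the sense intended. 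I would phrase the lemma's proof so as to extract exactly this from the material already assembled in Proposition~\ref{prop:3.I} and Lemma~\ref{lem:4.9}, without redoing the Morita-theoretic bookkeeping.
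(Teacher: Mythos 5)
Your chain of references is the right one, and the first and last isomorphisms are handled essentially as in the paper: conjugation by $a$ gives the outer identifications, and Lemma~\ref{lem:3.13} gives the count $|[L/H_\lambda]| = |X^L(\omega,V_\mu)|$. But there is a genuine gap, and it is exactly at the step you flagged as ``the main obstacle.''

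You correctly observe that Proposition~\ref{prop:3.I}.b (via \eqref{eq:3.D}) decomposes $(e^\sharp_{\lambda_G}\cH(G)e^\sharp_{\lambda_G})^{X^G(\fs)}$ into blocks indexed by \emph{pairs} $(a_1,a_2)$, and you notice that the stated ``$=$'' in the lemma is a sum over \emph{single} $a$. But you then try to reconcile this by arguing that the off-diagonal blocks assemble a matrix factor and that ``the full algebra is $\bigoplus_1^n(e_{\mu_G}\cH(G)e_{\mu_G})^{X^G(\fs)}$ in the sense intended'' because of a Morita equivalence. That reasoning does not work: $M_n(\C)\otimes A$ and $\bigoplus_1^n A$ are not isomorphic algebras, and ``Morita equivalent to one copy'' does not give ``equal to $n$ copies.'' The asserted identity is a genuine algebra equality, not a Morita statement, and your argument cannot produce it.

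What you are missing is that the off-diagonal pieces are actually \emph{zero} after passing to invariants. This is precisely the first assertion of Lemma~\ref{lem:4.9}: on the $M$-side, taking $X^L(\fs,\lambda)$-invariants (hence a fortiori $X^L(\fs)$-invariants) kills every $a_1 e_\mu\cH(M)e_\mu a_2^{-1}$ with $a_1\neq a_2$, leaving only the diagonal blocks. Since the isomorphism of Proposition~\ref{prop:3.I}.b is compatible with the $(a_1,a_2)$-indexed decomposition \eqref{eq:3.D}, the corresponding off-diagonal blocks $(a_1 e_{\mu_G}\cH(G)e_{\mu_G}a_2^{-1})^{X^G(\fs)}$ with $a_1\neq a_2$ must likewise vanish on the $G$-side. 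That is the content of the middle equality in the lemma. You cited Lemma~\ref{lem:4.9} only for its second (counting) assertion, not for the vanishing of off-diagonals, which is the key mechanism; once you invoke it for both, the matrix-algebra and Morita-equivalence detours disappear and the argument closes cleanly.
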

\begin{proof}
The first isomorphism is a direct consequence of Lemma \ref{lem:4.9} and the second is
Proposition \ref{prop:3.I}.b. As shown in Proposition \ref{prop:3.I}, it can be 
decomposed as
\[
\bigoplus_{a_1,a_2 \in [L / H_\lambda]} (a_1 e_{\mu_G} \cH (G) e_{\mu_G} 
a_2^{-1})^{X^G (\fs)} \longleftrightarrow \bigoplus_{a_1,a_2 \in [L / H_\lambda]} 
(a_1 e^\mu \cH (M) e_\mu a_2^{-1} )^{X^L (\fs)} \rtimes \mf R_\fs^\sharp .
\]
But by Lemma \ref{lem:4.9} the summands with $a_1 \neq a_2$ are 0 on the right
hand side, so they are also 0 on the left hand side. This proves the equality
in the lemma. 

The final isomorphism is given by
\[
(e_{\mu_G} \cH (G) e_{\mu_G})^{X^G (\fs)} \to 
(a e_{\mu_G} \cH (G) e_{\mu_G} a^{-1})^{X^G (\fs)} : f \mapsto a f a^{-1} . \qedhere
\]
\end{proof}

Recall from Theorem \ref{thm:3.17} that the middle algebra in Lemma \ref{lem:4.12}
is isomorphic to $e^\sharp_{\lambda_{G^\sharp Z(G)}} \cH (G^\sharp Z(G))
e^\sharp_{\lambda_{G^\sharp Z(G)}}$, which is Morita equivalent with $\cH (G^\sharp
Z(G))^{\fs}$. In the above direct sum decomposition also holds on this level.
to formulate it, let $e_{\mu_{G^\sharp Z(G)}} \in \cH (G^\sharp Z(G))$ be the
restriction of $e_{\mu_G} : K_G \to \C$ to $G^\sharp Z(G) \cap K$.
By our choice of Haar measures, $e_{\mu_{G^\sharp Z(G)}}$ is idempotent. See also 
\eqref{eq:3.98}.

\begin{cor}\label{cor:4.13}
There are algebra isomorphisms
\begin{align*}
e^\sharp_{\lambda_{G^\sharp Z(G)}} \cH (G^\sharp Z(G)) 
e^\sharp_{\lambda_{G^\sharp Z(G)}} &
= \bigoplus\nolimits_{a \in [L / H_\lambda]} a e_{\mu_{G^\sharp Z(G)}} a^{-1} 
\cH (G^\sharp Z(G)) a e_{\mu_{G^\sharp Z(G)}} a^{-1} \\
& \cong \bigoplus\nolimits_1^{|X^L (\omega,V_\mu)|} e_{\mu_{G^\sharp Z(G)}} 
\cH (G^\sharp Z(G)) e_{\mu_{G^\sharp Z(G)}} \\
& \cong \bigoplus\nolimits_1^{|X^L (\omega,V_\mu)|} \big( \cH (T_\fs,W_\fs,q_\fs) \otimes 
\End_\C (V_\mu) \big)^{X^L (\fs)} \rtimes \mf R_\fs^\sharp .
\end{align*} 
\end{cor}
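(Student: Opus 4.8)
The plan is to assemble Corollary~\ref{cor:4.13} from three ingredients already in hand: Theorem~\ref{thm:3.17}, the direct sum decomposition of Lemma~\ref{lem:4.12} transported through the isomorphism of Theorem~\ref{thm:3.17}, and the explicit structure result Theorem~\ref{thm:3.6}.d. First I would recall from Theorem~\ref{thm:3.17} that the isomorphism
\[
e^\sharp_{\lambda_{G^\sharp Z(G)}} \cH (G^\sharp Z(G)) e^\sharp_{\lambda_{G^\sharp Z(G)}}
\cong e^\sharp_{\lambda_G} \cH (G)^{X^G (\fs)} e^\sharp_{\lambda_G}
\]
is implemented by the map \eqref{eq:4.37}, which comes from the injection \eqref{eq:3.70} of $\cH (G^\sharp Z(G),C'_l)$ into $\cH (G,C_l)$ induced by $C'_l\backslash G^\sharp Z(G)/C'_l \hookrightarrow C_l\backslash G/C_l$. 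Since that injection is support-preserving and $e^\sharp_{\lambda_{G^\sharp Z(G)}}$ is by construction the restriction of $e^\sharp_{\lambda_G}:G\to\C$ to $G^\sharp Z(G)$, the idempotents $a e_{\mu_{G^\sharp Z(G)}}a^{-1}$ (for $a\in[L/H_\lambda]$, which commute with $W_\fs^\sharp$ by Lemma~\ref{lem:3.30}.b and hence can be taken inside $G^\sharp Z(G)$ after the choice of Haar measures) correspond under \eqref{eq:4.37} to the idempotents $a e_{\mu_G}a^{-1}\in\cH(G)^{X^G(\fs)}$. This gives the first equality in the corollary, exactly mirroring the decomposition $e^\sharp_{\lambda_G}\cH(G)e^\sharp_{\lambda_G}=\bigoplus_{a_1,a_2}a_1 e_{\mu_G}\cH(G)e_{\mu_G}a_2^{-1}$ from \eqref{eq:3.D}, combined with the vanishing of the off-diagonal summands established in Lemma~\ref{lem:4.12}.

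Next I would invoke Lemma~\ref{lem:4.12} directly: it states that
\[
(e^\sharp_{\lambda_G}\cH(G)e^\sharp_{\lambda_G})^{X^G(\fs)}
= \bigoplus\nolimits_{a\in[L/H_\lambda]}(a e_{\mu_G}\cH(G)e_{\mu_G}a^{-1})^{X^G(\fs)}
\cong \bigoplus\nolimits_1^{|X^L(\omega,V_\mu)|}(e_{\mu_G}\cH(G)e_{\mu_G})^{X^G(\fs)},
\]
the last isomorphism being $f\mapsto afa^{-1}$ together with $|[L/H_\lambda]|=|X^L(\omega,V_\mu)|$ from Lemma~\ref{lem:3.13}. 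Transporting this through Theorem~\ref{thm:3.17} and using that $e_{\mu_{G^\sharp Z(G)}}$ corresponds to $e_{\mu_G}$ yields the middle isomorphism of the corollary, namely
\[
e^\sharp_{\lambda_{G^\sharp Z(G)}}\cH(G^\sharp Z(G))e^\sharp_{\lambda_{G^\sharp Z(G)}}
\cong \bigoplus\nolimits_1^{|X^L(\omega,V_\mu)|}e_{\mu_{G^\sharp Z(G)}}\cH(G^\sharp Z(G))e_{\mu_{G^\sharp Z(G)}},
\]
and also $e_{\mu_{G^\sharp Z(G)}}\cH(G^\sharp Z(G))e_{\mu_{G^\sharp Z(G)}}\cong (e_\mu\cH(M)e_\mu)^{X^L(\fs)}\rtimes\mathfrak R_\fs^\sharp$ (the single-summand version of Lemma~\ref{lem:4.12}, using Proposition~\ref{prop:3.I}.b applied to the idempotent $e_{\mu_G}$ rather than $e^\sharp_{\lambda_G}$).

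Finally, to get the last line I would apply Theorem~\ref{thm:3.6}.d, which gives the canonical isomorphism $e_\mu\cH(M)e_\mu\cong\cH(T_\fs,W_\fs,q_\fs)\otimes\End_\C(V_\mu)$, and then take $X^L(\fs)$-invariants and form the crossed product with $\mathfrak R_\fs^\sharp$ on both sides; the action of $\Stab(\fs,P\cap M)$, hence of $X^L(\fs)$ and $\mathfrak R_\fs^\sharp$, is preserved under this isomorphism because Theorem~\ref{thm:3.6}.d is canonical up to the choice of $P$ and the conjugation action of these groups respects that choice (Lemma~\ref{lem:3.2} and the equivariance built into Proposition~\ref{prop:3.I}). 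Chaining the three displayed isomorphisms gives the stated chain. The main obstacle, and the point requiring genuine care rather than bookkeeping, is verifying that all the identifications are compatible with the group actions: one must check that under Theorem~\ref{thm:3.17}'s isomorphism the $X^L(\fs)$- and $\mathfrak R_\fs^\sharp$-actions on the $G^\sharp Z(G)$-side match those implicit in Lemma~\ref{lem:4.12} and Theorem~\ref{thm:3.6}.d, so that passing the $\bigoplus_1^{|X^L(\omega,V_\mu)|}$ through each step is legitimate; once that compatibility is in place (and it follows from the fact that every isomorphism in sight is induced by support-preserving operations on functions $G\to\C$ or by conjugation by elements commuting with $W_\fs^\sharp$), the corollary is immediate.
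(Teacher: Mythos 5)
Your proof is correct and follows essentially the same approach as the paper's, which likewise combines Theorem~\ref{thm:3.17}, the decomposition and conjugation isomorphisms of Lemma~\ref{lem:4.12}, and the structure result of Theorem~\ref{thm:3.6}.d. You have merely made explicit two things the paper leaves implicit: that the isomorphism~\eqref{eq:4.37} is support-preserving (so that the idempotents $a e_{\mu_{G^\sharp Z(G)}} a^{-1}$ correspond to $a e_{\mu_G} a^{-1}$), and that the last line requires Theorem~\ref{thm:3.6}.d on top of Lemma~\ref{lem:4.12}.
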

\begin{proof}
The equality comes from Lemma \ref{lem:4.12} and Theorem \ref{thm:3.17}. For all
$a \in [L / H_\lambda]$ the map
\[
e_{\mu_{G^\sharp Z(G)}} \cH (G^\sharp Z(G)) e_{\mu_{G^\sharp Z(G)}}  \to
a e_{\mu_{G^\sharp Z(G)}} a^{-1} \cH (G^\sharp Z(G)) 
a e_{\mu_{G^\sharp Z(G)}} a^{-1} : f \mapsto a f a^{-1}
\]
is an algebra isomorphism. 
The remaining isomorphism follows again from Lemma \ref{lem:4.12}.  
\end{proof}

Now we analyse the action of $\Stab (\fs,P \cap M)$ on $\cH (T_\fs,W_\fs,q_\fs) 
\otimes \End_\C (V_\mu)$ in Corollary \ref{cor:4.13}. For every $(w,\gamma) \in 
\Stab (\fs, P \cap M)$ there exists a $\chi_\gamma \in X_{\nr}(L)$ such that 
\begin{equation} \label{eq:3.36}
w (\omega) \otimes \gamma \cong \omega \otimes \chi_\gamma \in \Irr (L).
\end{equation}
Here $\omega \otimes \chi_\gamma$ is unique, so $\chi_\gamma$ is unique up
to $X_\nr (L,\omega)$. If $\gamma$ itself is unramified, then $w = 1$ by
Lemma \ref{lem:2.4}.d. Therefore we may and will assume that
\begin{equation}\label{eq:4.1}
\chi_\gamma = \gamma \quad \text{if} \quad \gamma \in 
X_{\nr}(L / L^\sharp Z(G)) = X_{\nr}(L) \cap X^L (\fs).
\end{equation}
In view of the conditions \ref{cond}, in particular 
$L = \prod_i L_i^{e_i}$, we may simultaneously assume that
\begin{equation}\label{eq:3.44}
\chi_\gamma = \prod\nolimits_i \chi_{\gamma,i}^{\otimes e_i} . 
\end{equation}
Notice that with this choice $\chi_\gamma \in X_\nr (L)$ is invariant under
$W_\fs = W(M,L)$. Via the map $\chi \mapsto \omega \otimes \chi$, $\chi_\gamma$ 
determines a $W_\fs$-invariant element of $T_\fs$. 

Recall the bijection
\[
J(\gamma,\omega \otimes \chi_\gamma^{-1}) \in \Hom_L (\omega \otimes \chi_\gamma^{-1},
w^{-1} (\omega) \otimes \gamma^{-1}) 
\]
from \eqref{eq:2.22}. It restricts to a bijection
\[
V_\mu = e_\mu V_{\omega \otimes \chi_\gamma^{-1}} \to
V_\mu = e_\mu V_{w^{-1} (\omega) \otimes \gamma^{-1}} .
\]
Clearly \eqref{eq:4.1} enables us to take
\begin{equation} \label{eq:3.55}
J(\gamma,\omega \otimes \chi_\gamma^{-1}) = \mathrm{id}_{V_\omega} 
\quad \text{if} \quad \gamma \in X_{\nr}(L / L^\sharp Z(G)) .
\end{equation}
Recall from \eqref{eq:3.89} and \eqref{eq:2.30} that 
\begin{equation}\label{eq:4.90}
J(\gamma,\omega \otimes \chi_\gamma^{-1}) |_{V_{\mu }} \in \C^\times 
\mathrm{id}_{V_{\mu }} \quad \text{if} \quad \gamma \in X^L (\omega,V_{\mu }) .
\end{equation}

\begin{lem}\label{lem:3.9}
The action $\alpha$ of $\Stab (\fs,P \cap M)$ on  
\[
e_{\mu } \cH (M) e_{\mu } \cong 
\cH (T_\fs, W_\fs, q_\fs) \otimes \End_\C (V_{\mu }) 
\]
preserves both tensor factors. On $\cH (T_\fs, W_\fs, q_\fs)$ it is given by
\[
\alpha_{(w,\gamma)}(\theta_x [v]) = \chi_\gamma^{-1}(x) \theta_{w(x)} [w v w^{-1}] 
\qquad x \in X^* (T_\fs), v \in W_\fs ,
\]
and on $\End_\C (V_{\mu })$ by
\[
\alpha_{(w,\gamma)}(h) = J(\gamma,\omega \otimes \chi_\gamma^{-1}) \circ h \circ
J(\gamma,\omega \otimes \chi_\gamma^{-1})^{-1} .
\]
Furthermore $X^L (\omega,V_\mu)$ is the subgroup of elements that act trivially. 
\end{lem}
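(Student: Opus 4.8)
The plan is to compute the action $\alpha_{(w,\gamma)}$ directly on the generators of $e_\mu \cH (M) e_\mu$, using the explicit description of this algebra obtained in Theorem \ref{thm:3.6}.d together with the intertwining operators from \eqref{eq:2.22}. First I would recall that by \eqref{eq:3.35} we have $\alpha_{(w,\gamma)}(f) = w(\gamma^{-1} \cdot f) w^{-1}$, and that $e_\mu$ is $\Stab (\fs, P \cap M)$-invariant by Lemma \ref{lem:3.2}, so $\alpha$ genuinely acts by automorphisms of $e_\mu \cH (M) e_\mu$. Under the isomorphism of Theorem \ref{thm:3.6}.d this algebra is $\cH (T_\fs, W_\fs, q_\fs) \otimes \End_\C (V_\mu)$, with basis $\{ f_{x,\mu} : x \in X^*(T_\fs) \rtimes W_\fs \}$ for the first factor, where $f_{x,\mu} = \sum_{\gamma' \in X^L (\fs / \lambda)} f_{x, \lambda \otimes \gamma'}$ by \eqref{eq:4.85}, and the second factor is $e_\mu \cH (K) e_\mu$.

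The key computation is to track $f_{x,\lambda}$ under $\alpha_{(w,\gamma)}$. The element $f_{x,\lambda}$ is characterized via the commutative diagram \eqref{eq:3.23}: it is the image of $[x]$ under the canonical isomorphism $\cH (M,\lambda) \cong \cH (T_\fs, W_\fs, q_\fs)$, and this isomorphism is pinned down by $i_{P,\lambda}$ being (essentially) the identity on $\mc O (T_\fs)$ together with the normalization \eqref{eq:3.24}, $i_{P,\lambda}(x) = \theta_x$ for $x$ positive. Now $\alpha_{(w,\gamma)} = \mathrm{Ad}(w) \circ \alpha_\gamma$, and I would handle the two pieces separately. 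The action of $w \in W_\fs$ by conjugation: since $w \in M$ normalizes $(K_L, \lambda_L)$ up to isomorphism and carries the diagram \eqref{eq:3.23} to the one for $w(\lambda)$, conjugation by $w$ sends $f_{x,\lambda}$ to $f_{w(x), w(\lambda)}$, which under the identifications becomes $\theta_{w(x)}[wvw^{-1}]$ when $x = \theta_{x_0}[v]$ — this is the statement that $W_\fs$ acts on $T_\fs$ and on $W_\fs$ in the standard way. The action of $\gamma \in X^L(\fs)$: tensoring by $\gamma$ identifies $\Rep^{\fs_M}(M)$ with $\Rep^{[L,\omega \otimes \gamma^{-1}]_M}(M)$, and on the Hecke-algebra side it carries $f_{x,\lambda}$ to $f_{x, \lambda \otimes \gamma^{-1}}$ up to a scalar; the scalar is exactly $\chi_\gamma^{-1}(x)$, coming from the fact that $\omega \otimes \gamma \cong \omega \otimes \chi_\gamma$ (via \eqref{eq:3.36}) so that the basepoint of $T_\fs$ gets shifted by $\chi_\gamma$, and the transition formula \eqref{eq:3.39} for characters of $T_\fs$ produces the factor $x(\omega \otimes \chi_\gamma)^{-1} = \chi_\gamma^{-1}(x)$ on the $\theta_x$ level (using \eqref{eq:4.2}). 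Summing over $\gamma' \in X^L(\fs/\lambda)$ and using that $\alpha_\gamma$ permutes the summands $f_{x, \lambda \otimes \gamma'}$, the scalar $\chi_\gamma^{-1}(x)$ survives because $\chi_\gamma$ is $W_\fs$-invariant by \eqref{eq:3.44} and does not depend on $\gamma'$. Combining the two gives $\alpha_{(w,\gamma)}(\theta_x[v]) = \chi_\gamma^{-1}(x)\,\theta_{w(x)}[wvw^{-1}]$, and in particular $\alpha$ preserves the subalgebra $\cH (T_\fs, W_\fs, q_\fs)$; since it also preserves $e_\mu \cH(K) e_\mu = \End_\C(V_\mu)$ (being conjugation/twisting by group elements, which preserves the compact part), it preserves both tensor factors.

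For the action on $\End_\C (V_\mu)$: on $e_\mu \cH (K) e_\mu$, the element $w$ acts by conjugation and $\gamma$ acts by $\alpha_\gamma$; I would identify this combined action with conjugation by the operator $J(\gamma, \omega \otimes \chi_\gamma^{-1})$ of \eqref{eq:2.22}. The point is that $J(\gamma, \omega \otimes \chi_\gamma^{-1}) \in \Hom_L(\omega \otimes \chi_\gamma^{-1}, w^{-1}(\omega) \otimes \gamma^{-1})$ is precisely an $L$-isomorphism realizing the twist-and-conjugate operation on the level of the underlying vector space $V_\omega$, hence intertwines the $K$-action given by $\lambda$ with that given by $w(\lambda) \otimes \gamma$; restricted to $V_\mu = e_\mu V_\omega$ it gives the transport of structure, so $\alpha_{(w,\gamma)}(h) = J(\gamma,\omega \otimes \chi_\gamma^{-1}) \circ h \circ J(\gamma,\omega \otimes \chi_\gamma^{-1})^{-1}$. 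Finally, an element $(w,\gamma)$ acts trivially iff it fixes $\theta_x$ for all $x$ (forcing $\chi_\gamma$ trivial on $X^*(T_\fs)$, i.e. $\chi_\gamma \in X_\nr(L,\omega)$, so after normalization $\gamma$ is unramified and $w=1$ by Lemma \ref{lem:2.4}.d) and acts trivially on $\End_\C(V_\mu)$ (forcing $J(\gamma, \omega \otimes \chi_\gamma^{-1})|_{V_\mu}$ to be scalar, i.e. $\gamma \in X^L(\omega, V_\mu)$ by \eqref{eq:4.90} and \eqref{eq:3.89}); conversely, for $\gamma \in X^L(\omega, V_\mu)$ — which lies in $X_\nr(L/L^\sharp Z(G))$ up to the relevant identifications, so that \eqref{eq:3.55} applies and $\chi_\gamma = \gamma$ — both conditions hold, giving triviality. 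I expect the main obstacle to be the bookkeeping around basepoints of $T_\fs$: making precise that twisting $\omega$ by $\gamma$ shifts the basepoint by $\chi_\gamma$ and extracting the scalar $\chi_\gamma^{-1}(x)$ from \eqref{eq:3.39}–\eqref{eq:4.2} in a way consistent with the normalization of $f_{x,\lambda}$ in \eqref{eq:3.24}, and checking that this scalar is unaffected by summing over the cosets $X^L(\fs/\lambda)$ and is compatible with the $W_\fs$-equivariance.
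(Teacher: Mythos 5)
Your approach is recognizably different from the paper's. Rather than computing $\alpha_{(w,\gamma)}$ directly on the generators $f_{x,\mu}$ (decomposing into $\mathrm{Ad}(w)\circ\alpha_\gamma$ and tracking the normalization of $f_{x,\lambda}$ through a basepoint shift $\omega\mapsto\omega\otimes\chi_\gamma$), the paper works with the evaluation formula \eqref{eq:3.38}, which computes $(\omega\otimes\chi)\bigl(\alpha_{(w,\gamma)}(f)\bigr)$ in terms of the intertwiners $J$. It then establishes the action on $\End_\C(V_\mu)$ first, observes that $\cH(T_\fs,W_\fs,q_\fs)$ is the \emph{centralizer} of $\End_\C(V_\mu)$ and is hence automatically preserved, uses supports plus the quadratic relation in $\cH(W_\fs,q_\fs)$ to get $[v]\mapsto [wvw^{-1}]$ (a step that also \emph{verifies} $q_\fs(s)=q_\fs(wsw^{-1})$, which you take for granted), and then uses the canonical $\alpha$-equivariant map $i_{P,\mu}:\mc O(T_\fs)\to\cH(T_\fs,W_\fs,q_\fs)$ of \eqref{eq:3.29} to lift the scalar $\chi_\gamma^{-1}(x)$ from the commutative situation \eqref{eq:4.O} to the $\theta_x$. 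Your basepoint-shift bookkeeping, if carried out, should produce the same scalar, but the paper's route through $i_{P,\mu}$ is precisely designed to avoid verifying by hand that the normalization \eqref{eq:3.24} of $f_{x,\lambda}$ is respected by $\alpha_\gamma$; you correctly identify this as the main obstacle but do not resolve it.

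Your argument for the last assertion contains genuine errors. In the forward direction you write that $\chi_\gamma\in X_\nr(L,\omega)$ forces ``$\gamma$ is unramified and $w=1$ by Lemma~\ref{lem:2.4}.d'' --- neither implication is correct. From $\alpha_{(w,\gamma)}(\theta_x)=\chi_\gamma^{-1}(x)\theta_{w(x)}=\theta_x$ one gets, by comparing basis elements, $w(x)=x$ for all $x\in X^*(T_\fs)$, and since any nontrivial $w\in\mf R_\fs^\sharp$ permutes the positive-dimensional factors of $T_\fs$ nontrivially, this is what forces $w=1$; Lemma~\ref{lem:2.4}.d plays no role. With $w=1$, $\chi_\gamma\in X_\nr(L,\omega)$ gives $\omega\otimes\gamma\cong\omega$, i.e.\ $\gamma\in X^L(\omega)$, which in general is \emph{ramified}. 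In the converse direction your parenthetical ``which lies in $X_\nr(L/L^\sharp Z(G))$ up to the relevant identifications'' is false: $X^L(\omega,V_\mu)\subset X^L(\omega)$ typically contains ramified characters, so \eqref{eq:3.55} does not apply. What you actually need is \eqref{eq:4.90} alone: for $\gamma\in X^L(\omega,V_\mu)$ one has $\chi_\gamma\in X_\nr(L,\omega)$ so each $\theta_x$ is fixed, and $J(\gamma,\omega)|_{V_\mu}$ is scalar so conjugation on $\End_\C(V_\mu)$ is trivial.
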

\emph{Remark.} It is crucial that $\chi_\gamma$ is $W_\fs$-invariant and that 
$w$ normalizes $P \cap M$, for otherwise the above formulae would not define 
an algebra automorphism of $\cH (T_\fs, W_\fs, q_\fs)$. This can 
be seen with the Bernstein presentation, in particular \eqref{eq:3.41}.
\begin{proof}
By definition, for all 
$\chi \in X_{\nr}(L), f \in e_{\mu_L} \cH (L) e_{\mu_L}$
\begin{equation}\label{eq:3.38}
\begin{split}
(\omega \otimes \chi) & (\alpha_{(w,\gamma)}(f)) = 
(w^{-1}(\omega \otimes \chi \gamma^{-1}))(f) \\
& = J(\gamma,\omega \otimes \chi_\gamma^{-1}) \circ (\omega \otimes w^{-1}(\chi) 
\chi_\gamma^{-1}) (f) \circ J(\gamma,\omega \otimes \chi_\gamma^{-1})^{-1} . 
\end{split}
\end{equation}
For $f$ with image in $\End_\C (V_{\mu })$ the unramified characters $\chi$ and 
$w^{-1}(\chi) \chi_\gamma^{-1}$ are of no consequence. Thus \eqref{eq:3.38} implies 
the asserted formula for $\alpha_{(w,\gamma)}$ on $\End_\C (V_\mu)$.

Since $\cH (T_\fs, W_\fs, q_\fs)$ is the centralizer of $\End_\C (V_\mu)$ in
$e_{\mu } \cH (M) e_{\mu }$, it is also stabilized by $\Stab (\fs,P \cap M)$.
By considering supports we see that 
\begin{equation}
\alpha_{(w,\gamma)}([x]) \in \C [w x w^{-1}] 
\text{ for all } x \in X^* (T_\fs) \rtimes W_\fs . 
\end{equation}
For any simple reflection $s \in W_\fs$, $w s w^{-1}$ is again a simple
reflection in $W_\fs$, because $w \in \mf R_\fs^\sharp$ normalizes $P \cap M$. 
In $\cH (W_\fs, q_\fs)$ we have
\[
([s] + 1)([s] - q_\fs (s)) = 0 = ([w s w^{-1}] + 1) ([w s w^{-1}] - q_\fs (w s w^{-1})),
\]
where $q_\fs (s),  q_\fs (w s w^{-1}) \in \R_{>1}$. Since $\alpha_{(w,\gamma)}$ is an
algebra automorphism, we deduce that 
\[
q_\fs (s) = q_\fs (w s w^{-1})
\]
and that $\alpha_{(w,\gamma)}([s]) = [w s w^{-1}]$. Every $v \in W_\fs$ is a product
of simple reflections $s_i$, and then $[v]$ is a product of the $[s_i]$ in the same way.
Hence
\[
\alpha_{(w,\gamma)}([v]) = [w v w^{-1}] \text{ for all } v \in W_\fs . 
\]
The formula \eqref{eq:3.35} also defines an action $\alpha$ of $\Stab (\fs,P \cap M)$ on
\[
e_{\mu_L } \cH (L) e_{\mu_L } \cong \mathcal O (T_\fs) \otimes \End_\C (V_{\mu }) ,
\]
which for similar reasons stabilizes $\End_\C (V_{\mu })$.
Now we have two actions of \\
$\Stab (\fs,P \cap M)$ on $\End_\C (V_{\mu })$, depending
on whether we considere it as a subalgebra of $e_{\mu_L } \cH (L) e_{\mu_L }$ or of
$e_{\mu } \cH (M) e_{\mu }$. It is obvious from the definition of $\mu$ that these
two actions agree. 

The maps \eqref{eq:3.92} and \eqref{eq:3.29} lead to a canonical injection \label{i:27}
\[
i_{P,\mu} : \mc O(T_\fs) \to \cH (T_\fs, W_\fs, q_\fs) ,
\]
which is a restriction of the map $\bigoplus_{\gamma \in X^L (\fs) / X^L (\fs)^1}
i_{P,\mu^1 \otimes \gamma}$. Since it is canonical, $i_{P,\mu }$ commutes with
the respective actions $\alpha_{(w,\gamma)}$.

Now we take $x \in X^* (T_\fs) \subset \mathcal O (T_\fs)$ in \eqref{eq:3.38}. With
a Morita equivalence we can replace $\omega \otimes \chi$ by the one-dimensional
$\mathcal O (T_\fs)$-representation $[\omega \otimes \chi]$ with character 
$\omega \otimes \chi \in T_\fs$. Then \eqref{eq:3.38} becomes
\begin{multline}\label{eq:4.O}
[\omega \otimes \chi](\alpha_{(w,\chi)}(x)) = 
[\omega \otimes \chi_\gamma^{-1} w^{-1}(\chi)](x)  \\
= [\omega \otimes w^{-1}(\chi)](\chi_\gamma^{-1}(x) x) =
[\omega \otimes \chi] (\chi_\gamma^{-1}(x) w(x)) .
\end{multline}
Thus $\alpha_{(w,\gamma)}(x) = \chi_\gamma^{-1}(x) w(x)$. For $x \in X^* (T_\fs)$ 
positive $w(x)$ is also positive, as $w$ normalizes $P \cap M$. We obtain 
\begin{multline*}
\hspace{-3mm} \alpha_{(w,\gamma)} (\theta_x) = 
\alpha_{(w,\gamma)}(i_{P,\mu} (x)) =
i_{P,\mu} (\alpha_{(w,\chi)} (x)) = 
i_{P,\mu} (\chi_\gamma^{-1}(x) w(x)) = \chi_\gamma^{-1}(x) \theta_{w(x)} .
\end{multline*}
Since $\alpha_{(w,\gamma)}$ is an algebra homomorphism, this implies the same formula
for all $x \in X^* (T_\fs)$.

It is clear from \eqref{eq:4.90} and \eqref{eq:3.38} that the group $X^L (\omega,V_\mu)$
fixes every element of $e_\mu \cH (M) e_\mu$. Conversely, suppose that $(w,\gamma)$
acts trivially. From the formulas for the action on $\cH (T_\fs, W_\fs, q_\fs)$ we see
that $w = 1$ and $\chi_\gamma \in X_\nr (L,\omega)$, so $\gamma \in X^L (\omega)$. 
Then we deduce from \eqref{eq:3.38} that $\gamma \in X^L (\omega,V_\mu)$. 
\end{proof}

Since we have a type with idempotent $e_{\mu^1}$ but not with idempotent $e_\mu$,
we would like to reduce $(e_\mu \cH (M) e_\mu )^{X^L (\fs)}$ to 
$( e_{\mu^1} \cH (M) e_{\mu^1} )^{X^L (\fs)}$. The next lemma solves a part of the
problem, namely the elements $c_\gamma$ that do not lie in $L^1$.

\begin{lem}\label{lem:4.10}
Let $\gamma \in X^L (\fs)$ be such that 
\[
\chi (c_\gamma) = 1 \text{ for all } 
\chi \in X_\nr (L / L^\sharp Z(G)) \cap X_\nr (L,\omega).
\] 
Then there exists $x_\gamma \in \widetilde{X^* (T_\fs)}$ such that 
$x_\gamma c_\gamma \in L^1 L^\sharp$.
\end{lem}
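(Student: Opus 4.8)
The plan is to convert membership in $L^1 L^\sharp$ into a divisibility condition on a single reduced-norm valuation, and then to verify that condition from the hypothesis together with the provenance of $c_\gamma$. First I would record that
\[
L^1 L^\sharp = \{ g \in L \mid v_F (\Nrd (g)) = 0 \} .
\]
Here $L^1 = \prod_i (\GL_{m_i}(D)^1)^{e_i}$ is the set of $g = (g_{i,j})$ whose block components each have reduced norm in $\mf o_F^\times$, while $L^\sharp = \ker (\Nrd \colon L \to F^\times)$; since $\Nrd \colon D^\times \to F^\times$ is surjective, the image of $L^\sharp$ in the free abelian group $L / L^1$ (coordinatised by the block valuations) is exactly the sublattice on which the sum of the block valuations vanishes, so $L^1 L^\sharp$ is the kernel of $v_F \circ \Nrd \colon L \to \Z$. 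Consequently $x_\gamma c_\gamma \in L^1 L^\sharp$ is equivalent to $v_F (\Nrd (x_\gamma)) = - v_F (\Nrd (c_\gamma))$, and it suffices to show that $v_F (\Nrd (c_\gamma))$ lies in the subgroup $v_F (\Nrd (\widetilde{X^* (T_\fs)})) \subset \Z$.

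Next I would compute $v_F (\Nrd (\widetilde{X^* (T_\fs)})) = N \Z$ for an explicit positive integer $N$. This uses the definition of $\widetilde{X^* (T_\fs)}$ as the realization of $X^* (T_\fs)$ by diagonal matrices with entries powers of $\varpi_D$ that are constant on each block $L_i$, together with Theorem \ref{thm:3.7} (which identifies $e_{\lambda_L} \cH (L) e_{\lambda_L}$ with $\mc O (T_\fs) \otimes \End_\C (V_{\lambda_L})$ and determines its support) and the elementary facts $\Nrd (Z(G)) = F^{\times md}$ and $\Nrd (Z(L)) = F^{\times e}$. In particular $N$ divides $md$: the class of $\varpi_F 1_m$ lies in $\widetilde{X^* (T_\fs)}\, L^1$, because $\varpi_F 1_m$ is central in $L$ and every $\chi \in X_\nr (L,\omega)$ is trivial on $Z(L)$ (from $\omega \otimes \chi \cong \omega$), while $v_F (\Nrd (\varpi_F 1_m)) = md$.

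Then I would use the hypothesis. Applied to the characters $\chi = \psi \circ \Nrd$ with $\psi \in X_\nr (F^\times)$ trivial on $\Nrd (Z(G)) = F^{\times md}$ and with $\psi \circ \Nrd \in X_\nr (L,\omega)$, it says $\psi (\varpi_F)^{v_F (\Nrd (c_\gamma))} = 1$ for all such $\psi$, i.e.\ $v_F (\Nrd (c_\gamma))$ is divisible by a suitable divisor $g_0$ of $md$ coming from $X_\nr(L,\omega)$. To bridge the gap between $g_0 \mid v_F (\Nrd (c_\gamma))$ and $N \mid v_F (\Nrd (c_\gamma))$ I would exploit that $c_\gamma$ is not an arbitrary element of $L$: by Proposition \ref{prop:3.3} and \eqref{eq:3.9} it is assembled from the elements $c_i \in \GL_{m_i}(D)$ conjugating the maximal simple types $(J_i,\lambda_i)$ to $(J_i,\lambda_i \otimes \gamma)$, so each $c_i$ lies in the projective normalizer of $(J_i,\lambda_i)$ and $v_F (\Nrd (c_i))$ is controlled, modulo the torsion number $f_i = |X_\nr (L_i,\omega_i)|$, by the unramified twist class relating $c_i \cdot \omega_i$ to $\omega_i$; substituting into $v_F (\Nrd (c_\gamma)) = \sum_i e_i\, v_F (\Nrd (c_i))$ yields the remaining divisibility. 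With $v_F (\Nrd (c_\gamma)) \in N\Z$ in hand, one picks $x_\gamma \in \widetilde{X^* (T_\fs)}$ with $v_F (\Nrd (x_\gamma)) = - v_F (\Nrd (c_\gamma))$ and concludes $x_\gamma c_\gamma \in L^1 L^\sharp$.

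The main obstacle is precisely this last matching: reconciling the reduced-norm valuation used to describe $L^1$, $L^\sharp$ and $L^1 L^\sharp$ with the $\varpi_D$-power normalization of $\widetilde{X^* (T_\fs)}$, keeping careful track of the divisibilities among the block sizes $m_i$, the torsion numbers $f_i$, the index $d$ and $md$, and of how the S\'echerre--Stevens construction of $c_\gamma$ constrains $v_F (\Nrd (c_\gamma))$. The reduction in the first paragraph and the identification of the relevant subgroups of $\Z$ are routine.
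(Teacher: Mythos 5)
Your first paragraph is a correct reduction: $L^1 L^\sharp = \{g \in L \mid v_F(\Nrd(g)) = 0\} = \bigcap_{\chi \in X_\nr(L/L^\sharp)} \ker\chi$, so it suffices to show $v_F(\Nrd(c_\gamma))$ lies in $N\Z := v_F\bigl(\Nrd(\widetilde{X^*(T_\fs)})\bigr)$. And you correctly extract from the hypothesis that $g_0 \mid v_F(\Nrd(c_\gamma))$, where $g_0 = |X_\nr(L/L^\sharp) \cap X_\nr(L,\omega)|$ (using, implicitly, the paper's observation that $X_\nr(L,\omega)$ vanishes on $Z(L)$, so the $Z(G)$-triviality is automatic). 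But from there you declare a gap between $g_0 \mid v_F(\Nrd(c_\gamma))$ and $N \mid v_F(\Nrd(c_\gamma))$ and propose to close it by examining the S\'echerre--Stevens construction of $c_\gamma$ block by block, modulo torsion numbers $f_i$. That step is not developed, and as you say yourself it is the "main obstacle"; it would be hard to carry out, and it is in any case unnecessary.

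The missing idea is that $N\Z = g_0\Z$, for reasons that have nothing to do with $c_\gamma$'s provenance. The image of $X_\nr(L/L^\sharp)$ in $T_\fs = X_\nr(L)/X_\nr(L,\omega)$ is the quotient $S := X_\nr(L/L^\sharp)/\bigl(X_\nr(L/L^\sharp)\cap X_\nr(L,\omega)\bigr)$, a subtorus of $T_\fs$. Restriction $X^*(T_\fs) \to X^*(S)$ is surjective (characters of a subtorus lift), and the pullback $X^*(S) \hookrightarrow X^*(X_\nr(L/L^\sharp)) \cong \Z$ has image $g_0\Z$. Since $x \mapsto v_F(\Nrd(x))$ on $\widetilde{X^*(T_\fs)}$ is exactly the composite $\widetilde{X^*(T_\fs)} \xrightarrow{\sim} X^*(T_\fs) \twoheadrightarrow X^*(S) \hookrightarrow \Z$, you get $N\Z = g_0\Z$, and you are done. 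This is precisely what the paper does, phrased dually: the hypothesis says $c_\gamma$ defines a character of $S$, and since $\widetilde{X^*(T_\fs)}$ realizes all of $X^*(T_\fs)$, which surjects onto $X^*(S)$, one can choose $x_\gamma \in \widetilde{X^*(T_\fs)}$ with $x_\gamma^{-1}$ matching $c_\gamma$ on $X_\nr(L/L^\sharp)$. Your paragraphs two and three (computing $N$ as a divisor of $md$, then analyzing $c_\gamma$) should be replaced by this single lifting observation.
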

\begin{proof}
Recall that every $\chi \in X_\nr (L,\omega)$ vanishes on $Z(L)$. Hence 
\[
X_\nr (L / L^\sharp Z(G)) \cap X_\nr (L,\omega) = 
X_\nr (L / L^\sharp) \cap X_\nr (L,\omega) ,
\]
and $c_\gamma$ determines a character of 
\begin{equation}\label{eq:4.X}
X_\nr (L / L^\sharp) / X_\nr (L / L^\sharp) \cap X_\nr (L,\omega) . 
\end{equation}
This is a subtorus of $T_\fs \cong X_\nr (L) / X_\nr (L,\omega)$,
so we can find $x_\gamma \in \widetilde{X^* (T_\fs)}$ such that $x_\gamma^{-1}$
restricts to the same character of \eqref{eq:4.X} as $c_\gamma$. Then
\[
x_\gamma c_\gamma \in \bigcap\nolimits_{\chi \in X_\nr (L / L^\sharp)} \ker \chi 
= L^1 L^\sharp . \qedhere 
\]
\end{proof}

In view of Lemma \ref{lem:4.19} we may replace $c_\gamma$ by $x_\gamma c_\gamma$
as in Lemma \ref{lem:4.10}. From now on we assume that this has been done
for all $\gamma$ to which Lemma \ref{lem:4.10} applies. Recall that we were already 
assuming that $c_\gamma \in L^1$ whenever this is possible.

This gives rise to groups \label{i:52} \label{i:75}
\begin{align*}
& X^L (\fs)^2 = \{ \gamma \in X^L (\fs) \mid c_\gamma \in L^1 L^\sharp \} ,\\
& \Stab (\fs,P \cap M)^2 = 
\{ (w,\gamma) \in \Stab (\fs, P \cap M) \mid c_\gamma \in L^1 L^\sharp \} ,
\end{align*}
and to an idempotent \label{i:11}
\[
e_{\mu^2} = \sum_{\gamma \in X^L (\fs)^2 / X^L (\fs)^1} e_{\mu^1} = 
\sum_{\gamma \in X^L (\fs)^2 / (X^L (\fs) \cap X^G (\fs,\lambda)} 
e_{\lambda \otimes \gamma} . 
\]
The tower of groups 
\[
X^L (\fs)^1 \subset X^L (\fs)^2 \subset X^L (\fs)
\]
corresponds to a tower of $K$-representations
\[
\mu^1 \subset \mu^2 \subset \mu ,
\]
where $V_{\mu^2} = \bigoplus_{\gamma \in X^L (\fs)^2 / X^L (\fs)^1} V_{\mu^1}$.
With these objects we can refine Corollary \ref{cor:4.13}.

\begin{thm}\label{thm:4.11}
The algebra $\cH (G^\sharp Z(G))^\fs$ is Morita equivalent with a direct sum of 
$|X^L (\omega,V_\mu)|$ copies of
\begin{align*}
& e_{\mu_{G^\sharp Z(G)}} \cH (G^\sharp Z(G)) e_{\mu_{G^\sharp Z(G)}} \cong \\
& \big( \cH (T_\fs,W_\fs,q_\fs) \otimes 
\End_\C (V_\mu) \big)^{X^L (\fs) / X^L (\omega,V_\mu)} \rtimes \mf R_\fs^\sharp \cong \\
& \big( \cH (T_\fs,W_\fs,q_\fs) \otimes 
\End_\C (V_{\mu^2}) \big)^{X^L (\fs)^2 / X^L (\omega,V_\mu)} \rtimes \mf R_\fs^\sharp .
\end{align*}
Here the actions $X^L (\fs)$ and $\mf R_\fs^\sharp$ come from the action of
$\Stab (\fs,P \cap M)$ described in Lemma \ref{lem:3.9}. In particular the group 
$X_\nr (L / L^\sharp Z(G))$ acts only via translations on $T_\fs$.
\end{thm}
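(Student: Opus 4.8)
The plan is to assemble Theorem~\ref{thm:4.11} from the pieces already in place, so the proof is mostly a matter of stringing together Corollary~\ref{cor:4.13} with the explicit description of the $\Stab(\fs,P\cap M)$-action from Lemma~\ref{lem:3.9}, and then performing one reduction of the coefficient data from $\mu$ to $\mu^2$. First I would invoke Lemma~\ref{lem:3.1} (Morita equivalence of $\cH(G^\sharp Z(G))^\fs$ with $(\cH(G)^\fs)^{X^G(\fs)}$) together with Theorem~\ref{thm:3.17} and Corollary~\ref{cor:4.13}, which already yields that $\cH(G^\sharp Z(G))^\fs$ is Morita equivalent with $|X^L(\omega,V_\mu)|$ copies of $e_{\mu_{G^\sharp Z(G)}}\cH(G^\sharp Z(G))e_{\mu_{G^\sharp Z(G)}}$, and that the latter algebra is isomorphic to $\big(\cH(T_\fs,W_\fs,q_\fs)\otimes\End_\C(V_\mu)\big)^{X^L(\fs)}\rtimes\mf R_\fs^\sharp$. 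Here I must be careful about which group acts: Corollary~\ref{cor:4.13} phrases the invariants with respect to $X^L(\fs)$, but Lemma~\ref{lem:3.9} shows that $X^L(\omega,V_\mu)$ acts trivially on $e_\mu\cH(M)e_\mu\cong\cH(T_\fs,W_\fs,q_\fs)\otimes\End_\C(V_\mu)$, so taking $X^L(\fs)$-invariants is the same as taking $X^L(\fs)/X^L(\omega,V_\mu)$-invariants. That gives the first displayed isomorphism.

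Next I would read off the last sentence of the theorem directly from Lemma~\ref{lem:3.9}: the action of $(w,\gamma)\in\Stab(\fs,P\cap M)$ on $\theta_x[v]$ is $\chi_\gamma^{-1}(x)\theta_{w(x)}[wvw^{-1}]$, and for $\gamma\in X_\nr(L/L^\sharp Z(G))=X_\nr(L)\cap X^L(\fs)$ we have $w=1$ (Lemma~\ref{lem:2.4}.d) and $\chi_\gamma=\gamma$ by the normalization \eqref{eq:4.1}; also $J(\gamma,\omega\otimes\chi_\gamma^{-1})=\mathrm{id}$ by \eqref{eq:3.55}, so such $\gamma$ act trivially on $\End_\C(V_\mu)$ and via $\theta_x\mapsto\gamma^{-1}(x)\theta_x$ on $\cH(T_\fs,W_\fs,q_\fs)$, i.e.\ by the translation of $T_\fs$ dual to $\gamma$. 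That is exactly the claimed statement about $X_\nr(L/L^\sharp Z(G))$.

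The one genuinely new step is the passage from $\End_\C(V_\mu)$ with $X^L(\fs)$-invariants to $\End_\C(V_{\mu^2})$ with $X^L(\fs)^2$-invariants. The idea is that $V_\mu=\bigoplus_{\gamma\in X^L(\fs)/X^L(\fs)^2}V_{\mu^2\otimes\gamma}$ and the quotient $X^L(\fs)/X^L(\fs)^2$ permutes these blocks freely and transitively: the elements $c_\gamma$ with $\gamma\notin X^L(\fs)^2$ are, by the very definition of $X^L(\fs)^2$ via Lemma~\ref{lem:4.10}, not in $L^1L^\sharp$, and their images in $\End_\C(V_\mu)$ therefore realize the permutation of the $|X^L(\fs)/X^L(\fs)^2|$ blocks rather than acting inside a single block. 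Concretely, $e_\mu\cH(M)e_\mu\cong e_{\mu^2}\cH(M)e_{\mu^2}\otimes M_{r}(\C)$ with $r=|X^L(\fs)/X^L(\fs)^2|$, the matrix factor carrying a regular-type action of $X^L(\fs)/X^L(\fs)^2$, so by the folklore lemma used in Lemma~\ref{lem:3.3} (\cite[Lemma A.3]{SolT}) the $X^L(\fs)$-invariants of the tensor product are canonically isomorphic to the $X^L(\fs)^2$-invariants of $e_{\mu^2}\cH(M)e_{\mu^2}$, and this isomorphism is visibly $\mf R_\fs^\sharp$-equivariant (since $\mf R_\fs^\sharp$ normalizes the whole setup by Lemma~\ref{lem:3.2} and the choices above). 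Crossing with $\mf R_\fs^\sharp$ gives the third displayed isomorphism.

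I expect the main obstacle to be bookkeeping around the various idempotents and the compatibility of the chosen $c_\gamma$ (after the replacement $c_\gamma\rightsquigarrow x_\gamma c_\gamma$ of Lemma~\ref{lem:4.10}) with the factorizations $M=\prod_i M_i$ and with the action $\alpha$ of $\Stab(\fs,P\cap M)$: one must check that the $X^L(\fs)^2/X^L(\fs)^1$-action used to build $e_{\mu^2}$ from $e_{\mu^1}$ really does split off a matrix factor on which $\mf R_\fs^\sharp$ acts compatibly, so that the application of \cite[Lemma A.3]{SolT} is legitimate. None of this is deep, but it requires keeping straight which elements lie in $L^1$, in $L^1L^\sharp$, or in neither, and that the translation action of $X_\nr(L/L^\sharp Z(G))$ survives the reduction to $\mu^2$ unchanged (which it does, since $X_\nr(L/L^\sharp Z(G))\subset X^L(\fs)^1\subset X^L(\fs)^2$ acts trivially on the matrix factors). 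Everything else is a direct quotation of Corollary~\ref{cor:4.13} and Lemma~\ref{lem:3.9}.
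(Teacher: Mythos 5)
Your first two steps are correct and match the paper: the Morita equivalence and the first displayed isomorphism come from Theorem~\ref{thm:3.17} and Corollary~\ref{cor:4.13} together with the last sentence of Lemma~\ref{lem:3.9}, and the translation statement for $X_\nr(L/L^\sharp Z(G))$ is read off the formulas in Lemma~\ref{lem:3.9} together with the normalizations \eqref{eq:4.1} and \eqref{eq:3.55}. (The paper even says explicitly that only the second isomorphism still needs proof.)

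The gap is in the passage from $V_\mu$ to $V_{\mu^2}$. You decompose $e_\mu\cH(M)e_\mu\cong e_{\mu^2}\cH(M)e_{\mu^2}\otimes M_r(\C)$ with $r=|X^L(\fs)/X^L(\fs)^2|$, note that the quotient acts on the matrix factor in a ``regular-type'' way, and then appeal to \cite[Lemma A.3]{SolT} as used in Lemma~\ref{lem:3.3} to conclude that the $X^L(\fs)$-invariants equal $(e_{\mu^2}\cH(M)e_{\mu^2})^{X^L(\fs)^2}$. That lemma does not yield this. In the form used in Lemma~\ref{lem:3.3} it gives $(\End_\C(\C R)\otimes A)^R\cong A\rtimes R$; so if the only thing happening were a regular $R$-action on the matrix factor and an $X^L(\fs)^2$-action on the other factor, you would land on $(e_{\mu^2}\cH(M)e_{\mu^2})^{X^L(\fs)^2}\rtimes R$, which has an extra crossed product factor of order $r$. (Test case: $A=\C$, $R=\Z/2$: the folklore lemma gives $(M_2(\C))^{\Z/2}\cong\C[\Z/2]$ of dimension $2$, not $\C$.)

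What actually removes the off-diagonal matrix entries --- and is the heart of the paper's proof --- is that $X^L(\fs)^2$ itself acts nontrivially on them. Write $B_{\gamma_1,\gamma_2}=e_{\mu^2\otimes\gamma_1}\cH(M)e_{\mu^2\otimes\gamma_2}$. The subgroup $X_\nr(L/L^\sharp Z(G))\cap X_\nr(L,\omega)\subset X^L(\fs)^2$ fixes each diagonal $B_{\gamma,\gamma}$ pointwise (Lemma~\ref{lem:3.9}), while on $B_{\gamma_1,\gamma_2}$ with $\gamma_1\neq\gamma_2$ it acts by the scalar $\chi^{-1}(c_{\gamma_1}c_{\gamma_2}^{-1})$; by Lemma~\ref{lem:4.10} this scalar is nontrivial for some $\chi$ precisely because $c_{\gamma_1}c_{\gamma_2}^{-1}\notin L^1L^\sharp$ when $\gamma_1\gamma_2^{-1}\notin X^L(\fs)^2$. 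So already the $X^L(\fs)^2$-invariants are confined to the block-diagonal $\bigoplus_\gamma B_{\gamma,\gamma}$, and only then does the simply transitive permutation by $X^L(\fs)/X^L(\fs)^2$ collapse this to a single $B_{1,1}^{X^L(\fs)^2}$. This is a Clifford-theoretic stabilizer argument, not \cite[Lemma A.3]{SolT}; you cite Lemma~\ref{lem:4.10} only to note that the $c_\gamma$'s permute the blocks, whereas its real role is to kill the off-diagonal blocks under invariance. Finally, the $\mf R_\fs^\sharp$-equivariance of the second isomorphism does not follow from Lemma~\ref{lem:3.2} (which is about $e_M^\fs$ and $e_\mu$); one needs to check that for $(w,\gamma)\in\Stab(\fs,P\cap M)^2$ the operator $J(\gamma,\omega\otimes\chi_\gamma^{-1})$ restricts to a bijection of $V_{\mu^2}$, which is what the paper does.
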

\begin{proof}
By Theorem \ref{thm:3.17} and Corollary \ref{cor:4.13}
it suffices to establish the second isomorphism.
From Lemma \ref{lem:3.9} we see that $X_\nr (L / L^\sharp Z(G)) \cap X_\nr (L,\omega)$ 
fixes the subalgebra
\[
e_{\mu^2} \cH (M) e_{\mu^2} \subset e_{\mu } \cH (M) e_{\mu }  
\]
pointwise. Hence the same holds for 
\[
e_{\mu^2 \otimes \gamma} \cH (M) e_{\mu^2 \otimes \gamma}  = 
c_\gamma e_{\mu^2} \cH (M) e_{\mu^2} c_\gamma^{-1}
\]
for any $\gamma \in X^L (\fs)$. On the other hand, by Lemma \ref{lem:4.10}
$X_\nr (L / L^\sharp Z(G)) \cap X_\nr (L,\omega)$ does not fix any 
$c_\gamma \in L \setminus L^1 L^\sharp$. Therefore
\[
(e_{\mu } \cH (M) e_{\mu } )^{X^L (\fs)} = 
\Big( \bigoplus_{\gamma \in X^L (\fs) / X^L (\fs)^2} \!\!\!\!
e_{\mu^2 \otimes \gamma} \cH (M) e_{\mu^2 \otimes \gamma} \Big)^{X^L (\fs)}
\cong \Big( e_{\mu^2} \cH (M) e_{\mu^2} \Big)^{X^L (\fs)^2} . 
\]
The proof of Theorem \ref{thm:3.6}.d shows that
\[
e_{\mu^2} \cH (M) e_{\mu^2} \cong \cH (T_\fs,W_\fs,q_\fs) \otimes 
\End_\C (V_{\mu^2}) .
\]
For $(w,\gamma) \in \Stab (\fs,P \cap M)^2$ the intertwiner 
$J (\gamma, \omega \otimes \chi_\gamma^{-1})$ restricts to a bijection
\[
V_{\mu^2} = e_{\mu^2} V_{\omega \otimes \chi_\gamma^{-1}} \to
V_{\mu^2} = e_{\mu^2} V_{w^{-1} (\omega) \otimes \gamma^{-1}} .
\]
Hence the action of $\Stab (\fs,P \cap M)^2$ on $\cH (T_\fs,W_\fs,q_\fs) 
\otimes \End_\C (V_{\mu})$ described in Lemma \ref{lem:3.9} preserves the
subalgebra $\cH (T_\fs,W_\fs,q_\fs) \otimes \End_\C (V_{\mu^2})$. 
By equations \eqref{eq:3.55} and \eqref{eq:4.90} the groups 
$X_\nr (L / L^\sharp Z(G))$ and $X^L (\omega,V_{\mu })$ act as asserted.
\end{proof}

In combination with Lemma \ref{lem:3.13},
the above proof makes it clear that the use of the group $L / H_\lambda \cong
\Irr (X^L (\omega,V_{\mu }))$ is unavoidable. Namely, by \eqref{eq:2.2} the operators 
$I(\gamma,\omega \otimes \chi)$ associated to $\gamma \in X^L (\omega,V_{\mu })$ 
cause some irreducible representations of $G$ to split upon restriction to
$G^\sharp Z(G)$. But in the proof of Theorem \ref{thm:4.11} we saw that the
same operators act trivially on 
\[
e_{\mu } \cH (M) e_{\mu } \cong \cH (T_\fs, W_\fs, q_\fs) \otimes \End_\C (V_\mu ) . 
\]
This has to be compensated somehow, and we do so by adding a direct
summand for every irreducible representation of $X^L (\omega,V_{\mu })$. See also
Example \ref{ex:5.1}.

\subsection{Hecke algebras for the derived group} \

Let $\omega \in \Irr(L)$ be supercuspidal and let $\sigma^\sharp$ be an irreducible
subquotient of $\Res_{L^\sharp}^L (\omega)$, or equivalently of 
$\Res_{L^\sharp Z(G)}^L (\omega)$. Consider the Bernstein torus $T_{\mf t^\sharp}$ 
of $\mf t^\sharp = [L^\sharp,\sigma^\sharp]_{G^\sharp}$ and $T_{\mf t}$ of
$\mf t = [L^\sharp Z(G),\sigma^\sharp]_{G^\sharp Z(G)}$. Then $T_{\mf t^\sharp}$ 
is the quotient of $T_{\mf t}$ with respect to the action of 
$X_\nr (L^\sharp Z(G) / L^\sharp)$. In turn $T_{\mf t}$ clearly is a quotient of 
$X_{\nr}(L)$ via $\chi \to \sigma^\sharp \otimes \chi$. But it is not so obvious that
it is also a quotient of $T_\fs \cong X_{\nr}(L) / X_{\nr}(L,\omega)$, because the 
isomorphism $\omega \otimes \chi \cong \omega$ for $\chi \in X_{\nr}(L,\omega)$ might
be complicated. The next result shows that this awkward scenario does not occur and
that $T_{\mf t}$ is a quotient of $T_\fs$.

\begin{lem}\label{lem:4.7}
For $\omega$ and $\sigma^\sharp$ as above, 
$\sigma^\sharp \otimes \chi \cong \sigma^\sharp$
for all $\chi \in X_{\nr}(L,\omega)$.
\end{lem}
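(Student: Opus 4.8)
The plan is to exploit the fact that the unramified characters of $L$ fixing $\omega$ are closely tied to the restriction of $\omega$ to $L^\sharp$, via the local Langlands correspondence / the structure of $X_\nr(L,\omega)$. The cleanest route is to reduce to the factors $L_i = \GL_{m_i}(D)$ and invoke what is known for supercuspidal representations of a single $\GL_{m_i}(D)$: there $X_\nr(L_i,\omega_i)$ is cyclic, generated by an unramified character of a specific order, and by the theory of simple types (Bushnell--Kutzko, S\'echerre) every element of $X_\nr(L_i,\omega_i)$ is trivial on $\GL_{m_i}(D)^\sharp \cdot Z(\GL_{m_i}(D))$ — indeed $X_\nr(L_i,\omega_i)$ consists of characters of the form $\chi\mapsto\chi(\Nrd(\,\cdot\,))$ with $\chi$ unramified on $F^\times$ of the relevant order, hence trivial on the reduced-norm-one subgroup. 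Granting that, $\chi\in X_\nr(L,\omega)$ is trivial on $L^\sharp$, and a fortiori $\sigma^\sharp\otimes\chi = \Res^L_{L^\sharp}(\omega\otimes\chi)$-constituent $\cong \sigma^\sharp$ after identifying $\omega\otimes\chi\cong\omega$.

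More precisely, first I would factor $\omega = \bigotimes_i \omega_i^{\otimes e_i}$ under Conditions~\ref{cond}, so that $X_\nr(L,\omega) = \prod_i X_\nr(L_i,\omega_i)$ (the exterior tensor product of supercuspidals is supercuspidal, and an unramified character of $L$ fixes $\omega$ iff each component fixes the corresponding $\omega_i$). It therefore suffices to prove the statement when $L = \GL_{m}(D)$ is a single general linear factor and $\omega$ is supercuspidal. In that case I would recall that $X_\nr(L,\omega)$ is generated by an unramified character $\nu$ such that $\nu^{f} = 1$ for the appropriate torsion number $f$, and — this is the key input — that $\nu$ factors through the reduced norm: $\nu = \mu\circ\Nrd$ for some unramified character $\mu$ of $F^\times$. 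This is precisely where the structure theory of \cite{SeSt4,SeSt6} (or, for $\GL_n(F)$, \cite{BuKu4}) enters: the stabilizer of a supercuspidal type under twisting by unramified characters is detected on the torus of the maximal simple type, and those characters are of the stated shape. Since $L^\sharp = \ker\Nrd$, any such $\nu$ is trivial on $L^\sharp$.

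With $\chi\in X_\nr(L,\omega)$ trivial on $L^\sharp$, I would finish as follows. Choose an isomorphism $I(\chi,\omega)\in\Hom_L(\omega\otimes\chi,\omega)$ as in \eqref{eq:2.30}. Because $\chi|_{L^\sharp} = 1$, the representation $\omega\otimes\chi$ restricted to $L^\sharp$ equals $\Res^L_{L^\sharp}\omega$ on the nose, and $I(\chi,\omega)$ is an $L^\sharp$-equivariant automorphism of $\Res^L_{L^\sharp}\omega$. Hence it permutes the isotypic components, and since it is an isomorphism $\omega\otimes\chi\to\omega$ of $L$-modules it in fact preserves each isotypic component (the $L$-action on the set of components is the same for $\omega$ and $\omega\otimes\chi$ once $\chi$ is trivial on $L^\sharp$). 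Therefore every irreducible constituent $\sigma^\sharp$ of $\Res^L_{L^\sharp}\omega$ satisfies $\sigma^\sharp\otimes\chi\cong\sigma^\sharp$: indeed $\sigma^\sharp\otimes\chi = \sigma^\sharp$ as $L^\sharp$-representations since $\chi|_{L^\sharp}=1$, which is the assertion (and the same argument works verbatim with $L^\sharp$ replaced by $L^\sharp Z(G)$, as $\chi$ is also trivial on $Z(L)$ — every $\chi\in X_\nr(L,\omega)$ vanishes on $Z(L)$, a fact already used in the proof of Lemma~\ref{lem:2.4}).

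The main obstacle is the key input in the middle paragraph: that every unramified self-twist of a supercuspidal representation of $\GL_m(D)$ factors through the reduced norm. This is not a formal consequence of anything in the excerpt; it relies on the explicit description of simple types and their Hecke algebras in \cite{Sec3,SeSt6} — concretely, that the torsion number and the twisting character are governed by the maximal simple type, whose relevant invariant is an unramified character of the centre $F^\times$ pulled back along $\Nrd$. Once this is granted (and for $\GL_n(F)$ it is classical, cf. \cite{BuKu4}), the rest of the argument is the elementary bookkeeping above. An alternative, if one wishes to avoid type theory, is to argue via the local Langlands correspondence: the Langlands parameter of $\omega$ is irreducible, twisting by $\chi$ corresponds to twisting the parameter by an unramified character of the Weil group, and such a twist fixes the parameter only if the character has the form $\det$-twist detectable on $F^\times$, again giving $\chi = \mu\circ\Nrd$; but keeping the type-theoretic formulation is more in the spirit of the present paper.
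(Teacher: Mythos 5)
There is a genuine gap in your argument: the claim that every $\chi \in X_\nr(L,\omega)$ is trivial on $L^\sharp$ is false once $L$ has more than one general linear factor. You correctly observe that every unramified character of a \emph{single} factor $L_{ij} = \GL_{m_i}(D)$ factors through its own reduced norm $\Nrd_{ij}$ and is therefore trivial on $\ker(\Nrd_{ij})$; but the subgroup $L^\sharp = L \cap G^\sharp$ is the kernel of the \emph{total} reduced norm $l \mapsto \prod_{i,j}\Nrd_{ij}(l_{ij})$, not the product $\prod_{i,j}\ker(\Nrd_{ij})$. So the product character $\chi = \prod_{i,j}\chi_{ij}$ with $\chi_{ij} = \mu_{ij}\circ\Nrd_{ij}$ is trivial on $L^\sharp$ only when all the $\mu_{ij}$ coincide, which is not forced for $\chi \in X_\nr(L,\omega) = \prod_{i,j} X_\nr(L_{ij},\omega_i)$. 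For instance, if $L = L_1^2$ and $\omega = \omega_1^{\otimes 2}$ with $X_\nr(L_1,\omega_1)=\langle\nu\rangle$ nontrivial, then $1\otimes\nu \in X_\nr(L,\omega)$ takes value $\nu(\varpi_F)\neq 1$ on an element $(l_1,l_2)\in L^\sharp$ with $\Nrd(l_2)=\varpi_F=\Nrd(l_1)^{-1}$. Consequently the ``reduction to a single factor'' step breaks down, and once it does there is nothing left to discharge: your final paragraph only handles the case $\chi|_{L^\sharp}=1$, in which the lemma is indeed tautological.

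This also explains why the paper treats the statement as non-obvious: in the sentences immediately preceding Lemma~\ref{lem:4.7} the authors warn that ``the isomorphism $\omega\otimes\chi\cong\omega$ for $\chi\in X_\nr(L,\omega)$ might be complicated,'' i.e.\ a priori the intertwiner could permute the $L^\sharp$-isotypic components of $V_\omega$. The actual content of the lemma is that this permutation is trivial, and the paper's proof establishes it via a Morita-theoretic argument: using the idempotent $e^\fs_L$ from \eqref{eq:3.61} and Theorem~\ref{thm:3.6}.b, one transports $\omega$ and $\omega\otimes\chi$ to modules over $\mc O(T_\fs)\otimes\End_\C(V_\mu)\otimes M_{|L/H_\lambda|}(\C)$, where they correspond to the \emph{same} point of $T_\fs$; the decomposition of $\Res^L_{L^\sharp}$ is then controlled by $\C[X^L(\omega),\kappa_\omega]$ acting on that common module, whence $\sigma^\sharp$ and $\sigma^\sharp\otimes\chi$ are computed by the same $\rho$-isotypic piece. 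You would need some substitute for this step — for example, exhibiting an intertwiner $\omega\otimes\chi\to\omega$ that commutes with the full $\C[X^L(\omega),\kappa_\omega]$-action — rather than a triviality claim about $\chi|_{L^\sharp}$.
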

\begin{proof}
Let $\mu_L$ be as in \eqref{eq:4.m}. By Lemma \ref{lem:3.4} and Theorem
\ref{thm:3.6} $\cH (L)^{\fs_L}$ is Morita equivalent with
\begin{equation}\label{eq:4.28}
e^\fs_L \cH (L) e^\fs_L \cong 
\mc O (T_\fs) \otimes \End_\C (V_{\mu}) \otimes M_{|L / H_\lambda|}(\C) .
\end{equation}
The analogues of Lemma \ref{lem:3.1} and Proposition \ref{prop:3.2} for $L$ 
show that $\cH (L^\sharp Z(G))^{\fs_L}$ is Morita equivalent with 
\begin{equation}\label{eq:4.20}
e^\fs_L \cH (L)^{X_L (\fs)} e^\fs_L \cong \Big( \mc O (T_\fs) \otimes 
\End_\C (V_{\mu} ) \Big)^{X^L (\fs)} \otimes M_{|L / H_\lambda|}(\C) . 
\end{equation}
Under the first Morita equivalence, $\omega$ is mapped to $e^\fs_L V_\omega$, 
considered as an $\mc O (T_\fs)$-module with character $\omega \in T_\fs$. 
For every $\gamma \in X^L (\omega) ,\; \omega \otimes \gamma$ is mapped to 
the same module of \eqref{eq:4.28}.

Recall the operator $I(\gamma,\omega) \in \Hom_L (\omega \otimes \gamma,\omega)$ 
from \eqref{eq:2.30}. It restricts to 
\[
I_{e^\fs_L} (\gamma,\omega) \in \Hom_{e^\fs_L \cH (L) e^\fs_L} 
(e^\fs_L V_{\omega \otimes \chi}, e^\fs_L V_\omega ) .
\]
Since we are dealing with Morita equivalences and by \eqref{eq:2.1}, 
these operators give rise to algebra isomorphisms
\begin{equation}\label{eq:4.29}
\End_{e^\fs_L \cH (L)^{X_L (\fs)} e^\fs_L} 
(e^\fs_L V_\omega) 
\cong \End_{L^\sharp} (\omega)) \cong \C [X^L (\omega,\kappa_\omega)] .
\end{equation}
It follows from \eqref{eq:2.2} that there exists a unique 
$\rho \in \Irr (\C [X^L(\omega), \kappa_\omega])$ such that
$\sigma^\sharp \cong \Hom_{\C [X^L(\omega),\kappa_\omega]}(\rho,\omega)$. 
Then \eqref{eq:4.29} implies
\[
e^\fs_L V_{\sigma^\sharp} \cong 
\Hom_{\C [X^L(\omega),\kappa_\omega]}(\rho,e^\fs_L V_\omega) .
\]
But $X^L (\omega \otimes \chi) = 
X^L (\omega), \kappa_{\omega \otimes \chi} = \kappa_\omega$ and
\[
\sigma^\sharp \otimes \chi \cong 
\Hom_{\C [X^L(\omega),\kappa_\omega]}(\rho,\omega \otimes \chi) .
\]
Moreover $\omega$ and $\omega \otimes \chi$ correspond 
to the same module of \eqref{eq:4.28}, so
\[
e^\fs_L (V_{\sigma^\sharp \otimes \chi}) \cong 
\Hom_{\C [X^L(\omega),\kappa_\omega]}(\rho,e^\fs_L V_{\omega \otimes \chi}) =
\Hom_{\C [X^L(\omega),\kappa_\omega]}(\rho,e^\fs_L V_{\omega}) \cong
e^\fs_L V_{\sigma^\sharp} .
\]
In view of the second Morita equivalence above, this implies that 
$V_{\sigma^\sharp \otimes \chi} \cong V_{\sigma^\sharp}$ as $L^\sharp$-representations.
\end{proof}

Now we are finally able to give a concrete description of the Hecke 
algebras associated to $G^\sharp$. Let $T_\fs^\sharp$ be the restriction of 
$T_\fs$ to $L^\sharp$, that is, \label{i:59}
\begin{equation}
T_\fs^\sharp := T_\fs / X_\nr (L / L^\sharp) = T_\fs / X_\nr (G  ) 
\cong X_\nr (L^\sharp) / X_\nr (L,\omega) .
\end{equation}
With this torus we build an affine Hecke algebra $\cH (T_\fs^\sharp, W_\fs, q_\fs)$
like in \eqref{eq:3.42} and \eqref{eq:3.43}. Recall from \eqref{eq:2.23} that there are 
finitely many Bernstein components $\mf t^\sharp$ for $G^\sharp$ such that
\begin{equation}\label{eq:4.46}
\cH (G^\sharp)^\fs = 
\bigoplus\nolimits_{\mf t^\sharp \prec \fs} \cH (G^\sharp )^{\mf t^\sharp} . 
\end{equation}
By Lemma \ref{lem:4.7} the Bernstein torus associated to any
$\mf t^\sharp \prec \mf s$ is a quotient of $T_\fs^\sharp$ by a finite group.
However, we warn that in general $T_{\mf t^\sharp}$ is not equal to 
$T_\fs^\sharp$, see Example \ref{ex:torus}.

Define $\kappa ((w,\gamma),(w',\gamma')) \in \C^\times$ by
\begin{equation}\label{eq:4.4}
J (\gamma,\omega \otimes \chi) \circ J(\gamma',\omega \otimes \chi) = 
\kappa \big( (w,\gamma),(w',\gamma') \big) J(\gamma \gamma',\omega \otimes \chi) .
\end{equation}
By the formula for $\alpha_{(w,\gamma)}(h)$ in Lemma \ref{lem:3.9} this determines
a 2-cocycle of
\begin{equation}\label{eq:3.31}
\Stab (\omega \otimes \chi,P \cap M) :=
\Stab (\omega \otimes \chi) \cap \Stab (\fs,P \cap M) . 
\end{equation}
We note that by \eqref{eq:2.12}, \eqref{eq:2.5} 
and Lemma \ref{lem:2.4}.b the group \eqref{eq:3.31} is isomorphic to 
$X^G (I_P^G (\omega \otimes \chi))$, via projection on the second coordinate. 

Recall the idempotent $e^\sharp_{\lambda_{G^\sharp}}$ from \eqref{eq:3.98}.
We define $e_{\mu_{G^\sharp}} \in \cH (G^\sharp)$ similarly, as the restriction
of $e_{\mu_G} : K \to \C$ to $K \cap G^\sharp$. Our final and main result
translates Theorem \ref{thm:4.11} from $G^\sharp Z(G)$ to $G$. \label{i:06}

\begin{thm}\label{thm:3.12}
The algebra $\cH (G^\sharp)^\fs$ is Morita equivalent with
\begin{multline*}
e^\sharp_{\lambda_{G^\sharp}} \cH (G^\sharp) e^\sharp_{\lambda_{G^\sharp}} =
\bigoplus_{a \in [L / H_\lambda]} a e_{\mu_{G^\sharp}} a^{-1} \cH (G^\sharp)
a e_{\mu_{G^\sharp}} a^{-1} 
\cong \bigoplus\nolimits_1^{|X^L (\omega,V_\mu)|} 
e_{\mu_{G^\sharp}} \cH (G^\sharp) e_{\mu_{G^\sharp}} .
\end{multline*}
There are algebra isomorphisms 
\begin{align*}
e_{\mu_{G^\sharp}} \cH (G^\sharp) e_{\mu_{G^\sharp}}
& \cong \Big( \cH (T_\fs^\sharp, W_\fs, q_\fs) \otimes 
\End_\C (V_\mu) \Big)^{X^L (\fs) / X^L (\omega,V_\mu ) X_\nr (L / L^\sharp Z(G))} 
\rtimes \mf R_\fs^\sharp \\
& \cong \Big( \cH (T_\fs^\sharp, W_\fs, q_\fs) 
\otimes \End_\C (V_{\mu^2}) \Big)^{X^L (\fs)^2 / X^L (\omega,V_\mu ) 
X_\nr (L / L^\sharp Z(G))} \rtimes \mf R_\fs^\sharp .
\end{align*}
The actions of $X^L (\fs)$ and $\mf R_\fs^\sharp$ come from $\Stab (\fs,P \cap M)$  
via Lemma \ref{lem:3.9}, which involves a projective action on 
\[
V_{\mu } = \bigoplus\nolimits_{\gamma \in X^L (\fs) / X^L (\fs)^2} 
V_{\mu^2 \otimes \gamma} .
\] 
The restriction of the associated 2-cocycle to $\Stab (\omega \otimes \chi,P \cap M)$ 
corresponds to the 2-cocycle $\kappa_{I_P^G (\omega \otimes \chi)}$ from \eqref{eq:2.1}. 
Its cohomology class is trivial if $G = \GL_m (D)$ is split.
\end{thm}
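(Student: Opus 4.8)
The plan is to assemble Theorem \ref{thm:3.12} from the pieces already in place, and then spend the real effort on the final sentence about the 2-cocycle. First I would note that the Morita equivalence of $\cH(G^\sharp)^\fs$ with $e^\sharp_{\lambda_{G^\sharp}} \cH(G^\sharp) e^\sharp_{\lambda_{G^\sharp}}$ and the first displayed isomorphism (the direct sum over $[L/H_\lambda]$, and the reduction to $|X^L(\omega,V_\mu)|$ copies) come directly from Theorem \ref{thm:3.18} together with Lemma \ref{lem:4.12}: the argument there was phrased for $G^\sharp Z(G)$ but, exactly as in the passage from Theorem \ref{thm:3.17}/Corollary \ref{cor:4.13} to Theorem \ref{thm:4.11}, it transfers to $G^\sharp$ upon taking the extra $X_\nr(G)$-invariants, i.e. restricting all functions to be supported on $G^1 G^\sharp$. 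So the skeleton of the proof is: invoke Theorem \ref{thm:3.18} for the Morita equivalence and the isomorphism with $(e^\fs_M \cH(M) e^\fs_M)^{X^L(\fs) X_\nr(G)} \rtimes \mf R_\fs^\sharp$, then apply the $X_\nr(G)$-invariant version of the computation in the proof of Theorem \ref{thm:4.11}.

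Next I would carry out the identification of the Hecke-algebra side. Taking $X_\nr(G) = X_\nr(L/L^\sharp)$-invariants inside $\cH(T_\fs,W_\fs,q_\fs) \otimes \End_\C(V_\mu)$ replaces $\mc O(T_\fs)$ by $\mc O(T_\fs^\sharp)$, since by Lemma \ref{lem:3.9} the group $X_\nr(L/L^\sharp Z(G))$ acts on $\cH(T_\fs,W_\fs,q_\fs)$ only through translations $\theta_x \mapsto \chi_\gamma^{-1}(x)\theta_x$ on the torus and trivially on $\End_\C(V_\mu)$; the quotient torus is precisely $T_\fs^\sharp = T_\fs / X_\nr(L/L^\sharp)$. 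This is the affine Hecke algebra $\cH(T_\fs^\sharp,W_\fs,q_\fs)$ of the theorem. The remaining invariance group is then $X^L(\fs)/X_\nr(L/L^\sharp Z(G))$, and by the last sentence of Lemma \ref{lem:3.9} the subgroup $X^L(\omega,V_\mu)$ already acts trivially, so one may divide it out, giving the quotient $X^L(\fs)/X^L(\omega,V_\mu) X_\nr(L/L^\sharp Z(G))$. The second displayed isomorphism, with $V_{\mu^2}$ and $X^L(\fs)^2$, follows from the $\mu^2$-version of Lemma \ref{lem:4.10}/the reduction done in the proof of Theorem \ref{thm:4.11}, exactly as there. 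The statement about the projective action and the decomposition $V_\mu = \bigoplus_{\gamma \in X^L(\fs)/X^L(\fs)^2} V_{\mu^2 \otimes \gamma}$ is just Lemma \ref{lem:3.9} read off along this tower of groups.

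The substantive point is the last two sentences: that the 2-cocycle governing the projective action on $V_\mu$, when restricted to $\Stab(\omega \otimes \chi, P \cap M)$, is $\kappa_{I_P^G(\omega \otimes \chi)}$, and that it is cohomologically trivial when $G$ is split. For the first claim I would compare definitions: by Lemma \ref{lem:3.9} the action on $\End_\C(V_\mu)$ is conjugation by the operators $J(\gamma,\omega \otimes \chi_\gamma^{-1})$, and their failure to compose multiplicatively is measured by the scalar $\kappa((w,\gamma),(w',\gamma'))$ defined in \eqref{eq:4.4}; on the other hand, under the identification \eqref{eq:2.1}, $\kappa_{I_P^G(\omega \otimes \chi)}$ is defined by \eqref{eq:2.21} from the intertwining operators $I(\gamma,I_P^G(\omega \otimes \chi))$, and via \eqref{eq:2.25} these are the parabolic inductions of the $I(\gamma,\omega \otimes \chi)$. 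Identifying $\Stab(\omega\otimes\chi,P\cap M) \cong X^G(I_P^G(\omega\otimes\chi))$ (as recorded after \eqref{eq:3.31}, using \eqref{eq:2.12}, \eqref{eq:2.5}, Lemma \ref{lem:2.4}.b) and noting that on the submodule $V_\mu \subset V_{\omega\otimes\chi}$ the operators $J(\gamma,\omega\otimes\chi_\gamma^{-1})$ and $I(\gamma,\omega\otimes\chi)$ agree up to scalars (both are the unique-up-to-scalar $L$-intertwiner, and \eqref{eq:4.90} pins this down for $\gamma \in X^L(\omega,V_\mu)$, while for general $\gamma$ the two 2-cocycles differ at most by a coboundary coming from the normalization choices), one gets that the two 2-cocycles are cohomologous. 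For the triviality in the split case, I would argue that when $G = \GL_m(F)$ every $\pi \in \Irr(G)$ restricts to $\SL_m(F)$ without multiplicities (stated in the Introduction), hence by \eqref{eq:2.1} the twisted group algebra $\C[X^G(\pi),\kappa_\pi]$ is commutative of dimension $|X^G(\pi)|$, which forces $\kappa_\pi$ to be a coboundary; applying this to $\pi = $ the Langlands constituent of $I_P^G(\omega \otimes \chi)$ and using the first claim gives that $\kappa$ restricted to each $\Stab(\omega\otimes\chi,P\cap M)$ is trivial, and since these groups cover $\Stab(\fs,P\cap M)$ as $\chi$ varies one concludes the cohomology class on the whole group is trivial. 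I expect this last globalization step — deducing triviality of the cocycle on $\Stab(\fs,P\cap M)$ from its triviality on each $\Stab(\omega\otimes\chi,P\cap M)$, with compatible choices of the trivializing cochains — to be the main obstacle, and I would handle it either by choosing the $J(\gamma,\omega\otimes\chi)$ rationally in $\chi$ (as in \eqref{eq:2.22} and the discussion of \cite{Wal}) so that one trivialization works uniformly, or by a direct low-degree cohomology computation using that $\Stab(\fs,P\cap M) \cong X^G(\fs)$ is a finite abelian group of known structure.
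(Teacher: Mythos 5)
Your assembly of the theorem from Theorem \ref{thm:3.18}, Lemma \ref{lem:4.12}, Corollary \ref{cor:4.13}, Theorem \ref{thm:4.11} and Lemma \ref{lem:3.9} matches the paper's proof essentially step for step: the Morita equivalence and direct-sum decomposition come from Theorem \ref{thm:3.18}; the passage from $G^\sharp Z(G)$ to $G^\sharp$ by $X_\nr(G)$-invariants and the identification of the quotient torus $T_\fs^\sharp$ via the translation action in Lemma \ref{lem:3.9} are exactly what is done there; and the identification of the restricted 2-cocycle with $\kappa_{I_P^G(\omega\otimes\chi)}$ via \eqref{eq:2.30}, \eqref{eq:2.18}, \eqref{eq:2.25} and \eqref{eq:4.4} is the same comparison of intertwining operators.

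Where you genuinely diverge is in the final claim of triviality in the split case. The paper argues via the local Langlands correspondence: it passes to the Langlands parameter $\phi$ of the Langlands quotient of $I_P^G(\omega\otimes\chi)$ and cites \cite[Lemma~12.5]{HiSa} and \cite[Theorem~3.1]{ABPS3} to identify $\kappa_{I_P^G(\omega\otimes\chi)}$ with a 2-cocycle of the component group of $\phi$, which is trivial when $D=F$ and reflects the Hasse invariant of $D$ otherwise. Your argument instead uses the elementary fact (recorded in the Introduction) that restriction from $\GL_n(F)$ to $\SL_n(F)$ is multiplicity-free: by \eqref{eq:2.1} and \eqref{eq:2.2} this forces $\C[X^G(\pi),\kappa_\pi]$ to be commutative, hence its irreducibles are 1-dimensional, hence $\kappa_\pi$ is a coboundary for the abelian group $X^G(\pi)$. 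That argument is correct and more self-contained; what it loses relative to the paper's route is the supplementary information about what the cocycle \emph{is} in the non-split case (the relation to the Hasse invariant of $D$), which the paper gets for free from the Langlands-theoretic identification.

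One point to fix in your write-up: the ``globalization step'' you flag as the main obstacle is not actually required. The theorem's last two sentences only assert that the \emph{restriction} of the cocycle to each $\Stab(\omega\otimes\chi,P\cap M)$ equals $\kappa_{I_P^G(\omega\otimes\chi)}$ and that \emph{this restricted class} is trivial in the split case. There is no claim that a single trivializing cochain works uniformly over $\chi$, nor that the cocycle is trivial on all of $\Stab(\fs,P\cap M)$. Your worry about compatible choices of trivializations across varying $\chi$, and the proposed remedies (rational dependence on $\chi$ or a global cohomology computation for $X^G(\fs)$), solve a stronger problem than the theorem poses.
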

\begin{proof}
By Theorem \ref{thm:3.18}
\[
\cH (G^\sharp )^\fs \sim_M 
e^\sharp_{\lambda_{G^\sharp}} \cH (G^\sharp) e^\sharp_{\lambda_{G^\sharp}} 
\cong e^\sharp_{\lambda_G} \cH (G)^{X^G (\fs) X_\nr (G)} e^\sharp_{\lambda_G} ,
\] 
while Theorem \ref{thm:3.17} tells us that
\[
e^\sharp_{\lambda_{G^\sharp Z(G)}} \cH (G^\sharp Z(G)) e^\sharp_{\lambda_{G^\sharp Z(G)}} 
\cong e^\sharp_{\lambda_G} \cH (G)^{X^G (\fs)} e^\sharp_{\lambda_G} .
\]
So the second line of the Theorem follows from Corollary \ref{cor:4.13} upon taking
invariants for $X_\nr (G) / X^G (\fs) \cap X_\nr (G) \cong X_\nr (G^\sharp Z(G))$.
Furthermore this gives an isomorphism
\[
\bigoplus\nolimits_1^{|X^L (\omega,V_\mu)|} 
e_{\mu_{G^\sharp}} \cH (G^\sharp) e_{\mu_{G^\sharp}} \cong
\bigoplus\nolimits_1^{|X^L (\omega,V_\mu)|} 
(e_\mu \cH (M) e_\mu)^{X^L (\fs) X_\nr (G  )} \rtimes \mf R_\fs^\sharp .
\]
The proof of Lemma \ref{lem:3.9} can also be applied to the action of 
$X_\nr (G  ) = X_\nr (L / L^\sharp)$ on 
\[
e_\mu \cH (M) e_\mu \cong \cH (T_\fs^\sharp, W_\fs, q_\fs) \otimes \End_\C (V_\mu) ,
\]
and it shows that $X_\nr (G  )$ acts only via translations of $T_\fs$.
The remaining action of $X^L (\fs)^2$ is trivial on 
$X_\nr (L / L^\sharp Z(G)) X^L (\omega,V_\mu)$ by Theorem \ref{thm:4.11}.
This gives the third isomorphism, from which the fourth follows (again using 
Theorem \ref{thm:4.11}).

The definitions \eqref{eq:2.30} and \eqref{eq:2.18} show that the 2-cocycle $\kappa$ of 
$\Stab (\omega,P \cap M)$ determined by \eqref{eq:4.4} is related to \eqref{eq:2.21} by
\[
\kappa \big( (w,\gamma),(w',\gamma') \big) = 
\kappa_{I_P^G (\omega \otimes \chi)}(\gamma,\gamma') .
\]
Let $\phi$ be the Langlands parameter of the Langlands quotient of 
$I_P^G (\omega \otimes \chi)$. Via the local Langlands correspondence 
$\kappa_{I_P^G (\omega \otimes \chi)}$ is related to a 2-cocycle of the component 
group of $\phi$, see \cite[Lemma 12.5]{HiSa} and \cite[Theorem 3.1]{ABPS3}. 
Hence $\kappa_{I_P^G (\omega \otimes \chi)}$ is trivial
if $\GL_m (D)$ is split, and otherwise it reflects the Hasse invariant of $D$. 
\end{proof}

\begin{rem}\label{rem:4.splitCocycle}
In the case $G^\sharp = \SL_n (F)$ Theorem \ref{thm:3.12} can be 
compared with \cite[Theorem 11.1]{GoRo2}. The algebra that Goldberg and Roche
investigate is $\cH (G^\sharp, \tau)$, where $\tau$ is an irreducible
subrepresentation of $\lambda_G$ as a representation of $PN^1 (K_G,\lambda_G)
\cap G^\sharp$. They show that this is a type for a single Bernstein component 
$\mf t^\sharp \prec \fs$. Then $\cH (G^\sharp,\tau)$ is a subalgebra of our 
$e_{\mu_{G^\sharp}} \cH (G^\sharp) e_{\mu_{G^\sharp}}$, because the idempotent $e_\tau$ 
is smaller than $e_{\mu_{G^\sharp}}$. In our terminology, \cite{GoRo2} shows that
\begin{align*}
& \cH (G^\sharp, \tau) \cong \cH (T_{\mf t^\sharp},W_\fs, q_\fs) 
\rtimes \mf R_{\mf t^\sharp} ,\\
& e_\tau \cH (G^\sharp) e_\tau \cong 
\cH (T_{\mf t^\sharp},W_\fs, q_\fs) \rtimes \mf R_{\mf t^\sharp} \otimes \End_\C (V_\tau) .
\end{align*}
Only about the part $\rtimes \mf R_{\mf t^\sharp}$ that Goldberg and Roche
are not so sure. In \cite{GoRo2} it is still conceivable that $\mf R_{\mf t^\sharp}$
(denoted $C$ there) is only embedded in $\cH (G^\sharp, \tau)$ as part of a twisted
group algebra $\C [\mf R_{\mf t^\sharp},\delta]$. With Theorem \ref{thm:3.12}, (actually
already with Proposition \ref{prop:3.2}) we see that the 2-cocyle $\delta$ from 
\cite[\S 11]{GoRo2} is always trivial.
\end{rem}

\section{Examples} 
\label{sec:exa}

This paper is rather technical, so we think it will be helpful for the reader 
to see some examples. These will also make clear that in general none of the 
introduced objects is trivial. Most of the notations used below are defined in
Subsections \ref{par:res1} and \ref{par:res2}. 

\begin{ex} [Weyl group in $G^\sharp$ bigger than in $G$] 
\ \label{ex:Weyl} 

Let $\zeta$ be a ramified character of $D^\times$ of order 3 and take
\[
G = \GL_6 (D),\; L = \GL_1 (D)^6,\; \omega = 1 \otimes 1 \otimes \zeta \otimes
\zeta \otimes \zeta^2 \otimes \zeta^2 .
\]
For $M = \GL_2 (D)^3 ,\; \fs = [L,\omega]_G$ we have
\[
T_\fs = X_\nr (L) \cong (\C^\times )^6,\; W_\fs = W (M,L) \cong (S_2 )^3 .
\]
Furthermore $X^L (\omega) = \{1\}$ and
\[
X^G (I_P^G (\omega)) = \{1,\zeta,\zeta^2\},\; \mf R_\fs^\sharp = 
\langle (135)(246) \rangle \subset S_6 \cong W(G,L) .
\]
The stabilizer of $\omega$ in $\Stab (\fs)$ is generated by $((135)(246),\zeta)$ and
\begin{align*}
\Stab (\fs) = \Stab (\omega) W_\fs X_\nr(G / Z(G)) .
\end{align*}
Let $\cH (\GL_2,q)$ denote an affine Hecke algebra of type $\GL_2$. 
By Theorems \ref{thm:4.11} and \ref{thm:3.12} there are Morita equivalences
\begin{align*}
& \cH (G^\sharp Z(G))^\fs \sim_M
(\cH (\GL_2,q)^{\otimes 3})^{X_\nr (G / Z(G))} \rtimes \mf R_\fs^\sharp , \\
& \cH (G^\sharp)^\fs \sim_M (\cH (\GL_2 ,q)^{\otimes 3})^{X_\nr (G  )} 
\rtimes \mf R_\fs^\sharp .
\end{align*}
Now we see that $\fs$ gives rise to a unique inertial class 
$\mf t^\sharp$ for $G^\sharp$. Hence 
\[
W_{\mf t^\sharp} = W_\fs^\sharp = W_\fs \rtimes \mf R_\fs^\sharp \supsetneq W_\fs .
\]
That is, the finite group associated by Bernstein to $\mf t^\sharp$ is strictly
larger than the finite group for $\fs$.
\end{ex}

\begin{ex}[Torus for $\fs$ in $G^\sharp$ smaller than expected] 
\ \label{ex:torus} 

Let $\sigma_2 \in \Irr (\GL_4 (F))$ be as in \cite[\S 4]{Roc}. Its interesting
property is $X^{\GL_4 (F)}(\sigma_2) = \langle \chi_0 \eta \rangle$. Here
$\chi_0$ and $\eta$ are characters of $F^\times$, both of order 4, with
$\chi_0$ unramified and $\eta$ (totally) ramified. There exists a similar
supercuspidal representation $\sigma_3$ with $X^{\GL_4 (F)}(\sigma_3) = 
\langle \chi_0^{-1} \eta \rangle$. Let 
\[
G = \GL_8 (F) ,\; L = \GL_4 (F)^2 ,\; \omega = \sigma_2 \otimes \sigma_3 .
\]
Then $X^L (\omega) = \{1,\eta^2 \chi_0^2\}$ and
\begin{align*}
& X^L (\fs) = \langle \eta \rangle X_\nr (L / L^\sharp Z(G)) , \\
& T_\fs = X_\nr (L) \cong ( \C^\times )^2 .
\end{align*}
The natural guess for the torus of an inertial class $\mf t^\sharp \prec \fs$ is 
\[
T_\fs^\sharp = X_\nr (L^\sharp) = T_\fs / X_\nr (L / L^\sharp).
\] 
Yet it is not correct in this example. 
Recall from \eqref{eq:3.36} that there exists a $\chi_\eta \in X_\nr (L)$ with
$\omega \otimes \eta \cong \omega \otimes \chi_\eta$. One can check that 
$\chi_\eta = \chi_0^{-1} \otimes \chi_0 \neq 1$ and 
\[
X^L (\fs) \neq X^L (\omega) X_\nr (L / L^\sharp Z(G)) .
\]
Upon restriction to $L^\sharp ,\; \omega$ decomposes as a sum of two irreducibles,
caused by the $L$-intertwining operator
\[
J (\eta^2,\omega) : \omega \to \omega \otimes \eta^{-2} \chi_\eta^2 \; \cong \; 
\omega \otimes \eta^2 \otimes (\chi_0^{-2} \otimes \chi_0^2) .
\]
Let $\sigma^\sharp$ be one of them. We may assume that $J(\eta,\omega)^2 = 
J(\eta^2,\omega)$, so $\eta$ stabilizes $\sigma^\sharp$ up to an unramified twist. 
Then, with $\mf t^\sharp = [L^\sharp,\sigma^\sharp ]_{G^\sharp}$:
\begin{align*}
& T_{\mf t} = T_\fs / X^L (\fs,\sigma^\sharp) = 
X_\nr (L^\sharp Z(G)) / \langle \chi_0^{-1} \otimes \chi_0 \rangle , \\
& T_{\mf t^\sharp} = X_\nr (L^\sharp) / \langle 1 \otimes \chi_0^2 \rangle .
\end{align*}
\end{ex}

\begin{ex}[$W_\fs^\sharp$ acts on torus without fixed points] 
\ \label{ex:free} 

This is the example from \cite[\S 4]{Roc}, worked out in our setup. 
\[
G = \GL_8 (F) ,\; L = \GL_2 (F)^2 \times \GL_4 (F)
\]
Take $\eta,\chi_0,\sigma_2$ as in the previous example, and let 
$\sigma_1 \in \Irr (\GL_2 (F))$ be supercuspidal, such that 
$X^{\GL_2 (F)} (\sigma_1) = \{1,\eta^2\}$. Write $\gamma = \eta \chi_0$ and 
$\omega = \sigma_1 \otimes \gamma \sigma_1 \otimes \sigma_2$. Then
\[
W_\fs = 1 ,\; \mf R_\fs^\sharp = W_\fs^\sharp = W(G,L) .
\]
Let $w$ be the unique nontrivial element of $W(G,L)$, it corresponds to the
permutation $(12)(34) \in S_8$. In this case $X^L (\omega) = 1$ and the action of 
$w$ on $T_\fs$ involves translation by $\gamma$:
\[
(w,\gamma) \cdot (\sigma_1 \chi_1 \otimes \gamma \sigma_1 \chi_2 \otimes \sigma_2 \chi_3) 
= (\gamma^2 \sigma_1 \chi_2 \otimes \gamma \sigma_1 \chi_1 \otimes \gamma \sigma_2 \chi_3) 
\cong (\chi_0^2 \sigma_1 \chi_2 \otimes \gamma \sigma_1 \chi_1 \otimes \sigma_2 \chi_3)
\]
for $\chi_i$ unramified. Using $\omega$ as basepoint and $\chi_1,\chi_2 \in
X_\nr (GL_2 (F)) \cong \C^\times, \chi_3 \in X_\nr (GL_4 (F)) \cong \C^\times$ as
coordinates, we obtain 
\begin{align*}
\chi_\gamma = (\chi_0^2, 1, 1) = (-1, 1, 1) \in (\C^\times )^3 ,\\
(w,\gamma)\cdot (\chi_1,\chi_2,\chi_3) = (-\chi_2,\chi_1,\chi_3) .
\end{align*}
This is a transformation without fixed points of $T_\fs = X_\nr (L)$, and also of
$X_\nr (L^\sharp)$ and of $X_\nr (L^\sharp Z(G))$. We have
\begin{align*}
& \Stab (\fs) = \Stab (\omega) X_\nr (L / L^\sharp Z(G)) ,\\
& \Stab (\omega) = \{1, (w,\gamma)\} ,
\end{align*}
and these groups act freely on $T_\fs$. It follows that $I_P^G (\omega)$ 
is irreducible and remains so upon restriction to $G^\sharp$. Writing 
$\fs^\sharp = [L^\sharp,\omega]_{G^\sharp}$, Theorems \ref{thm:4.11} and \ref{thm:3.12}
provide Morita equivalences
\begin{align*}
& \cH (G^\sharp Z(G))^\fs \sim_M \mc O (T_\fs) \rtimes \Stab (\fs) \sim_M 
\mc O (X_\nr (L^\sharp Z(G))) \rtimes \Stab (\omega) , \\
& \cH (G^\sharp )^{\fs^\sharp} \sim_M \mc O (X_\nr (L^\sharp)) \rtimes \Stab (\omega) .
\end{align*}
\end{ex}

\begin{ex}[Decomposition into 4 irreducibles upon restriction to $G^\sharp$] 
\ \label{ex:decomposition}

This and the next example are based on \cite[\S 6.3]{ChLi}.
Let $\phi$ be a Langlands parameter for $\GL_2 (F)$ with image 
$\big\{ \matje{a}{0}{0}{b} \mid a,b \in \{\pm 1\} \big\}$
and whose kernel contains a Frobenius element of the Weil group of $F$.
The representation $\pi \in \Irr (\GL_2 (F))$ associated to $\phi$ via the local
Langlands correspondence has $X^{\GL_2 (F)}(\pi)$ consisting of four ramified 
characters of $F^\times$ of order at most two, say $\{1,\gamma,\eta,\eta \gamma\}$. 
The cocycle $\kappa_\pi$ is trivial, so by \eqref{eq:2.1}
\[
\End_{\SL_2 (F)}(\pi) \cong \C \Big[ X^{\GL_2 (F)}(\pi) \Big] \cong \C^4, 
\]
and $\Res_{\SL_2 (F)}^{\GL_2 (F)}(\pi)$ 
consists of 4 inequivalent irreducible representations.\\
Next, let $\St$ be the Steinberg representation of $\GL_2 (F)$ and consider
\begin{equation}\label{eq:4.22}
\omega = \pi \otimes \St \otimes \gamma \St \otimes \eta \St \otimes \gamma \eta \St
\in \Irr (L), 
\end{equation}
where $L = \GL_2 (F)^5$, a Levi subgroup of $G = \GL_{10}(F)$. In this setting 
\[
W_\fs = 1 ,\; X^L (\omega) = 1,\; X^G (I_P^G (\omega)) = X^{\GL_2 (F)}(\pi) .
\]
Identifying $W(G,L)$ with $S_5$, we quickly deduce 
\[
\Stab (\omega) = \{ 1, ((23)(45),\gamma \eta ), ((24)(35),\eta ), ((25)(34),\gamma ) \},
\]
and $\mf R_\fs^\sharp = W_\fs^\sharp \cong (\Z / 2 \Z)^2$. However, the action of 
$\Stab (\omega)$ on $T_\fs$ does not involve translations, it is the same action as 
that of $\mf R_\fs^\sharp$. The cocycle $\kappa_{I_P^G (\omega)}$ can
be determined by looking carefully at the intertwining operators. Only in the first 
factor of $L$ something interesting happens, in the other four factors the intertwining 
operators can be regarded as permutations. Hence the isomorphism 
$\Stab (\omega) \to X^{\GL_2 (F)}(\pi)$ induces an equality 
$\kappa_{I_P^G (\omega)} = \kappa_\pi$. As we observed above, this cocycle is trivial, 
so by \eqref{eq:2.1}
\[
\End_{G^\sharp}(I_P^G (\omega)) \cong \C \Big[ \Stab (\omega) \Big] \cong \C^4
\]
and $\Res_{G^\sharp}^{G}(I_P^G (\omega))$ decomposes as a direct sum of 4 inequivalent 
irreducible representations. Theorems \ref{thm:4.11} and \ref{thm:3.12} tell us that
there are Morita equivalences
\begin{align*}
&\cH (G^\sharp Z(G))^\fs \sim_M \mc O (T_\fs)^{X_\nr (L / L^\sharp Z(G))} \rtimes 
\mf R_\fs^\sharp = \mc O \big( X_\nr (L^\sharp Z(G)) \big) \rtimes \mf R_\fs^\sharp ,\\
&\cH (G^\sharp)^\fs \sim_M \mc O (T_\fs)^{X_\nr (L / L^\sharp)} 
\rtimes \mf R_\fs^\sharp = \mc O (X_\nr (L^\sharp)) \rtimes \mf R_\fs^\sharp .
\end{align*}
\end{ex}

\begin{ex}[non-trivial 2-cocycles] 
\ \label{ex:cocycles} 

Let $D$ be a central division algebra of dimension 4 over $F$ and recall that $D^1$
denotes the group of elements of reduced norm 1 in $D^\times = \GL_1 (D)$, which is 
also the maximal compact subgroup and the derived group of $D^\times$.\\
Take $\phi,\pi,\gamma,\eta$
as in the previous example and let $\tau \in \Irr (D^\times)$ be the image of $\pi$
under the Jacquet--Langlands correspondence. Equivalently, $\tau$ has Langlands
parameter $\phi$. Then
\[
X^{D^\times} (\tau) = X^{\GL_2 (F)} (\pi) = \{ 1, \gamma, \eta, \gamma \eta \} .
\]
As already observed in \cite{Art}, the 2-cocycle $\kappa_\tau$ of $X^{D^\times}(\tau)$
is nontrivial. The group $X^{D^\times}(\tau)$ has one irreducible 
projective non-linear representation, of dimension two. Therefore
\[
\End_{D^1}(\tau) \cong \C \Big[ X^{D^\times}(\tau), \kappa_\tau \Big]
\cong M_2 (\C)
\]
and $\Res^{D^\times}_{D^1}(\tau) \cong \tau^\sharp \oplus \tau^\sharp$
with $\tau^\sharp$ irreducible.\\
Now we consider $G = \GL_5 (D), L = \GL_1 (D)^5$ and
\[
\sigma = \tau \otimes 1 \otimes \gamma \otimes \eta \otimes \gamma \eta \in
\Irr (L).
\] 
This representation is the image of \eqref{eq:4.22} under the Jacquet--Langlands
correspondence. It is clear that 
\[
X^L (\sigma) = 1 ,\; X^G (I_P^G (\sigma)) = X^{D^\times}(\tau) 
\text{ and } W_\fs = 1,
\]
where $\fs = [L,\sigma]_G$. Just as in the previous example, we find
\begin{align*}
& X^L (\fs) = X_\nr (L / L^\sharp Z(G)) \cong \Z / 10 \Z, \\
& W_\fs^\sharp = \mf R_\fs^\sharp \cong (\Z / 2 \Z)^2 \subset W(G,L) , \\
& \Stab (\sigma) = 
\{ 1, ((23)(45),\gamma \eta ), ((24)(35),\eta ), ((25)(34),\gamma ) \},\\
& X^G (\fs) = \Stab (\sigma) X^L (\fs) .
\end{align*}
We refer to Subsection \ref{par:res2} for the definitions of these groups.
The same reasoning as for $\kappa_\pi$ shows that $\kappa_{I_P^G (\sigma)} = \kappa_\tau$
via the isomorphism $\Stab (\sigma) \to X^{D^\times}(\tau)$. Hence
\[
\End_{G^\sharp}^G (I_P^G (\sigma)) \cong 
\C \Big[ \Stab (\sigma),\kappa_{I_P^G (\sigma)} \Big] \cong M_2 (\C) ,
\]
and $\Res^G_{G^\sharp}(I_P^G (\sigma))$ is direct sum of two isomorphic irreducible
$G^\sharp$-representations.\\
To analyse the Hecke algebras associated to $\fs$, we need to exhibit some types. 
A type for $\gamma$ as a $D^\times$-representation is $(D^1, \lambda_\gamma = 
\gamma \circ \Nrd)$. The same works for other characters of $D^\times$. We know that 
$\tau$ admits a type, and we may assume that it is of the form $(D^1,\lambda_\tau)$. It is 
automatically stable under $X^{D^\times} (\tau)$ and $\dim (\lambda_\tau) > 1$ because
$\tau$ is not a character. Then
\[
\Big( (D^1)^5 , \lambda = \lambda_\tau \otimes \lambda_1 \otimes \lambda_\gamma 
\otimes \lambda_\eta \otimes \lambda_{\eta \gamma} \Big) 
\]
is a type for $[L,\sigma]_L$. The underlying vector space $V_\lambda$ can be identified
with $V_{\lambda_\tau}$.\\
We note that $M = L$ and $T_\fs = X_\nr (L) \cong (\C^\times )^5$. Proposition 
\ref{prop:3.2} and Theorem \ref{thm:3.7} show that there is a Morita equivalence
\[
\cH (G Z(G))^\fs \sim_M \Big( \mc O (T_\fs) \otimes \End_\C (V_\lambda) 
\Big)^{X_{\nr}(L / L^\sharp Z(G))} \rtimes \mf R_\fs^\sharp .
\]
The action of $\mf R_\fs^\sharp$ on $\End_\C (V_\lambda)$ comes from a projective 
representation of $X^{D^\times}(\tau)$ on $V_{\lambda_\tau}$. It does not lift a to 
linear representation because $\kappa_\tau$ is nontrivial. 
Therefore $\cH (G Z(G))^\fs$ is not Morita equivalent with 
\[
\mc O ( T_\fs)^{X_{\nr}(L / L^\sharp Z(G))} \rtimes \mf R_\fs^\sharp =
\mc O \big( X_{\nr}(L^\sharp Z(G) \big) \rtimes \mf R_\fs^\sharp .
\]
Similarly the algebras $\cH (G^\sharp)^\fs$ and
\begin{equation}\label{eq:4.23}
\Big( \mc O (T_\fs) \otimes \End_\C (V_\lambda) \Big)^{X_{\nr}(L / L^\sharp)} 
\rtimes \mf R_\fs^\sharp = \Big( \mc O (X_\nr (L^\sharp)) \otimes 
\End_\C (V_\lambda) \Big) \rtimes \mf R_\fs^\sharp
\end{equation}
are Morita equivalent, but 
\begin{equation}\label{eq:4.24}
\mc O (T_\fs)^{X_{\nr}(L / L^\sharp)} \rtimes \mf R_\fs^\sharp =
\mc O (X_\nr (L^\sharp)) \rtimes \mf R_\fs^\sharp
\end{equation}
has a different module category. One can show that \eqref{eq:4.23} and 
\eqref{eq:4.24} are quite far apart, in the sense that they have different 
periodic cyclic homology.
\end{ex}

\begin{ex}[Type does not see all $G^\sharp$-subrepresentations] 
\ \label{ex:5.1}

Take $G = \GL_2 (F)$ and let $\chi_-$ be the unique unramified character 
of order 2. There exists a supercuspidal $\omega \in \Irr (G)$ with
$X_\nr (G,\omega) = \{1,\chi_-\}$. Then $\chi_- \in X^G (\omega)$ and
\[
I(\omega,\chi_-) \in \Hom_G (\omega,\omega \otimes \chi_-).
\] 
This operator can be normalized so that its square is the identity on 
$V_\omega$. Let $G^1$ be the subgroup of $G$ generated by all compact
subgroups. The $+1$-eigenspace and the $-1$-eigenspace of 
$I(\omega,\chi_-)$ are irreducible $G^1$-subrepresentations of
$\mathrm{Res}^G_{G^1}(\omega)$, and these are conjugate via an element
$a \in G \setminus G^1 Z(G)$. 
Any type for $[G,\omega]_G$ is based on a subgroup of $G^1$, so it sees
only one of the two irreducible $G^1$-subrepresentations of $\omega$. \\
This phenomenon forces us to introduce the group $L / H_\lambda$ in Lemma
\ref{lem:3.30} (here $G / G^1 Z(G) \cong \{1,a\}$) and carry it with us 
through a large part of the paper.
\end{ex}

\begin{ex}[Types conjugate in $G$ but not in $G^1 G^\sharp$] 
\ \label{ex:5.2}

Consider a supercuspidal representation $\omega$ of $\GL_m (D)$ which
contains a simple type $(K,\lambda)$. Fix a uniformizer $\varpi_D$ of $D$ and denote
the unit of $\GL_m (D)$ by $1_m$. Assume that there exists $\gamma \in X^G (\fs)$ 
such that $\varpi_D 1_m$ normalizes $K$ and 
\[
\varpi_D^{-1} 1_m \cdot \lambda \cong \lambda \otimes \gamma \not\cong \lambda.
\] 
Then $\lambda$ and $\lambda \otimes \gamma$ are conjugate in $G$ but not in 
$G^1 G^\sharp$.\\
This can be constructed as follows. For simplicity we consider the case where
$K = \GL_m (\mf o_D)$ and $\lambda$ has level zero. Then $\lambda$ is inflated from
a cuspidal representation $\sigma$ of the finite group $\GL_m (k_D)$, where $k_D$ 
denotes the residue field of $D$. On this group conjugation by $\varpi_D$ has the 
same effect as some field automorphism of $k_D / k_F$. We assume that it is the 
Frobenius automorphism $x \mapsto x^q$, where $q = |k_F|$. Recall
that $k_D / k_F$ has degree $d$, so $|k_D| = q^d$.\\
We need a $\sigma \in \Irr_{\cusp}(\GL_m (k_D))$ such that
\begin{equation}\label{eq:5.3}
\sigma \circ \mathrm{Frob}_{k_F} \cong \sigma \otimes \bar \gamma \not\cong \sigma ,
\end{equation}
where $\bar \gamma \in \Irr (\GL_m (k_D) / k_D^\times \SL_m (k_D))$ is induced
by $\gamma$.\\
To find an example, we recall the classification of the characters
of $\GL_m (k_D)$ by Green \cite{Gre}. In his notation, every irreducible cuspidal
character of $\GL_m (k_D)$ is of the form $(-1)^{m-1} I_m^k [1]$, where $k \in
\Z / (q^{dm} - 1) \Z$ is such that $k,k q^d, \ldots, k q^{d(m-1)}$ are $m$ 
different elements of $\Z / (q^{dm} - 1) \Z$. 

Let us make the class function $I_m^k [1]$ a bit more explicit. 
\cite[Theorems 12 and 13]{Gre} entail that it is determined by its values on principal 
elements, that is, elements of $\GL_m (k_D)$ which do not belong to any proper parabolic 
subgroup. Let $k_E$ be a field with $q^{md}$ elements, which contains $k_D$. Suppose 
that $x \in k_E^\times$ is a generator and let $f_x \in \GL_m (k_D)$ be such
that $\det (t - f_x)$ is the minimal polynomial of $x$ over $k_D$. Then 
$f_x \in \GL_m (k_D)$ is principal, and every principal element is of this form.
We identify $\Irr (k_E^\times)$ with $\Z / (q^{dm} - 1) \Z$ by fixing a generator, 
a character $\theta : k_E^\times \to \C^\times$ of order $q^{dm} - 1$. From 
\cite[\S 3]{Gre} one can see that $I_m^k [1](f_x) = \theta^k (x)$.

In this setting, the above condition on $k$ becomes that 
$\theta^{k q^{ds}} \neq \theta^k$ for any divisor $s$ of $m$ with $1 \leq s < m$.
Let us call such a character of $k_E^\times$ regular. The Galois group 
Gal$(k_E / k_D) = \langle \mathrm{Frob}_{k_D} \rangle$ acts on $\Irr (k_E^\times)$
by $\theta^k \circ \mathrm{Frob}_{k_D} = \theta^{k q^d}$, and the regular characters
are precisely those whose orbit contains $m = |\mathrm{Gal}(k_E / k_D)|$ elements.
Now \cite[Theorem 13]{Gre} sets up a bijection
\begin{equation}\label{eq:5.1}
\begin{array}{ccc}
\Irr (k_E^\times)_{\reg} / \mathrm{Gal}(k_E / k_D) & \to & \Irr_{\cusp} (\GL_m (k_D)) \\
\theta^k & \mapsto & \sigma_k ,
\end{array}
\end{equation}
determined by
\begin{equation}\label{eq:5.2}
\mathrm{tr}(\sigma_k (f_x)) = (-1)^{m-1} \theta^k (x) 
\text{ for every generator } x \text{ of } k_E^\times .
\end{equation}
Let us describe the effects of tensoring with elements of $\Irr (G / G^\sharp Z(G))$
and of conjugation with powers of $\varpi_D$ in these terms. As noted above, the
conjugation action of $\varpi_D$ on $\GL_m (D)$ is the same as entrywise application
of $\mathrm{Frob}_{k_F} \in \mathrm{Gal}(k_D / k_F)$. With \eqref{eq:5.2} we deduce
\[
\varpi_D^{-1} \cdot \sigma_k = \sigma_k \circ \mathrm{Frob}_{k_F} = \sigma_{k q}.
\]
This corresponds to the natural action of Gal$ (k_D / k_F)$ on
$\Irr (k_E^\times)_{\reg} / \mathrm{Gal}(k_E / k_D)$.\\
Consider a $\gamma \in \Irr (G / G^\sharp Z(G))$ which is trivial on 
$\ker (\GL_m (\mf o_D) \to \GL_m (k_D))$. It induces a character of $\GL_m (k_D)$
of the form
\[
\bar \gamma = \gamma' \circ N_{k_D / k_F} \circ \det 
\quad \text{with} \quad \gamma' \in \Irr (k_F^\times) .
\]
We assume that $\gamma' = \theta |_{k_F^\times}$. By \eqref{eq:5.2}
\begin{multline*}
\bar \gamma (f_x) = \theta \Big( N_{k_D / k_F} (N_{k_E / k_D} (x)) \Big) =
\theta \Big( N_{k_E / k_F} (x) \Big) \\
= \theta \Big( x^{(q^{dm} - 1) / (q - 1)} \Big) 
=\theta^{(q^{dm} - 1) / (q - 1)} (x) . 
\end{multline*}
Comparing with \eqref{eq:5.3} we find that we want to arrange that
\[
kq \equiv k + \frac{q^{dm} - 1}{q-1} \mod q^{dm} - 1, \text{ but }
kq \notin \{k , k q^d, \ldots, k q^{d (m-1)} \} \mod q^{dm} - 1.
\]
For example, we can take $q = 3, d = 3$ and $m = 2$. Then $q^{dm} = 729$,\\
$(q^{dm} - 1) / (q-1) = 364$ and suitable $k$'s are 
182 or $182 q^d \equiv -110$. 
\end{ex}

\begin{ex}[Types conjugate in $G^1 G^\sharp$ but not in $G^1$] 
\ \label{ex:5.3}

Take $G = \GL_{2m} (D)$, $L = \GL_m (D)^2$, $\omega = \omega_1 \otimes \omega_2$
with $\omega_1,K_1,\lambda_1$ as in the previous example (but there
without subscripts). Also assume that the supercuspidal $\omega_2 \in \Irr (\GL_m (D))$
contains a simple type $(K_2,\lambda_2)$ such that 
\[
\varpi_D 1_m \cdot \lambda_2 \cong \lambda_2 \otimes \gamma \not\cong \lambda_2 .
\]
Then $(K,\lambda) = (K_1 \times K_2,\lambda_1 \otimes \lambda_2)$ is a type for
$[L,\omega]_L$. It is conjugate to $(K,\lambda \otimes \gamma)$ by
$(\varpi_D^{-1} 1_m,\varpi_D 1_m) \in G^\sharp \setminus G^\sharp \cap G^1$, but not
by an element of $G^1$.
\end{ex}

\section{Index of notations}  

This index is supplementary to the notations and conventions in Section \ref{sec:not}.\\

\begin{Parallel}[v]{0.48\textwidth}{0.48\textwidth}
\ParallelLText{

$\mc A$, \iref{i: A}

$\alpha_{(\omega, \gamma)}(f)$, \iref{i:01}

$c_\gamma$, \iref{i:c}

$C^\infty(G/U)^\fs$, \iref{i:02}

$C^\infty(U\backslash G)^\fs$, \iref{i:03}

$e_{\mu}$, \iref{i:04}

$e_{\mu_G}$, \iref{i:05}

$e_{\mu_{G^\sharp}}$, \iref{i:06}

$e^{\sharp}_{\lambda}$, \iref{i:07}

$e^{\sharp}_{\lambda_G}$, \iref{i:08}

$e^{\sharp}_{\lambda_{G^\sharp}}$, \iref{i:09}

$e_{\mu_L}$, \iref{i:10}

$e_{\mu^2}$, \iref{i:11}

$e^{\fs}_L$, \iref{i:12}

$e^{\fs}_M$, \iref{i:13}

$\mathcal{F}_L$, \iref{i:14}

$f_{x, \lambda}$, \iref{i:15}

$f_{x, \lambda_L}$, \iref{i:16}

$f_{x, \mu}$, \iref{i:17}

$G^{\sharp} = \GL_m(D)_{\der}$, \iref{i:18}

$H^{\sharp}$, \iref{i:19}

$\cH (G)$, \iref{i:89}

$\mathcal{H}(G,\lambda)$, \iref{i:20}

$\cH (G^\sharp Z(G))^\fs$, \iref{i: H}

$H_{\lambda}$, \iref{i:21}

$\mathcal{H}(M \rtimes \mathfrak{R}^{\sharp}_{\fs})^{\fs}$, \iref{i:22}

$\mathcal{H}(T_\fs, W_\fs, q_\fs)$, \iref{i:23}

$\mathcal{H}(X^*(T_{\fs}) \rtimes W_\fs, q_\fs)$, \iref{i:24}

$\mathcal{H}(W_\fs, q_\fs)$, \iref{i:25}

$I(\gamma,\pi)$, \iref{i:26}

$i_{P, \mu}$, \iref{i:27}

$\Irr^{\fs_L}(L)$, \iref{i:28}

$\Irr^{\fs}(G^\sharp)$, \iref{i:Irr}

$(J_P, \lambda_P)$, \iref{i:29}

$J(w, I^G_P(\omega \otimes \chi))$, \iref{i:31}

$(K,\lambda)$, \iref{i:82}

$(K_G, \lambda_G)$, \iref{i:32}

$(K_L, \lambda_L)$, \iref{i:83}

$L$, \iref{i:85}

$L^1$, \iref{i:33}

$[L/H_{\lambda}]$, \iref{i:34}

$M$, \iref{i:84}

$\sim_{M}$, \iref{i:35}

$\mu, \mu^1$, \iref{i:36}

$\mu_L, \mu^1_L$, \iref{i:36}

$N_G(\fs_L)$, \iref{i:37}

$N(K_L, \lambda_L)$, \iref{i:38}

$\Nrd$, \iref{i:39} 
} \ParallelRText{

$PN(K, \lambda)$, \iref{i:40}

$PN^1(K, \lambda)$, \iref{i:41}

$PN(K_L, \lambda_L)$, \iref{i:42}

$PN^1(K_L, \lambda_L)$, \iref{i:43}

$\Rep^{\lambda}(G)$, \iref{i:44}

$\Rep^\fs (G^\sharp)$, \iref{i: Rep}

$\mathfrak{R}_{\sigma^{\sharp}}$, \iref{i:45}

$\mf R^\sharp_\fs$, \iref{i:30}

$\mathfrak{R}^{\sharp}_{\omega}$, \iref{i:46}

$\fs_L = [L,\omega]_L$, \iref{i:47}

$\Stab(\fs)$, \iref{i:48}

$\Stab(\fs, \lambda)$, \iref{i:49}

$\Stab(\fs, P \cap M)$, \iref{i:50}

$\Stab(\fs, P \cap M)^1$, \iref{i:51}

$\Stab(\fs, P \cap M)^2$, \iref{i:52}

$\Stab(\omega)$, \iref{i:53}

$\ft^{\sharp}$, \iref{i:54}

$\ft^\sharp  \prec \fs$, \iref{i:55}

$\theta_x$, \iref{i:56}

$t_{P,\lambda}$, \iref{i:57}

$T_\fs$, \iref{i:58}

$T^{\sharp}_\fs$, \iref{i:59}

$V_{\mu}$, \iref{i:60}

$V_{\mu_L}$, \iref{i:61}

$V_{\mu^1} = V_{\mu^1_L}$, \iref{i:62}

$V_{\pi}$, \iref{i:63}

$V_{\omega}$, \iref{i:64}

$V_{\omega, \rho}$, \iref{i:65}

$W_{\fs}$, \iref{i:66}

$W_{\pi}$, \iref{i:67}

$W_\fs^\sharp$, \iref{i:68}

$W^{\sharp}_{\omega}$, \iref{i:69}

$X^G(\pi)$, \iref{i:70}

$X^G(\fs)$, \iref{i:71}

$X^G(\fs/ \lambda)$, \iref{i:72}

$X^L(\fs)$, \iref{i:73}

$X^L(\fs)^1$, \iref{i:74}

$X^L(\fs)^2$, \iref{i:75}

$X^L(\fs,\lambda)$, \iref{i:76}

$X^L(\fs / \lambda)$, \iref{i:77}

$X^L(\fs / \lambda)^1$, \iref{i:78}

$X^L(\omega, V_{\mu})$, \iref{i:79}

$X_\nr (L)$, \iref{i:80}

$X_\nr (L,\omega)$, \iref{i:81}

$Z(G)$, \iref{i:86} 
} \end{Parallel}

\end{document}